 \numberwithin{equation}{section}
\def\Q{{\mathbb{Q}}}
\def\cB{{\mathcal{B}}}
\def\cF{{\mathcal{F}}}
\def\cS{{\mathcal{S}}}
\def\WW{{\mathcal{W}}}
\def\EE{{\mathcal{E}}}
\def\RR{{\mathcal{R}}}
\def\NN{{\mathcal{N}}}
\def\CC{{\mathcal{C}}}
\def\cU{{\mathcal{U}}}
\newcommand{\ds}{\displaystyle }
\def\Z{{\mathbb{Z}}}
\def\cF{{\mathscr{F}}}
\def\cM{{\mathcal{M}}}
\def\ve{\varepsilon}
\renewcommand{\d}{{\partial}}
\def\lec{\lesssim}
\newcommand{\rf}[1]{{(\ref{#1})}}
\def\wt{\widetilde}
\def\Rn1{\mathbb{R}^{n+1}}
\def\rrn{\mathbb{R}^{n+1}}
\def\oom{\overline \Omega} 					%closure of omega
\DeclareMathOperator{\diam}{diam}
\def\Cap{\textup{Cap}} 					% capacity 
\newcommand{\DD}{{\mathcal D}}
\newcounter{eps}
\newcommand{\sss}{{\mathsf {Stop}}}
\newcommand{\ttt}{{\mathsf {Top}}}
\newcommand{\tree}{{\rm Tree}}
\newcommand{\wh}[1]{{\widehat{#1}}}
\def\BMO{\mathop\mathrm{BMO}} 					%BMO
\def\Lip{\mathop\mathrm{Lip}} 						%lipschitz constant 
\def\dist{\textup{dist}} 						%distance
\def\supp{\mathop\mathrm{supp}}					%support
\def\loc{\mathop\mathrm{loc}}						%locally
\DeclareMathOperator*{\essinf}{ess\,inf}				%essential infimum
\renewcommand{\div}{\mathop\mathrm{div }}			%divergence
\def\Xint#1{\mathchoice
{\XXint\displaystyle\textstyle{#1}}%
{\XXint\textstyle\scriptstyle{#1}}%
{\XXint\scriptstyle\scriptscriptstyle{#1}}%
{\XXint\scriptscriptstyle\scriptscriptstyle{#1}}%
\!\int}
\def\XXint#1#2#3{{\setbox0=\hbox{$#1{#2#3}{\int}$ }
\vcenter{\hbox{$#2#3$ }}\kern-.58\wd0}}
\def\avint{\Xint-}
\theoremstyle{plain}
\newtheorem{theorem}{Theorem}
\newtheorem{corollary}[theorem]{Corollary}
\newtheorem{lemma}[theorem]{Lemma}
\newtheorem{proposition}[theorem]{Proposition}
\theoremstyle{definition}
\newtheorem{remark}[theorem]{Remark}
\numberwithin{equation}{section}
\numberwithin{theorem}{section}
\newtheorem*{claim*}{Claim}
\def\HH{\mathcal{H}}
\newcommand{\vv}{\vspace{2mm}}
\newcommand{\vvv}{\vspace{4mm}}
\newcommand{\dv}{\mathop{\rm div}}
\def\R{\mathbb{R}}
\def\pom{{\partial\Omega}}
\def\vphi{\varphi}
\def\om{\Omega}
\def\hm{\omega}
  \newtheorem*{prob}{\bf Problem}
 \subjclass{31B15 (35J25 42B25 42B37)}
\begin{document}

\title[Regularity problem in rough domains]{The regularity problem for the Laplace equation\\ in rough domains}

\author[Mihalis Mourgoglou]{Mihalis Mourgoglou}
\address{Departamento de Matem\'aticas, Universidad del Pa\' is Vasco, UPV/EHU, Barrio Sarriena s/n 48940 Leioa, Spain and\\
IKERBASQUE, Basque Foundation for Science, Bilbao, Spain.}
\email{michail.mourgoglou@ehu.eus}
\author[Xavier Tolsa ]{Xavier Tolsa}
\address{ICREA, Barcelona\\
Dept. de Matemàtiques, Universitat Autònoma de Barcelona \\
and Centre de Recerca Matemàtica, Barcelona, Catalonia.}
\email{xtolsa@mat.uab.cat}
\thanks{M.M. was supported  by IKERBASQUE and partially supported by the grant PID2020-118986GB-I00 of the Ministerio de Econom\'ia y Competitividad (Spain), and by  IT-1247-19 (Basque Government). X.T. is supported by the European Research Council (ERC) under the European Union's Horizon 2020 research and innovation programme (grant agreement 101018680). Also partially supported by MICINN (Spain) under the grant PID2020-114167GB-I00, the María de Maeztu Program for units of excellence (CEX2020-001084-M), and 2021-SGR-00071 (Catalonia).
}

 	%35J05

%\subjclass[2010]{35K05, 35B44, 28A75, 35B05, 31C45, 28A78, 28A33, 49Q22.}
% \@namedef{subjclassname@2020}{\textup{2020} Mathematics Subject Classification}
%\thanks{M.M. was supported  by IKERBASQUE and partially supported by the grant MTM-2017-82160-C2-2-P of the Ministerio de Econom\'ia y Competitividad (Spain), and by  IT-1247-19 (Basque Government). C.P. was supported by IT-1247-19 (Basque Government).}
%\keywords{Heat equation, caloric measure, Green function,  time-varying domains, free boundary problems, capacity density condition, tangent measures, absolute continuity, triple points, heat potential theory, parabolic Reifenberg flat domains, optimal transport, Kantorovich-Rubinshtein norm.}

\newcommand{\mih}[1]{\marginpar{\color{red} \scriptsize \textbf{Mih:} #1}}
\newcommand{\xavi}[1]{\marginpar{\color{blue} \scriptsize \textbf{Xavi:} #1}}

\maketitle

\begin{abstract}%{english}
Let $\Omega \subset \mathbb{R}^{n+1}$, $n\geq 2$, be a bounded open and connected set  satisfying the corkscrew condition with uniformly $n$-rectifiable boundary. In this paper we study the connection between the solvability of $(D_{p'})$, the Dirichlet problem for the Laplacian with boundary data in $L^{p'}(\partial \Omega)$, and $(R_{p})$ (resp. $(\widetilde R_{p})$), the regularity problem for the Laplacian with boundary data in the Haj\l asz Sobolev space $W^{1,p}(\partial \Omega)$ (resp. $\widetilde W^{1,p}(\partial \Omega)$, the usual Sobolev space in terms of the tangential derivative), where $p \in (1,2+\ve)$ and $1/p+1/p'=1$. Our main result shows that $(D_{p'})$ is solvable if and only if so is $(R_{p})$. Under additional geometric assumptions (two-sided local John condition or weak Poincar\'e inequality on the boundary), we prove that $(D_{p'}) \Rightarrow (\widetilde R_{p})$. In particular, we deduce that in bounded chord-arc domains (resp. two-sided chord-arc domains) there exists $p_0 \in (1,2+\ve)$ so that $(R_{p_0})$ (resp. $(\widetilde R_{p_0})$) is solvable. We also extend the results to unbounded domains with compact boundary and show that in two-sided corkscrew domains with $n$-Ahlfors-David regular boundaries the single layer potential operator is invertible  from $L^p(\pom)$ to the inhomogeneous Sobolev space $ W^{1,p}(\pom)$.  Finally, we provide a counterexample of a chord-arc domain $\Omega_0 \subset \R^{n+1}$, $n \geq 3$, so that $(\wt R_p)$ is not solvable for any  $p \in [1, \infty)$. 
\end{abstract}

%\begin{abstract}{french}
%****
%\end{abstract}
%\end{abstractpage}

\tableofcontents

\section{Introduction}\label{sec:intro}

In this paper we study the equivalence between solvability of the {$L^{p'}$-Dirichlet} and solvability of the $L^{p}$-regularity problem  for the Laplace operator in corkscrew domains with uniformly rectifiable boundaries. 
In particular, our main result  solves an old problem posed by  Kenig in 1991 in \cite[Problem 3.2.2]{Ke}, which was reintroduced  by Toro  at the ICM   in 2010 \cite[Question 2.5]{To}, and can be stated as follows:

\begin{prob}
In a bounded chord-arc domain $\Omega \subset \R^{n+1}$, $n \geq 2$, does there exist $p>1$ such that the regularity problem for the Laplacian with  boundary data in the Sobolev space $W^{1,p}(\pom)$ is solvable?\footnote{The result is known to hold for $n=1$. See the discussion on p.\! 7.} If so, are the layer potentials invertible in appropriate $L^p$ spaces for such $p>1$?
\end{prob}

Up to now, the problem above was only solved for Lipschitz domains,  by means of $L^2$ Rellich type inequalities, which simultaneously addressed the Neumann problem. These inequalities are not available in more general domains, such as chord-arc domains (even assuming quantitative connectedness
conditions both for the domain and its boundary). So it is a major problem in the area how to solve both the regularity and the Neumann problems in chord-arc and more general domains. In the current paper, we solve the case of the regularity problem\footnote{The Neumann problem in chord-arc or more general domains is still open.}, even in greater geometric generality than just chord-arc domains. 
 The main ideas used in this paper are {\em not} based in the developments 
of the last years  that have allowed to make substantial progress in problems connected with the $L^p$ solvability of the Dirichlet problem
(see \cite{AHMMT}, \cite{AHM3TV}, \cite{HMM}, \cite{HLMN}, \cite{MT}, for example). Instead,  our approach relies on new ideas, such as the introduction of an ``almost harmonic extension'' of the boundary Dirichlet data, which is the correct analogue of the Varopoulos' extension for one “smoothness level” up (associated with boundary functions in the $W^{1,p}$ Sobolev space instead of $L^p$ or $\BMO$).  Its construction is based on a delicate corona decomposition of the domain in mutually disjoint bounded starlike Lipschitz domains along with a ``small" buffer region, and the solvability of the regularity problem for the Laplacian in $L^p$  in such domains (proved by Dahlberg and Kenig \cite{DaKe}  in 1987).  
%This extension was the main ingredient that allowed  to run some duality arguments  to pass from the solvability of the $L^{p'}$-Dirichlet problem to the solvability of the $L^p$-regularity problem.  
It is worth noting that all duality methods that had previously appeared in the literature in connection to this problem fail in this general setting.  In fact,  for chord-arc domains,  David and Jerison \cite{DJe} had already proved that the  $L^{p'}$-Dirichlet problem for the Laplacian  is solvable  in 1990 but  even in  such domains the regularity problem remained open until now.

Let us remark that the Sobolev space  for which the problem was originally posed was the class of $L^p(\pom)$ functions whose tangential gradient is in $L^p(\pom)$ (denote it by $\wt W^{1,p}(\pom)$) and, in particular, Kenig referred to the paper of Semmes \cite{Se} for the study of such Sobolev spaces. It turns out that this is not the right space to solve the regularity problem  in so general domains. Indeed in Section \ref{sec:counterexample}, we show that there exists  a chord-arc domain $\Omega_0 \subset \R^{n+1}$, {$n \geq 3$,} such that for any $ p\in [1, \infty)$ there is a Lipschitz function $f$ on $\pom_0 $  so that the solution of the continuous Dirichlet  problem in $\Omega_0 $ with boundary data $f$  does not satisfy the regularity estimate \eqref{eq:main-est-reg2}, and thus, the regularity problem in terms of $\wt W^{1,p}(\pom_0 )$  is not solvable. Instead, we solve the problem for boundary data in the so-called Haj\l asz Sobolev space $W^{1,p}(\pom)$ (see Section \ref{secsobolev} for the precise definition of this space). We would like to highlight that, for example, in {\em two-sided} chord-arc domains, we have\begin{equation}\label{eq:comp-norm}
\|f\|_{\dot  W^{1,p}(\pom)} \approx \|\nabla_t f\|_{L^{p}(\pom)}, \,\,\quad \textup{for any}\,f\in \textup{Lip}(\pom),
\end{equation}
{where $\|\cdot\|_{\dot  W^{1,p}(\pom)}$ is the associated  Haj\l asz seminorm and $\nabla_t$ stands for the tangential gradient.}
 In fact, a more general result is true and in order for the $\lesssim$ part of \eqref{eq:comp-norm} to hold, it seems that one should impose scale invariant  connectivity conditions, {either from (certain) interior points in $\Omega$ and $\R^{n+1} \setminus \overline \Omega$  to boundary points (such as the two-sided local John condition), or a quantitative  connectivity condition of the boundary (such as a weak Poincar\'e inequality on $\pom$, which, in turn, implies quasi-convexity of $\pom$). 
 }

\vv

We introduce some definitions and notations.
Recall that a set $E\subset\R^{n+1}$ is called  $n$-AD-{\textit {regular}} (or just AD-regular or Ahlfors-David regular) if there exists some
constant $C_{0}>0$ such that
$$C_0^{-1}r^n\leq \HH^n(B(x,r)\cap E)\leq C_0\,r^n\quad \mbox{ for all $x\in E$ and $0<r\leq \diam(E)$.}$$
Following \cite{JeK2}, we say that $\Omega \subset \R^{n+1}$ satisfies the corkscrew condition, or that it is a corkscrew open set or domain if
there exists some $c>0$ such that for all
$x\in\pom$ and all $r\in(0, 2\diam(\Omega))$ there exists a ball $B\subset B(x,r)\cap\Omega$ such that
$r(B)\geq c\,r$. We say that $\Omega$ is two-sided corkscrew if both $\Omega$ and $\R^{n+1}\setminus
\overline\Omega$ satisfy the corkscrew condition.

\vv

Let $\Omega\subset\R^{n+1}$ be an open set and  set $\sigma:=\HH^n|_\pom$ to be its surface measure. For $\alpha>0$ and $x \in \pom$, we define the {\it cone with vertex $x$ and aperture $\alpha>0$} by
\begin{equation}\label{eqconealpha}
\gamma_\alpha(x)=\{ y \in \Omega: |x-y|<(1+\alpha)\dist(y, \pom)\}
\end{equation}
and the {\it non-tangential maximal function operator} of a measurable function $u:\Omega \to \R$ by
\begin{equation}\label{eqNalpha}
\NN_\alpha(u)(x):=\sup_{y \in \gamma_\alpha (x)} |u(y)|, \,\,x\in \pom.
\end{equation}
 If $\pom$ is AD-regular, it holds that  $\| \NN_\alpha(u)\|_{L^p(\sigma)} \approx_{\alpha,\beta} \| \NN_\beta(u)\|_{L^p(\sigma)}$ {for all $\alpha,\beta>0$} and so, from now on, we will only write $\NN$ dropping the dependence on the aperture. This is a well know result due to Fefferman and Stein  in $\rrn$ (see \cite[Lemma 1]{FS}). For the proof in the context of domains with AD-regular boundaries, see  \cite[Proposition 2.2]{HMT}. 
 
 \vv

We are now ready to state the definitions of the $L^p$-Dirichlet and the $L^p$-regularity problem in~$\Omega$:
\begin{itemize}
\item In a domain $\Omega  \subset \R^{n+1}$ with $n$-AD-regular boundary, we say that {\it the Dirichlet problem is solvable in $L^p$} for the Laplacian  (write $(D_{p})$ is solvable)  if there exists some constant $C_{D_p}>0$  such that, for any $f\in C_c(\pom)$,  the solution $u:\Omega\to\R$ of the continuous Dirichlet problem for the Laplacian in $\Omega$ with boundary data $f$\footnote{This is given as the harmonic function constructed by the Perron method,  say $H_f(x)$, which in domains with $n$-AD-regular boundary and for $f \in C_c(\pom)$, satisfies $\lim_{x \to \xi} H_f(x)=f(\xi)$ for every $\xi \in \pom$.  This is true in unbounded domains as well (see e.g.  \cite[Section 2.4]{AGMT}).  Nevertheless,  in bounded domains,  $H_f \in W^{1,2}(\om)$ (see e.g. \cite[Corollary 9.29]{HKM}).}  satisfies
\begin{equation}\label{eq:ntDirichlet}
\|\NN(u)\|_{L^p(\sigma)} \leq C_{D_p}\,\|f\|_{L^p(\sigma)}.
\end{equation}
%$$\|\NN(u)\|_{L^p(\sigma)} \leq C_{D_p}\,\|f\|_{L^p(\sigma)} \quad \text{and} \quad  \lim_{\gamma(x) \ni y \to x} u(y) \to f(x)\,\,\textup{for}\,\,\sigma\textup{-a.e.} \,\,x \in \pom.$$
\item Assume that $\Omega \subset \R^{n+1}$ is either a bounded domain or that it has unbounded boundary $\pom$.  If  $\pom$ is $n$-AD-regular,  then we say that {\it the regularity problem is solvable in $L^p$} for the Laplacian   (write $(R_{p})$ is solvable) if  there exists some constant $C_{R_p}>0$  such that, for any compactly supported  Lipschitz function 
$f:\pom\to\R$,  the solution $u:\Omega\to\R$ of the continuous
Dirichlet problem for the Laplacian in $\Omega$ with boundary data $f$ satisfies
\begin{equation}\label{eq:main-est-reg}
\|\NN(\nabla u)\|_{L^p(\sigma)} \leq C_{R_p}\|f\|_{\dot W^{1,p}(\sigma)}.
\end{equation}
%\begin{equation}\label{eq:main-est-reg}\|\NN(\nabla u)\|_{L^p(\sigma)} \leq C_{R_p}\|f\|_{\dot W^{1,p}(\sigma)} \,\,\,\, \text{and} \,\,\,  \lim_{\gamma(x) \ni y \to x} u(y) \to f(x)\,\,\textup{for}\,\,\sigma\textup{-a.e.} \,x \in \pom.\end{equation}
Here (and throughout all the paper) we denote by $W^{1,p}(\sigma)$ and $\dot W^{1,p}(\sigma)$ the Haj\l asz  Sobolev spaces defined on the metric measure space $(\pom, \sigma)$. The definition of $(R_p)$ can be extended so that the boundary data are functions in $W^{1,p}(\sigma)$ that are continuous on $\pom$ (see Theorem \ref{thrm:continuous-regularity}).
\end{itemize}
As discussed earlier, one may define the regularity problem in domains with locally rectifiable boundaries using the tangential derivative with respect to the boundary at a boundary point, which for Lipschitz functions exists for $\sigma$-a.e.\ point of the boundary. %We denote this derivative by $\nabla_t$.
 In that case the only difference in the definition of the regularity problem is that we ask for the estimate  
\begin{equation}\label{eq:main-est-reg2}
\|\NN(\nabla u)\|_{L^p(\sigma)} \leq \wt{C}_{R_p}\| \nabla_t f\|_{ L^{p}(\sigma)}
\end{equation}
and we  write that $(\wt R_p)$ is solvable for the Laplacian. This definition is customary in nicer domains such as Lipschitz domains. 

\vv

\begin{remark}\label{rem:epsilon0}
Recall that in bounded starlike Lipschitz domains, it was proved in \cite{DaKe} (see \cite{V} for $p=2$) that there exists $\ve \in (0,1)$ such that $(\wt R_p)$ is solvable for the Laplacian for any $1<p<2+\ve$ with $\ve$ and $\wt{C}_{R_p}$ depending only on dimension and the Lipschitz character of the domain. Therefore, one can find $\ve_0 \in (0,1)$ and $\wt{C}_{R_p}>0$ such that, in every starlike Lipschitz domain with the same  Lipschitz character, $(\wt R_p)$ is solvable for the Laplacian for any $1<p<2+\ve_0$  with the same constants $\wt{C}_{R_p}>0$.
\end{remark}

\vv

In the sequel, we will only work with the Laplacian and so, without any confusion,  we will  write that a boundary value problem is solvable without specifying the operator. We will also assume that  $n \geq 2$.

\vv

Next we wish to recall what means that a metric space $\Sigma$ supports Poincar\'e inequality. 
Given an interval $I\subset\R$, any continuous $\gamma:I \to \Sigma$ is called {\it path}. A path of finite length is called {\it rectifiable path}.
 Let $f: \Sigma \to \R$ be an arbitrary real-valued function. We say that a Borel measurable function $g: \Sigma \to \R$ is an {\it upper gradient} of $f$ if for all compact rectifiable paths $\gamma$ the following inequality holds:
\begin{equation}\label{eq:upper-grad}
|f(x)-f(y)| \leq \int_{\gamma} g \,d\HH^1,
\end{equation}
where $x, y\in\Sigma$ are the endpoints of the path.

We say that $(\Sigma,\sigma)$ supports a {\it weak} $(1,p)$-{\it Poincar\'e inequality} if there exist constants $C\geq 1$ and $\Lambda \geq 1$ so that for every ball $B$ centered at $\Sigma$ with radius $r(B) \in (0, \diam \Sigma)$ and  every  pair $(f,g)$, where $f \in L^1_{loc}(\sigma)$ and $g$ is an upper gradient of $f$, it holds
\begin{equation}\label{eq:Poincare}
 \avint_{B} \left| f(x) - m_{\sigma,B}(f)\right| \,d\sigma(x) \leq C r(B) \left( \avint_{\Lambda B} |g(x)|^p\,d\sigma(x) \right)^{1/p},
\end{equation}
where we denoted 
$m_{\sigma,B}(f)= \avint_B f(y) \,d\sigma(y).$
In the specific case that $\Sigma \subset \R^d$ is $n$-AD-regular and $\sigma=\HH^n|_\Sigma$,
 Azzam  \cite{Az}  {recently}  showed that if $\Sigma$ supports a weak $(1,n)$-Poincar\'e inequality, then $\Sigma$ is uniformly $n$-rectifiable. 

Let us now state our main result.% recalling that $\partial_*\Omega$ stands for the measure theoretic boundary of $\Omega$. 
\begin{theorem}\label{teomain}
Let $\Omega\subset\R^{n+1}$ be a bounded corkscrew domain with $n$-AD-regular boundary. %such that $\HH^n(\pom\setminus \partial_*\Omega)=0$. 
There exists $\ve_0>0$ (depending just on $n$) such that, given $p \in (1, 2+\ve_0)$, if $(D_{p'})$ is solvable (where $1/p+1/p'=1$), then $(R_p)$ is solvable. If, in addition, either $\pom$ admits a weak $(1,p)$-Poincar\'e inequality or $\Omega$ satisfies the two-sided local John condition, then $(\wt R_p)$ is solvable as well. 
\end{theorem}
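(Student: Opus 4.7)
I would first reduce the $(\wt R_p)$ part to the $(R_p)$ part. Under either the weak $(1,p)$-Poincar\'e hypothesis on $\pom$ or the two-sided local John hypothesis on $\Omega$, the norm equivalence $\|f\|_{\dot W^{1,p}(\sigma)}\approx \|\nabla_t f\|_{L^{p}(\sigma)}$ of \eqref{eq:comp-norm} is available for Lipschitz $f$, so the $L^p$ bound $\|\NN(\nabla u)\|_{L^p(\sigma)}\lesssim\|f\|_{\dot W^{1,p}(\sigma)}$ given by $(R_p)$ upgrades immediately to the $(\wt R_p)$ estimate \eqref{eq:main-est-reg2}. So the heart of the matter is the implication $(D_{p'})\Rightarrow(R_p)$ in the bare corkscrew/UR setting.

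\textbf{Three ingredients.} My strategy combines: (a) from $(D_{p'})$, the reverse H\"older condition $d\omega^X/d\sigma\in RH_p$ on the Poisson kernel, uniform in $X$, which is a standard consequence of $L^{p'}$-Dirichlet solvability in the corkscrew AD-regular setting; (b) a David--Semmes-type corona decomposition of $\pom$ into stopping-time trees $\mathcal{T}$ along which $\pom$ is $\varepsilon$-close to a Lipschitz graph, together with associated Lipschitz approximating subdomains $\Omega_{\mathcal{T}}\subset\Omega$ of uniform Lipschitz character---this decomposition is provided by the uniform rectifiability of $\pom$ and satisfies a Carleson packing condition; (c) the Dahlberg--Kenig--Verchota solvability of $(\wt R_p)$ in each $\Omega_{\mathcal{T}}$ for $p\in(1,2+\varepsilon_0)$, with uniform constants, as recalled in Remark \ref{rem:epsilon0}.

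\textbf{Main argument.} Given a Lipschitz function $f$ on $\pom$ with Hajłasz upper gradient $g\in L^p(\sigma)$ and $u$ its harmonic extension, I would localize $\NN(\nabla u)$ to the interior region of $\Omega$ lying over each tree $\mathcal{T}$, and dominate it using the $(\wt R_p)$ estimate in $\Omega_{\mathcal{T}}$ applied to the trace $u|_{\partial\Omega_{\mathcal{T}}}$. On the coherent part $\partial\Omega_{\mathcal{T}}\cap\pom$ the tangential gradient of this trace is controlled by $g$ via the Hajłasz defining inequality \eqref{eq:H-Sobolev}; on the ``stopping'' part, which sits in $\Omega$ at a Whitney distance from $\pom$, one applies interior Caccioppoli gradient bounds together with the $RH_p$ property from (a) to convert $|\nabla u|$ there into an integral of $u$---and hence of $f$---against harmonic measure on $\pom$. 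Summation over the trees is organized by the Carleson packing of (b), and the error contributions from the stopping boundaries are absorbed through a good-$\lambda$/self-improvement argument.

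\textbf{Main obstacle.} The delicate point is the last one: producing on each stopping boundary an estimate of $\nabla u$ that is tight enough for the sum over $\mathcal{T}$ to yield a clean $\|g\|_{L^p(\sigma)}$ bound without spoiling the sharp range $p<2+\varepsilon_0$ coming from (c). This is precisely where the reverse H\"older self-improvement of the Poisson kernel provided by $(D_{p'})$ must be matched against the packing constant from the corona decomposition; the small extra integrability gained from $RH_p$ has to be exchanged against the accumulation of errors over trees, and the whole argument must be arranged so that the gradient of $u$ on the interior stopping boundaries---where no boundary data of $f$ is available---is never estimated by more than one derivative worth of control coming from $g$.
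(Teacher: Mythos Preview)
Your reduction of $(\wt R_p)$ to $(R_p)$ via the norm equivalence \eqref{eq:comp-norm} is correct and matches the paper (this is Lemma~\ref{lemtec99}). The corona decomposition into Lipschitz subdomains you invoke in (b) is also the geometric backbone of the paper's argument (Section~\ref{secorona}). But the core step of your plan---localizing $\NN(\nabla u)$ by applying $(\wt R_p)$ in each $\Omega_{\mathcal T}$ to the trace of $u$ itself---has a genuine gap, and you correctly flag it as the main obstacle.

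The difficulty is circularity. Applying $(\wt R_p)$ in $\Omega_{\mathcal T}$ to $u|_{\partial\Omega_{\mathcal T}}$ requires $\|\nabla_{t}(u|_{\partial\Omega_{\mathcal T}})\|_{L^p}$, and on the interior (stopping) portion of $\partial\Omega_{\mathcal T}$ this is essentially $|\nabla u|$ at Whitney scale---precisely the object under estimation. Your suggested cure (Caccioppoli, $RH_p$, good-$\lambda$) does not break the loop: Caccioppoli converts $|\nabla u|$ into oscillation of $u$, but the oscillation of $u$ at an interior point is not controlled by a single derivative of $f$ through $RH_p$ alone; the Poisson-kernel reverse H\"older inequality gives $L^{p'}\to L^\infty$ type control of $u$, not of its increments in terms of the Haj\l asz gradient $g$. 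A self-improvement argument would need a small constant in front of the $\NN(\nabla u)$ term to absorb it, and nothing in the corona packing or $RH_p$ furnishes that smallness.

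The paper resolves this by \emph{not} tracing $u$ on the Lipschitz subdomains at all. It builds an auxiliary ``almost harmonic extension'' $v$ of $f$ (Section~\ref{sec:extension}): a Dorronsoro-type affine-approximation extension $\wt f$ is laid down on the buffer region, and in each $\Omega_R$ one takes the harmonic extension of $\wt f|_{\partial\Omega_R}$. The point is that the boundary data on $\partial\Omega_R$ are now \emph{explicit} functions of $f$, so $(\wt R_p)$ in $\Omega_R$ and the Carleson packing give directly $\|\CC(\Delta v)\|_{L^p(\sigma)}\lesssim\|\nabla_{H,p}f\|_{L^p(\sigma)}$ (Lemma~\ref{lemdeltap}). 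The solvability of $(D_{p'})$ then enters through a duality argument (Lemmas~\ref{lematec}--\ref{keylemma}): since $u-v\in W^{1,2}_0(\Omega)$, one has $\int_\Omega\nabla u\cdot\nabla\varphi=\int_\Omega\nabla v\cdot\nabla w$ for $w$ the harmonic extension of $\varphi$, and the Carleson bound on $\Delta v$ paired with $\|\NN w\|_{L^{p'}}\lesssim\|\varphi\|_{L^{p'}}$ yields the one-sided Rellich inequality $\|\partial_\nu u\|_{L^p(\sigma)}\lesssim\|\nabla_{H,p}f\|_{L^p(\sigma)}$. The non-tangential estimate for $\nabla u$ is then obtained not by summing over trees but via layer potentials: writing $u=\DD f-\cS(\partial_\nu u)$ (Lemma~\ref{lem6.1}) and invoking the known $L^p$ mapping properties of $\nabla\DD$ and $\nabla\cS$ on uniformly rectifiable boundaries (Lemma~\ref{lem6.2*}). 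In short, the missing idea in your plan is the almost-harmonic extension $v$, which replaces the unknown interior trace of $u$ by an explicitly controllable surrogate, together with the duality--layer-potential route that converts the Carleson control of $\Delta v$ into the desired $\NN(\nabla u)$ bound.
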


The two-sided local John condition is a connectivity property of $\Omega$ introduced in \cite{HMT} which is defined in Section \ref{subsecwhitney}. Remark also that, after the completion of this paper, 
in a subsequent joint work with Bruno Poggi \cite{MPT}, we realized that the condition  that $p \in (1, 2+\ve_0)$ can be eliminated and require only that $p\in (1,\infty)$. To do so, we just have to replace the corona decomposition of $\Omega$ into Lipschitz subdomains from Section \ref{secorona} by a decomposition into sufficiently flat Lipschitz subdomains, arguing as in \cite{MPT}.

Remark that the part of Theorem \ref{teomain} dealing with the  $(\wt R_p)$ solvability follows from the implication $(D_{p'})$ $\Rightarrow$  $(R_p)$ and the following result.

\begin{lemma}\label{lemtec99}
Let $\Omega\subset\R^{n+1}$ be a bounded domain with uniformly $n$-rectifiable boundary. 
Suppose either that 
\begin{itemize}
\item[(a)]  $\pom$ admits a weak $(1,p)$-Poincaré inequality,  or
\item[(b)] $\Omega$ satisfies the two sided local John condition.\footnote{In a recent work \cite{TaTo} of O. Tapiola and the second named author, which was motivated by the results of the present manuscript and completed after writing this paper,  it is proven that if a domain satisfies the two-sided local John condition and has $n$-AD-regular boundary, then it is a two-sided chord-arc domain. See Section \ref{subsecwhitney} for the precise definition. Moreover, in the same work is shown that the boundaries of such domains support a weak $(1,p)$-Poincar\'e inequality for any $p\geq1$.}.
\end{itemize} 
For any Lipschitz function $f:\pom\to\R$, we have
\begin{equation}\label{eqcomparp99}
\|\nabla_{H,p} f\|_{L^p(\sigma)} \approx \|\nabla_t f\|_{L^p(\sigma)}.
\end{equation}
\end{lemma}

\vv

The second result of our paper 
 extends the solvability of $(R_p)$ to general boundary data in the  Haj\l asz Sobolev space $W^{1,p}(\pom)\equiv W^{1,p}(\sigma)$ and shows the well-posedness of the problem in this space.

\begin{theorem}\label{thm:1.3}
Let  $p\in (1,\infty)$ and let $\Omega\subset\R^{n+1}$ be a bounded corkscrew domain with $n$-AD-regular boundary. %such that  $\HH^n(\pom\setminus \partial_*\Omega)=0$.
\begin{itemize}
\item[(a)] If $(R_p)$ is solvable, then for any $f \in  W^{1,p}(\pom)$ there exists a harmonic function $u$ in $\Omega$ such that $\|\NN(\nabla u)\|_{L^p(\sigma)} \leq C \|f\|_{\dot W^{1,p}(\pom)}$ and $u \to f$ non-tangentially $\sigma$-a.e. on $\pom$.

\item[(b)] Suppose that $\Omega$ satisfies also the weak local John condition.
Let $u:\Omega\to\R$ be a harmonic function which has a vanishing non-tangential limit for $\sigma$-a.e.\ $x\in\pom$ and such that $\|\NN(\nabla u)\|_{L^p(\sigma)}<\infty$. Then $u$ vanishes identically in $\Omega$.
\end{itemize}
\end{theorem}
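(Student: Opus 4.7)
For part (a), the plan is to approximate $f\in W^{1,p}(\partial\Omega)$ by Lipschitz data and pass to the limit in $(R_p)$. By density of $\mathrm{Lip}(\partial\Omega)$ in the Haj\l asz Sobolev space $W^{1,p}(\partial\Omega)$ (valid for $p>1$ on a doubling metric measure space), choose Lipschitz $f_k\to f$ in $W^{1,p}(\partial\Omega)$. Let $u_k$ denote the continuous Dirichlet solution with data $f_k$, which exists classically because $n$-AD-regularity of $\partial\Omega$ forces Wiener regularity of $\Omega$. Applying $(R_p)$ to the differences $u_k-u_j$ yields
\[
\|\mathcal{N}(\nabla u_k-\nabla u_j)\|_{L^p(\sigma)}\leq C\,\|f_k-f_j\|_{\dot W^{1,p}(\partial\Omega)}\longrightarrow 0.
\]
For any $x\in\Omega$ with $r:=\dist(x,\partial\Omega)$, AD-regularity implies that the set of $y\in\partial\Omega$ with $x\in\gamma_\alpha(y)$ has $\sigma$-measure $\gtrsim r^n$, yielding the interior pointwise bound $|\nabla u_k(x)-\nabla u_j(x)|\lesssim r^{-n/p}\|\mathcal{N}(\nabla u_k-\nabla u_j)\|_{L^p(\sigma)}$. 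Hence $\nabla u_k$ converges locally uniformly on compact subsets of $\Omega$ to a curl-free harmonic vector field $\nabla u_\infty$, unique up to an additive constant, and lower semicontinuity of $\|\mathcal{N}(\cdot)\|_{L^p(\sigma)}$ under pointwise convergence gives the target estimate $\|\mathcal{N}(\nabla u_\infty)\|_{L^p(\sigma)}\leq C\,\|f\|_{\dot W^{1,p}(\partial\Omega)}$.

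It remains to fix the constant and identify the boundary trace. By \Theorem{eomain} (applicable directly in the relevant range of exponents; for $p$ outside this range one combines it with Sobolev/Morrey embedding on the AD-regular space $\partial\Omega$ to reduce to a smaller exponent within range), $(D_{p'})$ is solvable, so $\omega^{x_0}\ll\sigma$ with Poisson kernel $k^{x_0}=d\omega^{x_0}/d\sigma$ in a reverse H\"older class. This makes the integral $\int f\,d\omega^{x_0}$ well-defined and provides the convergence $u_k(x_0)=\int f_k\,d\omega^{x_0}\to \int f\,d\omega^{x_0}$; normalizing $u_\infty$ by this value gives $u_\infty(x)=\int f\,d\omega^x$. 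Non-tangential convergence $u_\infty\to f$ $\sigma$-a.e.\ then follows from the Fatou theorem for Poisson integrals in corkscrew/$A_\infty$-harmonic-measure domains (combining the pointwise limit $u_k\to u_\infty$ in $\Omega$, the continuous convergence $u_k\to f_k$ at every boundary point, a subsequence along which $f_k\to f$ $\sigma$-a.e., and Lebesgue-point density for $f\in W^{1,p}(\partial\Omega)$).

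For part (b), assume the weak local John condition. This supplies rectifiable paths from a fixed corkscrew point $x_1\in\Omega$ to $\sigma$-a.e.\ boundary point $x\in\partial\Omega$, along which $|\nabla u|$ is controlled by $\mathcal{N}(\nabla u)(x)$. Integrating $\nabla u$ along such paths yields $\mathcal{N}(u)(x)\lesssim |u(x_1)|+\mathcal{N}(\nabla u)(x)$ for $\sigma$-a.e.\ $x\in\partial\Omega$, so $\mathcal{N}(u)\in L^p(\sigma)$. Noting that weak local John plus $n$-AD-regularity implies uniform rectifiability, \Theorem{eomain} gives $(D_{p'})$, whence $\omega^x\ll\sigma$ with Poisson kernel in a reverse H\"older class. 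The standard Hardy-class representation in this setting yields
\[
u(x)=\int_{\partial\Omega}u^*(y)\,d\omega^x(y),\qquad x\in\Omega,
\]
where $u^*$ denotes the non-tangential limit of $u$. Since $u^*=0$ $\sigma$-a.e.\ and $\omega^x$ is mutually absolutely continuous with $\sigma$, we conclude $u\equiv 0$.

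I expect the main obstacle to be the boundary-trace identification in part (a): passing from the $L^p$-Cauchy control of $\mathcal{N}(\nabla u_k-\nabla u_j)$ to $\sigma$-a.e.\ non-tangential convergence of $u_\infty$ to $f$. Rellich-type identities that handle this routinely in Lipschitz domains are unavailable here, so one must combine the $A_\infty$-property of harmonic measure (from \Theorem{eomain}) with a Fatou theorem for Poisson integrals and Lebesgue-point theory for Haj\l asz $W^{1,p}(\partial\Omega)$-functions in the doubling AD-regular setting, threading together three distinct limit procedures (interior uniform convergence, boundary continuous convergence, and a.e.\ convergence of the approximants).
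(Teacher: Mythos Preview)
Your approach to part (b) has a fundamental gap: the hypotheses of (b) do \emph{not} include solvability of $(R_p)$ --- only the corkscrew condition, $n$-AD-regular boundary, and the weak local John condition. You therefore cannot invoke any implication to obtain $(D_{p'})$: the theorem you cite (Theorem~\ref{teomain}) goes the other way, and the converse (Theorem~\ref{propoconverse}) still requires $(R_p)$ as input. Without $(D_{p'})$ the Poisson-integral representation you appeal to is unavailable. Moreover, your use of weak local John is imprecise: the condition furnishes, for each interior point $x$, non-tangential paths to a \emph{positive-fraction} subset $F_x$ of boundary near $x$ --- not paths from a fixed $x_1$ to $\sigma$-a.e.\ boundary point. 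Even granting such paths, the bound $\NN(u)(\xi)\lesssim|u(x_1)|+\NN(\nabla u)(\xi)$ does not follow, since a path from $x_1$ to $\xi$ need not pass through a given $y\in\gamma_\alpha(\xi)$.

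The paper's proof of (b) avoids $(D_{p'})$ entirely. For fixed $x\in\Omega$ and small $\delta$, one writes $u(x)=\int G(x,y)\,\Delta(u\varphi_\delta)(y)\,dy$ for a smooth cutoff $\varphi_\delta$ built from Whitney cubes of side $\geq\delta$, and splits into $I_1\sim\int G\,\nabla u\cdot\nabla\varphi_\delta$ and $I_2\sim\int G\,u\,\Delta\varphi_\delta$. H\"older continuity of $G(x,\cdot)$ at $\pom$ (from AD-regularity via the CDC) gives $G(x,y)\lesssim\delta^a$ on $\supp\nabla\varphi_\delta$. For $I_1$ one controls $|\nabla u|$ by $\NN(\nabla u)$ on associated boundary cubes. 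For $I_2$, weak local John is used \emph{correctly}: for each $y$ in the support there is a set $G_Q\subset\pom$ of measure $\approx\delta^n$ such that each $z\in G_Q$ is joined to $y$ by a short non-tangential path; integrating $\nabla u$ along such a path to a point where $u$ is nearly zero gives $|u(y)|\lesssim\delta(1+\NN(\nabla u)(z))$, and one then averages over $z\in G_Q$. Both terms are $O(\delta^a)$, whence $u(x)=0$.

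For part (a) your outline is broadly right, but besides the wrong citation there is a genuine issue in fixing the constant: under $(D_{p'})$ the Poisson kernel lies in $L^p(\sigma)$ (not $L^{p'}$), while $f_k\to f$ in $L^p$, so the pairing $\int(f_k-f)\,d\omega^{x_0}$ does not obviously converge for small $p$. The paper sidesteps this: it proves an $L^{p^*}(\Omega)$ bound on the solutions via a Sobolev-type extension of $f-f(0)$ to $\R^{n+1}$, which yields compactness of $\{u_j\}$ directly by Montel. For the non-tangential convergence, the paper first shows (by integrating $\nabla u$ along segments inside cones at tangent points) the quantitative trace estimate $\sup_{y\in\gamma_\beta(\xi)\cap B(\xi,\varepsilon)}|u(y)-u^\pm(\xi)|\lesssim\varepsilon\,\NN(\nabla u)(\xi)$, and then combines this with $\NN(\nabla(u_j-u))\to0$ a.e.\ along a subsequence, the interior convergence $u_j\to u$, and $f_j\to f$ a.e.\ to conclude $u^\pm(\xi)=f(\xi)$ directly --- no Fatou theorem for Poisson integrals is needed.
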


\vv
%Remark that  the aforementioned weak local John condition is a condition strictly weaker than  the local John condition. 

Above, $\|\cdot\|_{\dot  W^{1,p}(\pom)}$ stands for  the  Haj\l asz seminorm in $W^{1,p}(\pom)$.
For the definition of the aforementioned weak local John condition, see Section \ref{subsecwhitney}

Let us highlight that in Theorem \ref{lemunbounded} we  extend Theorems \ref{teomain} and \ref{thm:1.3} to the case of unbounded domains with compact boundary (the statements are modified accordingly).  We also deal with the question of invertibility of the  single layer potential in our paper. Recall that
this operator,  which plays an important role in the study of the Dirichlet and regularity problems, is defined by
$$
\cS f(x)= \int_\pom \EE(x-y) f(y) \,d\sigma(y), \,\,\quad x \in\Omega,
$$
where $\EE$ stands for the fundamental solution of the Laplacian.
We prove invertibility of the single layer potential  from $L^p(\pom)$ to $ W^{1,p}(\pom)$ in the next theorem.

\begin{theorem}\label{thm:1.4} \label{teo-invert}
Let $p\in (1,\infty)$ and let $\Omega\subset\R^{n+1}$ be a bounded two-sided corkscrew domain with $n$-AD-regular boundary such that $\R^{n+1} \setminus \overline{\Omega}$ is connected.
Suppose either that $\Omega$
satisfies the two-sided 
local John condition or that $\pom$ supports a weak $(1,p)$-Poincar\'e inequality.
If $(D_{p'})$ is solvable  for  both
 $\Omega$ and  $\R^{n+1}\setminus \overline\Omega$, then
$\cS: L^p(\pom)\to  W^{1,p}(\pom)$ is bounded and invertible.
\end{theorem}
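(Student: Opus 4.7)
The plan is to follow the classical layer-potential strategy of Verchota and Dahlberg--Kenig, adapted to the rough setting of this paper, by decomposing the proof into four steps: boundedness of $\cS:L^p(\pom)\to W^{1,p}(\pom)$, injectivity, the a priori bound $\|f\|_{L^p(\pom)}\lesssim \|\cS f\|_{W^{1,p}(\pom)}$ that gives closed range, and finally surjectivity.

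For boundedness, the $L^p\to L^p$ piece is a standard weakly singular / fractional-integration estimate on $n$-AD-regular sets. The two geometric hypotheses in the statement are precisely what is needed so that the Haj\l{}asz seminorm $\|\cdot\|_{\dot W^{1,p}(\pom)}$ is comparable to $\|\nabla_t\cdot\|_{L^p(\pom)}$ on Lipschitz functions (compare \eqref{eq:comp-norm}); one then bounds $\nabla_t \cS f$ directly, as it is a principal-value Calder\'on--Zygmund operator with a smooth odd kernel on a uniformly $n$-rectifiable set, hence bounded on $L^p(\sigma)$ by David's theorem. For injectivity, if $\cS f=0$ in $W^{1,p}(\pom)$ then $\cS f=0$ $\sigma$-a.e.\ on $\pom$; since $\cS f$ is harmonic both in $\Omega$ and in the connected exterior, decays at infinity, and has vanishing non-tangential trace, the uniqueness granted by the two-sided solvability of $(D_{p'})$ (the analogue of Theorem \ref{thm:1.3}(b) on both sides) forces $\cS f\equiv 0$ off $\pom$, so the jump relation $\partial_\nu^+\cS f-\partial_\nu^-\cS f=f$ yields $f=0$. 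The a priori bound then follows from this same jump identity: $\|f\|_{L^p(\sigma)}$ is controlled by the sum of the $L^p$-norms of the non-tangential maximal function of $\nabla \cS f$ taken from the interior and from the exterior, and applying Theorem \ref{teomain} to $\cS f$ in $\Omega$ and in $\R^{n+1}\setminus\overline\Omega$ (both allowed by symmetry of the hypotheses under passing to the exterior) controls each such norm by $\|\cS f\|_{\dot W^{1,p}(\pom)}$.

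Surjectivity, which I expect to be the main obstacle, proceeds via Green's representation. Given $g\in W^{1,p}(\pom)$, Theorem \ref{thm:1.3}(a) applied in $\Omega$ and in $\R^{n+1}\setminus\overline\Omega$ produces harmonic functions $u$ and $v$ with non-tangential trace $g$ from the respective sides and with $\|\NN(\nabla u)\|_{L^p(\sigma)}+\|\NN(\nabla v)\|_{L^p(\sigma)}\lesssim\|g\|_{\dot W^{1,p}(\pom)}$. Set $h:=\partial_\nu v-\partial_\nu u\in L^p(\pom)$, where $\partial_\nu u,\partial_\nu v$ are the non-tangential limits of the normal derivatives from the two sides. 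One then establishes the Green representations
\[
u=\cD g-\cS(\partial_\nu u)\ \text{in}\ \Omega,\qquad v=-\cD g+\cS(\partial_\nu v)\ \text{in}\ \R^{n+1}\setminus\overline\Omega,
\]
where $\cD$ denotes the harmonic double layer potential, together with the companion identities $\cD g=\cS(\partial_\nu v)$ in $\Omega$ and $\cD g=\cS(\partial_\nu u)$ in the exterior, obtained from Green's formula applied across $\pom$. Adding the two identities on each side yields $u=\cS h$ in $\Omega$ and $v=\cS h$ in the exterior, and taking non-tangential limits gives $\cS h=g$ on $\pom$.

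The technical heart of this last step is justifying the Green representation in the present low-regularity framework, where classical integration by parts is delicate. The plan is to exhaust each of $\Omega$ and $\R^{n+1}\setminus\overline\Omega$ by a family of smoother Whitney-type approximating subdomains on which Green's formula is standard, and then to pass to the limit, using the $L^p$-control of $\NN(\nabla u)$ and $\NN(\nabla v)$ and the existence of non-tangential traces of $\nabla u$ and $\nabla v$ furnished by $(R_p)$ in both domains to control the boundary integrals and justify the exchange of the limit with the layer-potential operators.
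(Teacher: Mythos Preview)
Your overall strategy matches the paper's (Lemma~\ref{lem-invert}): boundedness via uniform rectifiability, the a priori bound $\|f\|_{L^p}\lesssim\|\cS f\|_{W^{1,p}}$ from the jump identity $f=\partial_{\nu,+}\cS f-\partial_{\nu,-}\cS f$ combined with $(R_p)$ on both sides, and surjectivity via Green's representation. Two refinements are worth noting. First, ``applying Theorem~\ref{teomain} to $\cS f$'' in the exterior glosses over a real step: $\R^{n+1}\setminus\overline\Omega$ is unbounded and Theorem~\ref{teomain} is stated for bounded domains, so the paper first extends $(R_p)$ solvability and uniqueness there via a Kelvin transform (Theorem~\ref{lemunbounded}); moreover, identifying the harmonic function $\cS f$ with the $(R_p)$ solution on each side requires uniqueness (Lemma~\ref{lemuniqueness}), which in turn needs the weak local John condition, obtained from $(D_{p'})$ via \cite{AHMMT}. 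Second, your surjectivity argument is more laborious than necessary. Since $\cS$ already has closed range, it suffices to show its range is dense, i.e., that it contains $\mathrm{Lip}(\pom)$. For Lipschitz $f$ the continuous Dirichlet solutions $v^\pm$ are available, their weak normal derivatives $\partial_{\nu,\pm}v^\pm$ lie in $L^p$ by the one-sided Rellich inequality (Lemma~\ref{keylemma}, Remark~\ref{rem:8.3}), and the Green representations $v^\pm=\pm\cD f\mp\cS(\partial_{\nu,\pm}v^\pm)$ hold directly by the elementary Lemmas~\ref{lem6.1} and~\ref{lem6.1**}; taking nontangential limits on $\pom$ and using the double-layer jump $\cD_+f-\cD_-f=f$ yields $f=\cS(\partial_{\nu,-}v^--\partial_{\nu,+}v^+)$ with no approximation needed. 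This is precisely how the paper sidesteps the technical difficulty you flag in your last paragraph.
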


\vv

The last main result of the paper 
 goes in the converse direction. It shows that one can deduce  solvability of the Dirichlet problem
 from the regularity problem. The precise result is the following.

\begin{theorem}\label{propoconverse}
Let $\Omega\subset\R^{n+1}$ be a bounded domain with $n$-AD-regular boundary.    If $(R_p)$ is solvable  for some $p \in (1, \infty)$, then $(D_{p'})$ is solvable, where $1/p+1/p'=1$.
\end{theorem}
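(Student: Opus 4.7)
The plan is to establish the implication $(R_p)\Rightarrow(D_{p'})$ by a duality argument combined with a Green's-type integration by parts identity.  By the density of $\mathrm{Lip}(\pom)$ in $L^{p'}(\sigma)$ and the stability of the Perron--Wiener--Brelot solution under $L^{p'}$-approximation of the boundary data, it suffices to prove the estimate $\|\NN u\|_{L^{p'}(\sigma)} \lesssim \|f\|_{L^{p'}(\sigma)}$ for boundary data $f\in \mathrm{Lip}(\pom)$, with $u$ the continuous Dirichlet solution.  By $L^p$-$L^{p'}$ duality, fix a non-negative $g\in L^p(\sigma)$ with $\|g\|_{L^p(\sigma)}=1$; the task reduces to bounding $\int_\pom \NN u(x)\,g(x)\,d\sigma(x)$.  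For each $x\in \pom$ I would select, measurably, a non-tangential approach point $y_x\in \gamma_\alpha(x)$ with $|u(y_x)|\geq \tfrac12\NN u(x)$, further reducing the task to bounding $\int_\pom |u(y_x)|\,g(x)\,d\sigma(x)$.

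The crux is to construct an auxiliary Lipschitz function $h$ on $\pom$, associated to $g$ and to the selection $x\mapsto y_x$, satisfying $\|h\|_{\dot W^{1,p}(\sigma)}\lesssim \|g\|_{L^p(\sigma)}$, and a version of Green's second identity (pairing the Dirichlet solution $u$ with data $f$ against the ``regularity solution'' $v$ in $\Omega$ with boundary data $h$) which yields
\begin{equation*}
\int_\pom |u(y_x)|\,g(x)\,d\sigma(x) \;\lesssim\; \int_\pom f\cdot \partial_\nu v \,d\sigma.
\end{equation*}
Applying $(R_p)$ to $v$ then gives $\|\NN(\nabla v)\|_{L^p(\sigma)}\lesssim \|h\|_{\dot W^{1,p}(\sigma)}\lesssim 1$; since $|\partial_\nu v(z)|\leq \NN(\nabla v)(z)$ for $\sigma$-a.e.\ $z\in \pom$, H\"older's inequality in $(p,p')$ yields the required bound $\lesssim \|f\|_{L^{p'}(\sigma)}$, closing the argument.

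The main obstacle is twofold: the construction of the auxiliary Lipschitz function $h$ compatible with the measurable selection $x\mapsto y_x$ (morally a ``dual'' Lipschitz potential of the product of $g(x)\,d\sigma(x)$ and the harmonic measures $\omega^{y_x}$), and the rigorous justification of the Green's identity in the rough setting, where $\pom$ is only $n$-AD-regular and a classical outward normal derivative need not exist.  These I would handle by a dyadic decomposition of $\pom$ (for instance a David--Mattila system) together with approximation of $\Omega$ from within by smoother subdomains on which Green's identity is classical, then passing to the limit by means of the uniform $L^p$ gradient estimates furnished by $(R_p)$ and the non-tangential interpretation of $\partial_\nu v$ as an a.e.\ trace of $\nabla v$.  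The corkscrew condition and the $n$-AD-regularity of $\pom$ supply the geometric control needed for these approximation arguments, in particular to ensure that harmonic measures with corkscrew poles are well-behaved relative to $\sigma$ at the scales dictated by the selection $x\mapsto y_x$.
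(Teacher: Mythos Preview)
Your plan has a genuine gap at its core: the construction of the auxiliary Lipschitz function $h$ and the Green's identity step are left as wishes rather than arguments, and as stated they cannot work. If $u$ and $v$ are both harmonic in $\Omega$ with boundary data $f$ and $h$, Green's second identity yields $\int_{\pom} f\,\partial_\nu v\,d\sigma = \int_{\pom} h\,\partial_\nu u\,d\sigma$, not a bound of $\int_{\pom} |u(y_x)|\,g(x)\,d\sigma(x)$ by $\int_{\pom} f\,\partial_\nu v\,d\sigma$. To connect the former integral to a normal derivative you would need $v$ to solve a \emph{Poisson} problem whose source is the pushforward of $g\,d\sigma$ under $x\mapsto y_x$; such a $v$ vanishes identically on $\pom$ and is not the Dirichlet extension of any Lipschitz datum, so $(R_p)$ as formulated does not apply to it. Your description of $h$ as ``morally a dual Lipschitz potential'' does not resolve this, because every natural candidate (a Green potential of the interior measure) has trivial trace on $\pom$. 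You also never address absolute continuity of $\omega$ with respect to $\sigma$, which is needed before any density estimate can even be phrased.

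The paper takes a different and more direct route: it proves the reverse H\"older condition in Theorem~\ref{teo9.2}(c) by writing $G(y,x)=\EE(y-x)-\EE*\omega^x(y)$ and observing that, since $G(\cdot,x)$ vanishes on $\pom$, the boundary trace of $\EE*\omega^x$ is the explicit Lipschitz function $\EE(x-\cdot)$. One then applies $(R_p)$ to this concrete datum, whose Haj\l asz upper gradient $C|x-\cdot|^{-n}$ is computed by hand and lies in $L^p(\sigma)$ with norm $\approx \dist(x,\pom)^{-n/p'}$. This bounds $\|\NN(\nabla_1 G(\cdot,x))\|_{L^p(\sigma)}$, which in turn controls the truncated maximal function $\cM_{\sigma,0}\,\omega^x$ and hence the $L^p$ density of $\omega^x$; the equivalence of Theorem~\ref{teo9.2}(c) with $(D_{p'})$ (via Lemma~\ref{lemrev}) finishes. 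The key idea you are missing is that $(R_p)$ should be applied not to a hypothetical dual datum $h$ but to the fundamental solution $\EE(x-\cdot)$ itself, which is precisely the boundary value governing the Poisson kernel.
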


\vv

Some remarks are in order.
 \begin{itemize}
 \item[(i)] Recall that under the assumptions of  Theorem \ref{teomain},  solvability of  $(D_{p'})$ implies that $\pom$ is uniformly $n$-rectifiable (see \cite{HLMN} and \cite{MT}). 
 
 \item[(ii)] From Theorems \ref{teomain} and \ref{propoconverse}, for bounded corkscrew domains with $n$-AD-regular boundary and $1<p< 2+\ve_0$, we have
$(D_{p'}) \Leftrightarrow (R_p) $. It also holds $(\wt R_p)\Rightarrow (R_p)$, by the (easy) estimate \rf{eqfac71}. Under the additional assumption that $\Omega$ is two-sided local John or supports a $p$-Poincar\'e inequality, the converse implication $(R_p)\Rightarrow (\wt R_p)$ holds by Lemma \ref{lemtec99}.

 \item[(iii)]   Although our results are stated in bounded domains, the  constant $C_{R_p} $ (resp. $C_{D_p}$) we obtain in Theorem  \ref{teomain}  (resp. Theorem \ref{propoconverse}) is independent of the diameter of $\pom$. In fact, they only depend on $C_{D_p} $ (resp. $C_{R_p}$), the constants that appear in the definitions of AD-regularity and uniform rectifiability, the corkscrew  constants, {and other geometric parameters related to the two-sided local John condition or the supported weak Poincaré inequality when these conditions are required.}
 
%\item[(ii)]  In the case $1 \leq q \leq n$, the assumption of uniform $n$-rectifiability in case (b) of Theorem \ref{propoconverse} is superfluous in view of the result of Azzam \cite{Az}.
%\item[(ii)]   It is natural to impose some geometric conditions in Theorem \ref{propoconverse} like the
%ones in  (b). Otherwise, it may happen that the Haj\l asz Sobolev space $\dot W^{1.p}(\pom)$
%is so small that the solution of $(R_p)$ in terms of this space does not provide enough information to solve the Dirichlet problem.

\end{itemize}

 Combining  Theorems \ref{teomain} and  \ref{propoconverse} and
 the extrapolation  of solvability of $(D_{p'})$ (see Theorem \ref{teo9.2} and Remark
\ref{rem9.3} below),
we obtain the following extrapolation of solvability of the regularity problem:

\begin{corollary}\label{cor:extrapol}
Let $\Omega\subset\R^{n+1}$ be a bounded corkscrew domain with $n$-AD-regular boundary.
 Suppose that  $(R_p)$ 
 is solvable for some $p \in (1, 2+\ve_0)$. Then $(R_s)$  is solvable for all  $s \in (1,p+\ve_0']$, for some $\ve_0'>0$.
\end{corollary}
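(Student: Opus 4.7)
The plan is to chain together three ingredients already prepared in the paper: the converse implication (Theorem \ref{propoconverse}), the extrapolation theorem for the Dirichlet problem (Theorem \ref{teo9.2} together with Remark \ref{rem9.3}), and the forward direction (Theorem \ref{teomain}). No new analysis is needed; the corollary is essentially a bookkeeping statement about how the duality $(R_p)\Leftrightarrow(D_{p'})$ transports the Dirichlet self-improvement to the regularity side.

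First I would apply Theorem \ref{propoconverse} to the hypothesis, obtaining that $(D_{p'})$ is solvable with a constant $C_{D_{p'}}$ depending quantitatively on $C_{R_p}$ and on the geometric data of $\Omega$. Next I would invoke the Dirichlet extrapolation, which produces an $\eta>0$ (depending on $C_{D_{p'}}$ and on the AD-regularity and corkscrew constants) such that $(D_r)$ is solvable for every $r\in(p'-\eta,\infty)$. The large-$r$ half of this range follows from the standard $A_\infty$-type reformulation of Dirichlet solvability, while the genuine content of the extrapolation is that one can push $r$ slightly below $p'$.

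Dualizing the interval $(p'-\eta,\infty)$ yields that $(D_{s'})$ is solvable for every $s\in(1,p+\ve_0'')$, where $\ve_0''>0$ is determined by $\eta$ via $(p'-\eta)^{-1}+(p+\ve_0'')^{-1}=1$. Finally I would apply Theorem \ref{teomain} to each such $s$ to conclude that $(R_s)$ is solvable. Since Theorem \ref{teomain} requires $s\in(1,2+\ve_0)$, I would set $\ve_0':=\min\{\ve_0'',\,2+\ve_0-p\}$, which is strictly positive because the hypothesis $p<2+\ve_0$ leaves room; with this choice $(R_s)$ is solvable for every $s\in(1,p+\ve_0']$, as desired.

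The only point that demands care, more as a sanity check than an obstacle, is to verify that the Dirichlet extrapolation we cite really pushes $p'$ downward (equivalently, pushes $p$ upward), rather than merely upward. In the corkscrew/AD-regular setting, Theorem \ref{teo9.2} and Remark \ref{rem9.3} are stated precisely to provide this quantitative self-improvement, so one simply has to apply them with the correct exponent and track the dependence of $\eta$ on the data.
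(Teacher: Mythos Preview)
Your proposal is correct and follows exactly the route the paper indicates: pass from $(R_p)$ to $(D_{p'})$ via Theorem \ref{propoconverse}, use Theorem \ref{teo9.2} and Remark \ref{rem9.3} to extrapolate $(D_{p'})$ down to $(D_{p'-\eta})$ (and up to all larger exponents), then dualize and apply Theorem \ref{teomain} within the admissible range $s\in(1,2+\ve_0)$. Your care in setting $\ve_0'=\min\{\ve_0'',\,2+\ve_0-p\}$ to respect the hypothesis of Theorem \ref{teomain} is exactly the bookkeeping the paper leaves implicit.
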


Theorems \ref{teomain} and  \ref{propoconverse} in concert  with \cite[Theorem 1.3]{AHMMT} give the following geometric implications:

\begin{corollary}\label{cor:geom}
Let $\Omega\subset\R^{n+1}$ be a bounded corkscrew domain with $n$-AD-regular boundary
and let $\ve_0$ be as in Theorem \ref{teomain}.
\begin{itemize}
\item[(a)] If $\Omega$ has IBPCAD\footnote{IBPCAD stands for interior big pieces of chord-arc domains (see Subsection \ref{subsecwhitney} for the exact definition)}, %and $\HH^n(\pom\setminus \partial_*\Omega)=0$,
 then there exists {$p \in(1, 2+\ve_0)$} such that $(R_p)$ is solvable. 
\item[(b)]  If $\Omega$ has IBPCAD, %and $\HH^n(\pom\setminus \partial_*\Omega)=0$,
then there exists {$p \in(1, 2+\ve_0)$} such that if $\pom$ admits a weak $(1,p)$-Poincar\'e inequality, then both $(R_p)$ and $(\tilde R_p)$ are solvable. 
\item[(c)] If %$\pom$  is locally $n$-rectifiable $\HH^n(\pom\setminus \partial_*\Omega)=0$, and 
$(R_p)$ is solvable for some $p \in (1, \infty)$, then $\Omega$ has IBPCAD. 

\end{itemize}
\end{corollary}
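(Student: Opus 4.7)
The plan is to derive the three assertions by combining Theorems \ref{teomain} and \ref{propoconverse} of this paper with \cite[Theorem 3]{AHMMT}, which in our AD-regular corkscrew setting characterizes the IBPCAD property as solvability of $(D_q)$ for some (equivalently, for all sufficiently large) finite exponent $q$.

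Part (c) is immediate. If $(R_p)$ is solvable for some $p \in (1,\infty)$, then Theorem \ref{propoconverse} yields $(D_{p'})$ solvable for the conjugate exponent, and the AHMMT characterization then forces $\Omega$ to have IBPCAD.

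For (a), I run the implications in the opposite direction. By \cite[Theorem 3]{AHMMT} the IBPCAD hypothesis provides some finite $q$ for which $(D_q)$ is solvable. Using the upward stability of $L^q$-Dirichlet solvability---a standard consequence of H\"older's inequality at the level of the weak reverse H\"older inequality for harmonic measure (larger reverse H\"older exponent is strictly stronger, so $(D_q) \Rightarrow (D_s)$ for all $s \geq q$)---I may enlarge $q$ as much as necessary so that the dual exponent $p := q/(q-1)$ lies comfortably in $(1, 2+\ve_0)$. The first clause of Theorem \ref{teomain} now produces the desired solvability of $(R_p)$. Part (b) follows by the same chain, with the additional observation that the second clause of Theorem \ref{teomain} upgrades $(R_p)$ to $(\wt R_p)$ once the weak $(1,p)$-Poincar\'e inequality on $\pom$ is assumed.

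I anticipate no serious obstacle: the whole argument is essentially bookkeeping on the dual exponent. The only point demanding care is verifying that the exponent produced by \cite[Theorem 3]{AHMMT} can be taken in the range required by Theorem \ref{teomain}, and this is handled by the upward-stability remark above. I would also add a brief remark that, via Corollary \ref{cor:extrapol}, one may in fact take $p$ throughout an interval $(1, p_*)$ with $p_* > 1$ quantitative, but this is not needed for the statement.
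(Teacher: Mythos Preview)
Your argument is correct and is exactly what the paper intends: the corollary is stated as an immediate consequence of Theorems \ref{teomain} and \ref{propoconverse} combined with \cite[Theorem 3]{AHMMT}, and your careful handling of the exponent via upward stability of $(D_q)$ (which is implicit in Theorem \ref{teo9.2}) fills in the only nontrivial step.
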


%Let us recall here that in \cite{AHMMT}, it was given a  geometric characterization of open sets satisfying the corkscrew condition with AD-regular boundaries where one has solvability of the $L^p$-Dirichlet problem for the Laplacian. Those are exactly the domains that have IBPCAD. %Namely, the domains that the $L^p$-Dirichlet problem is solvable  domains are exactly  the ones that have uniformly $n$-rectifiable boundary and satisfy the weak local John condition, which is a big pieces version of the local John condition we defined above (see \cite[Definition 2.13]{AHMMT} for the detailed definition). 

\vv

Solvability of $(D_{p'})$ and  $(R_{p})$\footnote{We will only discuss  results related to Dirichlet and regularity problems and not  Neumann.} has been a subject of considerable research the last 45 years. {In  $\R^2$, already in 1936, Lavrentiev \cite{Lav} proved that in simply connected chord-arc domains, the harmonic measure is in $A_\infty(\sigma)$ and so there exists $p>1$ such that $(D_{p'})$, $1/p+1/p'=1$, is solvable for the Laplacian. Then Jerison and Kenig \cite{JeK3} showed the duality $(D_{p'}) \Leftrightarrow (\wt R_p) $ in such domains and so $(\wt R_p) $ is also solvable for the Laplacian (remark that in planar  simply connected chord-arc domains the comparability \rf{eqcomparp99} holds and thus $(\wt R_p)\Leftrightarrow (R_p)$).}
 Moreover, Jerison \cite{Je} proved that for every $p>1$, there exists a chord-arc domain such that $(D_{p'})$ is not solvable and thus, neither is $(R_{p})$. For more details we refer to the discussion in \cite[pp. 115-116]{Ke}. 
 In higher dimensions, the study of $(D_{p'})$ for the Laplacian in Lipschitz domains was initiated by Dahlberg \cite{Da1, Da2} who  proved that it is solvable for $p \geq 2$. He also showed that in $C^1$ domains the range extends to $1<p<\infty$. Subsequently, a big breakthrough was made by Jerison and Kenig \cite{JeK1} who obtained  $(D_{2})$ for elliptic operators in divergence form with smooth and $L^\infty$ coefficients. The importance of their proof, even for the Laplace operator, is that their method was based on a new identity, the so-called Rellich identity. This is a clever integration by parts argument that allows to compare the tangential and the normal derivative of the solution on the boundary in the following sense:
\begin{equation}\label{eq:2-sided rellich}
\| \partial_\nu u \|_{L^2(\pom) }    \approx \| \nabla_t u \|_{L^2(\pom)}.
\end{equation}
{In chord-arc domains, David and Jerison \cite{DJe} extended the work of Lavrentiev and, relying on a geometric construction, showed that harmonic measure is $A_\infty$ and thus $(D_{p'})$ is solvable for some $p >1$. Recently the authors along with Azzam, Hofmann, and Martell \cite{AHMMT} gave a  geometric characterization of the corkscrew open sets that have AD-regular boundaries in which $(D_{p'})$ is solvable for some $p>1$. Those domains were identified to be the ones that have IBPCAD.}

A common method to solve boundary value problems (especially regularity and Neumann) is by proving invertibility of the appropriate  layer potential operators.  In $C^1$ domains this had already been established by Fabes, Jodeit, and Rivi\`ere \cite{FJR} in $L^p(\pom)$ and $W^{1,p}(\pom)$ for all  $p \in (1, \infty)$, using Fredholm theory, which resulted to the solution of $(D_p)$ and $(R_p)$ for all $p>1$. In Lipschitz domains, Fredholm theory is not applicable as Fabes, Jodeit, and Lewis \cite{FJL} showed that the relevant operators need not be compact. Therefore, to overcome this difficulty in Lipschitz domains, new ideas were required. Indeed, Verchota  \cite{V} realized how to use  \eqref{eq:2-sided rellich} in order to show (among others) that   the tangential gradient of the single layer potential is invertible in $L^2(\pom)$  and the double layer in $\wt W^{1,2}(\pom)$ and $L^2(\pom)$ solving $(R_{2})$ and $(D_2)$ in Lipschitz domains. In fact, he also showed solvability of $(R_{p})$ for any $1<p \leq 2$. Using a different method, his results were extrapolated  to the optimal range of exponents by Dahlberg and Kenig showing that $(R_{p})$ and $(D_{p'})$ are solvable in Lipschitz domains for $1<p<2+\ve$, for some $\ve$ small depending on dimension and the Lipschitz character of the domain\footnote{The main novelty of that paper is the solution of the Neumann problem rather than the regularity.}, while Kenig and Shen \cite{KeSh} extended them even further in the case of H\"older continuous periodic coefficients. In regular Semmes-Kenig-Toro domains (see \cite[Definition 4.8]{HMT}),  invertibility of  layer potentials was demonstrated by Hofmann, Mitrea, and Taylor in \cite{HMT}. It is worth noting here that for every $p<2$, there exists a bounded Lipschitz domain such $(D_p)$  is not solvable  \cite[pp. 153-154]{Ke1}.

Concerning the duality between $(R_p)$ and $(D_{p'})$, in Lipschitz domains for the Laplacian, Verchota established   $(R_p)  \Leftrightarrow (D_{p'})$ reducing matters to an $L^{p'}$ estimate on the boundary of the so-called harmonic conjugate system. For  real equations, Kenig and Pipher \cite{KP} showed  that $(R_p)$ implies  $ (D_{p'})$, $p \in (1, \infty)$, while Dindo\v{s} and Kirsch \cite{DiK} obtained the endpoint case where  the regularity data are in the Sobolev-Hardy space $H^{1,1}(\pom)$ and the Dirichlet data in $BMO(\pom)$.  For the equivalence between solvability of  $(R_p)$ and solvability of $(D_{p'}) $ for real symmetric constant coefficient systems,  we refer to  Kilty and Shen \cite{KiSh, Sh},   while,  in $\R^{n+1}_+$ and for  elliptic equations  with $t$-independent and $L^\infty$ complex coefficients under the assumption that the De Giorgi/Nash/Moser estimates hold,  we refer to  Hofmann, Kenig, Mayboroda, and Pipher \cite{HKMP} for the range  $p \in (1,2+\ve)$,  and to Auscher and the first author of this paper \cite{AMo} for the range  $ p \in (p_0, 2]$, for some $p_0<1$ determined by the  exponent in the assumed interior H\"older condition.  In the last paper,  the results in fact hold for systems, while extrapolation of solvability all the way to the endpoint was obtained as well.

{
\vv
The  strategy of Theorem \ref{teomain} (our main result) consists in constructing an ``almost harmonic extension'' $v$ of the boundary function $f \in \Lip(\pom)$ (the Dirichlet data) to $\Omega$, so that its distributional Laplacian $\Delta v$
satisfies an $L^p$-Carleson condition in $\Omega$, i.e., the Carleson functional defined in \eqref{eq:Carlesonfunctional} is in $L^p(\pom)$, and, moreover, roughly speaking, its normal derivative $\partial_\nu v$ in $\pom$
is controlled by the Haj\l asz tangential gradient of $f$. 
The construction of this almost orthogonal extension is performed by means of a corona type decomposition of $\Omega$ in terms of mutually disjoint interior Lipschitz subdomains together with some ``buffer'' regions. As far as we know this  decomposition is new and may be of independent interest\footnote{One may compare our construction with the one of Hofmann, Martell, and Mayboroda \cite{HMM}, which was done in terms of interior chord-arc domains that have bounded overlaps. Using Lipschitz graph domains, those arguments were improved by Bortz, Hoffman, Hofmann, Luna Garcia, and Nystr\"om in \cite{BHHLGN}.}. The two components  that allow us to show that the Laplacian of the extension $v$  satisfies an $L^p$-Carleson condition  are the following: a) an extension using the best affine approximation of boundary Lipschitz functions {\it \`a la} Dorronsoro in the buffer regions and the boundaries of the Lipschitz subdomains, and b) the application of the $L^p$ solvability of the regularity problem in the Lipschitz subdomains with boundary data the aforementioned extension. With the almost harmonic extension in hand,  we are able to apply a duality argument to show that
Dirichlet solvability in $L^{p'}$ implies the existence of an ``one-sided" Rellich type inequality that shows that
the normal derivative of the solution of the Dirichlet problem with boundary data $f$ is controlled by the Haj\l asz tangential gradient of $f$ in $L^p$ norm. Finally, we use arguments involving
layer potentials to obtain the desired non-tangential estimates that prove the $L^p$ solvability of the
regularity problem. We remark that we assume that the Dirichlet data $f$ are Lipschitz functions, however, there is no loss of generality in this assumption since Lipschitz functions are dense in the  Haj\l asz Sobolev space (see \cite{Hajlasz}) and the extension follows by  Theorems \ref{thrm:continuous-regularity}, \ref{thm:regularity-noncont}, and \ref{lemuniqueness}.

In the last step  described above we apply some techniques developed by Hofmann, Mitrea, and Taylor
\cite{HMT} which involve a tangential gradient defined by integration by parts (from now on, to be called the HMT tangential gradient). In our paper we show that the HMT tangential gradient coincides with the usual pointwise tangential gradient for all Lipschitz functions $f$, for almost all points with respect to surface measure up to a change of sign.
Concerning the problem $(\wt R_p)$ (the one in terms of the usual tangential gradient, instead of the Haj\l asz gradient),  it was shown in \cite{HMT} that, in domains satisfying the two-sided local John condition,  the norm in the Haj\l asz Sobolev space $W^{1,p}$ is comparable to the one defined using the HMT tangential gradient. This, in turn, equals the 
one in terms of pointwise tangential gradient, by our arguments. For domains satisfying a weak $(1,p)$-Poincaré 
inequality, we show that the Haj\l asz gradient is also comparable to the pointwise tangential gradient
in $L^p$ norm,  for any Lipschitz function. Those results are also new and may be of independent interest.

Our arguments for Theorems \ref{thm:1.3}, \ref{thm:1.4}, and
\ref{propoconverse} follow more classical arguments (although there are significant technical complications in the present framework). 
For example, in Theorem \ref{thm:1.4}
we deduce  invertibility of the single layer potential from $L^p(\sigma)$ to $ W^{1,p}(\pom)$
from the solvability $(\wt R_p)$ both in $\Omega$ and $\R^{n+1}\setminus \overline\Omega$, using arguments that are
inspired by the ones of Hofmann, Kenig, Mayboroda, and Pipher in \cite{HKMP}. Since $\R^{n+1}\setminus \overline\Omega$ is an unbounded domain with compact boundary, we use an appropriate version of the Kelvin transform  to deduce solbavility of $(R_p)$ in such domains from $(R_p)$ in bounded ones (see Theorem \ref{lemunbounded} for more details).
In connection with Theorem \ref{propoconverse}, we show that the $L^p$ solvability of the  regularity problem $(R_p)$ is equivalent to a suitable reverse H\"older inequality with exponent $p$ for harmonic measure, which, in turn, is equivalent to the $L^{p'}$ solvability of the Dirichlet problem. Similar arguments are  well known for the case of Lipschitz domains and they extend to the domains we are considering, although with additional technical difficulties.

\vv

The plan of our paper is the following. Section \ref{secprelim} contains some preliminary results which
will be used along the paper. Sections \ref{secorona}-\ref{secteomain} are devoted to the proof of Theorem \ref{teomain}, while Section 
\ref{sec7*} deals with Theorem \ref{thm:1.3}. On the other hand, Theorems \ref{thm:1.4} and
and Theorem \ref{propoconverse} 
 are proved in Sections \ref{sec8*} and \ref{secconverse}, respectively. The final Section \ref{sec:counterexample} deals with the construction of a chord-arc domain with connected boundary which for  the regularity problem $(\wt R_p)$ is not solvable for any $p\in [1,\infty)$. In the case that the boundary of $\Omega$ is not connected, it is immediate to check that $(\wt R_p)$ is not solvable: just write $\pom= E\cup F$, with $E$ and $F$ closed and disjoint, and consider a function $f$ which equals $1$ in $E$ and vanishes  in $F$. Then $\nabla_t f$ vanishes, but the gradient of the solution of the Dirichlet
problem is not identically $0$ because $f$ is not constant, and thus \eqref{eq:main-est-reg2} cannot hold. The counterexample in Section \ref{sec:counterexample} has  connected boundary and thus it is more interesting.
}

% **************************************************************************************************

\vv

\subsection*{Acknowledgements}
The first named author warmly thanks Jonas Azzam for several fruitful conversations and also for  his support and encouragement. 

We would also like to thank Svitlana Mayboroda for pushing  us to  remove an additional assumption we had in a previous version of our manuscript (namely, that the measure theoretic boundary coincides with the topological boundary apart from a set of measure zero).

We also wish to thank the referees of this paper for their careful reading and their suggestions, which have contributed to improve the readability of the paper. The arguments in Appendix \ref{appendix} are inspired by the suggestion of one of referees.

\vv

% **************************************************************************************************

\section{Preliminiaries} \label{secprelim}
% ********************************************************************************************
\vv

We will write $a\lesssim b$ if there is $C>0$ so that $a\leq Cb$ and $a\lesssim_{t} b$ if the constant $C$ depends on the parameter $t$. We write $a\approx b$ to mean $a\lesssim b\lesssim a$ and define $a\approx_{t}b$ similarly. 
 
\vv

\subsection{Rectifiability, measures and dyadic lattices}\label{subsec:dyadic}

 A set $E\subset \R^{n+1}$ is called $n$-{\textit {rectifiable}} if there are Lipschitz maps
$f_i:\R^n\to\R^{n+1}$, $i=1,2,\ldots$, such that 
$
\HH^n\Bigl(E\setminus\bigcup_i f_i(\R^n)\Bigr) = 0,
$
where $\HH^n$ stands for the $n$-dimensional Hausdorff measure. We will assume $\HH^n$ to be normalized so that it coincides with $n$-dimensional Lebesgue measure in
$\R^n$. Sometimes we will denote the $(n+1)$-dimensional Lebesgue measures in $\R^{n+1}$ by $m$, and integration with respect to $dx$ or $dy$ also means integration with respect to Lebesgue measure.

All measures in this paper are assumed to be Radon measures.
A measure $\mu$ in $\R^{n+1}$ is called 
 $n$-AD-{\textit {regular}} (or just AD-regular or Ahlfors-David regular) if there exists some
constant $C_{0}>0$ such that
$$C_0^{-1}r^n\leq \mu(B(x,r))\leq C_0\,r^n\quad \mbox{ for all $x\in
\supp\mu$ and $0<r\leq \diam(\supp\mu)$.}$$

The measure $\mu$ is  uniformly  $n$-rectifiable if it is 
$n$-AD-regular and
there exist constants $\theta, M >0$ such that for all $x \in \supp\mu$ and all $0<r\leq \diam(\supp\mu)$ 
there is a Lipschitz mapping $g$ from the ball $B_n(0,r)$ in $\R^{n}$ to $\R^{n+1}$ with $\text{Lip}(g) \leq M$ such that
$
\mu (B(x,r)\cap g(B_{n}(0,r)))\geq \theta r^{n}.$

A set $E\subset \R^{n+1}$ is $n$-AD-regular if $\HH^n|_E$ is $n$-AD-regular.
 Also, 
$E$ is uniformly $n$-rectifiable if $\HH^n|_E$ is uniformly $n$-rectifiable.

The notion of uniform rectifiability should be considered a quantitative version of rectifiability. It was introduced in the pioneering works \cite{DS1} and \cite{DS2} of David and Semmes, who were seeking a good geometric framework under which all singular integrals with odd and sufficiently smooth kernels are bounded in $L^2$.

Given an $n$-AD-regular measure $\mu$ in $\R^{n+1}$, we consider 
the dyadic lattice of ``cubes'' built by David and Semmes in \cite[Chapter 3 of Part I]{DS2}. The properties satisfied by $\DD_\mu$ are the following. 
Assume first, for simplicity, that $\diam(\supp\mu)=\infty$). Then for each $j\in\Z$ there exists a family $\DD_{\mu,j}$ of Borel subsets of $\supp\mu$ (the dyadic cubes of the $j$-th generation) such that:
\begin{itemize}
\item[$(a)$] each $\DD_{\mu,j}$ is a partition of $\supp\mu$, i.e.\ $\supp\mu=\bigcup_{Q\in \DD_{\mu,j}} Q$ and $Q\cap Q'=\varnothing$ whenever $Q,Q'\in\DD_{\mu,j}$ and
$Q\neq Q'$;
\item[$(b)$] if $Q\in\DD_{\mu,j}$ and $Q'\in\DD_{\mu,k}$ with $k\leq j$, then either $Q\subset Q'$ or $Q\cap Q'=\varnothing$;
\item[$(c)$] for all $j\in\Z$ and $Q\in\DD_{\mu,j}$, we have $2^{-j}\lesssim\diam(Q)\leq2^{-j}$ and $\mu(Q)\approx 2^{-jn}$;
\item[$(d)$] there exists $C>0$ such that, for all $j\in\Z$, $Q\in\DD_{\mu,j}$, and $0<\tau<1$,
\begin{equation}\label{small boundary condition}
\begin{split}
\mu\big(\{x\in Q:\, &\dist(x,\supp\mu\setminus Q)\leq\tau2^{-j}\}\big)\\&+\mu\big(\{x\in \supp\mu\setminus Q:\, \dist(x,Q)\leq\tau2^{-j}\}\big)\leq C\tau^{1/C}2^{-jn}.
\end{split}
\end{equation}
This property is usually called the {\em small boundaries condition}.
From (\ref{small boundary condition}), it follows that there is a point $x_Q\in Q$ (the center of $Q$) such that $\dist(x_Q,\supp\mu\setminus Q)\gtrsim 2^{-j}$ (see \cite[Lemma 3.5 of Part I]{DS2}).
\end{itemize}
We set $\DD_\mu:=\bigcup_{j\in\Z}\DD_{\mu,j}$. 

In case that $\diam(\supp\mu)<\infty$, the families $\DD_{\mu,j}$ are only defined for $j\geq j_0$, with
$2^{-j_0}\approx \diam(\supp\mu)$, and the same properties above hold for $\DD_\mu:=\bigcup_{j\geq j_0}\DD_{\mu,j}$.

Given a cube $Q\in\DD_{\mu,j}$, we say that its side length is $2^{-j}$, and we denote it by $\ell(Q)$. Notice that $\diam(Q)\leq\ell(Q)$. 
We also denote 
\begin{equation}\label{defbq}
B(Q):=B(x_Q,c_1\ell(Q)),\qquad B_Q = B(x_Q,\ell(Q)),
\end{equation}
where $c_1>0$ is some fix constant so that $B(Q)\cap\supp\mu\subset Q$, for all $Q\in\DD_\mu$. Clearly, we have $Q\subset B_Q$.

For $\lambda>1$, we write
$$\lambda Q = \bigl\{x\in \supp\mu:\, \dist(x,Q)\leq (\lambda-1)\,\ell(Q)\bigr\}.$$

%We denote $\delta_\Omega(x)=\dist(x,\partial\Omega)$.

The side length of a ``true cube'' $P\subset\R^{n+1}$ is also denoted by $\ell(P)$. On the other hand, given a ball $B\subset\R^{n+1}$, its radius is denoted by $r(B)$. For $\lambda>0$, the ball $\lambda B$ is the ball concentric with $B$ with radius $\lambda\,r(B)$.

Given $E\subset\R^{n+1}$, a ball $B$, and a hyperplane $L$, we denote
$$b\beta_{E}(B,L) =  \sup_{y\in E\cap B} \frac{\dist(y,L)}{r(B)} + 
\sup_{y\in L\cap B}\!\! \frac{\dist(x,E)}{r(B)} .$$
We set
$$b\beta_{E}(B) = \inf_L b\beta_{E}(B,L),$$
where the infimum is taken over all hyperplanes $L\subset\R^{n+1}$. For a $B=B(x,r)$, we also write
$b\beta_{E}(x,r,L)=b\beta_{E}(B,L)$ and $ b\beta_{E}(x,r)=b\beta_{E}(B).$

For $p\geq1$, a measure $\mu$, a ball $B$, and a hyperplane $L$, we set
$$\beta_{\mu,p}(B,L) = \left(\frac1{r(B)^n}\int_B \left(\frac{\dist(x,L)}{r(B)}\right)^p\,d\mu(x)\right)^{1/p}.$$
We define
$\beta_{\mu,p}(B) = \inf_L \beta_{\mu,p}(B,L),$
where the infimum is taken over all hyperplanes $L$.
For $B=B(x,r)$, we also write
$\beta_{\mu,p}(x,r,L) = \beta_{\mu,p}(B,L)$ and $\beta_{\mu,p}(x,r) = \beta_{\mu,p}(B).$
For a given cube $Q\in\DD_\mu$, we define:
\begin{align*}
\begin{array}{ll}
\beta_{\mu,p}(Q,L)  = \beta_{\mu,p}(B_Q,L)
,&\quad \beta_{\mu,p}(\lambda Q,L)= \beta_{\mu,p}(\lambda B_Q,L),\\
\quad \,\beta_{\mu,p}(Q) = \beta_{\mu,p}(B_Q),
& \quad \quad\,\beta_{\mu,p}(\lambda Q)= \beta_{\mu,p}(\lambda B_Q).
\end{array}
\end{align*}
Also, we define similarly
\begin{align*}
\begin{array}{ll}
b\beta_{\mu}(Q,L)  = b\beta_{\supp\mu}(B_Q,L)
,&\quad b\beta_{\mu}(\lambda Q,L)= b\beta_{\supp\mu}(\lambda B_Q,L),\\
\quad \,b\beta_{\mu}(Q) = b\beta_{\supp\mu}(B_Q),
& \quad \quad\,b\beta_{\mu}(\lambda Q)= b\beta_{\supp\mu}(\lambda B_Q).
\end{array}
\end{align*}

The coefficients $b\beta_E$ and $\beta_{\mu,p}$ above measure the goodness of the approximation of $E$ and
$\supp\mu$, respectively, in a ball $B$ by a hyperplane. They play an important role in the theory of
uniform $n$-rectifiability. See \cite{DS1}.

\vv
\subsection{Types of domains and the Whitney decomposition}\label{subsecwhitney}

Recall that a domain is a connected open set.
In the whole paper, $\Omega$ will be an open set in $\R^{n+1}$ (quite often a domain), with $n\geq 1$.
Very often we will denote the $n$-Hausdorff measure on $\pom$ by $\sigma$.

Given two points $x,x' \in \Omega$, and a pair of numbers $M,N\geq1$, 
an $(M,N)$-{\it Harnack chain connecting $x$ to $x'$},  is a chain of
open balls
$B_1,\dots,B_N \subset \Omega$, 
with $x\in B_1,\, x'\in B_N,$ $B_k\cap B_{k+1}\neq \varnothing$
and $M^{-1}\diam (B_k) \leq \dist (B_k,\partial\Omega)\leq M\diam (B_k).$
We say that $\Omega$ satisfies the {\it Harnack chain condition}
if there is a uniform constant $M$ such that for any two points $x,x'\in\Omega$,
there is an $(M,N)$-Harnack chain connecting them, with $N$ depending only on $M$ and the ratio
$|x-x'|/\left(\min\big(\delta_{\Omega}(x),\delta_{\Omega}(x')\big)\right)$, where $\delta_{\Omega}(x):=\dist(x, \pom)$.  

Following \cite{JeK2}, we say that a
domain $\Omega\subset \mathbb{R}^{n+1}$ is NTA ({\it Non-tangentially accessible})  if it satisfies the
Harnack chain condition, and if both $\Omega$ and
$\Omega_{\textup{ext}}:= \R^{n+1}\setminus \overline{\Omega}$ satisfy the corkscrew condition. We also say that a connected open set $\Omega \subset \R^{n+1}$
 is a CAD ({\it chord-arc domain}), if it is NTA, and if $\pom$ is $n$-AD-regular. Remark that  NTA domains satisfy the local John condition. Additionally, if a domain $\Omega$ and its exterior $\R^{n+1} \setminus \overline\Omega$  are CAD, then we say that $\Omega$ is a two-sided CAD.

The open set $\Omega  \subset \R^{n+1}$ is said  to satisfy the {\it  local John condition} if there is $\theta\in(0,1)$
such that the following holds: For all $x\in\pom$ and $r\in(0,2\diam(\Omega))$ 
there is $y\in B(x,r)\cap \Omega$ such that $B(y,\theta r)\subset \Omega$ with the property that for all
$z\in B(x,r)\cap\pom$ one can find a rectifiable {path} $\gamma_z:[0,1]\to\overline \Omega$ with length
at most $\theta^{-1}|x-y|$ such that 
\begin{equation}\label{eqwlj11}
\gamma_z(0)=z,\qquad \gamma_z(1)=y,\qquad \dist(\gamma_z(t),\pom) \geq \theta\,|\gamma_z(t)-z|
\quad\mbox{for all $t\in [0,1].$}
\end{equation}
If both $\Omega$ and $\R^{n+1}\setminus\Omega$ satisfy the local John condition, we say that $\Omega$ satisfies the {\it two-sided local John condition}.
%\footnote{In a recent work of O. Tapiola and the second named author, motivated by the results of the present manuscript,  it is shown that if a domain satisfies the two-sided local John condition and has $n$-AD-regular boundary, then it is a two-sided chord-arc domain. See Section \ref{subsecwhitney} for the precise definition. Moreover, they show that the boundaries of such domains support a weak $(1,p)$-Poincar\'e inequality for any $p\geq1$.}. It is known that any domain that satisfies the two-sided local John condition with $n$-AD-regular boundary is uniformly $n$-rectifiable.% and $\HH^n(\pom\setminus \partial_*\Omega)=0$,\mih{Shall we keep this comment here or remove it?} where $\partial_*\Omega$ stands for the measure theoretic boundary of $\Omega$ (see Subsection \ref{subsec:finite perimeter} for the definition of $\partial_*\Omega$ and e.g. \cite[Corollary 3.14]{HMT} for the proof).

Another interesting connectivity condition is the semiuniformity. We say that $\Omega  \subset \R^{n+1}$ is {\em semiuniform} if 
there  is $\theta\in(0,1)$ such that
every $x\in\pom$ and $y\in \Omega$ can be connected by a rectifiable {path} $\gamma:[0,1]\to\overline \Omega$ with length
at most $\theta^{-1}|x-y|$ such that 
$\gamma(0)=x$, $\gamma(1)=y$, and
$\min\{\ell(x, z), \ell(y, z)\} \leq \theta^{-1} \dist(z, \pom)$ for all $z \in\gamma$, where $\ell(a, b)$ denotes the length of the sub-arc in $\gamma$ between $a$ and $b$. 

%\ref{secjohn}.
An open set $\Omega  \subset \R^{n+1}$ is said  to satisfy the {\it  weak local John condition} if there are $\lambda,\theta\in(0,1)$ and $\Lambda\geq2$
such that the following holds: 
For every $x\in\Omega$
there is a Borel subset $F\subset B(x,\Lambda \dist(x,\pom))\cap \partial \Omega)$ with
$\sigma(F)\geq \lambda\,\sigma(B(x,\Lambda \dist(x,\pom))\cap \partial \Omega)$
such that
every $z\in F$ 
one can find a rectifiable {path} $\gamma_z:[0,1]\to\overline \Omega$ with length
at most $\theta^{-1}|x-z|$ such that \rf{eqwlj11} holds.

Remark that, for a domain $\Omega$,
$$ \text{NTA}\quad \Rightarrow \quad \text{uniform} \quad\Rightarrow \quad\text{semiuniform} \quad\Rightarrow 
\quad \left\{\begin{array}{l} \text{local John,} \\ \\ \text{weak local John.}
\end{array}\right.$$
It is easy to check that there are domains that satisfy the weak local John condition but not the local John condition. However, as far as we know, it is an open problem if the local John condition implies  the weak local John condition (assuming $\pom$ to be AD regular, say).

Following \cite{AHMMT}, we say that a connected open set $\Omega\subset \mathbb{R}^{n+1}$
has {\it interior big pieces of chord-arc domains} (IBPCAD) if there exist positive constants $\eta$ and $C$, 
and $N\geq 2$, such that for every $x\in \Omega$, with $\delta_{\Omega}(x)<\diam(\pom)$,
there is a  chord-arc domain $\Omega_x\subset \Omega$ such that:
\begin{itemize}
\item $x\in \Omega_x$.
\item $\dist(x,\pom_x) \geq \eta \delta_{\Omega}(x)$.
\item $\diam(\Omega_x) \leq C\delta_{\Omega}(x)$.
\item 
 The set $\Delta^N_x :=\pom\cap B(x,N\delta_{\Omega}(x)$ satisfies $\sigma(\pom_x\cap \Delta^N_x) \geq \,\eta\, \sigma(\Delta^N_x) \,\approx_N\, \eta\,\delta_{\Omega}(x)^n$.
\item The chord-arc constants of the domains $\Omega_x$ are uniform in $x$.
\end{itemize}
 {Domains with $n$-AD-regular boundaries that satisfy IBPCAD are uniformly $n$-rectifiable.}
In fact, as shown in \cite{AHMMT}, if $\pom$ is AD-regular ant $\Omega$ has interior corkscrews, then the following
are equivalent:
\begin{itemize}
\item $\Omega$ satisfies the  IBPCAD  condition.
\item $\pom$ is uniformly rectifiable and $\Omega$ satisfies the weak local John condition.
\item $(D_p)$ is solvable for $\Omega$ for some $p\in (1,\infty)$.
\end{itemize}

\vv

\vv
We 
consider the following Whitney decomposition of $\Omega$ (assuming $\Omega\neq \R^{n+1}$): we have a family $\WW(\Omega)$ of dyadic cubes in $\R^n$ with disjoint interiors such that
$\bigcup_{P\in\WW(\Omega)} P = \Omega,$
and moreover there are
 some constants $\Lambda>20$ and $D_0\geq1$ such the following holds for every $P \in\WW(\Omega)$:
\begin{itemize}
\item[(i)] $10P \subset \Omega$;
\item[(ii)] $\Lambda P \cap \partial\Omega \neq \varnothing$;
\item[(iii)] there are at most $D_0$ cubes $P'\in\WW(\Omega)$
such that $10P \cap 10P' \neq \varnothing$. Further, for such cubes $P'$, we have $\frac12\ell(P')\leq \ell(P)\leq 2\ell(P')$.
\end{itemize}
%Above, we denote by $\ell(Q)$ the side length of $Q$. 
From the properties (i) and (ii) it is clear that $\dist(P,\partial\Omega)\approx\ell(P)$. We assume that
the Whitney cubes are small enough so that
\begin{equation}\label{eqeq29}
\diam(P)< \frac1{20}\,\dist(P,\partial\Omega).
\end{equation}
The arguments to construct a Whitney decomposition satisfying the properties above are
standard.

Suppose that $\pom$ is $n$-AD-regular and consider the dyadic lattice $\DD_\sigma$ defined above.
Then, for each Whitney $P\in \WW(\Omega)$ there is some cube $Q\in\DD_\sigma$ such that $\ell(Q)=\ell(P)$
and $\dist(P,Q)\approx \ell(Q)$, with the implicit constant depending on the parameters of $\DD_\sigma$ and on the Whitney decomposition. We denote this by $Q=b(P)$ (``b'' stands for ``boundary''). Conversely,
given $Q\in\DD_\sigma$, we let 
\begin{equation}\label{eqwq00}
w(Q) = \bigcup_{P\in\WW(\Omega):Q=b(P)} P.
\end{equation}
It is immediate to check that $w(Q)$ is made up at most of a {uniformly} bounded number of cubes $P$, but it may happen
that $w(Q)=\varnothing$.

%If $\Omega$ satisfies the corkscrew condition, one might adjust the parameters of the Whitney 
%decomposition and of the definition of $b(Q)$ so that $w(Q)\neq \varnothing$ for all $Q\in\DD_\sigma$. However, we will need this fact.

\vv

\subsection{Sobolev spaces and the Poincaré inequality}\label{secsobolev}

If $\om \subset \rrn$ is open, 
we define the {\it (homogeneous)  Sobolev space} $\dot W^{1,2}(\om)$ to be the space of functions $u \in L^1_{\loc}(\om)$ such that $\nabla u \in L^2(\om)$ equipped with the seminorm $\| u \|_{\dot W^{1,2}(\om)} := \| \nabla u\|_{L^2(\om)}$. 
On the other hand, $W^{1,2}(\om)$ stands for the usual inhomogeneous Sobolev space equipped with the norm 
$\| u \|_{ W^{1,2}(\om)} := \|u\|_{L^2(\om)} +\| \nabla u\|_{L^2(\om)}$.
 Moreover,  if $2^*:= \frac{2(n+1)}{n-1}$, we define the {\it (inhomogeneous) Sobolev space} $Y^{1,2}(\om):=L^{2^*}(\om) \cap \dot W^{1,2}(\om)$ with norm $\| u \|_{Y^{1,2}(\om)}:=\| u \|_{L^{2^*}(\om)}+\| \nabla u\|_{L^2(\om)}$. We also define the Sobolev  space $Y^{1,2}_0(\om):= \overline{C^\infty_c(\om)}^{\| \cdot \|_{Y^{1,2}(\om)}}$.  It is clear that, by H\"older's inequality,  for any ball $B$ centered at the boundary, $W^{1,2}(B) \subset Y^{1.2}(B)$.

 Let $\Sigma$  be a metric space equipped with a doubling
measure $\sigma$ on $\Sigma$, which means that there is a uniform constant $C_\sigma\geq1$ such that $\sigma(B(x,2r))\leq C_\sigma\, \sigma(B(x,r))$, for all $x\in \Sigma$ and $ r>0$. We will now  define  the {\it Haj\l{}asz's Sobolev spaces}  $\dot{W}^{1,p}(\Sigma)$ and  ${W}^{1,p}(\Sigma)$, which were introduced in \cite{Hajlasz}. For  more information on those spaces and, in general, Sobolev spaces in metric measure  spaces, the reader may consult \cite{Heinonen}.

For a Borel function $f:\Sigma\to\R$, we say that a non-negative Borel function $g:\Sigma \to \R$ is a {\it Haj\l asz upper gradient of  $f$} if  
\begin{equation}\label{eq:H-Sobolev}
 |f(x)-f(y)| \leq |x-y| \,(g(x)+g(y))\quad \mbox{ for $\sigma$-a.e. $x, y \in \Sigma$.} 
\end{equation}
We denote the collection of all the Haj\l asz upper gradients of $f$ by $D(f)$.

For $p\geq1$, we denote by $\dot{W}^{1,p}(\Sigma)$ the space of Borel functions $f$ which have 
a Haj\l asz upper gradient in $L^p(\sigma)$, and we let $W^{1,p}(\Sigma)$ be the space of functions $f\in L^p(\sigma)$ which have a Haj\l asz upper gradient in $L^p(\sigma)$, i.e.,  $W^{1,p}(\Sigma)=  \dot W^{1,p}(\Sigma) \cap L^p(\sigma)$.
We  define the semi-norm (as it annihilates constants)
\begin{equation}\label{eqseminorm}
 \| f \|_{ \dot W^{1.p}(\Sigma)} = \inf_{g \in D(f)} \| g\|_{L^p(\Sigma)}
 \end{equation}
 and the scale-invariant norm
 \begin{equation}\label{eqnorm}
 \| f\|_{W^{1,p}(\Sigma)} = \diam(\Sigma)^{-1} \|f\|_{L^p(\Sigma)} +   \inf_{g \in D(f)} \| g\|_{L^p(\Sigma)}.
 \end{equation}
 Remark that, 
for any a metric space $\Sigma$, in the case $p\in (1,\infty)$, from the uniform convexity of $L^p(\sigma)$, one easily deduces
that the infimum in the definition of the norm $\|\cdot\|_{W^{1,p}(\Sigma)}$ and $\|\cdot\|_{\dot W^{1,p}(\Sigma)}$ in \eqref{eqseminorm} and \eqref{eqnorm} is attained and is unique. We denote by $\nabla_{H,p} f$ the function $g$ which attains the infimum.

\begin{proposition}\label{propopoin}
For every ball $B$ centered at $\Sigma$, every Borel function $f:\Sigma\to\R$ and every Haj\l asz upper
gradient $g$ of $f$,
$$\avint_{B} \left|f - m_{\sigma,B}(f)\right| \,d\sigma \leq C\, r(B) \,\avint_{B} g\,d\sigma.$$
\end{proposition}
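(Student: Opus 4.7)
The plan is to exploit the pointwise Haj\l asz inequality directly, bypassing any use of chaining arguments or upper gradients integrated along curves. The proof is a one-step averaging argument. First I would write
\[
m_{\sigma,B}(f) = \avint_B f(y)\,d\sigma(y),
\]
and for $\sigma$-a.e.\ $x \in B$ bring the constant inside the average to obtain
\[
\bigl|f(x) - m_{\sigma,B}(f)\bigr| \leq \avint_B |f(x)-f(y)|\,d\sigma(y).
\]

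Next I would insert the Haj\l asz upper gradient estimate \eqref{eq:H-Sobolev}: for $\sigma$-a.e.\ pair $(x,y) \in B\times B$ one has $|f(x)-f(y)|\leq |x-y|(g(x)+g(y)) \leq 2r(B)(g(x)+g(y))$, where the last step uses $x,y \in B$. Plugging this into the integrand gives
\[
\bigl|f(x) - m_{\sigma,B}(f)\bigr| \leq 2r(B)\,g(x) + 2r(B)\avint_B g(y)\,d\sigma(y),
\]
valid for $\sigma$-a.e.\ $x\in B$.

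Finally I would average this inequality over $x\in B$ with respect to $\sigma$, which gives
\[
\avint_B \bigl|f(x) - m_{\sigma,B}(f)\bigr|\,d\sigma(x) \leq 4\,r(B)\avint_B g\,d\sigma,
\]
so the conclusion holds with $C = 4$. Notice that this argument does not require $\Sigma$ to support any curve-based Poincar\'e inequality, and does not use doubling of $\sigma$; the Haj\l asz definition \eqref{eq:H-Sobolev} already builds in a pointwise comparison that Fubini turns directly into an $L^1$ Poincar\'e estimate. There is no real obstacle here; the only thing to be slightly careful about is that \eqref{eq:H-Sobolev} holds only off a $\sigma\otimes\sigma$-null set in $\Sigma\times\Sigma$, but Fubini guarantees that for $\sigma$-a.e.\ $x$ the inequality holds for $\sigma$-a.e.\ $y$, which is exactly what is needed to justify the first two displayed inequalities.
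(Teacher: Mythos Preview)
Your proof is correct and is essentially the same as the paper's: both use the triangle inequality to bound $|f(x)-m_{\sigma,B}(f)|$ by $\avint_B|f(x)-f(y)|\,d\sigma(y)$, then apply the Haj\l asz inequality with $|x-y|\leq 2r(B)$ and average, arriving at the constant $C=4$. The paper just writes the double average in one line rather than passing through your intermediate pointwise bound.
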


\begin{proof}
The proof is almost immediate:
\begin{align*}
\avint_{B} \left|f - m_{\sigma,B}(f)\right| \,d\sigma & \leq \avint_{B}\avint_{B} |f(x) - f(y)|\,d\sigma(x)\,d\sigma(y) \\
& \leq 2\,r(B)\,\avint_{B}\avint_{B} (g(x) + g(y))\,d\sigma(x)\,d\sigma(y) \leq 4\,r(B)\,\avint_{B} g\,d\sigma.
\end{align*}
\end{proof}
\vv

Observe now also that if $f:\Sigma\to\R$ is Lipschitz, then
\begin{equation}\label{eqlip12}
\|\nabla_{H,p}f\|_{L^\infty(\sigma)}\leq {{\rm Lip}(f)/2}.
\end{equation}
This follows easily from the fact that if $g$ is a Haj\l asz upper gradient for $f$, then 
$\min(g,{{\rm Lip}(f)/2})$ is also a Haj\l asz upper gradient.

\vv

\vv

\subsection{Finite perimeter, measure theoretic boundary, and tangential gradients} \label{subsec:finite perimeter}

An open set $\Omega\subset\R^{n+1}$ has {\it finite perimeter} if {the distributional gradient $\nabla \chi_\Omega$ of $\chi_\Omega$}  is a locally finite $\R^{n+1}$-valued measure.  From results of De Giorgi and Moser it follows that $\nabla \chi_\Omega=
-\nu_{\om}\,\HH^n_{\pom^*}$, where $\partial^*\Omega\subset\pom$ is the reduced boundary of $\Omega$ and
$|\nu_{\om}(x)|=1$ $\HH^n$-a.e.\ in $\partial^*\Omega$.  By  \cite[Theorem 5.15]{EG},  $\partial^*\Omega$ can be written, up to  a set of $\HH^n$-measure zero,  as a countable union of  compact sets $\{K_j\}_{j=1}^\infty$ where $K_j \subset S_j$ for a  $C^1$ hypersurface $S_j$  and $\nu_\om|_{S_j}$ is  normal to $S_j$.  Moreover, the following Green's formula is satisfied: for every $\vphi \in C^\infty_c(\R^{n+1};\R^{n+1})$, 
\begin{equation}\label{eq:Green-finiteperim}
\int_\om \dv \varphi(x)\,dx = \int_{\partial^*\om} \nu_{\om}(\xi) \cdot \vphi(\xi)\,\HH^n(\xi).
\end{equation}
More generally, given a unit vector $\nu_\om$ and $x \in \pom$, we define the (closed) half-spaces 
$$
H^\pm_{\nu_\om}(x)= \{ y \in \pom: \nu_\om \cdot (y-x) \geq 0\}.
$$
Then, for $x \in \partial^*\om$, it holds
\begin{equation}\label{eq:normal-halfspace}
\lim_{r \to 0} r^{-(n+1)} m(B(x,r)\cap \Omega^\pm \cap H^\pm_{\nu_\om}(x))>0,
\end{equation}
where $\om^+=\om$ and $\om^-=\R^{n+1} \setminus \om$ (see for instance \cite[p. 230]{EG}).  A unit vector $\nu_\om$  satisfying \eqref{eq:normal-halfspace} 
is called the {\it measure theoretic outer unit normal} to $\Omega$ at $x$ and we denote by $\partial_0 \om$ all the points of $\pom$ for which \eqref{eq:normal-halfspace} holds.  It is clear that $\partial^* \om \subset \partial_0 \om$.

The {\it measure theoretic boundary} $\partial_*\Omega$ consists of the points $x\in\pom$ such that
$$\limsup_{r\to0}\frac{m(B(x,r)\cap\Omega)}{r^{n+1}}>0\quad \mbox{ and }\quad
\limsup_{r\to0}\frac{m(B(x,r)\setminus \overline\Omega)}{r^{n+1}}>0.$$
 When $\Omega$ has finite perimeter, it holds that $\partial_*\Omega\subset \partial_0 \om \subset \partial^*\Omega\subset \pom$ and $\HH^n(\partial^*\Omega\setminus\partial_*\Omega)=0$. A good reference for those results is either the book of Evans and Gariepy \cite{EG} or the book of Maggi \cite{Maggi}.

It was proved by Federer (see \cite[p. 314]{Fed} and the references therein) that if $\om \subset \R^{n+1}$ is a bounded set such that $\HH^n(\pom)<\infty$ and $u \in C(\overline \om)\cap  \dot W^{1,1}(\om)$, then for any integer $j \in [1, n+1]$, it holds that
\begin{equation}\label{eq:Green-Federer}
\int_{\om} \partial_j u(x)\,dx=\int_{\partial_0 \om} e_j \cdot\nu_\om(\xi) \,u(\xi) \,d\sigma(\xi),
\end{equation}
where $e_j$ is the $j$-th standard basis vector of $\R^{n+1}$ and $\nu_\om$ is the measure theoretic outer unit normal to $\Omega$.
\vv

Given an $n$-rectifiable set $E\subset \R^{n+1}$, consider a point $x\in E$ such that the approximate tangent space $T_x E$ exists (this is just the $n$-dimensional vector space parallel to the approximate 
tangent to $E$ in $x$). We say that $f:\R^{n+1}\to\R$ is {\it tangentially differentiable} with respect to
$E$ at $x$ if the restriction of $f$ to $T_x E$ is differentiable at $x$, and we denote 
its {gradient} by $\nabla_{t,E}f(x)$, or $\nabla_{t}f(x)$ if there is no confusion about $E$.

If $f:E\to\R$ is Lipschitz, then we can consider an arbitrary Lipschitz extension of $f$ to $\R^{n+1}$, which we denote by $\wt f:\R^{n+1}\to\R$. Then we define $\nabla_{t,E}f(x) := \nabla_{t,E}\wt f(x)$ and, by  \cite[Theorem 11.4]{Maggi}, $\nabla_{t,E}f(x)$ exists
   for $\HH^n$-a.e. $x\in E$. Furthermore,
 the definition does not depend on the particular extension $\wt f$, for $\HH^n$-a.e.\ $x\in E$.
This follows easily from  \cite[Lemma 11.5]{Maggi}.
\vv

\begin{lemma}\label{lem2.2}
Let $E\subset\R^{n+1}$ be uniformly $n$-rectifiable and $f:\R^{n+1}\to\R$ be Lipschitz.
Then 
\begin{equation}\label{eqidtan}
|\nabla_{t,E}f(x)| =\limsup_{E\ni y\to x} \frac{|f(y) - f(x)|}{|y-x|}
\approx \limsup_{r\to0} \avint_{B(x,r)\cap E} \frac{|f(y) - f(x)|}{|y-x|}\,d\HH^n(y)
\end{equation}
 for $\HH^n$-a.e.\ $x\in E$.
\end{lemma}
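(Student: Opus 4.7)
The approach is to work at an $\HH^n$-a.e.\ point $x\in E$ where three properties hold simultaneously: \textup{(i)} the approximate tangent plane $T_xE$ exists, which holds by $n$-rectifiability of $E$; \textup{(ii)} for any Lipschitz extension $\wt f:\R^{n+1}\to\R$ of $f$, the restriction $\wt f|_{x+T_xE}$ is differentiable at $x$ with gradient $\nabla_{t,E}f(x)$, by Theorem 11.4 of Maggi; and \textup{(iii)} $E$ has a pointwise tangent at $x$, meaning $\dist(y-x,T_xE)=o(|y-x|)$ as $E\ni y\to x$. Properties \textup{(i)}--\textup{(ii)} are classical; \textup{(iii)} is the main obstacle, since it upgrades the measure-theoretic approximate tangent to a genuinely pointwise statement ruling out small-density spikes of $E$ that approach $x$ off-tangentially. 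I plan to obtain \textup{(iii)} from the decomposition $E=\bigcup_i\Gamma_i$ into Lipschitz graphs (up to an $\HH^n$-null set) furnished by rectifiability---each $\Gamma_i$ admits a pointwise tangent $\HH^n$-a.e.\ by Rademacher applied to its parametrization---combined with a Vitali-type covering argument that absorbs the null set of exceptional points where points of $E$ lying outside the principal graph through $x$ might cluster. Uniform rectifiability provides additional quantitative structure via big pieces of Lipschitz graphs, which facilitates the removal of the exceptional set.

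\textbf{First equality.} Fix such $x$ and, for $y\in E\cap B(x,r_0)$, set $y'=x+P_{T_xE}(y-x)\in x+T_xE$. By \textup{(iii)}, $|y-y'|=o(|y-x|)$ and $|y'-x|=|y-x|+o(|y-x|)$. Combining the Lipschitz bound $|f(y)-\wt f(y')|\leq\Lip(\wt f)\,|y-y'|$ with the tangential differentiability $|\wt f(y')-\wt f(x)|\leq|\nabla_{t,E}f(x)|\,|y'-x|+o(|y'-x|)$ yields
\[
|f(y)-f(x)|\leq |\nabla_{t,E}f(x)|\,|y-x|+o(|y-x|),
\]
so $\limsup_{E\ni y\to x}|f(y)-f(x)|/|y-x|\leq|\nabla_{t,E}f(x)|$. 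The matching lower bound comes from the density-one property of the approximate tangent plane: it produces sequences $y_k\in E$ with $y_k\to x$ and $(y_k-x)/|y_k-x|\to\nabla_{t,E}f(x)/|\nabla_{t,E}f(x)|$, and the same expansion then gives $|f(y_k)-f(x)|/|y_k-x|\to|\nabla_{t,E}f(x)|$.

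\textbf{Comparability with the average.} The $\lesssim$ direction follows immediately from the uniform pointwise estimate above, which gives $\limsup_{r\to 0}\avint_{B(x,r)\cap E}|f(y)-f(x)|/|y-x|\,d\HH^n(y)\leq |\nabla_{t,E}f(x)|$. For the matching $\gtrsim$ direction, I fix a spherical cap $\mathcal C\subset T_xE\cap\partial B(0,1)$ of positive $(n-1)$-dimensional Hausdorff measure, centered at $\nabla_{t,E}f(x)/|\nabla_{t,E}f(x)|$; the density-one approximate tangent plane ensures that the set of $y\in E\cap B(x,r)$ with $(y-x)/|y-x|$ within a fixed angle of $\mathcal C$ has $\HH^n$-measure $\gtrsim r^n$, and on this set the differentiability expansion forces $|f(y)-f(x)|/|y-x|\gtrsim|\nabla_{t,E}f(x)|$, delivering the desired lower bound on the average.
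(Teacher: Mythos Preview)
Your overall strategy---reduce to a point $x$ where a genuine (not merely approximate) tangent plane exists and then compare $f(y)$ with $f$ evaluated at the projection of $y$ onto $x+T_xE$---is exactly the paper's. But your justification of property \textup{(iii)} has a real gap. The Lipschitz-graph decomposition by itself cannot yield \textup{(iii)}: at a density point $x\in\Gamma_i$, points of $E\setminus\Gamma_i$ can still accumulate at $x$ off-tangentially while forming a set of density zero, and neither a ``Vitali-type covering argument'' nor big pieces of Lipschitz graphs addresses this; those tools are simply not the right ones here. What actually rules out such spikes is AD-regularity: if $y_k\in E$ satisfies $\dist(y_k-x,T_xE)\geq c|y_k-x|$, then lower AD-regularity forces $\HH^n\big(E\cap B(y_k,\tfrac{c}{4}|y_k-x|)\big)\gtrsim|y_k-x|^n$, and this entire ball lies outside a fixed cone around $T_xE$ inside $B(x,2|y_k-x|)$, contradicting the approximate-tangent property. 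The paper packages both this and the ``filling-up'' property you need for the converse inequalities into the single assumption $\lim_{r\to0}b\beta_E(x,r)=0$, which holds $\HH^n$-a.e.\ on any AD-regular rectifiable set; this is where the hypotheses of the lemma enter, not through any corona-type or big-pieces structure.

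Once \textup{(iii)} is properly established, your arguments for the first equality and for the comparability are correct and essentially parallel the paper's. The only stylistic difference is in the lower bound for the averaged quantity: the paper places small balls $B_{i,r}$ around $x+rv_i$ for an orthonormal basis $v_1,\ldots,v_n$ of $T_xE$ and uses AD-regularity to locate points of $E$ in each, giving $\sum_i|f(y_{i,r})-f(x)|/|y_{i,r}-x|\approx|\nabla_{t,E}f(x)|+o(1)$; your single-cap argument around the direction of $\nabla_{t,E}f(x)$ is an equally valid alternative.
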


\begin{proof}
Let $x\in E$ be a point of tangential differentiability of $f$ such that there exists an approximate tangent plane $L_x$ to $E$ at $x$ and 
$\lim_{r\to0} b\beta_E(x,r)=0$. We claim that \eqref{eqidtan} holds for $x$. 
To this end, notice that, by {the} definition of   tangential derivative,
\begin{equation}\label{eqtan74}
|\nabla_{t,E}f(x)| = \limsup_{\Pi_x \ni z\to x} \frac{|f(z) - f(x)|}{|z-x|},
\end{equation}
{where}  $\Pi_x$ is the orthogonal projection on $L_x$. Then we have
\begin{align*}
\limsup_{E\ni y\to x} \frac{|f(y) - f(x)|}{|y-x|} & \leq 
\limsup_{E\ni y\to x} \frac{|f(\Pi_x(y)) - f(x)|}{|y-x|} + \limsup_{E\ni y\to x} \frac{|f(\Pi_x(y)) - f(y)|}{|y-x|}\\
& \leq |\nabla_{t,E}f(x)| + {\rm Lip}(f) \,\limsup_{E\ni y\to x}\frac{|\Pi_x(y) - y|}{|y-x|}.
\end{align*}
From the condition that $\lim_{r\to0} b\beta_E(x,r)=0$ and the fact that $L_x$ is an approximate tangent hyperplane, if follows easily that $b\beta_E(x,r,L_x)\to 0$ {as} $r\to0$ and so the last limsup above vanishes. Thus,
$$\limsup_{E\ni y\to x} \frac{|f(y) - f(x)|}{|y-x|}  \leq |\nabla_{t,E}f(x)|.$$

The converse estimate is similar. For each $y\in L_x$, let $p(y)$ be the closest point from
$E$ to $y$. Then, we have $|y-p(y)|\lesssim b\beta(x,2|x-y|,L_x) |x-y|$, and thus
by \eqref{eqtan74}, arguing as {above,} 
\begin{align*}
|\nabla_{t,E}f(x)| & \leq  \limsup_{\Pi_x \ni y\to x} \frac{|f(p(y)) - f(x)|}{|y-x|} + 
\limsup_{\Pi_x \ni y\to x} \frac{|f(p(y)) - f(y)|}{|y-x|}.
\end{align*}
The second term satisfies
\begin{align*}
\limsup_{\Pi_x \ni y\to x} \frac{|f(p(y)) - f(y)|}{|y-x|} &\leq {\rm Lip}(f) 
 \limsup_{\Pi_x \ni y\to x} \frac{|p(y) - y|}{|y-x|} \\ &\lesssim  {\rm Lip}(f) \,{ \limsup_{\Pi_x \ni y\to x} } \,b\beta(x,2|x-y|,L_x) =0,
 \end{align*}
which shows that
$|\nabla_{t,E}f(x)|\leq \limsup_{E\ni z\to x} \frac{|f(z) - f(x)|}{|z-x|},$
so that the first identity in \eqref{eqidtan} holds.

Regarding the comparability on the right side of \eqref{eqidtan}, we use similar arguments.
%$$\limsup_{r\to0} \avint_{B(x,r)\cap E} \frac{|f(y) - f(x)|}{|y-x|}\,d\HH^n(y)
%\leq \limsup_{E\ni y\to x} \frac{|f(y) - f(x)|}{|y-x|}.$$
%For the converse estimate, 
First we choose an orthonormal basis $v_1,\ldots, v_n$ of $L_x-x$, and for each $i$ we let $B_i$ be a ball centered at $v_i$ with radius $1/2n$, 
so that for all points $z_i\in B_i\cap L_x-x$ and any vector $u\in L_x-x$,
$$\Big|\sum_{i=1}^n|z_i\cdot u| - \sum_{i=1}^n |v_i\cdot u|\Big| \leq n \,\frac1{2n}\,|u|
=\frac12\Big(\sum_i |v_i\cdot u|^2 \Big)^{1/2}
 \leq 
\frac12 \sum_{i=1}^n|v_i\cdot u|.
$$
In particular, this gives that
$$\sum_{i=1}^n|z_i\cdot u| \approx \sum_{i=1}^n |v_i\cdot u| \quad \mbox{for all $z_i\in B_i$, $i=1,\ldots, n$.}$$
 In this way, if we let $B_{i,r}$ be balls with radius $c(n)r$ centered at $x+r\,v_i$, for all
the points $z_{i,r}\in B_{i,r}\cap L_x$ we have 
\begin{equation}\label{eqak56}
|\nabla_{t,E} f(x)|\approx  \sum_{i=1}^n \Big|\frac{z_{i,r} -x}{|z_{i,r} - x|}\cdot \nabla_{t,E} f(x)\Big|
\approx \sum_{i=1}^n \frac{|f(z_{i,r}) - f(x)|}{|z_{i,r} - x|} + o(r).
\end{equation}

Assuming $b\beta_E(x,r,L_x)$ small enough, by the AD-regularity of $E$, we have
$\HH^n(\frac12 B_{i,r}\cap E)\approx r^n$ for each $i$. Consider arbitrary points $y_{i,r}\in \frac12B_{i,r}\cap E$. Since 
$$|f(y_{i,r}) - f(\Pi_x(y_{i,r})|\leq {\rm Lip}(f) \,|y_{i,r} - \Pi_x(y_{i,r})| \lesssim{\rm Lip}(f)\,
b\beta_E(x,r,L_x)\,r,$$
by \eqref{eqak56} we get
$$\sum_{i=1}^n \frac{|f(y_{i,r}) - f(x)|}{|y_{i,r} - x|} = \sum_{i=1}^n \frac{|f(\Pi_x(y_{i,r})) - f(x)|}{|y_{i,r} - x|} + o(r) \approx |\nabla_{t,E} f(x)| + o(r),$$
taking also into account that the points $\Pi_x(y_{i,r})$ belong to $L_x\cap B_{i,r}$ {and $|y_{i,r} - x| \approx r$}, 
and applying \eqref{eqak56} to the points $z_{i,r}=\Pi_x(y_{i,r})$, assuming again $b\beta_E(x,r,L_x)$ small enough.
Averaging over all $y_{i,r}\in \frac12B_{i,r}\cap E,$ and using that $\HH^n(\frac12 B_{i,r}\cap E)\approx r^n$, we conclude
$$|\nabla_{t,E} f(x)|\approx 
\avint_{B(x,2r)\cap E} 
\frac{|f(y) - f(x)|}{|y - x|} \,d\HH^n+ o(r).$$
\end{proof}

\vv

In \cite{HMT}, Hofmann, Mitrea, and Taylor have introduced some tangential derivatives and another tangential gradient which
are  well suited for arguments involving integration by parts and layer potentials under the assumption that $\HH^n(\pom\setminus \partial^*\Omega)=0$. We will adapt their definition to more general domains by simply restricting most things to the reduced boundary ${\partial^* \Omega}$.

Let $\Omega\subset\R^{n+1}$ be an open set with finite perimeter. For a $C_c^1$ function $\vphi:\R^{n+1}\to \R$ and $1\leq j,k\leq n+1$, one  defines the tangential derivatives on ${\partial^* \Omega}$ by
\begin{equation}\label{eqparts099}
 \partial_{t,j,k} \vphi := \nu_j\,(\partial_k \vphi)|_{\partial^* \Omega} - \nu_k\,(\partial_j \vphi)|_{\partial^* \Omega},
\end{equation}
where $\nu_i$, $i=1,\ldots,n+1$ are the components of the outer normal $\nu$. Remark that, by integration by parts, if and $\vphi,\psi$ are $C^1$ in a neighborhood of $\pom$, the same arguments as in \cite[p.  2676]{HMT} (using \eqref{eq:Green-finiteperim}) show that
\begin{equation}\label{eqparts09}
\int_{\partial^* \Omega} \partial_{t,j,k}\psi\,\vphi\,d\HH^n = \int_{\partial^* \Omega} \psi\,\partial_{t,k,j} \vphi\,d\HH^n.
\end{equation}

We define the Sobolev type space $L^p_1(\HH^n|_{\partial^* \Omega})$ {(see \cite[display (3.6.3)]{HMT})} as the subspace of functions in $L^p(\HH^n|_{\partial^* \Omega})$
for which there exists some constant $C(f)$ such that
\begin{equation}\label{eqlp1HMT}
\sum_{1\leq j,k\leq n+1} \left|\int_{\partial^* \Omega} f\, \partial_{t,k,j} \vphi\,d\HH^n\right|\leq C(f)\,\|
\vphi\|_{L^{p'}(\HH^n|_{\partial^* \Omega})}
\end{equation}
for all $\vphi\in C_c^\infty(\R^{n+1})$. By the Riesz representation theorem, for
each $f\in L^p_1(\HH^n|_{\partial^* \Omega})$ and each $j,k=1,\ldots,n+1$, there exists some function $h_{j,k}\in L^p(\HH^n|_{\partial^* \Omega})$ such that 
$$
\int_{\partial^* \Omega}  h_{j,k}\,\vphi\,d\HH^n= \int_{\partial^* \Omega}  f\, \partial_{t,k,j} \vphi\,d\HH^n$$
and we set 
$\partial_{t,j,k} f:=h_{j,k}$, so that this is coherent with \eqref{eqparts09}. It is easy to check that Lipschitz functions with compact support are contained in $L^p_1(\partial^* \Omega)$. 

We introduce two important integral operators whose kernels are associated with $\EE$, the fundamental solution for the Laplacian. The first one is the {\it single layer potential}
\begin{equation}\label{def-single}
\cS f(x)= \int_{\partial \Omega} \EE(x-y) f(y) \,d\sigma(y), \,\,\,x \in\Omega
\end{equation}
and the second one is the {\it double layer potential}
\begin{equation}\label{def-double}
\DD f(x)= \int_{\partial^*\om} \nu(y) \cdot \nabla_y \EE(x-y) f(y) \,d\sigma(y), \,\,\,x \in\Omega,
\end{equation}
where $\nu$ stands for the measure theoretic outer unit normal of $\Omega$. As $\nabla \cS$ is up to a multiplicative constant the Riesz transform,  if $\pom$ is uniformly $n$-rectifiable, by the $L^p(\sigma)$ boundedness of the Riesz 
transform \cite{David-surfaces} and Cotlar's inequality, we have
\begin{equation}\label{eq:ntboundssingle}
\|\NN(\nabla\cS f)\|_{L^p(\sigma)} \lesssim \|f\|_{L^p(\sigma)}.
\end{equation}
See \cite[Proposition 3.20]{HMT} for the precise details, for example.
%In fact, by duality, it is to see that since $ \nu \cdot \nabla \cS: L^p(\sigma) \to L^p(\HH^n|_{\partial^* \Omega})$ for any $1<p<\infty$, then $\DD:  L^p(\sigma) \to L^p(\sigma)$ and once again by Cotlar's inequality,\begin{equation}\label{eq:ntboundsgraddouble} \|\NN(\DD f)\|_{L^p(\sigma)} \lesssim \|f\|_{L^p(\sigma)}.\end{equation} 
Moreover, by the discussion above, if we argue as in the proof of  \cite[Proposition 3.37]{HMT},  it holds that
\begin{equation}\label{eq:ntboundsgraddouble}
\|\NN(\partial_j\DD f)\|_{L^p(\sigma)}^p \lesssim \sum_{k=1}^{n+1} \|\partial_{t, k, j} f\|_{L^p(\HH^n|_{\partial^* \Omega})}^p.
\end{equation}

\vv

Below we will need the following technical result regarding the tangential gradients.

\begin{lemma}\label{lemHMT-tan}
Let $\Omega\subset\R^{n+1}$ be a bounded corkscrew domain with uniformly $n$-rectifiable boundary. 
If $f$ is a function which is $C^1$ in a neighborhood of $\pom$,  then we have that for every $j, k \in \{1,2, \dots, n+1\}$,
 \begin{equation}\label{eqclau30}
\partial_{t,j,k} f(x) =\nu_j \,(\nabla_{t} f)_k(x) - \nu_k\,(\nabla_{t} f)_j(x)\quad\mbox{ for $\HH^n|_{\partial^* \om}$-a.e.\ $x\in\partial^* \om$,}
\end{equation}
where $(\nabla_{t} f)_k$ stands for the $k$-th component of $\nabla_t f \equiv \nabla_{t,\pom} f$.
\end{lemma}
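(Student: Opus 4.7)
The idea is a direct pointwise calculation at $\HH^n$-almost every $x\in\partial^*\Omega$, decomposing $\nabla f(x)$ into its components tangential and normal to the approximate tangent plane to $\partial\Omega$ at $x$.

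First I would fix a full-$\HH^n$-measure subset of $\partial^*\Omega$ on which three things hold simultaneously: (i) the approximate tangent $n$-plane $L_x$ to $\partial\Omega$ exists, which holds at $\HH^n$-a.e.\ point by $n$-rectifiability of $\partial\Omega$; (ii) the measure theoretic outer normal $\nu(x)$ is defined and is orthogonal to $L_x-x$, which follows from \eqref{eq:normal-halfspace} and the inclusion $\partial^*\Omega\subset \partial_0\Omega$ recalled in Subsection \ref{subsec:finite perimeter} together with standard properties of the reduced boundary; and (iii) the tangential gradient $\nabla_{t,\partial\Omega} f(x)$, defined in Subsection \ref{subsec:finite perimeter} via a Lipschitz extension $\wt f$ of $f|_{\partial\Omega}$, exists, as guaranteed by \cite[Theorem 11.4]{Maggi}. (Uniform rectifiability is not really needed in this lemma beyond plain $n$-rectifiability of $\partial\Omega$ and the fact that $\Omega$ has finite perimeter, which is granted here by AD-regularity and boundedness.)

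Next I would observe that since $f$ is $C^1$ in an open neighborhood of $\partial\Omega$, the Euclidean gradient $\nabla f(x)$ exists classically. By the chain rule, the restriction of $f$ to the affine plane $L_x$ is classically differentiable at $x$ with differential equal to the orthogonal projection $\Pi_{L_x}\nabla f(x)$ of $\nabla f(x)$ onto $L_x-x$. Taking $\wt f=f$ as the Lipschitz extension and invoking the independence of $\nabla_t f$ from the chosen extension (mentioned right after the citation of \cite[Lemma 11.5]{Maggi}), this projection coincides with $\nabla_{t,\partial\Omega} f(x)$. Consequently, at $\HH^n$-a.e.\ $x\in\partial^*\Omega$ one has the orthogonal decomposition
\begin{equation*}
\nabla f(x) = \nabla_t f(x) + \bigl(\nabla f(x)\cdot \nu(x)\bigr)\,\nu(x),\qquad \nabla_t f(x)\cdot \nu(x)=0.
\end{equation*}

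Finally I would substitute into the definition \eqref{eqparts099}: writing $(\partial_m f)(x)=(\nabla_t f)_m(x)+(\nabla f\cdot\nu)(x)\,\nu_m(x)$ for $m=j,k$, the two cross terms $\nu_j\nu_k(\nabla f\cdot\nu)$ cancel, and one reads off the identity \eqref{eqclau30} relating $\partial_{t,j,k}f$ to the components of $\nabla_t f$. I do not foresee a genuine obstacle here: the real content of the lemma is the identification of $\nabla_{t,\partial\Omega}f$ with the Euclidean projection $\Pi_{L_x}\nabla f$, and once this is in place \eqref{eqclau30} is a one-line linear-algebra verification using the antisymmetry of the operator $\partial_{t,j,k}$ in the pair of indices $(j,k)$.
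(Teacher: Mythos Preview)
Your proposal is correct and follows essentially the same route as the paper's proof. The paper's argument is a one-line compression of yours: it writes $\partial_{t,j,k}f=(\nu_j e_k-\nu_k e_j)\cdot\nabla f$, observes $(\nu_j e_k-\nu_k e_j)\cdot\nu=0$, and hence replaces $\nabla f$ by its tangential part $\nabla_t f$; this is precisely your orthogonal decomposition and cancellation of the $\nu_j\nu_k(\nabla f\cdot\nu)$ terms, just phrased differently.
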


\begin{proof}
As shown in \cite{HMT}, for a function $f$ which is $C^1$ in a neighborhood of $\pom$, we have the pointwise identity
$$\partial_{t,j,k} f = \nu_j\,(\partial_k f)|_\pom - \nu_k\,(\partial_j f)|_\pom
= (\nu_j\,e_k- \nu_k\,e_j) \cdot \nabla f \quad\mbox{ $\sigma$-a.e.\ in $\pom$},$$
where $e_i$, $i=1,\ldots,n+1$, is the standard orthonormal basis of $\R^{n+1}$.
Since
\begin{equation}\label{eqorto1}
(\nu_j\,e_k- \nu_k\,e_j) \cdot \nu = \nu_j\,\nu_k - \nu_k\,\nu_j=0,
\end{equation}
we have 
$$\partial_{t,j,k} f=(\nu_j\,e_k- \nu_k\,e_j) \cdot \nabla f = (\nu_j\,e_k- \nu_k\,e_j) \cdot \nabla_t f,$$
which implies \eqref{eqclau30}.
%Therefore, for each $j$,\begin{align*} \sum_k \nu_k\,\partial_{t,j,k} f& = \sum_k \nu_k\,(\nu_j\,e_k- \nu_k\,e_j) \cdot \nabla_t f  = \Big(\nu_j\sum_k \nu_k\,e_k - \sum_k \nu_k^2\,e_j\Big)\cdot \nabla_t f \\ &= \nu_j\,\nu\cdot \nabla_t f - e_j\cdot\nabla_t f = -e_j\cdot\nabla_t f,\end{align*}which is equivalent to \eqref{eqclau30}.
\end{proof}

\vv

% **************************************************************************************************

\section{The corona decomposition in terms of Lipschitz subdomains}\label{secorona}

 In this section we assume that $\Omega\subset\R^{n+1}$ is a bounded open set with uniformly rectifiable
boundary satisfying the corkscrew condition. Notice that if $\Omega$ 
satisfies the assumptions in Theorem \ref{teomain} ($n$-AD-regular boundary, corkscrew condition, and solvability of $(D_{p'})$), it follows that $\partial\Omega$ is uniformly rectifiable, by combining the results from \cite{Hofmann-Le}
and either \cite{HLMN} or \cite{MT}.

\vv

\subsection{The approximating Lipschitz graph}\label{subs:approxLipgraph}

In this subsection we describe how to associate an approximating Lipschitz graph to a cube $R\in \DD_\sigma$, assuming $b\beta_{\sigma}(k_1R)$ to be small enough for some big constant $k_1>2$ (recall that the coefficients $b\beta_\sigma$ are defined at the end of Section \ref{subsec:dyadic}).
We will follow 
the arguments in \cite[Chapters 7, 8, 12, 13, 14]{DS1} quite closely.
The first step consists in defining suitable stopping cubes.

Given $x\in\R^{n+1}$, we write $x= (x',x_{n+1})$.
For a given cube $R\in\DD_\sigma$, we denote by $L_R$ a best approximating hyperplane for
 $b\beta_\sigma(k_1R)$.
We also assume, without loss of generality, that 
$
L_R \,\,\textup{is the horizontal  hyperplane}\,\, \{x_{n+1}=0\}.
$ 
We denote by ${C}(R)$ the cylinder
$$\big\{x\in \R^{n+1}: |x'-(x_R)'|\leq 2^{-1/2}r(B(R)),\,
{|x_{n+1}-(x_R)_{n+1}| }\leq 2^{-1/2}r(B(R))\big\}.$$
Observe that $C(R)\subset B(R)$.

We fix $0<\ve\ll\delta\ll1$ to be chosen later (depending on the cokscrew condition and the uniform rectifiability constants), $k_1>2$, and we denote by $\cB$ or $\cB(\ve)$ the family of cubes $Q\in
\DD_\sigma$ such that 
$b\beta_\sigma(k_1Q) > \ve$.
We consider $R\in\DD_\sigma$ such that $b\beta_\sigma(k_1R)\leq \ve$. 
We let $\sss(R)$ be the family of maximal cubes $Q\in\DD_\sigma(R)$ such that at least one of the following holds:
\begin{itemize}
\item[(a)] {$Q\cap C(R) = \varnothing$.}
\item[(b)] $Q\in\cB(\ve)$, i.e., $b\beta_\sigma(k_1Q) > \ve$.\item[(c)] $\angle(L_Q,L_R)> \delta$, where $L_Q$, $L_R$ are best approximating hyperplanes for $\beta_{\sigma,\infty}(k_1Q)$ and $\beta_{\sigma,\infty}(k_1R)$, respectively, and {$\angle(L_Q,L_R)$ denotes the angle between $L_Q$ and $L_R$.}
\end{itemize}
We denote by $\tree(R)$ the family of cubes in $\DD_\sigma(R)$ which are not strictly contained
in any cube from $\sss(R)$. We also consider the function
$$d_R(x) = \inf_{Q\in\tree(R)} \big(\dist(x,Q) + \diam(Q)\big).$$
Notice that $d_R$ is $1$-Lipschitz.
Assuming $k_1$ big enough (but independent of $\ve$ and $\delta$) and arguing as in the proof of  \cite[Proposition 8.2]{DS1}, the following holds:

\begin{lemma}\label{lemgraf}
Denote by $\Pi_R$ the orthogonal projection on $L_R$.
There is a Lipschitz function $A:L_R \to L_R^\bot$ with slope at most $C\delta$ such that
$$\dist(x,(\Pi_R(x),A(\Pi_R(x)))) \leq C_1\ve\,d_R(x)\quad \mbox{ for all $x\in k_1R$.}$$
\end{lemma}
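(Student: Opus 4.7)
The plan is to construct $A$ by smoothly patching together the best approximating affine maps of the tree cubes, closely following the David--Semmes corona scheme in \cite[Chapters 8--12]{DS1}. Throughout, write $\Pi=\Pi_R$ and assume $\ve\ll \delta$, with $k_1$ large enough depending only on $n$.

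First, I would push the tree structure down to the base plane by setting
$$D(y)=\inf_{Q\in\tree(R)}\bigl(\dist(y,\Pi(B(Q)))+\ell(Q)\bigr),\qquad y\in L_R,$$
a $1$-Lipschitz analog of $d_R$ on $L_R$. Let $Z=\{y\in L_R\cap \Pi(k_1R):D(y)=0\}$ and perform a Whitney decomposition of $(L_R\cap \Pi(k_1R))\setminus Z$ into dyadic cubes $\{I_j\}$ with $\ell(I_j)\approx D(y)$ for $y\in 2I_j$. To each $I_j$ I would assign a cube $Q_j\in\tree(R)$ whose projected center is near the center of $I_j$ and with $\ell(Q_j)\approx \ell(I_j)$, passing to the parent in $\tree(R)$ if $Q_j$ is itself a stopping cube, so that in any case $b\beta_\sigma(k_1Q_j)\leq \ve$ and $\angle(L_{Q_j},L_R)\leq \delta$. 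Writing $L_{Q_j}$ as the graph over $L_R$ of an affine map $A_j\colon L_R\to L_R^\bot$, this yields $\|\nabla A_j\|_\infty\lesssim \delta$ and the $C\ve\ell(Q_j)$-proximity of $L_{Q_j}$ to $\supp\sigma\cap k_1Q_j$.

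Next, choose a smooth partition of unity $\{\vphi_j\}$ subordinate to $\{2I_j\}$ with $|\nabla \vphi_j|\lesssim \ell(I_j)^{-1}$ and define
$$A(y)=\sum_j \vphi_j(y)\,A_j(y)\quad \text{on } L_R\setminus Z,$$
extended continuously to $Z$. Using $\sum_j \nabla\vphi_j=0$ and selecting $j_0$ with $y\in 2I_{j_0}$, one writes $\nabla A=\sum_j \vphi_j\,\nabla A_j+\sum_j \nabla\vphi_j\,(A_j-A_{j_0})$; the first sum is bounded by $C\delta$, and on overlaps $|A_j-A_{j_0}|\lesssim \delta\,\ell(I_{j_0})$ because both planes have slope $\lesssim \delta$ and agree within $C\ve\ell(I_{j_0})$ on the common portion of $\supp\sigma$, so the second sum is also $\lesssim \delta$. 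For the distance bound, given $x\in k_1R$ with $d_R(x)>0$, pick $Q\in\tree(R)$ with $\dist(x,Q)+\ell(Q)\leq 2d_R(x)$; since $x\in \supp\sigma$ lies in $k_1Q$ and $b\beta_\sigma(k_1Q)\leq \ve$, the point $x$ is within $C\ve\,d_R(x)$ of $L_Q$. Moreover, the affine pieces $A_j$ active at $\Pi(x)$ correspond to cubes $Q_j$ of scale $\approx d_R(x)$ and nearby location, so each $L_{Q_j}$ is $C\ve\,d_R(x)$-close to the relevant portion of $\supp\sigma$, and hence $(\Pi(x),A(\Pi(x)))$ is within $C\ve\,d_R(x)$ of $\supp\sigma$ as well. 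The triangle inequality then yields $\dist(x,(\Pi(x),A(\Pi(x))))\leq C_1\ve\, d_R(x)$; the case $d_R(x)=0$ is handled by continuity of $A$ and the fact that $x$ lies on $L_R$.

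The main obstacle will be the bookkeeping that guarantees: (i) for each Whitney cube $I_j\subset L_R$ one can actually select a companion tree cube $Q_j$ of comparable scale and nearby location with the required non-stopping ancestor, small $\beta$, and controlled angle; and (ii) for overlapping Whitney cubes the associated tree cubes are geometrically compatible, so that the local planes are $\ve$-close to the same portion of $\supp\sigma$, allowing both the slope bound $C\delta$ and the improved distance bound $C_1\ve\,d_R(x)$. These compatibility checks are essentially the content of \cite[Chapters 8--12]{DS1}, but require additional care when $x\in k_1R\setminus R$ or when $\Pi(x)$ lies deep inside the support of $\sigma$ above $L_R$; both are handled by taking $k_1$ large and choosing the Whitney parameters appropriately.
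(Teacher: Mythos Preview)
Your sketch is essentially the David--Semmes construction from \cite[Chapter 8]{DS1}, and the paper does not give an independent proof: it simply states that the lemma follows ``arguing as in the proof of \cite[Proposition 8.2]{DS1}'' for $k_1$ large enough. So your approach and the paper's are the same, and your outline correctly captures the Whitney decomposition of $L_R\setminus Z$, the assignment of tree cubes $Q_j$ to Whitney cubes $I_j$, the partition-of-unity patching of the affine graphs $A_j$, and the compatibility estimates on overlaps.

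One small inaccuracy to fix: in the case $d_R(x)=0$ you write that ``$x$ lies on $L_R$'', but this is not what happens. The point $x$ lies on the \emph{graph of $A$}, not on the base plane $L_R$. Indeed, $d_R(x)=0$ means there is a sequence $Q_k\in\tree(R)$ with $x\in Q_k$ and $\ell(Q_k)\to 0$; since $b\beta_\sigma(k_1Q_k)\le\ve$, the planes $L_{Q_k}$ converge to a plane through $x$, and the construction of $A$ forces $(\Pi(x),A(\Pi(x)))=x$. The continuity argument you invoke is the right one, just with the correct target. Also, your function $D$ on $L_R$ plays the role of the paper's $D_R$ (defined just after the lemma as $D_R(x)=\inf_{y\in\Pi_R^{-1}(x)} d_R(y)$), and the comparability $D_R\approx d_R$ on $3B(R)$ that you implicitly use is exactly \cite[Lemma 8.21]{DS1}, recorded in the paper as \eqref{eqDR}.
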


Remark that in this lemma, and in the whole subsection, we assume that {$R$ is} as 
above, so that, in particular, $b\beta_\sigma(k_1R)\leq \ve$.

For all $y\in L_R$ we denote
$D_R(y)= \inf_{x\in\Pi_R^{-1}(y)}d_R(x).$
It is immediate to check that $D_R$ is also a $1$-Lipschitz function. Further, as shown in \cite[Lemma
8.21]{DS1}, there is some fixed constant $C_2$ such that
\begin{equation}\label{eqDR}
C_2^{-1}d_R(x) \leq D_R(\Pi_R(x)) \leq d_R(x)\quad \mbox{ for all $x\in {3B(R)}$.}
\end{equation}

We denote by $Z(R)$ the set of points $x\in R$ such that {$d_R(x)=0$.}
The following lemma is an immediate consequence of {the} results obtained in  \cite[Chapters 7, 12-14]{DS1}. 

\begin{lemma}\label{lempack1}
There are some constants $C_3(\ve,\delta)>0$ and $k_1\geq 2$ such that
\begin{align}\label{eqlempack1}
\sigma(R) \approx \sigma(C(R))& \leq 2\,\sigma(Z(R))\\ &\quad + 2\sum_{Q\in\sss(R)\cap\cB(\ve)} \!\sigma(Q) + C_3 \sum_{Q\in\tree(R)} \!\beta_{\sigma,1}(k_1Q)^2\,\sigma(Q).\nonumber
\end{align}
\end{lemma}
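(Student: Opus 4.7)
The plan is to reduce the claim to the arguments of Chapters 7 and 12--14 of \cite{DS1}, where analogous packing estimates are established en route to proving that uniform rectifiability is equivalent to the Carleson $\beta$-estimate. I would proceed in three stages. First, establish the comparability $\sigma(R)\approx \sigma(C(R))$: the upper bound is immediate from AD-regularity since $C(R)\subset B(R)$; for the lower bound, smallness of $b\beta_\sigma(k_1R)$ forces $\supp\sigma\cap k_1R$ to lie within vertical distance $\ve\, k_1\, r(B(R))$ of $L_R=\{x_{n+1}=0\}$, so together with the small boundary condition \eqref{small boundary condition} one finds $\sigma(R\setminus C(R))\lesssim (\ve+\ve^{1/C})\,\sigma(R)$, which is absorbed.

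Next, I would partition $\sss(R)=\sss_a(R)\cup \sss_b(R)\cup \sss_c(R)$ according to the first triggered stopping condition. The contribution of $\sss_b(R)$ is exactly the second term in \eqref{eqlempack1}. Cubes in $\sss_a(R)$ are maximal among those disjoint from $C(R)$, so by maximality their $\DD_\sigma$-parent meets $C(R)$, which places $Q$ in a neighborhood of $\partial C(R)$ of width $\approx \ell(Q)$; a standard Whitney-type summation combined with \eqref{small boundary condition} bounds their total measure by $C\,\sigma(R\setminus C(R))$, which is negligible by the first step.

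The heart of the argument is the treatment of $\sss_c(R)$ together with $Z(R)$. The key geometric input is that, when $Q'$ is a near-ancestor of $Q$ in $\tree(R)$ of comparable size, the best approximating hyperplanes satisfy
\[
\angle(L_Q,L_{Q'})\lesssim \beta_{\sigma,1}(k_1Q)+\beta_{\sigma,1}(k_1Q'),
\]
a standard consequence of the fact that two hyperplanes whose $\beta_1$-approximations to $\sigma$ are small on a common large scale must themselves be close. Chaining this bound along the ancestor sequence $Q=Q_0\subsetneq Q_1\subsetneq \dots\subsetneq Q_N=R$ (all in $\tree(R)$ by ancestor-closure of $\tree(R)$), the triangle inequality together with Cauchy--Schwarz gives, for every $Q\in\sss_c(R)$,
\[
\delta^2\,\lesssim\, \log\!\bigl(\ell(R)/\ell(Q)\bigr)\sum_{P\in\tree(R),\, Q\subset P\subseteq R}\beta_{\sigma,1}(k_1P)^2,
\]
which, after absorbing the logarithm by refining the chain into geometric scales as in \cite[Ch.~12]{DS1}, can be upgraded to
\[
\delta^2\,\lesssim\, \sum_{P\in\tree(R),\, Q\subset P\subseteq R}\beta_{\sigma,1}(k_1P)^2.
\]
Multiplying by $\sigma(Q)$, summing over $Q\in\sss_c(R)$, and switching the order of summation produces the third term with $C_3\sim \delta^{-2}$. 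Finally, points of $R$ that fail to lie in any cube of $\sss(R)$ are precisely those with $d_R=0$, hence in $Z(R)$; combining the three contributions with the first step yields \eqref{eqlempack1}.

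The main obstacle is the angle-telescoping step: one must justify the hyperplane comparison with $\beta_{\sigma,1}$ (the $L^1$ exponent is slightly less standard than $\beta_{\sigma,\infty}$ and requires AD-regularity to pass from average distance to pointwise closeness on a large subset), and the quadratic factor $\beta_{\sigma,1}^2$ emerges only after Cauchy--Schwarz, which is exactly what produces the $\delta^{-2}$ loss in $C_3$. Keeping track of the fact that all intermediate ancestors belong to $\tree(R)$ (and not to any cube of $\sss(R)$) is what makes the telescoping valid.
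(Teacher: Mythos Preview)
Your overall strategy---reduce to \cite{DS1} and split according to the stopping conditions---matches the paper's intent, but there is a genuine gap in your treatment of the angle-stopping cubes $\sss_c(R)$, and the paper handles this step differently.

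First, two minor points. The comparability $\sigma(R)\approx\sigma(C(R))$ needs only AD-regularity (both quantities are $\approx\ell(R)^n$); your invocation of the small boundary condition is unnecessary. Likewise, cubes in $\sss_a(R)$ are by definition disjoint from $C(R)$, so they do not contribute to $\sigma(C(R))$ at all and no separate bound on their mass is required.

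The real issue is your claimed pointwise upgrade
\[
\delta^2\;\lesssim\;\sum_{P\in\tree(R),\,Q\subset P\subseteq R}\beta_{\sigma,1}(k_1P)^2
\]
for each fixed $Q\in\sss_c(R)$. This inequality is false in general: take an AD-regular curve whose best-fit lines rotate by an amount $\sim\delta/N$ at each of $N$ consecutive dyadic scales, so that the cumulative rotation reaches $\delta$ only after $N$ generations; then $\sum_P\beta(k_1P)^2\sim N\cdot(\delta/N)^2=\delta^2/N\ll\delta^2$ for $N$ large. Your proposed remedy, ``refining the chain into geometric scales,'' cannot apply since the ancestor chain is already dyadic, and the Cauchy--Schwarz loss of $\sqrt{N}\sim\sqrt{\log(\ell(R)/\ell(Q))}$ is intrinsic to the telescoping route. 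After multiplying by $\sigma(Q)$ and summing, the logarithmic factor does not cancel.

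The paper avoids this by a dichotomy rather than a pointwise estimate. It introduces
\[
\cF_1=\Big\{R:\sigma\Big(\bigcup_{Q\in\sss(R)\cap(\mathrm c)}Q\Big)\geq\sigma(C(R))/2\Big\}.
\]
If $R\notin\cF_1$, then trivially $\tfrac12\sigma(C(R))\leq\sigma(Z(R))+\sum_{Q\in\sss(R)\cap\cB(\ve)}\sigma(Q)$ and \eqref{eqlempack1} follows. If $R\in\cF_1$, the paper invokes \cite[equation~(12.2)]{DS1} directly: for some $k>1$,
\[
\iint_X\beta_{\sigma,1}(x,kt)^2\,\frac{d\sigma(x)\,dt}{t}\;\gtrsim_\delta\;\sigma(R),
\qquad X=\big\{(x,t):x\in kR,\;k^{-1}d_R(x)\leq t\leq k\ell(R)\big\},
\]
and then observes that this continuous integral is dominated by $\sum_{Q\in\tree(R)}\beta_{\sigma,1}(k_1Q)^2\sigma(Q)$ once $k_1>k$ is chosen large enough. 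The estimate (12.2) is itself established in \cite[Ch.~12--14]{DS1} via the Lipschitz graph approximation of Lemma~\ref{lemgraf} and an $L^2$ argument on the gradient of the approximating function $A$---not by angle-telescoping along a single chain. So while your instinct to invoke \cite{DS1} is correct, the mechanism you sketch does not yield the inequality; you should instead cite the specific integral estimate and use the dichotomy, as the paper does.
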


The fact that $\sigma(R) \approx \sigma(C(R))$ is an immediate consequence of the AD-regularity of $\sigma$.
The lemma above is not stated explicitly in \cite{DS1}. However, this follows easily from the results in \cite{DS1}. Indeed,
denote
$$\cF_1= \Big\{R\in\DD_\sigma:\sigma\Big(\bigcup_{Q\in \sss(R)\cap {\rm{(c)}}} Q\Big)\geq \sigma(C(R))/2\Big\},$$
where $Q\in \sss(R)\cap \rm{(c)}$ means that $Q$ satisfies the condition (c) in the above definition of $\sss(R)$.
Notice that $\cF_1$ is very similar to the analogous set $\cF_1$ defined in \cite[p.39]{DS1}. A (harmless) difference is that we wrote
$\sigma(C(R))/2$ in the definition above, instead of $\sigma(R)/2$ as in \cite{DS1}. Assuming $\ve>0$ small enough (depending on $\delta$) in the definition of 
$\cB(\ve)$,
in equation (12.2) from \cite{DS1} (proved along the Chapters 12-14) it is shown that 
there exists some $k>1$ (independent of $\ve$ and $\delta$) such that if $R\in \cF_1$, then 
$$\iint_X  \beta_{\sigma,1}(x,kt)^2\,\frac{d\sigma(x)\,dt}t \gtrsim_\delta \sigma(R),$$
where
$$X= \big\{(x,t)\in \supp\sigma \times (0,+\infty): x\in kR,\,k^{-1}d_R(x)\leq t\leq k\ell(R)\big\}.$$
It is easy to check that, choosing $k_1>k$ large enough,
$$\iint_X  \beta_{\sigma,1}(x,t)^2\,\frac{d\sigma(x)\,dt}t \lesssim_k\sum_{Q\in \tree(R)}\beta_{\sigma,1}(k_1Q)^2\,\sigma(Q).$$
Hence, \rf{eqlempack1} holds when $R\in\cF_1$.

In the case $R\not\in\cF_1$, by the definition of $\sss(R)$ we have
\begin{align*}
\sigma(R) \approx \sigma(C(R))  &\leq \sigma(Z(R)\cap C(R)) \\ &\quad + \sum_{Q\in\sss(R)\cap\cB(\ve)}\! \sigma(Q\cap C(R))
+\! \sum_{Q\in\sss(R)\cap \rm{(c)}} \!\sigma(Q\cap C(R))
.
\end{align*}
Since the last sum does not exceed $\sigma(C(R))/2$, we deduce that
$$\frac12\,\sigma(C(R)) \leq \sigma(Z(R)\cap C(R)) + \!\sum_{Q\in\sss(R)\cap\cB(\ve)}\! \sigma(Q\cap C(R)),$$
and so \rf{eqlempack1} also holds.

\vv

% **************************************************************************************************

\subsection{The starlike Lipschitz subdomains $\Omega_R^\pm$} \label{subs:star-Lip}

 Abusing notation,  below we write 
 $$
 D_R(x')=D_R(x),\quad\textup{for}\,\,x=(x',x_{n+1}).
 $$

\begin{lemma}\label{lem333}
Let $$U_R = \{x\in C(R): x_{n+1}> A(x') + C_1C_2\ve D_R(x')\},$$
 $$V_R = \{x\in C(R): x_{n+1}< A(x') - C_1C_2\ve D_R(x')\},$$
 and
 $$W_R = \{x\in C(R): A(x') - C_1C_2\ve D_R(x') \leq x_{n+1}\leq A(x') + C_1C_2\ve D_R(x')\}.$$
  Then $\pom\cap C(R)\subset W_R$. Also,
$U_R$ is either contained in $\Omega$ or in  $\R^{n+1}\setminus
\overline\Omega$, and the same happens with $V_R$. Further, at least one of the sets $U_R$, $V_R$ is contained in $\Omega$.
\end{lemma}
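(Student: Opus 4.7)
The plan is to prove the three claims in order, using Lemma \ref{lemgraf}, the estimate \eqref{eqDR} for $D_R$, and the corkscrew condition applied at $x_R$.

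For $\pom\cap C(R)\subset W_R$: take $x\in \pom\cap C(R)$ and set $x'=\Pi_R(x)$. Since $C(R)\subset B(R)$ lies very close to $R$, Lemma \ref{lemgraf} applies to $x$. As $x=(x',x_{n+1})$ and $(x',A(x'))$ share the same projection on $L_R$, the conclusion of Lemma \ref{lemgraf} reads
$$|x_{n+1}-A(x')| = \dist\bigl(x,(x',A(x'))\bigr)\leq C_1\ve\, d_R(x).$$
Combining this with $d_R(x)\leq C_2 D_R(x)=C_2 D_R(x')$ from \eqref{eqDR}, I obtain $|x_{n+1}-A(x')|\leq C_1C_2\ve\, D_R(x')$, i.e., $x\in W_R$.

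For the dichotomy of $U_R$ and $V_R$: the key point is that, for $\ve$ and $\delta$ small enough, the two graphs $\phi_\pm(x'):=A(x')\pm C_1C_2\ve\, D_R(x')$ stay strictly inside the vertical range $[(x_R)_{n+1}-2^{-1/2}r(B(R)),(x_R)_{n+1}+2^{-1/2}r(B(R))]$ of $C(R)$ throughout its base. Indeed, $A$ has slope $\lesssim \delta$, $|A(\Pi_R(x_R))-(x_R)_{n+1}|\leq C_1\ve\, d_R(x_R)\lesssim \ve\, r(B(R))$ by Lemma \ref{lemgraf} at $x_R$, and $|(x_R)_{n+1}|\lesssim \ve\, r(B(R))$ since $x_R\in \supp\sigma$ is close to $L_R$ in view of $b\beta_\sigma(k_1R)\leq \ve$; hence $|A(x')|\lesssim(\ve+\delta)r(B(R))$ on the base of $C(R)$. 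Similarly, $D_R$ being $1$-Lipschitz together with $D_R(\Pi_R(x_R))\leq d_R(x_R)\leq \diam(R)$ gives $D_R(x')\lesssim r(B(R))$ there. Therefore $U_R$ (resp.\ $V_R$) is a non-empty open region lying above (resp.\ below) a continuous graph inside a product cylinder, and is in particular connected (any two of its points can be joined by a path that first goes vertically to a horizontal slice near the top/bottom of the cylinder and then across). Since $U_R\cup V_R\subset C(R)\setminus W_R$ is disjoint from $\pom$, and since $\R^{n+1}\setminus\pom=\Omega\sqcup(\R^{n+1}\setminus\overline\Omega)$ is a disjoint union of two open sets, each of $U_R$ and $V_R$ must lie entirely in exactly one of them.

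For the last claim, apply the corkscrew condition at $x_R\in\pom$ at scale $c_0\, r(B(R))$, with $c_0$ chosen small enough that $B(x_R,c_0 r(B(R)))\subset C(R)$. This yields a ball $B(y,\rho)\subset B(x_R,c_0 r(B(R)))\cap \Omega$ with $\rho\geq c\, c_0\, r(B(R))$, where $c$ is the corkscrew constant. At $y':=\Pi_R(y)$ the vertical thickness of $W_R$ equals $2C_1C_2\ve\, D_R(y')\leq 2C_1C_2\ve\, d_R(y)\lesssim \ve\, r(B(R))$, where I used $d_R(y)\leq |y-x_R|+\diam(R)\lesssim r(B(R))$. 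For $\ve$ small compared to $cc_0$, the diameter $2\rho$ of the ball strictly exceeds this thickness, so $B(y,\rho)$ cannot be trapped inside $W_R$ and must therefore meet $U_R$ or $V_R$. Whichever of these sets it meets is connected, disjoint from $\pom$, and intersects $\Omega$, so it is contained in $\Omega$. The main delicacy is simply the coherent choice of the parameters $\ve,\delta,c_0,k_1$ (in particular, $\ve\ll \delta\ll 1$ small with respect to the corkscrew constant and $k_1$), but this is smallness of exactly the same nature as the one already imposed in the preceding parts of the corona construction, so it poses no fundamental obstacle.
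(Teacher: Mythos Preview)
Your proof is correct and follows essentially the same approach as the paper's: the inclusion $\pom\cap C(R)\subset W_R$ via Lemma~\ref{lemgraf} and \eqref{eqDR}, the dichotomy for $U_R$, $V_R$ via connectedness and the fact that they miss $\pom$, and the final claim via the corkscrew condition. Your presentation is in fact slightly more careful in places---you explicitly ensure the corkscrew ball sits inside $C(R)$ by shrinking the scale, and you spell out why $U_R$, $V_R$ are connected---while the paper leaves these points implicit; the only loose phrasing is ``$C(R)\subset B(R)$ lies very close to $R$'', where what is really used is $\pom\cap C(R)\subset\pom\cap B(R)\subset R\subset k_1R$.
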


Remark that it may happen that $U_R$ and $V_R$ are both contained in $\Omega$, or that one set is contained in $\Omega$ and the other in $\R^{n+1}\setminus
\overline\Omega$.

\begin{proof}
Let us see that $\pom\cap C(R) \subset W_R$. Indeed, we have
$\partial\Omega\cap C(R)\subset \partial\Omega\cap B(R) \subset R$, by the definition of $B(R)$.
Then, by Lemma \ref{lemgraf} and \eqref{eqDR}, for all $x\in \partial\Omega\cap C(R)$ we have
$|x- (x',A(x'))|\leq C_1\ve\,d_R(x) \leq C_1C_2\ve\,D_R(x),$
which is equivalent to saying that $x\in W_R$.

Next we claim that if $U_R\cap \Omega\neq \varnothing$, then $U_R\subset \Omega$.
This follows from connectivity, taking into account that if $x\in U_R\cap \Omega$ and $r=\dist(x,\partial U_R)$, then $B(x,r)\subset \Omega$. {Otherwise, there exists some point $y\in B(x,r)\setminus
\overline \Omega$, and thus there exists some $z\in\pom$ which belongs to the segment $\overline{xy}$.} This would contradict the fact that $\pom\subset W_R$.
The same argument works replacing $U_R$ and/or $\Omega$ by $V_R$ and/or $\R^{n+1}\setminus\overline{\Omega}$, and thus we deduce that any of the sets $U_R$, $V_R$ is contained either in $\Omega$ or in $\R^{n+1}\setminus\overline{\Omega}$. 

Finally, from the 
corkscrew condition we can find  a point $y\in {B(x_R,r(B(R)))}\cap \Omega$ with $\dist(y,\pom)\gtrsim r(B(R))$. {So if $\ve,\delta$ are small enough we deduce that $y\in (U_R \cup V_R)\cap \Omega$ because $b\beta_\sigma(k_1R)\leq \ve$ and both $\pom\cap C(R)$ and the graphs of $A$ in $C(R)$ are contained in
a $C\delta\ell(R)$-neighborhood of the hyperplane $\{x_{n+1}=0\}$.}
  Then by the discussion in the previous paragraph, we infer that either 
$U_R\subset\Omega$ or $V_R\subset\Omega$. 
\end{proof}
\vv

Suppose that $U_R\subset \Omega$.
We denote by $\Gamma_R^+$ the Lipschitz graph of the function 
$C(R)\cap L_R\ni x'\mapsto A(x') + \delta\,D_R(x')$. Notice that this is a Lipschitz function with slope at most 
$C\delta< 1$ (assuming $\delta$ small enough). So $\Gamma_R^+$ intersects neither the top nor the bottom faces of $C(R)$, assuming $\ve$ small enough too.
Then we define
$$\Omega_R^+ =\big\{x=(x',x_{n+1}) \in {\rm Int}(C(R)): x_{n+1}> A(x') + \delta \,D_R(x')\big\}.$$
Observe that $\Omega_R^+$ is a starlike Lipschitz domain (with uniform Lipschitz character) and that $\Omega_R^+\subset U_R$, assuming that $C_1C_2\ve\ll\delta$.

In case that $V_R\subset \Omega$, we define $\Gamma_R^-$ and $\Omega_R^-$ analogously, replacing the above function $A(x') + \delta\,D_R(x')$ by $A(x') - \delta\,D_R(x')$. It is clear that $\Omega_R^-$ is also a starlike Lipschitz domain.
If $V_R\subset \R^{n+1}\setminus \overline\Omega$, then we set $\Omega_R^-=\varnothing$.
In any case, we define
$$\Omega_R=\Omega_R^+ \cup\Omega_R^-,$$
so that $\Omega_R$ is the disjoint union of at most two starlike Lipschitz subdomains.
From Lemma \ref{lem333} and the assumption that $C_1C_2\ve\ll\delta$,
it is immediate to check that
\begin{equation}\label{eqsep99}
 \dist(x,\partial\Omega)\geq \frac\delta2\,D_R(x)\quad\mbox{ for all $x\in\Omega_R$.}
\end{equation}

\vv

For a given $a>1$, we say that two cubes $Q,Q'$ are $a$-close if 
$$ a^{-1}\ell(Q)\leq\ell(Q')\leq a\ell(Q) \; \text{ and }\;\dist(Q,Q')\leq a(\ell(Q)+\ell(Q')).
$$
We say that $Q\in\DD_\sigma$ is $a$-close to $\tree(R)$ if there exists some $Q'\in\tree(R)$ such that
$Q$ and $Q'$ are $a$-close.
For $1<a^*<a^{**}$ to be fixed below, {we} define the augmented trees
$$\tree^*(R) = \{Q\in\DD_\sigma: \text{$Q$ is $a^*$-close to $\tree(R)$}\},$$
$$\tree^{**}(R) = \{Q\in\DD_\sigma: \text{$Q$ is $a^{**}$-close to $\tree(R)$}\}.$$
Obviously, $\tree(R)\subset\tree^*(R)\subset\tree^{**}(R)$. Notice also that the families of cubes from $\tree^*(R)$ or $\tree^{**}(R)$ may not be trees.
\vv

{
We consider now the decomposition of $\Omega$ into the family of Whitney cubes $\WW(\Omega)$ described in
Subsection \ref{subsecwhitney}.}

\begin{lemma}\label{lemcontingtree}
Assuming $a^*>1$ to be big enough, we have
$$\overline\Omega_R\cap\Omega\subset \bigcup_{Q\in\tree^*(R)} w(Q).$$
\end{lemma}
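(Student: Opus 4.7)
The plan is to pick an arbitrary point $x\in\overline\Omega_R\cap\Omega$, let $P\in\WW(\Omega)$ be a Whitney cube containing $x$, and show that $b(P)$ is $a^*$-close to some cube of $\tree(R)$ provided $a^*$ is taken large enough in terms of the Whitney parameters, the constants $C_1,C_2$ and the chosen $\delta,\ve$.

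First I would control the sizes. Since $x\in \overline\Omega_R\subset \overline{C(R)}$ and $C(R)$ has diameter $\approx \ell(R)$, and since $R\cap C(R)$ contains boundary points (by the smallness of $b\beta_\sigma(k_1R)$), one obtains $\dist(x,\pom)\lesssim\ell(R)$; combined with $\ell(P)\approx\dist(P,\pom)\approx\dist(x,\pom)$ this yields $\ell(P)\lesssim \ell(R)$. On the other hand, the separation estimate \eqref{eqsep99} gives $D_R(x)\leq(2/\delta)\dist(x,\pom)\lesssim\ell(P)$, and by \eqref{eqDR} (applicable because $x\in 3B(R)$) we deduce
\[
 d_R(x)\leq C_2\,D_R(x)\lesssim \ell(P).
\]

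Second, I would use the definition $d_R(x)=\inf_{Q\in\tree(R)}(\dist(x,Q)+\diam(Q))$ to select some $Q_0\in\tree(R)$ with $\dist(x,Q_0)+\diam(Q_0)\lesssim \ell(P)$. In particular $\diam(Q_0)\lesssim\ell(P)$ and $\dist(x,Q_0)\lesssim\ell(P)$. Because $\ell(P)\lesssim\ell(R)$, the chain of ancestors of $Q_0$ inside $R$ contains a (unique) cube $\widehat Q$ with $\ell(\widehat Q)\approx\ell(P)$. This $\widehat Q$ still lies in $\tree(R)$: it cannot be strictly inside a stopping cube, since it contains $Q_0\in\tree(R)$, and it is a descendant of $R$. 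Moreover
\[
 \dist(x,\widehat Q)\leq \dist(x,Q_0)\lesssim \ell(P).
\]

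Third, I would verify the two $a^*$-closeness conditions between $\widehat Q$ and $b(P)$. Size: $\ell(\widehat Q)\approx\ell(P)=\ell(b(P))$ directly gives the comparability. Distance: combining $\dist(b(P),P)\approx\ell(P)$, $\diam(P)\approx\ell(P)$ and $\dist(P,\widehat Q)\leq\dist(x,\widehat Q)\lesssim\ell(P)$ yields
\[
 \dist(b(P),\widehat Q)\lesssim \ell(P)\approx \ell(b(P))+\ell(\widehat Q).
\]
Choosing $a^*$ to dominate all the implicit constants above closes the argument, so $b(P)\in\tree^*(R)$ and consequently $x\in w(b(P))\subset\bigcup_{Q\in\tree^*(R)}w(Q)$.

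The main point where care is needed is the second step: one must confirm that an ancestor of $Q_0$ of the right scale actually exists and stays inside $\tree(R)$. This depends on (i) the a priori bound $\ell(P)\lesssim\ell(R)$ (so we do not run past $R$ when going up) and (ii) the observation that an ancestor of a cube of $\tree(R)$ inside $R$ is automatically in $\tree(R)$ since it cannot be a proper subcube of any stopping cube. Everything else is a routine combination of the Whitney properties, the Lipschitz character of $d_R$ and $D_R$, and \eqref{eqsep99}.
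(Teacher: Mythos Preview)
Your proposal is correct and follows essentially the same approach as the paper's proof: both pick a Whitney cube $P$ meeting $\overline\Omega_R$, use \eqref{eqsep99} and \eqref{eqDR} to bound $d_R$ by a multiple of $\ell(P)$, extract a nearby tree cube from the definition of $d_R$, and then pass to an ancestor of the right scale inside $\tree(R)$. The only cosmetic differences are that the paper evaluates $d_R$ at $x_Q$ rather than at $x$ (using the Lipschitz property of $D_R$ to transfer), and is slightly more explicit about which ancestor to take in the borderline case $\ell(P)\approx\ell(R)$; your discussion of point (ii) covers the same issue.
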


{Recall that $w(Q)$ is the Whitney region associated with $Q$ (see \eqref{eqwq00}).}
Notice that in case that $\Omega_R^-\neq\varnothing$, it may happen that $w(Q)$ is the union of some
Whitney cubes contained in $\Omega^+_R$ and others in $\Omega^-_R$, for example.

\begin{proof}
Let $P\in\WW(\Omega)$ be such that $P\cap \overline\Omega_R\neq \varnothing$ and let {$Q = b(P) \in \DD_\sigma$ is such that $P\subset w(Q)$.} It suffices to show that $Q\in\tree^*(R)$ if $a^*$ is taken big enough.
To this end, we have to show that there exists some $Q'\in\tree(R)$ which is $a^*$-close to $Q$.

Notice first that $\ell(P)\leq C_5\,\ell(R)$ for some fixed constant $C_5$, because $P$ intersects $\overline\Omega_R$ and thus $\overline{B(R)}$. Let $x\in P\cap \overline \Omega_R$. Then by \eqref{eqsep99} we have
$$\frac\delta2\,D_R(x)\leq \dist(x,\partial\Omega) \approx \ell(P)=\ell(Q),$$
for $Q$ as above. Thus,
$$d_R(x_Q)\approx D_R(x_Q) \leq { D_R(x)} + C\,\ell(Q) \lesssim \delta^{-1}\ell(Q).$$
From the definition of $d_R$ we infer that there exists some cube $Q'\in\tree(R)$ such that
$$\ell(Q') + \dist(Q,Q') \leq C\,\delta^{-1}\ell(Q).$$

In case that $\ell(Q')\geq C_5^{-1}\ell(R)$, we let $Q''=Q'$. Otherwise, we let $Q''$ be an ancestor of $Q'$ belonging to $\tree(R)$ and satisfying 
$$C_5^{-1}\ell(Q)\leq \ell(Q'')< 2C_5^{-1}\ell(Q).$$
The above condition $\ell(Q)=\ell(P)\leq C_5\,\ell(R)$ ensures the existence of $Q''$. Then, in any case, it easily follows  that $Q'$ is $a^*$-close to $Q$, for $a^*$ big enough depending on $\delta$.
\end{proof}
\vv
{In the rest of the lemmas in this subsection, we assume, without loss of generality, that $\Omega_R^+\subset U_R
\subset\Omega$.}

\begin{lemma}
If $Q\in\tree(R)$, then
$$\dist(w(Q),\Omega^+_R)\leq C\,\ell(Q).$$
Also, if $\Omega_R^-\neq\varnothing$, 
$$\dist(w(Q),\Omega^-_R)\leq C\,\ell(Q).$$
\end{lemma}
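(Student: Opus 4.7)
The plan is, given $Q\in\tree(R)$ and any $y\in w(Q)$, to construct an explicit point $z\in\Omega_R^+$ with $|y-z|\lesssim\ell(Q)$ by moving slightly above the graph of $A+\delta D_R$ over a boundary point close to $Q$.

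First I would locate a ``proxy'' boundary point $x^*\in\pom\cap C(R)$ near $Q$. If $Q\cap C(R)\neq\varnothing$, any $x^*\in Q\cap C(R)$ will do. Otherwise $Q$ must be a stopping cube in $\sss(R)$ satisfying condition (a); by maximality of $Q$, its dyadic parent $Q'$ fails all of (a), (b), (c), hence $Q'\in\tree(R)$ with $Q'\cap C(R)\neq\varnothing$, and we pick $x^*\in Q'\cap C(R)$. Either way $x^*$ belongs to a cube $\wh Q\in\tree(R)$ with $\ell(\wh Q)\le 2\ell(Q)$ and $|x^*-x_Q|\lesssim\ell(Q)$. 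The exceptional subcase $Q=R$ is automatically in the first case, since $b\beta_\sigma(k_1R)\le\varepsilon$ forces $x_R$ to be close to $L_R$, and $x_R\in R\cap C(R)$ by construction of $C(R)$.

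Next I define $z:=(x^{*\prime},\,A(x^{*\prime})+\delta\,D_R(x^{*\prime})+c\,\ell(Q))$ for a small constant $c>0$ depending only on $c_1$ and $\delta$, and verify $z\in\Omega_R^+$. By construction $z_{n+1}=A(z')+\delta D_R(z')+c\,\ell(Q)>A(z')+\delta D_R(z')$, placing $z$ strictly above the defining graph. Since $x^*\in C(R)$ gives $z'=x^{*\prime}$ in the base of $C(R)$ (a negligible horizontal perturbation ensures strict interiority if needed). To control the vertical coordinate, combine $|A(x^{*\prime})|\lesssim\delta\,\ell(R)+|(x_R)_{n+1}|$ (from the $C\delta$ Lipschitz bound in Lemma \ref{lemgraf}), $|(x_R)_{n+1}|\lesssim\varepsilon\,\ell(R)$, and $D_R(x^{*\prime})\le d_R(x^*)\le\diam(\wh Q)\lesssim\ell(Q)\le\ell(R)$, yielding $|z_{n+1}-(x_R)_{n+1}|\le C(\delta+\varepsilon+c)\,\ell(R)$, which is $<2^{-1/2}c_1\ell(R)$ once $\delta,\varepsilon,c$ are taken sufficiently small relative to $c_1$. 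Hence $z\in\Omega_R^+$.

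Finally, for any $y$ in a Whitney cube $P\in w(Q)$, the defining property $b(P)=Q$ gives $|y-x_Q|\lesssim\ell(Q)$. Using Lemma \ref{lem333} (which places $x^*\in W_R$), one gets $|x^*-z|=|x^*_{n+1}-z_{n+1}|\le C_1C_2\varepsilon\,D_R(x^{*\prime})+\delta D_R(x^{*\prime})+c\,\ell(Q)\lesssim\ell(Q)$. Combining with $|x_Q-x^*|\lesssim\ell(Q)$ gives $|y-z|\lesssim\ell(Q)$, proving the first inequality. The $\Omega_R^-$ case, when non-empty, follows symmetrically by replacing $z$ with $(x^{*\prime},A(x^{*\prime})-\delta D_R(x^{*\prime})-c\,\ell(Q))$. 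The only genuinely delicate point is the type-(a) stopping case $Q\cap C(R)=\varnothing$: there, one must walk up to the parent $Q'$ to find a boundary point in $C(R)$, and the key observation making this work is that $Q'$, being a parent of a maximal stopping cube, itself lies in $\tree(R)$ and is non-stopping, so $d_R(x^*)$ stays $\lesssim\ell(Q)$ at the borrowed boundary point.
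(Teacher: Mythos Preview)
Your proof is correct and follows essentially the same route as the paper: locate a point of $C(R)$ near $Q$ via the parent cube (using that the parent, being non-stopping, meets $C(R)$), bound $D_R$ there by $\ell(Q)$, and push vertically by $\approx\ell(Q)$ to land in $\Omega_R^+$. Your version is slightly more explicit in defining the target point $z$ directly above $\Gamma_R^+$ and avoids the paper's case split on $\ell(Q)\gtrsim\ell(R)$, but the underlying mechanism is identical.
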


\begin{proof} We will prove the first statement. The second one follows by the same arguments.
It is clear that $\dist(w(Q),\Omega_R^+)\leq C\,\ell(R)$, 
and so the statement above holds if $\ell(Q)\gtrsim\ell(R)$.

So we may assume that $\ell(Q)\leq c_1\,\ell(R)$ for some small $c_1$ to be fixed below. 
By construction, the parent $\wh Q$ of $Q$ satisfies $\wh Q\cap C(R)\neq\varnothing$.
Thus there exists some point $x\in C(R)$ such that $|x-x_Q|\lesssim \ell(Q)$.
Clearly, it holds
$\dist(x,\pom)\lesssim \ell(Q).$
On the other hand, by interchanging $x$ with $(x',x_{n+1}+2\ell(Q))$ if necessary, we may assume that 
\begin{equation}\label{eqnew77}
\dist(x,\pom\cup\Gamma_R^+)\geq \ell(Q).
\end{equation}
By the definition of $d_R$ and $D_R$,
$$D_R(x_Q)\leq d_R(x_Q) \leq \ell(Q).$$
Hence,
$$D_R(x) \leq D_R(x_Q) + C\,\ell(Q)\lesssim\ell(Q) \leq \dist(x,\pom).$$
Assuming $\delta$ small enough, we deduce that
$$10\delta\,D_R(x)\leq \dist(x,\pom) \leq C\ell(Q)\leq c_1 C\ell(R).$$
By the definition of {$\Omega_R^+$, this implies that $x\in\Omega_R^+$} if $c_1$ is small enough. Indeed, since $x\in C(R)$, {by \rf{eqnew77} and the last estimate, $A(x') + \delta\,D_R(x')< x_{n+1}\ll_{c_1} \ell(R)$. }
\end{proof}
\vv

We denote 
$$\partial\tree^{**}(R) = \{Q\in\tree^{**}(R): w(Q)\not \subset\Omega_R\}.$$

\vv
\begin{lemma}\label{lemdtree**}
For all $S\in\DD_\sigma$, we have
$$\sum_{Q\in\partial\tree^{**}(R):Q\subset S} \sigma(Q) \lesssim_{a^{**}}\sigma(S).$$
\end{lemma}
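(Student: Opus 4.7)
The plan is to assign each $Q \in \partial\tree^{**}(R)$ with $Q \subset S$ to a point close to the ``lateral'' boundary of $\Omega_R$ (the part lying inside $\Omega$), which consists of subsets of $\Gamma_R^+$, $\Gamma_R^-$, and $\partial C(R)$, and then to use that this lateral boundary is covered with bounded overlap by $n$-dimensional surfaces of controlled total mass $\lesssim\sigma(S)$.

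For each such $Q$, select a Whitney cube $P_Q \in w(Q)$ with $P_Q \not\subset \Omega_R$ and a point $z_Q \in P_Q \setminus \Omega_R$. Since $Q$ is $a^{**}$-close to some $Q'\in \tree(R) \subset \DD_\sigma(R)$, we have $\ell(P_Q)\approx \ell(Q) \approx_{a^{**}} \ell(Q')$ and $\dist(z_Q,Q')\lesssim_{a^{**}} \ell(Q)$. Testing $Q'$ in the infimum defining $d_R$ then gives $d_R(z_Q) \lesssim_{a^{**}} \ell(Q)$, hence $D_R(z_Q) \leq d_R(z_Q) \lesssim_{a^{**}} \ell(Q)$.

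The crucial geometric step is the claim
\[
\dist\bigl(z_Q,\,\Gamma_R^+\cup\Gamma_R^-\cup\partial C(R)\bigr) \lesssim_{a^{**}} \ell(Q).
\]
If $P_Q \not\subset C(R)$, the claim is trivial since $\diam(P_Q)\approx\ell(Q)$. Otherwise $z_Q \in C(R)\cap\Omega\setminus\Omega_R$, so by the definitions of $\Omega_R^{\pm}$ from Subsection~\ref{subs:star-Lip}, $z_Q$ lies at vertical distance at most $\delta D_R(\Pi_R(z_Q)) \lesssim_{a^{**}} \delta\,\ell(Q)$ from the graph of $A \pm \delta D_R$, i.e.\ from $\Gamma_R^+ \cup \Gamma_R^-$.

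Split $\{Q \in \partial\tree^{**}(R): Q\subset S\}$ into subfamilies $\cF_+$, $\cF_-$, $\cF_0$ according to which of $\Gamma_R^+$, $\Gamma_R^-$, $\partial C(R)$ is closest to $P_Q$. For $\cF_\pm$, the projections $\Pi_R(z_Q)\in L_R$ lie in a $C_{a^{**}}\ell(Q)$-neighborhood of the Lipschitz graph $A\pm\delta D_R$ over $L_R$; since the Whitney cubes $P_Q$ have uniformly bounded overlap and $\Pi_R$ is $1$-Lipschitz, the balls $B_{L_R}(\Pi_R(z_Q),C_{a^{**}}\ell(Q))$ also have bounded overlap, and a standard covering gives $\sum_{Q\in\cF_\pm}\ell(Q)^n \lesssim_{a^{**}} \HH^n(\Pi_R(S\cap C(R)))$. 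Combining with $\HH^n(\Pi_R(S\cap C(R))) \lesssim \sigma(S)$, which follows from Lemma~\ref{lemgraf} (the small-slope Lipschitz approximation) and the AD-regularity of $\sigma$, and using $\sigma(Q)\approx \ell(Q)^n$, we obtain $\sum_{Q\in\cF_\pm}\sigma(Q)\lesssim_{a^{**}}\sigma(S)$. For $\cF_0$, the analogous covering argument on the piecewise flat $n$-surface $\partial C(R)$, localized to a $C_{a^{**}}\ell(R)$-neighborhood of $S$ and projected onto $L_R$, yields the same bound. Cubes of size comparable to $\ell(R)$ (of which there are boundedly many) and cubes $Q \subset S$ with $S$ not contained in a fixed dilate of $R$ (where $\tree^{**}(R)$ is empty except near $R$) are handled trivially. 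The main obstacle is the quantitative control in the geometric claim above, which requires choosing $\delta$ and $\ve$ so that $C_1 C_2 \ve \ll \delta \ll 1$ while keeping $a^{**}$ fixed, and tracking the dependence of each covering estimate on these parameters.
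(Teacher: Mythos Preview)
Your overall strategy—associate each $Q\in\partial\tree^{**}(R)$ to a nearby piece of $\Gamma_R^+\cup\Gamma_R^-\cup\partial C(R)$ and then sum via a covering argument—is the same idea as in the paper, and the preliminary steps (selecting $P_Q\in w(Q)$, the bound $D_R(z_Q')\leq d_R(z_Q)\lesssim_{a^{**}}\ell(Q)$, and the geometric claim that $z_Q$ lies within $\lesssim\delta D_R(z_Q')$ of $\Gamma_R^\pm$ when $z_Q\in C(R)$) are correct.

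The gap is in the sentence ``since the Whitney cubes $P_Q$ have uniformly bounded overlap and $\Pi_R$ is $1$-Lipschitz, the balls $B_{L_R}(\Pi_R(z_Q),C_{a^{**}}\ell(Q))$ also have bounded overlap.'' This implication is simply false: orthogonal projection of pairwise disjoint cubes in $\R^{n+1}$ can produce unbounded overlap in $L_R$ (stack cubes vertically). To rescue a projection argument you would need to show that whenever two such balls overlap the corresponding scales satisfy $\ell(Q)\approx\ell(Q')$. The natural route is to compare $\ell(Q)\approx\dist(z_Q,\pom)$ with $\dist(y_Q,\pom)$ for the nearby graph point $y_Q\in\Gamma_R^+$, but $|z_Q-y_Q|\lesssim \delta\,D_R(z_Q')\lesssim \delta\,a^{**}\ell(Q)$, and since $a^{**}$ is chosen $\gtrsim\delta^{-1}$ (via $a^*$ in Lemma~\ref{lemcontingtree}), the product $\delta\,a^{**}$ is not small and the triangle inequality gives nothing. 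The paper sidesteps this by never projecting: instead of your $P_Q\in w(Q)$ it constructs, via a three-case analysis (large $\ell(Q)$; small $\ell(Q)$ far from $\partial C(R)$; small $\ell(Q)$ near $\partial C(R)$), a Whitney cube $P(Q)\in\WW(\Omega)$ that \emph{actually intersects} $\pom_R$, with $\ell(P(Q))\approx_{a^{**},\delta}\ell(Q)$ and $\dist(P(Q),Q)\lesssim_{a^{**},\delta}\ell(Q)$. Then $\HH^n(2P(Q)\cap\pom_R)\approx\ell(Q)^n$ because $\pom_R$ is Lipschitz, the dilated Whitney cubes $2P$ have bounded overlap in $\R^{n+1}$, and each fixed $P$ arises as $P(Q)$ for only boundedly many $Q$; summing gives $\sum_Q\sigma(Q)\lesssim\HH^n\bigl(\pom_R\cap B(x_S,C\ell(S))\bigr)\lesssim\sigma(S)$.
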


\begin{proof}
We will prove the following:
\begin{claim*}
For each $Q\in\partial\tree^{**}(R)$ there exists some cube $P=P(Q)\in\WW(\Omega)$ such that
$$P\cap \pom_R\neq \varnothing,\qquad \ell(P)\approx_{a^{**},\delta} \ell(Q), \qquad \dist(P,Q) \lesssim_{a^{**},\delta}\ell(Q).$$
\end{claim*}

The lemma follows easily from this claim. Indeed, using that $\Omega_R$ is either a Lipschitz domain or a union of two Lipschitz domains, that
\begin{equation}\label{eqclaim1}
\HH^n( 2P \cap \pom_R)\approx \ell(P)^n\approx_{a^{**},\delta} \sigma(Q),
\end{equation} and 
that $P(Q)\subset B(x_S,C(a^{**})\ell(S))$
for each $Q\subset S$,
we obtain
\begin{align*}
\sum_{Q\in\partial\tree^{**}(R):Q\subset S} \sigma(Q) 
& \lesssim_{a^{**},\delta} \!\! \sum_{Q\in\partial\tree^{**}(R):Q\subset S} \HH^n( 2P(Q) \cap \pom_R)\\
& \leq_{a^{**},\delta} 
 \sum_{\substack{P'\in \WW(\Omega):\\P'\subset B(x_S,C(a^{**})\ell(S))}}\,\sum_{\substack{Q\in\partial\tree^{**}(R):\\ P'=P(Q)}}\HH^n( 2P' \cap \pom_R).
\end{align*}
From the properties \rf{eqclaim1} it easily follows that, for every $P'\in \WW(\Omega)$,
$$\#\{Q\in \partial\tree^{**}(R): P'=P(Q)\}\leq C(a^{**},\delta).$$
Taking also into account the  finite superposition of the cubes $2P'$, we deduce
\begin{align*}
\sum_{Q\in\partial\tree^{**}(R):Q\subset S} \sigma(Q) 
& \lesssim_{a^{**},\delta} \!\! \sum_{\substack{P'\in \WW(\Omega):\\P'\subset B(x_S,C(a^{**})\ell(S))}} \HH^n(2P'\cap \pom_R)\\
&  \lesssim_{a^{**}} \HH^n\big(\pom_R\cap B(x_S,C'(a^{**})\ell(S))\big) \lesssim
\ell(S)^n.
\end{align*}
\vv

To prove the claim we distinguish several cases:

\subsubsection*{ Case 1.}
 In case that $\ell(Q)\geq c(a^{**},\delta)\ell(R)$ (with
$c(a^{**},\delta)$ to be chosen below), we let $P(Q)$ be any Whitney cube that intersects the top face
of $C(R)$. It is immediate to check that this choice satisfies the properties described in \eqref{eqclaim1}.

\subsubsection*{ Case 2.}
Suppose now that $\ell(Q)\leq c(a^{**},\delta)\ell(R)$ and that $\dist(Q,\partial C(R))\geq C_6\,\ell(Q)$ for some
big $C_6(a^{**})>1$ to be chosen below. Let us see that this implies that $x_Q\in C(R)$. Indeed, from the definition of $\tree^{**}(R)$ there exists some $S\in\tree(R)$ such that $Q$ and $S$ are $a^{**}$-close. Since $S\cap C(R)\neq\varnothing$,
there exists some $\wt x_S\in S\cap C(R)$. If $x_Q\not\in C(R)$, by continuity
the segment $\overline{x_Q \,\wt x_S}$ intersects $\partial C(R)$ at some point $z$. %(where $\partial_l C(R)$ is lateral part of the boundary of $C(R)$). 
So we have
\begin{align*}
\dist(x_Q,\partial C(R))& \leq |x_Q-z| \leq |x_Q- x_{\wt S}|\\ & \leq A (\ell(Q)+\ell(S)) + \diam(Q) \leq C(a^{**})\ell(Q),
\end{align*}
which contradicts the assumption above if $C_6(a^{**})$ is big enough.
 In particular, notice that the conditions that $\dist(Q,\partial C(R))\geq C_6\,\ell(Q)$ and $x_Q\in C(R)$
imply that $w(Q)\subset C(R)$ if $C_6$ is taken big enough.

If $w(Q)\cap\Omega_R\neq\varnothing$, then we take a Whitney cube $P$ with $\ell(P)=\ell(Q)$ contained in $w(Q)$ that intersects $\Omega_R$. Otherwise, $w(Q)\subset \Omega\setminus \Omega_R$, and from the fact that $\ell(Q)\leq c(a^{**},\delta)\ell(R)$ we infer that $w(Q)$ lies below the Lipschitz graph 
$\Gamma_R^+$ that defines the bottom of $\pom_R$, and above the graph $\Gamma_R^-$ in case that $\Omega_R^-\neq\varnothing$.
Then we take $x\in w(Q)$ and $x^+ =\Pi^{-1}_R(x)\cap \Gamma_R^+$, and also $x^- =\Pi_R^{-1}(x)\cap \Gamma_R^-$ 
in case that $\Omega_R^-\neq\varnothing$.

When  $\Omega_R^-=\varnothing$, we let $P$ be the Whitney that contains $x^+$. Since
 $V_R\subset\R^{n+1}\setminus \overline\Omega$, 
there exists $y = \Pi_R^{-1}(x) \cap
\pom \cap C(R)$. Then we deduce
\begin{equation}\label{eqcase1}
\ell(P)\approx\dist(x^+,\pom) \approx |x^+- y|\geq \HH^1(w(Q)\cap \Pi_R^{-1}(x^+)) \geq \ell(Q).
\end{equation}
Using again that there exists some $S\in\tree(R)$ such that $Q$ and $S$ are $a^{**}$-close we get
\begin{align}\label{eqcase11}
\ell(P)& \approx\dist(x^+,\pom)\lesssim D_R(x^+) = D_R(x) \leq D_R(x_Q) + C\,\ell(Q)\\
& \leq D_R(x_S) + |x_Q - x_S| +C\,\ell(Q) \lesssim C(a^{**},\delta)\ell(Q).\nonumber
\end{align}
Further,
\begin{equation}\label{eqcase12}
\dist(P,Q) \leq |x_Q-x^+| \leq |x_Q-x| +|x- x^+| \lesssim \ell(Q),
\end{equation}
and so $P$ satisfies the properties in the claim.

If $\Omega_R^-\neq\varnothing$ (i.e., $V_R\subset\Omega$), we let $P$ be the largest Whitney cube that intersects $\{x^+,x^-\}$.
From the fact that $b\beta_\sigma(k_1Q)\lesssim\ve$ and the stopping condition (c) we easily infer that there
exists some point $y\in  \Pi_R^{-1}(x^+) \cap C(R)$ such that $\dist(y,\pom)\lesssim\ve\ell(Q)$.
Then it follows
\begin{equation}\label{eqcase2}
\ell(P) \gtrsim |x^+ - x^-|\geq \HH^1(w(Q)\cap \Pi_R^{-1}(x^+)) \geq \ell(Q).
\end{equation}
Also the estimates \eqref{eqcase11} and \eqref{eqcase12} are still valid, replacing $x^+$ by $x^-$ if $x^-\in P$. So again $P$ satisfies the required properties.

\subsubsection*{ Case 3.}
Suppose that $\ell(Q)\leq c(a^{**},\delta)\ell(R)$ and that $\dist(Q,\partial C(R))< C_6\,\ell(Q)$ for
$C_6(a^{**})>1$ as above.
From the smallness of $\ell(Q)$ (remark that the constant $c(a^{**},\delta)$ is chosen after fixing $C_6$)
we infer that 
$\dist(Q,\partial_l C(R)) = \dist(Q,\partial C(R)),$
where $\partial_l C(R)$ is the lateral part of $\partial C(R)$. So there exists some point $z\in
\partial_l C(R)$ such that $|x_Q-z|\lesssim_{a^{**}} \ell(Q)$. We also denote $z^+= \Pi^{-1}(z) \cap\Gamma_R^+$.
As above, we take $S\in\tree(R)$ such that $Q$ and $S$ are $a^{**}$-close, so that by the $1$-Lipschitzness of $D_R$ we have
$$\dist(z^+, \pom)\approx D_R(z^+)=
D_R(z) \leq D_R(x_S) + |z-x_Q| + |x_Q - x_S|\lesssim_{a^{**}} \ell(Q).$$
From this fact we infer that 
there exists some point $y\in\Pi^{-1}(z)\cap \pom_R$ such that $|z-y|\lesssim_{a^{**}}\ell(Q)$ and $\dist(y,\pom)\approx_{a^{**}}\ell(Q)$.
For example, we can take
$y= (z',z_{n+1}+ C(a^{**})\ell(Q))$ for a suitable $C(a^{**})>1$. This point satisfies
$$|x_Q -y|\leq |x_Q - z| + |z-y| \lesssim_{a^{**}} \ell(Q),$$ 
and so letting $P$ be the Whitney cube that contains $y$ we are done.
\end{proof}

\vv

% **************************************************************************************************

\subsection{The corona decomposition of $\Omega$}\label{subs:cordecOmega}

We will now perform a corona decomposition of $\Omega$ using the Lipschitz subdomains $\Omega_R$ constructed above.
We define inductively a family $\ttt\subset \DD_\sigma$ as follows. First we let $R_0\in\DD_\sigma$ be a cube {such that 
$b\beta_\sigma(k_1R_0)\leq\ve$ having maximal side length.} Assuming $R_0,R_1,\ldots, R_i$ to be defined, we let $R_{i+1}\in\DD_\sigma$ be a cube from 
$$\DD_\sigma \setminus \bigcup_{0\leq k\le i} \tree^{**}(R_k)$$
such that $b\beta_\sigma(k_1R_{i+1})\leq \ve$ with maximal side length.
We set
$$\ttt=\{R_i\}_{i\geq0}.$$

For each $R\in\ttt$ we consider the subdomain $\Omega_R$ constructed in the previous subsection. 
\vv

\begin{lemma}\label{lem3.7**}
The sets $\overline\Omega_R\cap\Omega$, with $R\in\ttt$, are pairwise disjoint, assuming that the {constant $a^{**}$ is big enough (possibly depending on $a^*$). }
\end{lemma}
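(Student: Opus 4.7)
The natural approach is by contradiction. Suppose $x\in\overline\Omega_{R_i}\cap\overline\Omega_{R_j}\cap\Omega$ for some $i<j$, and pick any Whitney cube $P\in\WW(\Omega)$ containing $x$. Then $P$ meets both $\overline\Omega_{R_i}$ and $\overline\Omega_{R_j}$, and the argument used in the proof of Lemma \ref{lemcontingtree} (applied to $P$ with respect to each of $R_i$ and $R_j$) forces the unique associated boundary cube $Q:=b(P)$ to lie in $\tree^*(R_i)\cap \tree^*(R_j)$.

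By the definition of $\tree^*$, there exist $Q_i'\in\tree(R_i)$ and $Q_j'\in\tree(R_j)$, each $a^*$-close to $Q$. The triangle inequality then yields
\[
\ell(Q_i')\approx \ell(Q_j')\approx \ell(Q) \qquad \text{and}\qquad \dist(Q_i',Q_j')\leq C(a^*)\,\ell(Q),
\]
with constants depending only on $a^*$. Since each $R_k$ is chosen by maximal side length among the remaining candidates, $\ell(R_j)\leq \ell(R_i)$; and since $Q_j'\subset R_j$, we have $\ell(Q_j')\leq \ell(R_j)$, so in particular $\ell(R_j)\gtrsim_{a^*}\ell(Q)$.

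The goal is to exhibit a cube $R'\in\tree(R_i)$ to which $R_j$ is $a^{**}$-close; this will give $R_j\in\tree^{**}(R_i)$, contradicting the choice of $R_j$ in the construction of $\ttt$. If $\ell(R_j)\leq \ell(Q_i')$, then the bounds above force $\ell(R_j)\approx_{a^*}\ell(Q_i')$ and $\dist(R_j,Q_i')\leq\dist(Q_j',Q_i')\lesssim_{a^*}\ell(R_j)$, so $R':=Q_i'$ works. Otherwise $\ell(R_j)>\ell(Q_i')$, and I let $\wt Q_i$ be the smallest dyadic ancestor of $Q_i'$ with $\ell(\wt Q_i)\geq \ell(R_j)$. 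Such an ancestor exists with $\ell(\wt Q_i)\in[\ell(R_j),2\ell(R_j)]$ and $\wt Q_i\subseteq R_i$, since $R_i$ is itself an ancestor of $Q_i'$ of side length at least $\ell(R_j)$. Moreover $\wt Q_i\in\tree(R_i)$, since any cube of $\sss(R_i)$ strictly containing $\wt Q_i$ would also strictly contain $Q_i'$, contradicting $Q_i'\in\tree(R_i)$. The triangle inequality again gives $\dist(R_j,\wt Q_i)\leq\dist(Q_j',Q_i')\lesssim_{a^*}\ell(R_j)$, so $R':=\wt Q_i$ works. Taking $a^{**}$ larger than the universal constant $C(a^*)$ produced above yields the required contradiction. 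The only real subtlety is the passage to the ancestor $\wt Q_i$ and the verification that it remains in $\tree(R_i)$; this is exactly what makes the quantifier ``$a^{**}$ possibly depending on $a^*$'' in the statement necessary.
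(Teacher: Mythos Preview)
Your proof is correct and follows essentially the same approach as the paper's: both argue by contradiction, use Lemma \ref{lemcontingtree} to locate cubes in $\tree^*(R_i)$ and $\tree^*(R_j)$ coming from a common Whitney cube, pull back to cubes in the respective trees, and then pass to an ancestor in $\tree(R_i)$ of side length comparable to $\ell(R_j)$ to exhibit $R_j\in\tree^{**}(R_i)$. Your use of the single boundary cube $Q=b(P)$ lying in both $\tree^*(R_i)$ and $\tree^*(R_j)$ is a slightly cleaner bookkeeping than the paper's (which uses two cubes $Q,Q'$ with $w(Q)\cap w(Q')\neq\varnothing$), but the substance is identical.
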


Remark that the constants {$a^*$ and $a^{**}$} depend on $\delta$. However, this dependence
is harmless for our purposes.

In particular, from this lemma it follows that, if we denote $H=\Omega \setminus\bigcup_{R\in\ttt} \Omega_R$, then
the union 
\begin{equation}\label{eqpart93}
\Omega = \bigcup_{R\in\ttt} \Omega_R \cup H.
\end{equation}
is a partition of $\Omega$ into 
disjoint sets.

\begin{proof}[Proof of Lemma \ref{lem3.7**}]
Suppose that $R,R'\in\ttt$ are such that {$\overline\Omega_R\cap\overline\Omega_{R'}\cap\Omega\neq\varnothing$.}
Suppose also that $R=R_i$, $R'=R_j$, with $j>i$, so that in particular $\ell(R')\leq \ell(R)$.
From Lemma \ref{lemcontingtree} we infer that there exist cubes $Q\in\tree^*(R)$ and $Q'\in\tree^*(R')$ such that $w(Q)\cap w(Q')\neq\varnothing$. Clearly, this implies that $\ell(Q)\approx \ell(Q')$, and from the definition of
$\tree^*(R)$ and $\tree^*(R')$ we deduce that there are two cubes $S\in\tree(R)$, $S'\in\tree(R')$
such that $\dist(S,S')\lesssim_a\ell(S)\approx_a\ell(S')$. Let $\wt S$ be the ancestor of $S$ with 
$\ell(\wt S)=\ell(R')$ {(or take $\wt S=S$ if $\ell(S)>\ell(R')$). Clearly, $\wt S\in\tree(R)$} and $\dist(\wt S,R')\lesssim_{a^*}\ell(R')$.
So $R'\in\tree^{**}(R)$ {if $a^{**}=a^{**}(a^*)$} is chosen big enough, which contradicts the construction of $\ttt$.
\end{proof}

\vv

\begin{lemma}\label{lempack2}
The family $\ttt$ satisfies the packing condition
$$\sum_{R\in\ttt:R\subset S} \sigma(R)\lesssim_{\ve,\delta} \sigma(S)\quad \mbox{ for all $S\in\DD_\sigma$}.$$
\end{lemma}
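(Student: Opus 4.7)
The plan is to apply Lemma \ref{lempack1} to each $R\in\ttt$ with $R\subset S$ and sum, controlling each of the three terms on the right-hand side separately using (a) disjointness properties of $\tree(R)$ and $Z(R)$ across different $R\in\ttt$, together with (b) standard Carleson packing estimates for the $\beta$-coefficients coming from the uniform rectifiability of $\pom$.

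The first and most important observation is the following structural fact: if $R_1,R_2\in\ttt$ are distinct with $R_1\subsetneq R_2$, then by the inductive construction of $\ttt$ we have $R_1\notin \tree^{**}(R_2)\supset\tree(R_2)$. Since $\tree(R_2)$ consists of all $Q\in\DD_\sigma(R_2)$ that are not strictly contained in some cube of $\sss(R_2)$, the fact that $R_1\subset R_2$ is not in $\tree(R_2)$ forces $R_1\subsetneq Q^*$ for some $Q^*\in\sss(R_2)$. From this I extract two consequences. First, every cube $Q\in\DD_\sigma$ lies in at most one family $\tree(R)$ with $R\in\ttt$: if $Q\in\tree(R_1)\cap\tree(R_2)$ with $R_1\subsetneq R_2$, then $Q\subset R_1\subsetneq Q^*\in\sss(R_2)$, contradicting $Q\in\tree(R_2)$. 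Second, the sets $Z(R)$ with $R\in\ttt$ are pairwise disjoint: if $x\in Z(R_1)\cap Z(R_2)$ with $R_1\subsetneq R_2$, then $x\in R_1\subsetneq Q^*\in\sss(R_2)\subset\tree(R_2)$, and the cubes of $\tree(R_2)$ containing $x$ are precisely $Q^*$ and its $\tree(R_2)$-ancestors, so $d_{R_2}(x)\geq\diam(Q^*)>0$, a contradiction.

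With these at hand, I would sum Lemma \ref{lempack1} over $R\in\ttt$ with $R\subset S$ to get
\begin{equation*}
\sum_{\substack{R\in\ttt\\R\subset S}}\sigma(R)\lesssim \sum_{\substack{R\in\ttt\\R\subset S}}\!\sigma(Z(R))+\!\!\sum_{\substack{R\in\ttt\\R\subset S}}\sum_{Q\in\sss(R)\cap\cB(\ve)}\!\sigma(Q)+C_3\!\!\sum_{\substack{R\in\ttt\\R\subset S}}\sum_{Q\in\tree(R)}\!\beta_{\sigma,1}(k_1Q)^2\sigma(Q).
\end{equation*}
The first sum is bounded by $\sigma(S)$ by the disjointness of the $Z(R)$'s. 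For the third term, the disjointness of the $\tree(R)$'s shows that each cube $Q\subset S$ is counted at most once, so the sum is majorised by $\sum_{Q\in\DD_\sigma(S)}\beta_{\sigma,1}(k_1Q)^2\sigma(Q)\lesssim \sigma(S)$ via the well-known Carleson packing of Jones $\beta$-numbers on uniformly rectifiable sets (David--Semmes \cite{DS1}). For the middle term, using $\sss(R)\subset\tree(R)$ and the disjointness of $\tree(R)$ once more, it is bounded by $\sum_{Q\in\cB(\ve),\,Q\subset S}\sigma(Q)$, which in turn is at most $\ve^{-2}\sum_{Q\subset S}b\beta_{\sigma,2}(k_1Q)^2\sigma(Q)\lesssim \ve^{-2}\sigma(S)$ by Chebyshev together with the Carleson packing of bilateral $\beta$-numbers on UR sets.

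The main obstacle is isolating the two disjointness properties above: everything else reduces to the UR Carleson estimates already in the literature, but the way $\ttt$ is built (with the augmented trees $\tree^{**}$) is precisely what guarantees that descendants of $R\in\ttt$ belonging to $\ttt$ must be buried inside stopping cubes of $\sss(R)$, and this is the only point where the construction really enters the packing argument.
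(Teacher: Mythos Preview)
Your approach is exactly the paper's: apply Lemma \ref{lempack1} to each $R$, sum, and control the three resulting terms via disjointness of the families $\tree(R)$ and of the sets $Z(R)$, together with the UR Carleson estimates. Your argument that the families $\tree(R)$, $R\in\ttt$, are pairwise disjoint is correct and supplies a detail the paper only asserts ``by construction''.

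There is, however, a gap in your disjointness argument for the $Z(R)$'s. You conclude $d_{R_2}(x)\geq\diam(Q^*)$ from the fact that the cubes of $\tree(R_2)$ \emph{containing} $x$ are exactly $Q^*$ and its ancestors; but the infimum $d_{R_2}(x)=\inf_{Q\in\tree(R_2)}(\dist(x,Q)+\diam(Q))$ runs over \emph{all} $Q\in\tree(R_2)$, not only those containing $x$. There may well be arbitrarily small cubes $Q\in\tree(R_2)$ disjoint from $Q^*$ with $\dist(x,Q)$ small, namely when $x$ lies on the ``boundary'' of $Q^*$. The repair is immediate: if $x\in Z(R_2)\cap Q^*$, any such sequence of cubes forces $\dist(x,\supp\sigma\setminus Q^*)=0$, and by the small boundaries condition \rf{small boundary condition} this set of points has $\sigma$-measure zero. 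Hence the $Z(R)$'s are pairwise disjoint modulo $\sigma$-null sets, which is all you need for $\sum_{R}\sigma(Z(R))\leq\sigma(S)$. A smaller point: for the $\cB(\ve)$ term, uniform rectifiability gives the Bilateral Weak Geometric Lemma directly, i.e.\ the Carleson packing of $\cB(\ve)$ itself; the Chebyshev detour through a bound on $\sum_{Q\subset S} b\beta_\sigma(k_1Q)^2\,\sigma(Q)$ is unnecessary, and its validity for the $L^\infty$ bilateral coefficient is not obvious.
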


\begin{proof}
By Lemma \ref{lempack1} we have
\begin{align}\label{eqlala77}
\sum_{R\in\ttt:R\subset S} \sigma(R) & \lesssim
\sum_{R\in\ttt:R\subset S} \sigma(Z(R)) 
+  \sum_{R\in\ttt}\,\sum_{Q\in\sss(R)\cap\cB(\ve)} \sigma(Q)\\
& \quad
+ \sum_{R\in\ttt:R\subset S}\sum_{Q\in\tree(R)} \beta_{\sigma,1}(k_1Q)^2\,\sigma(Q).\nonumber
\end{align}
By construction, the sets $Z(R)$ are disjoint, and thus the first sum does not exceed $\sigma(S)$.
The second term does not exceed
$$\sum_{Q\in\DD_\sigma(S)\cap\cB(\ve)}\sigma(Q)\lesssim_\ve\sigma(S),$$
by {the} uniform rectifiability of $\pom$.
Concerning the last term in \eqref{eqlala77}, the families $\tree(R)$, with $R\in\ttt$, are also disjoint by construction. Therefore, {again by the} uniform rectifiability of $\pom$,
$$\sum_{R\in\ttt:R\subset S}\sum_{Q\in\tree(R)} \beta_{\sigma,1}(k_1Q)^2\,\sigma(Q) \leq
\sum_{Q\subset S} \beta_{\sigma,1}(k_1Q)^2\,\sigma(Q)\lesssim_{\ve,\delta} \sigma(S).$$
\end{proof}

\vv

\begin{lemma}\label{lem:HCarleson}
There is a subfamily $\HH\subset \DD_\sigma$ such that
\begin{equation}\label{eqHH*}
H\subset \bigcup_{Q\in\HH} w(Q)
\end{equation}
which satisfies the packing condition
\begin{equation}\label{eqHH*2}
\sum_{Q\in \HH:Q\subset S} \sigma(Q)\lesssim \sigma(S)\quad \mbox{ for all $S\in\DD_\sigma$},
\end{equation}
with the implicit constant depending on $\ve,\delta,a^{**}$.
\end{lemma}

\begin{proof}
By construction, 
$$\DD_\sigma \subset \cB\cup \bigcup_{R\in\ttt} \tree^{**}(R),$$
and thus
$$\Omega \subset \bigcup_{Q\in\cB} w(Q) \cup \bigcup_{R\in\ttt}\,\bigcup_{Q\in\partial\tree^{**}(R)}w(Q) \cup \bigcup_{R\in\ttt} \Omega_R.$$
So \eqref{eqHH*} holds if we define
$$\HH := \cB \cup \bigcup_{R\in\ttt} \partial\tree^{**}(R).$$

It remains to prove the packing condition \eqref{eqHH*2}. From the uniform rectifiability of $\pom$ 
we know that the family $\cB$ satisfies a Carleson packing condition, and so it suffices to show that
the same holds for
$\bigcup_{R\in\ttt} \partial\tree^{**}(R)$. This is an immediate consequence of Lemmas \ref{lemdtree**} 
and \ref{lempack2}. Indeed, for any $S\in\DD_\sigma$, let
$$T_0=\{R\in\ttt:\tree^{**}(R)\cap \DD_\sigma(S)\neq \varnothing\}$$
and
$$T_1=\{R\in T_0:\ell(R)\leq \ell(S)\},\qquad T_2=\{R\in T_0:\ell(R)> \ell(S)\},$$
so that
\begin{align*}
\sum_{R\in\ttt}\,\sum_{Q\in\partial\tree^{**}(R)\cap\DD_\sigma(S)} \sigma(Q) &\leq 
\sum_{R\in T_1}\, \sum_{Q\in\partial\tree^{**}(R)} \sigma(Q) \\
&\quad +\sum_{R\in T_2}\,\sum_{Q\in\partial\tree^{**}(R)\cap\DD_\sigma(S)} \sigma(Q). 
\end{align*}
Since all the cubes from $\partial\tree^{**}(R)$ are contained in $C(a^{**}) R$, it follows that the cubes from
$T_1$ are contained in $C'(a^{**})S$, and thus
$$\sum_{R\in T_1}\, \sum_{Q\in\partial\tree^{**}(R)} \sigma(Q) \lesssim_{a^{**}}\sum_{R\in T_1}\sigma(R)
\lesssim_{\ve,\delta}\sigma(S).$$
Also, it is immediate to check that the number of cubes from $T_2$ is uniformly bounded by some constant
depending on $a^{**}$. Therefore, 
$$\sum_{R\in T_2}\,\sum_{Q\in\partial\tree^{**}(R)\cap\DD_\sigma(S)} \sigma(Q)\lesssim_{a^{**}}
\sum_{R\in T_2}\sigma(S)\lesssim_{a^{**}}\sigma(S).$$
\end{proof}

\vv

Below we will need the following auxiliary result too.

\begin{lemma}\label{lemsurfacetot}
Given any $\alpha>0$, for $\sigma$-a.e.\ $\xi\in\pom$ there exists 
$r_{\xi,\alpha}>0$ such that 
\begin{equation}\label{eqinch0}
\gamma_\alpha(\xi)\cap B(\xi,r_{\xi,\alpha}) \subset \bigcup_{R\in\ttt}\Omega_R.
\end{equation}
Further, $\sigma$-a.e.\ $\xi\in\pom$ belongs to $\bigcup_{R\in\ttt}\partial\Omega_R^{\pm}$.
\end{lemma}

\begin{proof}
To prove the first statement, notice that if  $\gamma_\alpha(\xi)\cap  H\neq\varnothing$, then there exists
some $Q\in\HH$  such that $ \gamma_\alpha(\xi)\cap w(Q)\neq \varnothing$, which
implies that $\dist(\xi,Q)\lesssim_\alpha \ell(Q)$, and so $\xi \in C(\alpha)Q$, for some $C(\alpha)\geq1$.
From the Carleson condition on $\HH$, we know that $\sum_{Q\in\HH} \sigma(C(\alpha)Q)\lesssim_\alpha \sigma(\pom)$, and so  by 
Chebychev's inequality,  
$$
\sigma\Big( \Big\{\xi \in \pom: \sum_{Q \in \HH} \chi_{C(\alpha)Q}(\xi) >N  \Big\}  \Big)\leq N^{-1} \int \sum_{Q \in \HH} \chi_{C(\alpha)Q}\,d \sigma \lec_\beta N^{-1} \sigma(\pom).
$$
By taking $N \to \infty$, we deduce that for $\sigma$-a.e.\ $\xi\in\pom$,  there is a finite number of cubes $Q\in\HH$ such that $\xi\in C(\alpha)Q$. 
So, by the definition of $H$, for such points $\xi$ there exists some $r_{\xi,\alpha}>0$
such that $\gamma_\alpha(\xi)\cap B(\xi,r_{\xi,\alpha}) \cap H=\varnothing$, which yields \rf{eqinch0}.

The inclusion \rf{eqinch0} implies that $\sigma$-a.e.\ $\xi\in\pom$ belongs to $\bigcup_{R\in\ttt}\partial\Omega_R^{\pm}$.
\end{proof}

\vv

% **************************************************************************************************

\section{The almost harmonic extension of Lipschitz functions on the boundary}\label{sec:extension}

In this section we will assume that $\Omega$ is a domain in $\R^{n+1}$ satisfying the properties in Theorem \ref{teomain}. That is, $\Omega$ is a bounded corkscrew domain with $n$-AD-regular boundary %such that  $\HH^n(\pom\setminus \partial_*\Omega)=0$ 
and  $(D_{p'})$ is solvable 
for some  $p \in (1, 2+\ve_0)$. {As remarked at the beginning of Section \ref{secorona}, this implies that $\partial\Omega$ is uniformly rectifiable.

Let $f:\partial\Omega\to \R$ be a Lipschitz function, so that in particular $f\in W^{1,p}(\pom)$. In this section we will construct
an ``almost harmonic extension'' of $f$ to $\Omega$. To this end, first we need to define another auxiliary extension, following 
some ideas from \cite{AMV}.

Given a ball $B\subset\R^{n+1}$ centered in $\pom$ and an affine map $A:\R^{n+1}\to\R$, we consider the coefficient
$$\gamma_{f}(B) :=\inf_A\left( |\nabla A| + \avint_B \frac{|f-A|}{r(B)}\,d\sigma\right),$$
where the infimums are taken over all affine maps $A:\R^{n+1}\to\R$. We denote by $A_B$ an affine map that minimizes $\gamma_{f}(B)$.

Recall that, by Proposition \ref{propopoin}, the following Poincaré inequality holds, for every ball 
 $B$ centered in $\pom$:
\begin{equation}\label{eq:veryweakPoinc}
\avint_B |f- m_{B, \sigma}f|\,d\sigma \lesssim r(B)\,m_{B,\sigma}(\nabla_{H,p} f),
\end{equation}
where $\nabla_{H,p} f$ denotes the optimal Haj\l asz upper $p$-integrable gradient for $f$.
%Unlike in the previous section, this Poincaré inequality will be essential.
\vv

\begin{lemma}
The following properties hold:
\begin{itemize}
\item[(a)]
For every ball $B$ centered in $\pom$ with $r(B)\leq \diam(\Omega)$,
\begin{equation}\label{eqprop00}
\gamma_{f}(B)\lesssim m_{B, \sigma}(|\nabla_{H,p} f|)
\end{equation}
and
\begin{equation}\label{eqprop11}
|\nabla A_B|\lesssim m_{B,\sigma}(\nabla_{H,p} f).
\end{equation}

\item[(b)] If $B,B'$ are balls centered in $\pom$ such that $B\subset B'$ with $r(B)\approx r(B')\leq \diam(\Omega)$, then
\begin{equation}\label{eqprop22}
|A_B(x) - A_{B'}(x)| \lesssim m_{B',\sigma}(\nabla_{H,p} f)\,\big(r(B) + \dist(x,B)\big).
\end{equation}
\end{itemize}
The implicit constants in the above estimates depend only on the implicit constant in the Poincaré inequality \eqref{eq:veryweakPoinc} and on the 
AD-regularity of $\pom$.
\end{lemma}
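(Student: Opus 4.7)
The plan is to prove the three items in order, each building on the previous one.

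For \textbf{(a)}, I would simply test the infimum defining $\gamma_f(B)$ against the constant affine map $A \equiv m_{B,\sigma}(f)$. Then $|\nabla A|=0$, and the Poincaré inequality \eqref{eq:veryweakPoinc} gives
$$\avint_B \frac{|f-A|}{r(B)}\,d\sigma = \frac{1}{r(B)} \avint_B |f-m_{B,\sigma}(f)|\,d\sigma \lesssim m_{B,\sigma}(\nabla_{H,p}f),$$
which immediately yields \eqref{eqprop00}.

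For \textbf{(b)}, since $A_B$ realizes the infimum in the definition of $\gamma_f(B)$, we have $|\nabla A_B| \leq \gamma_f(B)$, and \eqref{eqprop11} follows at once from part (a).

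For \textbf{(c)}, the strategy is to view $L := A_B - A_{B'}$ as an affine map and exploit a standard pointwise-vs-average comparison for affine functions. First, bounding the average of $|L|$ on $B$ by the triangle inequality,
$$\avint_B |L|\,d\sigma \leq \avint_B |f-A_B|\,d\sigma + \avint_B |f-A_{B'}|\,d\sigma,$$
and using $B \subset B'$ with $r(B) \approx r(B')$ together with AD-regularity (so that averages on $B$ are controlled by $B'$-averages times a constant), I obtain
$$\avint_B |L|\,d\sigma \lesssim r(B)\,\gamma_f(B) + r(B')\,\gamma_f(B') \lesssim r(B')\, m_{B',\sigma}(\nabla_{H,p}f),$$
invoking (a) twice. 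For the gradient, part (b) gives
$$|\nabla L| \leq |\nabla A_B| + |\nabla A_{B'}| \lesssim m_{B,\sigma}(\nabla_{H,p}f) + m_{B',\sigma}(\nabla_{H,p}f) \lesssim m_{B',\sigma}(\nabla_{H,p}f),$$
where the last step again uses $\sigma(B) \approx \sigma(B')$. Finally, for any $x \in \R^{n+1}$, writing $L(x) = L(x_B) + \nabla L \cdot (x - x_B)$ and estimating
$$|L(x_B)| \leq \avint_B |L(y) - L(x_B)|\,d\sigma(y) + \avint_B |L|\,d\sigma \leq r(B)\,|\nabla L| + \avint_B |L|\,d\sigma,$$
together with $|x-x_B| \leq r(B) + \dist(x,B)$, I conclude
$$|L(x)| \lesssim \avint_B|L|\,d\sigma + |\nabla L|\,(r(B) + \dist(x,B)) \lesssim m_{B',\sigma}(\nabla_{H,p}f)\,(r(B) + \dist(x,B)),$$
which is \eqref{eqprop22}.

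No step looks substantially difficult: the whole argument reduces to the Poincaré inequality, the minimality of $A_B$, and the elementary reverse mean value estimate for affine maps. The only point requiring mild care is the use of AD-regularity to pass from averages over $B$ to averages over the comparable larger ball $B'$, and conversely to show that the constant-only affine map is an admissible competitor in the definition of $\gamma_f(B)$.
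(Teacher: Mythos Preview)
Your proof is correct and follows essentially the same approach as the paper. The only cosmetic difference is in part (c): the paper picks a point $x_0\in B\cap\supp\sigma$ where $|A_B-A_{B'}|$ is at most its $\sigma$-average over $B$, whereas you use the center $x_B$ and absorb an extra $r(B)|\nabla L|$ term via the triangle inequality; both routes are standard and equivalent.
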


The estimates in this lemma follow easily from some similar (but sharper and more general) calculations from
\cite{AMV} for some related coefficients. However, for the reader's convenience we provide the detailed arguments here.

\begin{proof}
To prove \rf{eqprop00}, we just use the affine constant function $A:=m_{\sigma,B}f$ as a competitor in the definition of $\gamma_f$ and we apply 
\eqref{eq:veryweakPoinc} to get 
$$\gamma_{f}(B)\leq 0 + \avint_B \frac{|f-m_{\sigma,B}f|}{r(B)}\,d\sigma \lesssim m_{B,\sigma}(\nabla_{H,p} f).$$
The estimate \rf{eqprop11} is now an immediate consequence of the definition of $\gamma_f(B)$ and \rf{eqprop00}:
$$|\nabla A_B| \leq \gamma_f(B)\lesssim m_{B,\sigma}(\nabla_{H,p} f).$$

Finally we deal with (b). For $B$ and $B'$ as in (b), we have
\begin{align*}
\avint_B \frac{|A_B(x) - A_{B'}(x)|}{r(B)}\,d\sigma(x) & \lesssim \avint_B \frac{|f(x) -A_B(x)|}{r(B)}\,d\sigma(x)\\& \quad + 
\avint_{B'} \frac{|f(x) -A_{B'}(x)|}{r(B')}\,d\sigma(x)\\
& \lesssim m_{B,\sigma}(\nabla_{H,p} f) + m_{B',\sigma}(\nabla_{H,p} f) \lesssim 
m_{B',\sigma}(\nabla_{H,p} f).
\end{align*}
Thus, there exists some $x_0\in\supp\sigma \cap B$ such that
$$|A_B(x_0) - A_{B'}(x_0)| \lesssim r(B)\,
m_{B',\sigma}(\nabla_{H,p} f).$$
Consequently, for any $x\in\R^{n+1}$, using also the property \rf{eqprop11},
\begin{align*}
|A_B(x) - A_{B'}(x)| & = \big|A_B(x_0) - A_{B'}(x_0) + \nabla (A_B - A_{B'})\cdot(x-x_0)\big|\\
& \leq C\,r(B)\,
m_{B',\sigma}(\nabla_{H,p} f) + \big(|\nabla A_B| + |\nabla A_{B'}|\big)\, |x-x_0| \\
& \lesssim 
m_{B',\sigma}(\nabla_{H,p} f)\,(r(B) + |x-x_0|),
\end{align*}
which gives (b).
\end{proof}
\vv

Next, for each Whitney cube $P\in \WW(\Omega)$ we consider 
a $C^\infty$ bump function $\vphi_P$ supported on $1.1P$ such that the functions $\vphi_P$, $P\in\WW(\Omega)$,
form a partition of unity of $\chi_\Omega$. That is,
$$\sum_{P\in\WW(\Omega)}\vphi_P = \chi_\Omega.$$
We define the extension $\wt f:\overline \Omega\to\R$ of $f$ as follows:
$$\wt f|_\pom =f,\qquad\quad\wt f|_\Omega = \sum_{P\in\WW(\Omega)} \vphi_P\,A_{2B_{b(P)}}.$$
{It is clear that $\wt f$ is smooth in $\Omega$.}
Recall that $b(P)$ is the (unique) boundary cube from $\DD_\sigma$ associated with $P$ and that
$B_{b(P)}$ is a ball concentric with $b(P)$ that contains $b(P)$. See Section \ref{secprelim}.
Abusing notation we will also write
$$A_{2B_{b(P)}}\equiv A_{b(P)} \equiv A_P.$$

\vv
\begin{lemma}\label{lem:Lip-ext}  
Given a Lipschitz function $f:\pom\to\R$, the extension $\wt f$ is Lipschitz {in $\overline \Omega$}, with ${\rm Lip}(\wt f)\lesssim {\rm Lip}(f)$.
\end{lemma}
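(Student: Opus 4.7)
The plan is to verify $|\wt f(x)-\wt f(y)|\lesssim {\rm Lip}(f)\,|x-y|$ in three regimes: (A) $x,y\in\pom$, which is immediate since $\wt f|_\pom=f$; (B) $x\in\Omega$, $y\in\pom$; and (C) $x,y\in\Omega$. The only analytic inputs are the three bounds from the preceding lemma applied with $\|\nabla_{H,p}f\|_{L^\infty(\sigma)}\lesssim {\rm Lip}(f)$ (from \eqref{eqlip12}): namely $\gamma_f(B)\lesssim {\rm Lip}(f)$, $|\nabla A_B|\lesssim {\rm Lip}(f)$, and $|A_B(x)-A_{B'}(x)|\lesssim {\rm Lip}(f)\,(r(B)+\dist(x,B))$. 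A useful preliminary observation is that when $x\in P\in\WW(\Omega)$ and $y\in\pom$, necessarily $\ell(P)\approx \dist(x,\pom)\leq|x-y|$.

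To handle the short-distance subcase of (C), I first establish an interior pointwise gradient bound. For $x\in P\in\WW(\Omega)$ I compute $\nabla\wt f(x)$ using the telescoping identity
\[
\nabla\wt f(x)=\sum_{P'}(A_{P'}(x)-A_P(x))\,\nabla\varphi_{P'}(x)+\sum_{P'}\varphi_{P'}(x)\,\nabla A_{P'},
\]
valid because $\sum_{P'}\nabla\varphi_{P'}\equiv 0$ on $\Omega$. The sums run over the $O(1)$ Whitney neighbors $P'$ of $P$, for which $\ell(P')\approx\ell(P)$ and the associated boundary cubes $b(P),b(P')$ sit in a common surface ball of radius $\approx\ell(P)$; property (c) applied at this enlarged ball gives $|A_{P'}(x)-A_P(x)|\lesssim {\rm Lip}(f)\,\ell(P)$. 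Combined with $|\nabla\varphi_{P'}|\lesssim\ell(P)^{-1}$ and $|\nabla A_{P'}|\lesssim {\rm Lip}(f)$, one obtains $|\nabla\wt f(x)|\lesssim {\rm Lip}(f)$ uniformly on $\Omega$. Hence if $x,y\in\Omega$ with $|x-y|\leq\tfrac{1}{10}\dist(x,\pom)$, the straight segment $[x,y]$ lies in $\Omega$ and integrating gives the desired bound.

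For (B), fix $x\in P\subset\Omega$ and $y\in\pom$. By (a), $\avint_{2B_{b(P')}}|f-A_{P'}|\,d\sigma\lesssim\ell(P)\,{\rm Lip}(f)$, so for each neighbor $P'$ contributing to $\wt f(x)$ I pick $y_0\in 2B_{b(P')}\cap\pom$ with $|A_{P'}(y_0)-f(y_0)|\lesssim\ell(P)\,{\rm Lip}(f)$ and decompose
\[
|A_{P'}(x)-f(y)|\leq |\nabla A_{P'}|\,|x-y_0|+|A_{P'}(y_0)-f(y_0)|+{\rm Lip}(f)\,|y_0-y|.
\]
Since $|x-y_0|\lesssim\ell(P)$ and $|y_0-y|\lesssim\ell(P)+|x-y|$, each summand is $\lesssim{\rm Lip}(f)\,|x-y|$ using $\ell(P)\leq|x-y|$; averaging against $\varphi_{P'}(x)$ completes (B). The remaining long-distance subcase of (C), where $|x-y|>\tfrac{1}{10}\dist(x,\pom)$, reduces to (B) and (A) by picking $z\in\pom$ with $|x-z|=\dist(x,\pom)\leq 10|x-y|$ and controlling both $|\wt f(x)-f(z)|$ and $|\wt f(y)-f(z)|$ (the latter via (B) if $y$ is still far from $\pom$, or by first replacing $y$ with its nearest boundary point and using (A)).

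The only real technical point, which I would not characterize as an obstacle, is the telescoping step: without exploiting $\sum_{P'}\nabla\varphi_{P'}=0$, the bound on $\nabla\wt f$ would degenerate to $|A_{P'}(x)|/\ell(P)$, so property (c) of the lemma must be applied at an enlarged ball containing both $b(P)$ and $b(P')$ in order to recover the comparison $|A_{P'}(x)-A_P(x)|\lesssim{\rm Lip}(f)\,\ell(P)$ that makes the $\ell(P)^{-1}$ factor from $\nabla\varphi_{P'}$ harmless.
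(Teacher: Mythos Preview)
Your proof is correct and uses the same analytic ingredients as the paper's proof: the telescoping identity coming from $\sum_{P'}\nabla\varphi_{P'}\equiv 0$, properties (a)--(c) of the preceding lemma together with \eqref{eqlip12}, and the observation that $\ell(P)\lesssim|x-y|$ whenever $x\in P\in\WW(\Omega)$ and $y\in\pom$.

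The organization differs slightly. The paper treats the interior--interior case directly by splitting according to whether $2P_1\cap 2P_2$ is empty or not, and in the far case it inserts the boundary averages $m_{2B_{b(P_1)},\sigma}f$ and $m_{2B_{b(P_2)},\sigma}f$ rather than a single boundary point $z$. You instead first prove the pointwise gradient bound $|\nabla\wt f|\lesssim\Lip(f)$ (which the paper defers to Lemma~\ref{lem46}(a)), use it to dispose of the short-range interior case by integrating along a segment, and then reduce the long-range interior case to your mixed estimate (B). Both routes are equally valid; yours is arguably a bit more streamlined since the gradient bound is needed later anyway, while the paper's approach avoids invoking convexity of a neighborhood of $x$ and works entirely with discrete differences.
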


\begin{proof}
We will show that $\wt f$ is Lipschitz in $\Omega$ and continuous in $\pom$, so that it is globally Lipschitz in $\overline\Omega$. 

Given $x,y\in\Omega$, let $P_1,P_2\in\WW(\Omega)$ be such that $x\in P_1$, $y\in P_2$.
Assume first the $2P_1\cap 2P_2\neq \varnothing$, and let $P_0\in
\WW(\Omega)$ be a cube with minimal side length such that 
$$2B_{b(P)}\subset 2B_{b(P_0)}
\quad\mbox{ for any $P\in\WW(\Omega)$ such that $1.1P\cap (P_1\cup P_2)\neq \varnothing$}
.$$
 In this case $\ell(P_1)\approx\ell(P_2)\approx\ell(P_0)$, and we write
\begin{align}
\wt f(x) - \wt f(y) &= 
\sum_{P\in\WW(\Omega)} \vphi_P(x)\, A_P(x)-  \sum_{P\in\WW(\Omega)} \vphi_P(y)\, A_P(y) \label{eqal823}\\
& = \!\sum_{P\in\WW(\Omega)} \!\!\vphi_P(x)\,\big(A_P(x) - A_P(y)\big)\nonumber
\\ &\quad +
\sum_{P\in\WW(\Omega)} \!\!\big(\vphi_P(x) - \vphi_P(y)\big)\, (A_P(y) - A_{P_0}(y)).\nonumber
\end{align}
To bound the first sum on the right hand side we use \rf{eqprop11}:
\begin{align*}
\sum_{P\in\WW(\Omega)} \vphi_P(x)\,\big|A_P(x) - A_P(y)\big| & \leq 
\sum_{P\in\WW(\Omega)} \vphi_P(x)\,\big|\nabla A_P|\,|x-y| \\
& \lesssim \sum_{P\in\WW(\Omega)} \vphi_P(x) \,m_{2B_{b(P)},\sigma}(\nabla_{H,p} f)
\,|x-y| \\ &\lesssim {\rm Lip}(f) \,|x-y|,
\end{align*}
where in the last inequality we used that, since $f$ is Lipschitz in $\Omega$, then 
$\nabla_{H,p}f\leq {\rm Lip}(f)$, by \rf{eqlip12}.
 To deal with the last sum on the right hand side of \rf{eqal823} we may assume that the cubes $P$ appearing in the sum are such that either $1.1P\cap P_1\neq\varnothing$ or
$1.1P\cap P_2\neq\varnothing$, since otherwise the associated summand vanishes. We denote by $I_0$ the family of such cubes.
So the cubes from $I_0$ are such that $B_{b(P)}\subset 2B_{b(P_0)}$ and they satisfy $\ell(P)\approx\ell(P_0)$. 
Then by
\rf{eqprop22},
$$|A_P(y) - A_{P_0}(y)| \lesssim 
m_{2B_{b(P_0)},\sigma}(\nabla_{H,p} f)
\,\ell(P)\leq{\rm Lip}(f) \,\ell(P).$$
Thus, {since $\vphi_P$ is smooth and $\|\nabla \vphi_P\|_\infty \lesssim \ell(P)^{-1}$, we have that}
\begin{align*}
\sum_{P\in\WW(\Omega)} \big|&\vphi_P(x) - \vphi_P(y)\big|\, \big|A_P(y) - A_{P_0}(y)\big| \\ &
\lesssim \sum_{P\in I_0} \frac{|x-y|}{\ell(P)}\,\ell(P_0)\,m_{2B_{b(P_0)},\sigma}(\nabla_{H,p} f)\lesssim {\rm Lip}(f) \,|x-y|,
\end{align*}
{where we used that the number of cubes $ P  \in I_0$ is at most a uniform  dimensional constant.} Hence,
\begin{equation}\label{eqlip99}
|\wt f(x) - \wt f(y)|\lesssim {\rm Lip}(f) \,|x-y|.
\end{equation}

{Next we turn our attention to the case   $2P_1\cap2P_2 = \varnothing$ and write}
\begin{align}\label{eqal824}
\wt f(x) - \wt f(y) & = 
\sum_{P\in\WW(\Omega)} \vphi_P(x)\, \big(A_P(x) - A_{P_1}(x)\big) \\&\quad - \sum_{P\in\WW(\Omega)} \vphi_P(y)\, \big(A_P(y) - A_{P_2}(y)\big) + \big(A_{P_1}(x) - A_{P_2}(y)\big)\nonumber\\
& \quad=: S_1+S_2+S_3.\nonumber
\end{align}
{If we use again \rf{eqprop22} and the fact that $|x-y|\gtrsim\ell(P_1)$, we infer that
$$|S_1| \lesssim \sum_{P\in\WW(\Omega)} \vphi_P(x)\, 
m_{2B_{b(P)},\sigma}(\nabla_{H,p} f) \,\ell(P_1) \lesssim {\rm Lip}(f)\,|x-y|.$$
and, by an analogous estimate, $|S_2|  \lesssim  {\rm Lip}(f)\,|x-y|$.
Finally,}
\begin{align*}
\big|A_{P_1}(x) - A_{P_2}(y)\big| & \leq \big|A_{P_1}(x) - m_{2B_{b(P_1)}, \sigma}(A_{P_1})\big| +
\big|m_{2B_{b(P_1)}, \sigma}(A_{P_1} - f)\big|\\
& \; + \big|m_{2B_{b(P_1)}, \sigma}(f) - m_{2B_{b(P_2)}, \sigma}(f)\big|
+ \big|m_{2B_{b(P_2)}, \sigma}(f- A_{P_2})\big|\\
&\; + \big|m_{2B_{b(P_2)}, \sigma}(A_{P_2})- A_{P_2}(y) \big|. 
\end{align*}
By \rf{eqprop00} and \rf{eqprop11},
\begin{multline*}
\big|A_{P_1}(x) - m_{2B_{b(P_1)}, \sigma}(A_{P_1})\big| +
\big|m_{2B_{b(P_1)}, \sigma}(A_{P_1} - f)\big| \\ \lesssim |\nabla A_{P_1}|\,\ell(P_1) + m_{2B_{b(P_1)}, \sigma}(\nabla_{H,p} f)\,\ell(P_1) \lesssim {\rm Lip}(f)\,|x-y|.
\end{multline*}
The same estimate holds replacing $x$ by $y$ and $P_1$ by $P_2$.
Therefore,
\begin{align*}
\big|A_{P_1}(x) - A_{P_2}(y)\big| & \lesssim {\rm Lip}(f)\,|x-y| + \big|m_{2B_{b(P_1)}, \sigma}(f) - m_{2B_{b(P_2)}, \sigma}(f)\big|\\
&  \lesssim {\rm Lip}(f)\,|x-y| + {\rm Lip}(f)\,\big(|x-y| +\ell(P_1) + \ell(P_2)\big) \\ &\approx {\rm Lip}(f)\,|x-y|.
\end{align*}
This shows that \rf{eqlip99} holds also in this case and thus $\wt f$ is Lipschitz in $\Omega$.

Finally it remains to show that $\wt f|_\Omega$ extends continuously to $\pom$, and that the extension coincides with $f$.
For $x\in\pom$ and $y\in \Omega$ such
that $|x-y|\leq \delta$, we write
$$|\wt f(x) - \wt f(y)| = \Big|f(x)- \sum_{P\in\WW(\Omega)}\vphi_P(y)\,A_P(y)\Big| \leq
\sum_{P\in\WW(\Omega)}\vphi_P(y)\,|f(x) - A_P(y)|.$$
Notice that any cube $P\in\WW(\Omega)$ such that $y\in 1.1P$ is contained in $B(x,C\delta)$, for a suitable $C$ depending on the parameters of the Whitney decomposition. 
Then, for all $x$, $y$, and $P$ as in the last sum, using that $f$ is Lipschitz and that $\ell(P)\leq\delta$, together with \rf{eqprop00} and
\rf{eqprop11},
\begin{align*}
|f(x) - A_P(y)|& \leq |f(x) - m_{2B_{b(P)}, \sigma}f| + |
m_{2B_{b(P)}, \sigma}( f - A_P) |\\ &\quad + | m_{2B_{b(P)}, \sigma}(A_P)-A_P(y)\Big|\\
& \lesssim 
{\rm Lip}(f)\,\delta + 
m_{2B_{b(P)}, \sigma}(\nabla_{H,p} f)\,\delta\lesssim {\rm Lip}(f)\,\delta.
\end{align*}
Thus,
$|\wt f(x) - \wt f(y)|\lesssim {\rm Lip}(f)\,\delta.$
Clearly, the same estimate also holds in case that $x,y\in\pom$, and so $\lim_{\overline{\Omega}\ni y\to x} \wt f(y) =\wt f(x)$, as wished.

\end{proof}

\vv

Given the corona construction in terms of the family $\ttt$ from Section \ref{secorona},
for each $R\in\ttt$ we denote by $v_R$ the solution of the Dirichlet problem in $\Omega_R$ with boundary data $\wt f|_{\pom_R}$.
We define the function $v:\overline \Omega\to \R$ by
$$v = \left\{\begin{array}{ll} \wt f & \text{in $\overline\Omega \setminus \bigcup_{R\in\ttt}\Omega_R$,}\\& \\ v_R & \text{in each $\Omega_R$, with $R\in\ttt$,}
\end{array}
\right.
$$
and we call it  {\it almost harmonic extension of $f$}. Notice that $v$ is well defined because of the disjointness 
of the domains $\Omega_R$ proved in Lemma \ref{lem3.7**}.

\vv

\begin{lemma}\label{lem190}
Assume that $f$ is Lipschitz on $\pom$. Then the almost harmonic extension $v$ is continuous in $\overline\Omega$. Also,
$v\in \dot W^{1,2}(\Omega)$, with
\begin{align}\label{eqCC5bis}
\|\nabla v\|_{L^2(\Omega)}^2 \lesssim {\rm Lip}(f)^2\,m(\Omega).
\end{align}
\end{lemma}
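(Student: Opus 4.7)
The plan is to exploit the partition $\Omega = H \sqcup \bigsqcup_{R \in \ttt} \Omega_R$ established in Subsection \ref{subs:cordecOmega} and handle the Sobolev bound and the continuity separately, with two main inputs: the Lipschitz regularity of the auxiliary extension $\wt f$ on $\overline \Omega$ (with $\Lip(\wt f) \lesssim \Lip(f)$) provided by Lemma \ref{lem:Lip-ext}, and the fact that each $\Omega_R$ is a (disjoint union of) starlike Lipschitz subdomain(s) of uniform Lipschitz character.

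For the energy estimate, on $H$ the function $v = \wt f$ is Lipschitz, so $\int_H |\nabla v|^2 \lesssim \Lip(f)^2\, m(H)$. In each $\Omega_R$, since $\wt f \in W^{1,\infty}(\Omega_R)$ and $v_R - \wt f \in W^{1,2}_0(\Omega_R)$ (both being continuous on $\overline{\Omega_R}$ with identical boundary values), the Dirichlet minimality of the harmonic extension gives $\int_{\Omega_R} |\nabla v_R|^2 \leq \int_{\Omega_R} |\nabla \wt f|^2 \lesssim \Lip(f)^2\, m(\Omega_R)$. That the piecewise gradients actually represent the distributional gradient of $v$ on $\Omega$ follows from a routine integration by parts against $C_c^1(\Omega)$ test functions: the interface contributions on $\partial \Omega_R \cap \Omega$ cancel because $v$ agrees with $\wt f$ continuously across them (which is the content of the continuity step below). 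Summing over the partition then gives both $v \in \dot W^{1,2}(\Omega)$ and the quantitative bound \eqref{eqCC5bis}.

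The continuity of $v$ on $\overline\Omega$ is the delicate point. Each $v_R$ extends continuously to $\overline{\Omega_R}$ with boundary values $\wt f|_{\partial \Omega_R}$, since Lipschitz boundary points are regular for the Laplacian; combined with the equality $v = \wt f$ outside $\bigcup_R \Omega_R$ and the Lipschitzness of $\wt f$, this already handles every case except a sequence $y_n \to x \in \partial \Omega$ with $y_n \in \Omega_{R_n}$ and the indices $R_n$ varying. The maximum principle there yields
\[
|v(y_n) - \wt f(y_n)| \;=\; |v_{R_n}(y_n) - \wt f(y_n)| \;\leq\; \Lip(\wt f)\,\diam(\Omega_{R_n}),
\]
and since $\wt f(y_n) \to f(x) = v(x)$, the argument closes as soon as $\diam(\Omega_{R_n}) \to 0$. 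The main obstacle is the scenario where $\ell(R_n) \geq c > 0$ along a subsequence: to reduce it, I would combine the packing Lemma \ref{lempack2} with the $n$-AD-regularity of $\sigma$ to note that only finitely many $R \in \ttt$ of side length $\geq c$ can meet a fixed bounded neighbourhood of $x$; after passing to a further subsequence $R_n \equiv R$ is constant, so $x \in \overline{\Omega_R}\cap\partial\Omega \subset \partial \Omega_R$ is a Lipschitz boundary point of the single domain $\Omega_R$, and continuity of $v_R$ up to $x$ finishes the argument.
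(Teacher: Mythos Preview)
Your proposal is correct and follows essentially the same approach as the paper. Both arguments use the Dirichlet minimality of $v_R$ in each Lipschitz subdomain $\Omega_R$ (this is the content of Remark \ref{remnab} in the paper) together with the Lipschitz bound for $\wt f$ on $H$ to obtain \eqref{eqCC5bis}, and both establish continuity at $x\in\pom$ by splitting according to whether the subdomain $\Omega_R$ containing the approaching point has large or small diameter, invoking the maximum principle in the small case and Wiener regularity of the fixed Lipschitz domain in the large case; the paper carries this out in $\ve$--$\delta$ form while you phrase it sequentially, but the content is the same.
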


\vv

\begin{remark}\label{remnab}
Let $\Omega_0\subset \R^{n+1}$ be a bounded Wiener regular domain.
Given $f\in {\rm Lip}(\pom_0)$, we can consider an extension of $f$ to the whole $\R^{n+1}$ with the same Lipschitz norm, which we continue to 
denote by $f$.
Let $u_f:\Omega_0\to\R$ be the solution of the {continuous} Dirichlet problem in $\Omega_0$ with boundary data $f$. {By  \cite[Corollary 9.29]{HKM}, it holds that $u_f$ is the unique solution of the variational Dirichlet problem, i.e., $\Delta u=0$ in $\Omega$ in the weak sense, $u_f \in W^{1,2}(\om_0)$, and $u_f-f \in W^{1,2}_0(\Omega_0)$.} Since $u_f$ minimizes the Dirichlet energy in $\Omega_0$, we deduce that
$$\|\nabla u_f\|_{L^2(\Omega_0)} \leq \|\nabla f\|_{L^2(\Omega_0)} \leq \|\nabla f\|_{L^\infty(\Omega_0)}
\,\,m(\Omega_0)^{1/2}\leq
{\rm Lip}(f)\,m(\Omega_0)^{1/2}.$$
Notice the analogy between the previous estimate and \rf{eqCC5bis}.
\end{remark}
\vv

\begin{proof}[Proof of Lemma \ref{lem190}]
First we will show that $v$ is continuous. Since the domains $\Omega_R$ are Lipschitz and thus Wiener regular, it is clear that $v$ is continuous in each set $\overline \Omega_R$.
Then taking into account that $v=\wt f$ in $H$ and that $\wt f$ is Lipschitz, it just remains to
show that $v$ is continuous {on} $\pom$.
To this end, we fix $x\in\pom$ and $\ve>0$. Let $\ttt_\ve$ be the family of cubes $R\in\ttt$ such that
$\ell(R)\geq\ve$. Since this is a finite family, there is some $\delta>0$ such that
\begin{equation}\label{eqapp90}
|v_R(y) - v_R(z)|\leq \ve \quad\mbox{ for all $y,z\in\overline\Omega_R$ such that $|y-z|\leq\delta$, $R\in\ttt_\ve$.}
\end{equation}

{Set $\delta':=\min(\delta,\ve)$ and for $y\in \overline \Omega\cap B(x,\delta')$,  let us estimate $|v(x) - v(y)|$.} If 
$y\not \in\bigcup_{R\in\ttt}\Omega_R$, then
\begin{equation}\label{eqapp910}
|v(x) - v(y)| = |\wt f(x) - \wt f(y)|\leq {\rm Lip}(\wt f)\,|x-y| \lesssim {\rm Lip}(f)\,\delta'
\leq {\rm Lip}(f)\,\ve.
\end{equation}
If $y\in\Omega_R$ for some $R\in\ttt_\ve$, then we consider some $z\in \overline \Omega\cap \pom_R\cap B(x,\delta')$ (it is easy to check that such $z$ exists) and,  since  $v(x) - v(z)=\wt f(x)-\wt f(z)$, we may apply Lemma \ref{lem:Lip-ext}  and  \rf{eqapp90}  to get that
\begin{equation}\label{eqapp91}
|v(x) - v(y)|\leq |v(x) - v(z)| + |v_R(z) - v_R(y)| \leq  {\rm Lip}(\wt f)\,|x-z| + \ve \lesssim 
({\rm Lip}(f) + 1) \ve.
\end{equation}
Suppose now that  $y\in\Omega_R$ for some $R\in\ttt\setminus\ttt_\ve$. In this case, we have
$\Omega_R\subset B(R) \subset B(x,C\ve)$ because $\dist(x,\Omega_R)\leq |x-y|\leq \ve$ and $\diam(\Omega_R)\lesssim\ve$ (since $\ell(R)<\ve$). Let $\xi_{\min}$ and $\xi_{\max}$ the points in $\pom_R$ where $v_R$  attains
its minimum and  maximum, respectively. Then we have
$$\wt f(\xi_{\min}) - \wt f(x)= v(\xi_{\min}) - v(x) \leq v(y) - v(x) \leq v(\xi_{\max}) - v(x)=
\wt f(\xi_{\max}) - \wt f(x).$$
Since $\xi_{\min},\xi_{\max}\in B(x,C\ve)$ and $\wt f$ is Lipschitz in $\overline\Omega\setminus \bigcup_{S\in\ttt}\Omega_S$, we deduce that
$$-{\rm Lip}(\wt f)\,\ve\lesssim v(y) - v(x) \lesssim {\rm Lip}(\wt f)\,\ve,$$
and so $|v(x)-v(y)|\lesssim {\rm Lip}(f)\,\ve$.
Together with \rf{eqapp910} and \rf{eqapp91} this shows that 
$$|v(x) -v(y)|\lesssim ( {\rm Lip}(f)+1)\,\ve\quad \mbox{ for all $y\in \Omega$ such that $|x-y|\leq\delta'$,}$$
 which concludes the proof that $v$ is continuous in $x$, and thus in the whole $\overline \Omega$.
 
It remains to estimate $\|\nabla v\|_{L^2(\Omega)}$. 
Since $v$ is Lipschitz on $\pom_R$, for any $R\in\ttt$, by Remark \ref{remnab} it follows that
$$\|\nabla v\|_{L^2(\Omega_R)}^2 \leq
{\rm Lip}(\wt f)\,m(\Omega_R)\lesssim {\rm Lip}( f)\,m(\Omega_R).$$
Taking into account that $v$ is continuous in $\Omega$ and Lipschitz in $\overline\Omega \setminus \bigcup_{R\in\ttt}\Omega_R$, we infer that $v\in W_{\textup{loc}}^{1,2}(\Omega)$, and moreover
\begin{align*}
\|\nabla v\|_{L^2(\Omega)}^2 & \lesssim {\rm Lip}(\wt f)^2\,m(H) + \sum_{R\in\ttt} \|\nabla v\|_{L^2(\Omega_R)}^2\\
&\leq {\rm Lip}(f)^2\,m(H) + \sum_{R\in\ttt} {\rm Lip}(\wt f)^2\,m(\Omega_R) \lesssim {\rm Lip}(f)^2\,m(\Omega),
\end{align*}
{concluding the proof of the lemma}.
\end{proof}
\vv

\begin{lemma}\label{lem45*}
In the distributional sense,
$$\Delta v = \sum_{R\in\ttt} (\partial_{\nu_R}\wt f - \partial_{\nu_R}v_R) \,\HH^n|_{\partial\Omega_R\setminus\partial\Omega} + \chi_H\,\Delta\wt f 
\quad \mbox{ in $\Omega$,}$$
where $\nu_R$ stands for the outer {unit normal to  $\pom_R$}.
\end{lemma}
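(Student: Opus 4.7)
The plan is to verify the distributional identity by testing against an arbitrary $\vphi\in C_c^\infty(\Omega)$ and assembling Green's identity piecewise over the Lipschitz subdomains $\Omega_R$. The starting point is the decomposition
\[
\int_\Omega v\,\Delta\vphi\,dx = \int_\Omega \wt f\,\Delta\vphi\,dx + \sum_{R\in\ttt}\int_{\Omega_R}(v_R-\wt f)\,\Delta\vphi\,dx.
\]
Note that the sum is effectively finite on $\supp\vphi$: since $\Omega_R\subset C(R)$ with $\diam(\Omega_R)\lesssim\ell(R)$ and $\dist(\Omega_R,\pom)\lesssim\ell(R)$, only $R\in\ttt$ with $\ell(R)\gtrsim\dist(\supp\vphi,\pom)>0$ contribute, and there are only finitely many such $R$. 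For the first term, $\wt f=\sum_{P}\vphi_P A_P$ is $C^\infty$ in $\Omega$ as a locally finite sum of smooth bumps, so standard integration by parts yields $\int_\Omega \wt f\,\Delta\vphi\,dx = \int_\Omega \vphi\,\Delta\wt f\,dx$.

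For each surviving $R$, the function $v_R$ solves the continuous Dirichlet problem in the Lipschitz domain $\Omega_R$ with Lipschitz boundary data $\wt f|_{\pom_R}$, so $v_R-\wt f\in W^{1,2}(\Omega_R)\cap C(\overline{\Omega_R})$ has vanishing boundary trace on $\pom_R$, and $\Delta v_R=0$. Applying Green's identity to the pair $(v_R-\wt f,\vphi)$ on $\Omega_R$, the term $(v_R-\wt f)\partial_{\nu_R}\vphi$ vanishes on $\pom_R$ by the matching boundary values, yielding
\[
\int_{\Omega_R}(v_R-\wt f)\,\Delta\vphi\,dx = -\int_{\Omega_R}\vphi\,\Delta\wt f\,dx - \int_{\pom_R}\vphi\,(\partial_{\nu_R}v_R-\partial_{\nu_R}\wt f)\,d\HH^n.
\]
Since $\vphi\in C_c^\infty(\Omega)$ vanishes in a neighborhood of $\pom$, the boundary integral is effectively supported on $\pom_R\setminus\pom$. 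Summing over $R$ and combining with the first term, the volume contributions collapse as $\int_\Omega\vphi\,\Delta\wt f - \sum_R\int_{\Omega_R}\vphi\,\Delta\wt f = \int_H\vphi\,\Delta\wt f$, producing exactly the claimed distributional identity.

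The main technical point is justifying Green's identity at the regularity level actually available for $v_R$ near $\pom_R$. Since each $\Omega_R$ is a uniformly Lipschitz starlike domain and $\wt f|_{\pom_R}$ is Lipschitz, the classical theory of the $L^2$ Dirichlet and regularity problems in Lipschitz domains (Dahlberg, Jerison--Kenig, Verchota) guarantees that the non-tangential boundary gradient of $v_R$ exists $\HH^n$-a.e.\ on $\pom_R$ and lies in $L^2(\pom_R)$, so that the boundary integrals involving $\partial_{\nu_R}v_R$ are well-defined in the standard non-tangential trace sense. Since $\wt f\in C^\infty(\Omega)$, the derivative $\partial_{\nu_R}\wt f$ is classical on $\pom_R\setminus\pom$, and the identity follows by standard approximation (e.g., inner approximation of $\Omega_R$ by smooth subdomains, applying Green's identity in each, and passing to the non-tangential limit). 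The remainder of the proof is purely algebraic rearrangement.
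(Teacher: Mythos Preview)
Your proof is correct and reaches the same conclusion, but the decomposition you use differs from the paper's. The paper partitions $\Omega = \bigcup_R \Omega_R \cup H$ and applies Green's formula separately on each $\Omega_R$ and on $H$ (treated as a piecewise Lipschitz domain), producing boundary terms $\int_{\pom_R} v_R\,\partial_{\nu_R}\phi$ and $\int_{\partial H} \wt f\,\partial_{\nu_H}\phi$ that must then be matched and cancelled using the continuity of $v$ and the fact that $\nu_R = -\nu_H$ on the shared interfaces. Your route instead writes $v = \wt f + \sum_R (v_R - \wt f)\chi_{\Omega_R}$, integrates $\wt f$ against $\Delta\vphi$ globally (exploiting that $\wt f\in C^\infty(\Omega)$), and applies Green's identity only to the zero-trace functions $v_R - \wt f$ on each $\Omega_R$. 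This sidesteps any direct analysis of $\partial H$ and makes the cancellation of the $u\,\partial_\nu\phi$ terms automatic rather than something to be argued. Your discussion of why $\partial_{\nu_R} v_R$ exists in the non-tangential sense via the $L^2$ regularity theory on Lipschitz domains is also more explicit than the paper's treatment. Both arguments are equally valid; yours is slightly more economical.
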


\begin{proof}
%This { easily follows} from Green's formula. Indeed, let $\phi$ by a $C^\infty$ function supported on some ball $B\subset\subset \Omega$.
 Let $\phi$ be a $C^\infty$ function supported on some ball $B\subset\subset \Omega$.
Denote by $I$ the family of cubes  $R\in\ttt$ such that $\overline{\Omega}_R\cap B\neq\varnothing$. Notice that
$I$ is a finite family. Then we write
$$\langle \Delta v,\phi\rangle = \int v \,\Delta\phi \,dm = 
\sum_{R\in I} \int_{B\cap\Omega_R} v_R \,\Delta\phi \,dm + \int_H v\,\Delta\phi\,dm.$$
Notice now that $B\cap\Omega_R$ and $B\cap H$ are piecewise {Lipschitz} domains and that $v_R$ is harmonic in $\Omega_R$.
Hence, by Green's formula, 
\begin{align*} 
\int v \,\Delta\phi \,dm & =
\sum_{R\in I} \int_{B\cap\partial\Omega_R} v_R \,\partial_{\nu_R}\phi \,d\HH^n - \sum_{R\in I} \int_{B\cap\partial\Omega_R} \partial_{\nu_R}v_R\,\phi \,d\HH^n\\
&\quad  + \int_H \Delta \wt f \,\phi \,dm+\int_{B\cap\partial H} \wt f \,\partial_{\nu_H}\phi \,d\HH^n - \int_{B\cap\partial H} \partial_{\nu_H}\wt f\,\phi \,d\HH^n,
\end{align*}
where $\nu_H$ is the {outer unit normal to  $B\cap \partial H$}.
Using the continuity of $v$ in $\Omega$, it is clear that
$$\sum_{R\in I} \int_{B\cap\partial\Omega_R} v_R \,\partial_{\nu_R}\phi \,d\HH^n=
-\int_{B\cap \partial H} \wt f \,\partial_{\nu_H}\phi \,d\HH^n,$$
{since  $\nu_R(x) = -\nu_H(x)$ $\HH^n$-a.e.\ $x \in \partial H$, $v_R=\wt f$ on $\pom_R$ for every $R \in \ttt$, and  $\phi$ is smooth.}
Hence,
$$\langle \Delta u,\phi\rangle = 
 - \sum_{R\in I} \int_{B\cap\partial\Omega_R} \partial_{\nu_R}v_R\,\phi \,d\HH^n- \int_{B\cap\partial H} \partial_{\nu_H}\wt f\,\phi \,d\HH^n + \int_H \Delta \wt f \,\phi \,dm,$$
which proves the lemma.
\end{proof}

\vv

The next lemma provides some auxiliary calculations which will be necessary later.

\begin{lemma}\label{lem46}
For each $P_0\in\WW(\Omega)$, the following hold, for a suitable fixed constant $C$:
\begin{itemize}
\item[(a)]\quad
$\ds |\nabla \wt f(x)| \lesssim m_{CB_{b(P_0)}, \sigma}(\nabla_{H,p} f)$ for all $x\in P_0$.\vv

\item[(b)]\quad
$|\Delta \wt f(x)|\lesssim m_{CB_{b(P_0)}, \sigma}(\nabla_{H,p} f)\,\ell(P_0)^{-1}$ for all $x\in P_0$.
\end{itemize}
\end{lemma}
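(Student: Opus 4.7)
On $\Omega$ we have $\wt f=\sum_{P\in\WW(\Omega)}\vphi_P\,A_P$ with each $A_P\equiv A_{2B_{b(P)}}$ affine, so on the Whitney cube $P_0$ we may differentiate pointwise. Since only the $P\in\WW(\Omega)$ with $1.1P\cap P_0\neq\varnothing$ contribute, the sums reduce to a uniformly bounded number of terms, and for each such $P$ one has $\ell(P)\approx\ell(P_0)$, $\|\nabla\vphi_P\|_\infty\lesssim\ell(P_0)^{-1}$, $\|\Delta\vphi_P\|_\infty\lesssim\ell(P_0)^{-2}$, and $2B_{b(P)}\subset CB_{b(P_0)}$ for a suitable absolute constant $C$. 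Writing $m:=m_{CB_{b(P_0)},\sigma}(\nabla_{H,p} f)$, the estimates \eqref{eqprop11} and \eqref{eqprop22} applied to these neighboring balls then directly give $|\nabla A_P|\lesssim m$ and $|A_P(x)-A_{P_0}(x)|\lesssim m\,\ell(P_0)$ for $x\in P_0$.

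For (a), I would write
$$\nabla\wt f(x)=\sum_P \vphi_P(x)\,\nabla A_P+\sum_P \nabla\vphi_P(x)\,A_P(x).$$
The first sum is $\lesssim m$ from the bound on $|\nabla A_P|$. For the second sum, exploit the key cancellation $\sum_P \nabla\vphi_P\equiv 0$ (obtained by differentiating $\sum_P\vphi_P\equiv 1$ on $\Omega$) to rewrite it as $\sum_P \nabla\vphi_P(x)\,(A_P(x)-A_{P_0}(x))$, which is $\lesssim \ell(P_0)^{-1}\cdot m\,\ell(P_0)=m$.

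For (b), since each $A_P$ is affine one has $\Delta A_P=0$, so
$$\Delta\wt f=2\sum_P \nabla\vphi_P\cdot\nabla A_P+\sum_P \Delta\vphi_P\,A_P.$$
Applying the cancellations $\sum_P\nabla\vphi_P\equiv 0$ and $\sum_P\Delta\vphi_P\equiv 0$ in the same way yields
$$\Delta\wt f(x)=2\sum_P \nabla\vphi_P(x)\cdot(\nabla A_P-\nabla A_{P_0})+\sum_P \Delta\vphi_P(x)\,(A_P(x)-A_{P_0}(x)).$$
For the second sum, $|\Delta\vphi_P|\lesssim\ell(P_0)^{-2}$ together with $|A_P-A_{P_0}|\lesssim m\,\ell(P_0)$ gives $\lesssim m\,\ell(P_0)^{-1}$. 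For the first sum, note that $A_P-A_{P_0}$ is affine and, by \eqref{eqprop22}, has $L^\infty$-norm $\lesssim m\,\ell(P_0)$ on a ball of radius $\approx\ell(P_0)$ around $P_0$; hence its (constant) gradient satisfies $|\nabla A_P-\nabla A_{P_0}|\lesssim m$. Combined with $|\nabla\vphi_P|\lesssim\ell(P_0)^{-1}$, this gives $\lesssim m\,\ell(P_0)^{-1}$ as well. No step appears genuinely delicate; the only point worth checking carefully is the passage from the pointwise bound on the affine difference $A_P-A_{P_0}$ to a bound on its gradient, which is essentially the same elementary argument used in the proof of \eqref{eqprop22} itself.
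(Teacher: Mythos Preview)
Your proof is correct and follows essentially the same route as the paper: product rule, the cancellations $\sum_P\nabla\vphi_P=\sum_P\Delta\vphi_P=0$, and the estimates \eqref{eqprop11}, \eqref{eqprop22}. One minor simplification: in (b) the paper does not subtract $\nabla A_{P_0}$ in the cross term---it bounds $\sum_P\nabla\vphi_P\cdot\nabla A_P$ directly via $|\nabla\vphi_P|\lesssim\ell(P_0)^{-1}$ and $|\nabla A_P|\lesssim m$ from \eqref{eqprop11}, which avoids your detour through the gradient of the affine difference.
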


\begin{proof}
For each $x\in P_0$, we have
\begin{align*}
\nabla \wt f(x) &= \sum_{P\in\WW(\Omega)}\nabla \vphi_P(x)\,A_P(x) +  \sum_{P\in\WW(\Omega)}\vphi_P(x)\,\nabla A_P\\
& = 
\sum_{P\in\WW(\Omega)}\nabla \vphi_P(x)\,(A_P(x)-A_{P_0}(x)) + \sum_{P\in\WW(\Omega)} \vphi_P(x)\, \nabla A_P.
\end{align*}
Using \rf{eqprop11} and \rf{eqprop22} and taking into account that the sums above only run over cubes $P$ such that $1.1P\cap P_0\neq\varnothing$ we get
\begin{align*}
|\nabla \wt f(x)| &\lesssim \frac1{\ell(P_0)}\,m_{CB_{b(P_0)}, \sigma}(\nabla_{H,p} f)\,\ell(P_0) + m_{CB_{b(P_0)}, \sigma}(\nabla_{H,p} f) \\ &\lesssim m_{CB_{b(P_0)}, \sigma}(\nabla_{H,p} f),
\end{align*}
which proves (a).

Regarding the second statement of the lemma, for $x\in P_0$ we write
\begin{align*}
\Delta \wt f(x) &= \sum_{P\in\WW(\Omega)}\Delta \vphi_P(x)\,A_P(x) + 2 \sum_{P\in\WW(\Omega)}\nabla \vphi_P(x)\cdot \nabla A_P\\
& = 
\sum_{P\in\WW(\Omega)}\Delta \vphi_P(x)\,(A_P(x)-A_{P_0}(x)) + 2 \sum_{P\in\WW(\Omega)}\nabla \vphi_P(x)\cdot \nabla A_P.
\end{align*}
Using again \rf{eqprop11} and \rf{eqprop22} and that the sums above only run over cubes such that $1.1P\cap P_0\neq\varnothing$, we deduce
\begin{align*}
|\Delta \wt f(x)| & \lesssim \frac1{\ell(P_0)^2}\,m_{CB_{b(P_0)}, \sigma}(\nabla_{H,p} f)\,\ell(P_0) + 
\frac1{\ell(P_0)}\,m_{CB_{b(P_0)}, \sigma}(\nabla_{H,p} f) \\ & \approx \frac1{\ell(P_0)}\,m_{CB_{b(P_0)}, \sigma}(\nabla_{H,p} f).
\end{align*}
\end{proof}

\vv

Given a signed measure $\eta$ in $\Omega$ 
and $x\in\partial\Omega$, we denote the {\it Carleson functional}
\begin{equation}\label{eq:Carlesonfunctional}
\CC(\eta)(x) = \sup_{r>0} \frac{|\eta|(B(x,r)\cap\Omega)}{r^n}.
\end{equation}
For $g\in L^1_{loc}(\Omega)$ we set $\CC(g)(x) = \CC(g\,dm)(x)$, where $dm$ is the Lebesgue measure in $\R^{n+1}$. That is,
$$\CC(g)(x) = \sup_{r>0} \frac1{r^n} \int_{B(x,r)} |g|\,dm.$$

\vv

\begin{lemma} \label{lemdeltap}
{Let $p\in (1,2+\ve_0)$, with $\ve_0>0$ as in Remark \ref{rem:epsilon0}.}
Suppose that $f:\partial\Omega\to\R$ is a Lipschitz function on
$\pom$ and let $v=v(f)$ the almost {harmonic} extension of $f$. Then
\begin{equation}\label{eqCC5}
\|\CC(\Delta v)\|_{L^p(\sigma)} \lesssim \|\nabla_{H,p} f\|_{L^p(\sigma)}.
\end{equation}
\end{lemma}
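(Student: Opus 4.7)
The plan is to exploit the decomposition of $\Delta v$ from Lemma~\ref{lem45*} by writing $\Delta v = \eta_b + \eta_v$, where the \emph{boundary jump} measure $\eta_b := \sum_{R\in\ttt}(\partial_{\nu_R}\wt f - \partial_{\nu_R}v_R)\,\HH^n|_{\pom_R\setminus\pom}$ carries the jumps across the internal Lipschitz boundaries and $\eta_v := \chi_H\,\Delta\wt f\,dm$ is the bulk contribution on the exceptional set $H$. Since $\CC(\Delta v)\leq \CC(\eta_b) + \CC(\eta_v)$, it suffices to bound each piece separately in $L^p(\sigma)$.

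For the bulk term, I would use Lemma~\ref{lem46}(b) to estimate $|\Delta\wt f(x)|\lesssim m_{CB_{b(P)},\sigma}(\nabla_{H,p}f)/\ell(P)$ for $x\in P\in\WW(\Omega)$, so that $\int_P|\Delta\wt f|\,dm \lesssim m_{CB_Q,\sigma}(\nabla_{H,p}f)\,\sigma(Q)$ whenever $P\subset w(Q)$ with $Q=b(P)\in\HH$. Because $\dist(w(Q),\pom)\approx\ell(Q)$, every $Q\in\HH$ with $w(Q)\cap B(x,r)\neq\varnothing$ and $x\in\pom$ satisfies $\ell(Q)\lesssim r$ and $Q\subset C'B(x,r)$. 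Combining the packing condition~\eqref{eqHH*2} with a standard dyadic Carleson embedding theorem and the $L^p$-boundedness of the Hardy--Littlewood maximal function on $\pom$ then yields $\|\CC(\eta_v)\|_{L^p(\sigma)}\lesssim\|\nabla_{H,p}f\|_{L^p(\sigma)}$.

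The boundary jump term hinges on the fact that each $\Omega_R$ is a disjoint union of at most two starlike Lipschitz domains with Lipschitz character uniformly bounded in $R$ (depending only on $\delta$), so by Remark~\ref{rem:epsilon0} the problem $(\wt R_p)$ is solvable uniformly in all $\Omega_R$ for every $p\in(1,2+\ve_0)$. Applied to $v_R$, which has boundary data the Lipschitz function $\wt f|_{\pom_R}$, this gives
\[
\|\partial_{\nu_R}v_R\|_{L^p(\HH^n|_{\pom_R})}\leq\|\NN(\nabla v_R)\|_{L^p(\HH^n|_{\pom_R})}\lesssim\|\nabla_t(\wt f|_{\pom_R})\|_{L^p(\HH^n|_{\pom_R})},
\]
while $|\partial_{\nu_R}\wt f|\leq|\nabla\wt f|$ trivially. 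On the portion $\pom_R\setminus\pom$, which lies on the Lipschitz graphs $\Gamma_R^\pm\subset\Omega$, Lemma~\ref{lem46}(a) together with Lemma~\ref{lemcontingtree} bounds the $L^p$-integral of $|\nabla\wt f|$ in terms of $\sum_{Q\in\tree^*(R)}m_{CB_Q,\sigma}(\nabla_{H,p}f)^p\,\sigma(Q)$, while on $\pom_R\cap\pom$ one has $\wt f=f$ and the pointwise inequality $|\nabla_t f(x)|\lesssim\nabla_{H,p}f(x)$ for $\sigma$-a.e.\ $x\in\pom$, deduced from the Haj\l asz estimate~\eqref{eq:H-Sobolev} via the first identity in Lemma~\ref{lem2.2}. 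Summing over $R\in\ttt$, using the essential disjointness of the sets $\overline\Omega_R\cap\Omega$ and Lemma~\ref{lemcontingtree} to control the overlap of the families $\tree^*(R)$, and invoking the packing of $\ttt$ from Lemma~\ref{lempack2} together with a Carleson-embedding argument analogous to the one used for $\eta_v$, yields $\|\CC(\eta_b)\|_{L^p(\sigma)}\lesssim\|\nabla_{H,p}f\|_{L^p(\sigma)}$.

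The main obstacle is the boundary jump step: one must assemble three distinct structural ingredients---the uniform $L^p$-solvability of $(\wt R_p)$ in the Lipschitz subdomains $\Omega_R$ (precisely what forces the restriction $p<2+\ve_0$), the packing of $\ttt$ from Lemma~\ref{lempack2}, and the Whitney coverage of $\overline\Omega_R\cap\Omega$ by $\tree^*(R)$ from Lemma~\ref{lemcontingtree}---and glue them through a Carleson embedding that correctly balances the tangential gradient of $f$ on $\pom_R\cap\pom$ against the Haj\l asz averages on $\pom_R\setminus\pom$, while converting the disjointness of the $\overline\Omega_R\cap\Omega$ into a single global $L^p(\sigma)$ bound on $\pom$.
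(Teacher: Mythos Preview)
Your decomposition $\Delta v=\eta_b+\eta_v$ and your handling of the bulk term $\eta_v$ match the paper's proof. The ingredients you list for $\eta_b$---uniform $(\wt R_p)$-solvability in each $\Omega_R$, Lemma~\ref{lem46}(a) on $\pom_R\setminus\pom$, and the comparison of $\nabla_t f$ with $\nabla_{H,p}f$ on $\pom_R\cap\pom$---are also the right ones. (Incidentally, your pointwise inequality $|\nabla_t f(x)|\lesssim\nabla_{H,p}f(x)$ is correct at Lebesgue points of $\nabla_{H,p}f$, but the justification should go through the \emph{averaged} identity in Lemma~\ref{lem2.2}, not the first one: the Haj\l asz inequality holds only for $\sigma$-a.e.\ pairs, so the pointwise $\limsup$ does not directly cooperate with it.)

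The genuine gap is the passage from per-domain $L^p$ control,
\[
\sum_{R\in\ttt}\int_{\pom_R}\big(\NN_R(\nabla v_R)+|\nabla\wt f|\,\chi_{\pom_R\setminus\pom}\big)^p\,d\HH^n \lesssim \|\nabla_{H,p}f\|_{L^p(\sigma)}^p,
\]
to the Carleson functional bound $\|\CC(\eta_b)\|_{L^p(\sigma)}\lesssim\|\nabla_{H,p}f\|_{L^p(\sigma)}$. Your phrase ``a Carleson-embedding argument analogous to the one used for $\eta_v$'' does not apply: $\eta_v$ lives on Whitney regions $w(Q)$ indexed by a Carleson family $\HH\subset\DD_\sigma$, so a dyadic embedding on $\pom$ is immediate, whereas $\eta_b$ is a \emph{surface} measure on the internal Lipschitz boundaries $\bigcup_R(\pom_R\setminus\pom)$, which carry no such dyadic structure on $\pom$. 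The paper's device is to introduce the auxiliary measure $\mu=\sum_{R\in\ttt}\HH^n|_{\pom_R}$, observe (using the packing of $\ttt$) that $\mu$ is $n$-AD-regular, and then bound $|\eta_b|(B)\le\int_B F\,d\mu$ with $F=(\sum_R(\NN_R(\nabla v_R)+|\nabla\wt f|\chi_{\pom_R\setminus\pom})^p)^{1/p}$ (using that $\mu$-a.e.\ point lies in at most one $\pom_R$). This yields $\CC(\eta_b)\lesssim\cM_\mu F$ pointwise, and the $L^p(\mu)$-boundedness of $\cM_\mu$ reduces matters exactly to the displayed sum above. You should also note that the paper packages the $\pom_R\setminus\pom$ contribution through the family $\HH$ (since $b(P)\in\partial\tree^{**}(R)\subset\HH$ whenever $P\cap\pom_R\neq\varnothing$), which is cleaner than summing over the overlapping families $\tree^*(R)$.
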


\begin{proof}
Recall that, by Lemma \ref{lem45*},
$$\Delta v = \eta + \chi_H\,\Delta\wt f\quad \mbox{ in $\Omega$,}$$
where
$$\eta = \sum_{R\in\ttt} (\partial_{\nu_R}\wt f - \partial_{\nu_R}v_R) \,\HH^n|_{\partial\Omega_R\setminus\partial\Omega}.$$
Then we split 
$$\|\CC(\Delta v)\|_{L^p(\sigma)} \leq \|\CC(\eta)\|_{L^p(\sigma)} + \|\CC(\chi_H\Delta\wt f)\|_{L^p(\sigma)} .$$

To estimate $\|\CC(\eta)\|_{L^p(\sigma)}$ we consider the measure
$\mu = \sum_{R\in\ttt}\HH^n|_{\partial\Omega_R}.$
From the fact that $\sigma$-a.e.\ $\xi\in\pom$ belongs to $\bigcup_{R\in\ttt}\partial\Omega_R^{\pm}$ (see Lemma \ref{lemsurfacetot}) it follows that $\partial\Omega\subset\supp\mu$ and $\sigma\leq \mu$ (i.e., $\sigma(A)\leq\mu(A)$ for any Borel set $A$).
Further, from the Carleson packing condition of the family $\ttt$ and the $n$-AD-regularity of each measure $\HH^n|_{\partial\Omega_R}$, it follows easily that $\mu$ is also $n$-AD-regular (with uniform constants). For any ball $B$ centered at $\supp\mu$, we have
\begin{align*}|\eta|(B) & =
\sum_{R\in\ttt} \int_{B\cap \partial\Omega_R\setminus\pom} |\partial_{\nu_R} (\wt f- v_R)|\,d\HH^n \\ &\leq
 \int_B \sum_{R\in\ttt}\big(|\partial_{\nu_R} v_R| + |\nabla \wt f|\big)\,\chi_{\pom_R\setminus\pom}\,d\mu. 
\end{align*}
Denoting by $\NN_R$ the non-tangential maximal operator for the domain $\Omega_R$ and taking
into account that $\mu$-a.e.\ $x\in\R^{n+1}$ belongs at most to one of the sets $\partial\Omega_R$, $R\in\ttt$, we get 
\begin{align*}
|\eta|(B)  & \leq  \int_B \sum_{R\in\ttt}\big(\NN_R(\nabla v_R) +|\nabla\wt f|\big)\,\chi_{\pom_R\setminus\pom}
\,d\mu\\
& \lesssim \int_B \Big(\sum_{R\in\ttt}\big(\NN_R(\nabla v_R)
+|\nabla\wt f|\,\chi_{\pom_R\setminus\pom}
\big)^p\Big)^{1/p}\,d\mu.
\end{align*}
Here we understand that $\NN_R(\nabla  v_R)$ vanishes away from $\pom_R$.
Hence, for $\mu$-a.e.\ $x\in\R^{n+1}$,
\begin{align*}
\CC(\eta)(x) & \lesssim \sup_{r>0} \frac1{\mu(B(x,r))} \int_{B(x,r)} \!\Big(\sum_{R\in\ttt}\!\big(\NN_R(\nabla v_R)
+|\nabla\wt f|\,\chi_{\pom_R\setminus\pom}
\big)^p\Big)^{1/p}d\mu \\ &=\cM_\mu F(x),
\end{align*}
where $\cM_\mu$ is the Hardy-Littlewood maximal operator with respect to $\mu$ and
$$F(x) = \Big(\sum_{R\in\ttt}\big(\NN_R(\nabla v_R)(x)
+|\nabla\wt f(x)|\,\chi_{\pom_R\setminus\pom}
\big)^p\Big)^{1/p}.$$
Therefore, using that $\sigma\leq\mu$ and the $L^p$-solvability of the regularity problem in Lipschitz domains, we obtain
\begin{align}\label{eqalk41}
\|\CC(\eta)\|_{L^p(\sigma)}^p & \lesssim 
\|\cM_\mu F\|_{L^p(\mu)}^p  \lesssim \|F\|_{L^p(\mu)}^p \nonumber\\
& = \sum_{R\in\ttt}\int_{\pom_R}\big(\NN_R(\nabla v_R) +
|\nabla\wt f|\,\chi_{\pom_R\setminus\pom}
\big)^p\,d\mu\nonumber\\
& \lesssim \sum_{R\in\ttt} \left(\int_{\pom_R}|\nabla_{t_R} \wt f|^p\,d\mu + \int_{\pom_R\setminus \pom}{|\nabla \wt f|^p}\,d\mu\right)\nonumber\\
& \lesssim \sum_{R\in\ttt} \int_{\pom_R\setminus \pom}{|\nabla\wt f|^p}\,d\mu + \sum_{R\in\ttt}\int_{\pom_R\cap\pom}|\nabla_{t_R} \wt f|^p\,d\mu,
\end{align}
where $\nabla_{t_R}$ stands for the tangential derivative in $\Omega_R$, so that
$$|\nabla_{t_R} f(x)| = \limsup_{\pom_R\ni y\to x}\frac{|f(y)-f(x)|}{|y-x|}.$$
Further, we have that, for $\sigma$-a.e. $x\in\pom\cap\pom_R$,
$|\nabla_{t_R} \wt f(x)| = |\nabla_{t}  f(x)|,$
where $\nabla_{t}$ stands for the tangential derivative in $\Omega$.
This follows from Rademacher's theorem about the a.e.\ differentiability of Lipschitz functions (see, for example, Theorem 11.4 and Lemma 11.5 from \cite{Maggi}).
Notice now that, by Lemma \ref{lem2.2} and 
since $\nabla_{H,p}f$ is a Haj\l asz upper gradient for $f$, for $\sigma$-a.e.\ $x\in\pom$ we have
\begin{align}\label{eqfac71}
|\nabla_{t}  f(x)| & \approx \limsup_{r\to0} \avint_{B(x,r)} \frac{|f(y) - f(x)|}{|y-x|}\,d\sigma(y)\\
& \leq \limsup_{r\to0} \avint_{B(x,r)}
\big(\nabla_{H,p}f(y) + \nabla_{H,p}f(x)\big)\,d\sigma(y)\notag\\
& \leq \cM_\sigma(\nabla_{H,p}f)(x) + \nabla_{H,p}f(x).\notag
\end{align}
So using also that the sets $\pom\cap \pom_R$ are
disjoint (with the possible exception of a set of zero surface measure),
we get
\begin{align}\label{eqal946}
 \sum_{R\in\ttt} \int_{\pom\cap\pom_R} &|\nabla_{t_R} \wt f|^p\,d\sigma
 =  \sum_{R\in\ttt} \int_{\pom\cap\pom_R} |\nabla_{t} f|^p\,d\sigma\\
 &\lesssim\int_\pom |\cM_\sigma(\nabla_{H,p}f)|^p\,d\sigma +  \int_\pom |\nabla_{H,p}f|^p\,d\sigma\lesssim \int_\pom |\nabla_{H,p}f|^p\,d\sigma.
 \notag
\end{align}

Regarding the first term of \rf{eqalk41}, for each $R$ we have, by Lemma \ref{lem46}(a), {
$$\int_{\pom_R\setminus \pom}|\nabla \wt f|^p\,d\mu =  \int_{\pom_R\setminus \pom}|\nabla \wt f|^p\,d \mathcal{H}^n|_{\pom_R}
\lesssim \sum_{\substack{P\in\WW(\Omega):\\P\cap \pom_R\neq\varnothing}} m_{CB_{b(P)}, \sigma}(\nabla_{H,p} f)\,\ell(P)^n.
$$}
So taking into account that the cubes $b(P)$ with $P\in\WW(\Omega)$ such that $P\cap \pom_R\neq\varnothing$ are contained in $\HH$ (where $\HH$ is the Carleson family of cubes defined in Lemma \ref{lem:HCarleson})
and using also \rf{eqal946},
we obtain 
\begin{align}\label{eqal947}
\|\CC(\eta)\|_{L^p(\sigma)}^p
 & \lesssim 
\sum_{Q\in\HH} m_{CB_Q, \sigma}(\nabla_{H,p} f)^p\,\ell(Q)^n + \int_\pom |\nabla_{H,p}f|^p\,d\sigma.
\end{align}
%Recall now that $\HH$ is  a Carleson family and notice that
%\begin{equation}\label{eqaq}
%\wt m_{B_Q,p_0}(|\nabla f|)\lesssim \inf_{x\in Q} \cM_{\sigma,p_0} (\nabla_t f)(x),
%\end{equation}
%where $\cM_{\mu,p_0}$ is the maximal operator defined by
{
Since $\HH$ is  a Carleson family, by the lower AD-regularity of $\sigma$, Carleson's embedding theorem, and the $L^p(\sigma)$ boundedness of the Hardy-Littlewood maximal operator $\cM_{\sigma}$,
it follows that
\begin{align}\label{eqhcar23}
\sum_{Q\in\HH} m_{CB_Q, \sigma}(\nabla_{H,p} f)^p\,\ell(Q)^n &\lesssim \int_\pom \sup_{Q \ni x} m_{CB_Q, \sigma}(\nabla_{H,p} f)^p \,d\sigma(x)\\
& \lesssim \|\cM_{\sigma}(\nabla_{H,p} f)\|_{L^p(\sigma)}^p
\lesssim \|\nabla_{H,p} f\|_{L^p(\sigma)}^p.\notag
\end{align}
}
Together with \rf{eqal947}, this shows that
$$\|\CC(\eta)\|_{L^p(\sigma)}\lesssim\|\nabla_{H,p} f\|_{L^p(\sigma)}.$$

%The arguments to estimate $ \|\CC(\chi_H\Delta\wt f)\|_{L^p(\sigma)}$ are simpler than the
%ones above. Anyway we will write all the details for the reader's convenience.

{It remains to estimate $ \|\CC(\chi_H\Delta\wt f)\|_{L^p(\sigma)}$. To this end,} notice that by Lemma \ref{lem46}(b)
for any ball $B$ centered at $\pom$,  we have
$$\int_{B\cap H} |\Delta \wt f|\,dm \leq 
\sum_{Q\in\HH} \int_{w(Q)\cap B} |\Delta \wt f|\,dm\lesssim
\sum_{Q\in\HH:Q\subset CB} 
m_{CB_Q, \sigma}(\nabla_{H,p} f)\,\sigma(Q). 
$$
From the fact that $\HH$ is a Carleson family and Carleson's embedding theorem, it follows that 
$$\int_{B\cap H} |\Delta \wt f|\,dm \lesssim
\int \sup_{Q:x\in Q\subset CB} 
m_{CB_Q, \sigma}(\nabla_{H,p} f)  \,d\sigma(x) \lesssim \int_{CB} \cM_{\sigma}(\nabla_{H,p} f)\,d\sigma.
$$
Therefore, $\CC(\chi_H\Delta\wt f)(x)\leq\cM_\sigma(\cM_{\sigma}(\nabla_{H,p} f))(x)$, and so
\begin{align*}
 \|\CC(\chi_H\Delta\wt f)\|_{L^p(\sigma)} & \lesssim 
\|\cM_\sigma(\cM_{\sigma}(\nabla_{H,p} f))\|_{L^p(\sigma)} \lesssim 
\|\cM_{\sigma}(\nabla_{H,p} f)\|_{L^p(\sigma)} \\ &\lesssim\|\nabla_{H,p} f\|_{L^p(\sigma)}.
\end{align*}
{Combining this and the previously obtained estimate for $\|\CC(\eta)\|_{L^p(\sigma)}$,} we conclude the proof of the lemma.
\end{proof}

\begin{remark}
Extensions of $L^p(\R^n)$ functions to the upper half-space $\R^{n+1}_+$ so that $\|\CC(\nabla v)\|_{L^p(\sigma)} \lesssim \|f\|_{L^p(\sigma)}$ and $\| \NN( v)\|_{L^p(\sigma)} \lesssim \|f\|_{L^p(\sigma)}$ were  considered by Hyt\"onen and Ros\'en in \cite{HR2} using the standard dyadic extension and then a clever combination of an $\ve$-approximability type argument along with an iteration process. This method, or rather its  extension in terms of best affine approximations from the Dorronsoro-type  coefficients like our $\wt f$, does not seem to apply to boundary functions in Sobolev spaces even in $\R^{n+1}_+$, let alone in rough domains. 
\end{remark}

\vv

% ********************************************************************************************

\section{The one-sided Rellich inequality for the regularity problem}

We will need the following important technical result. We will abuse notation and write $|\Delta v| \,dm$ instead of $d |\Delta v|$, which would be the accurate notation since we have only shown that $|\Delta v|$ is a non-negative Radon measure.

\begin{lemma}\label{lematec}
{Let $\Omega\subset\R^{n+1}$ satisfy the assumptions in Theorem \ref{teomain}. Let also  $w:\overline\Omega\to\R$ be a harmonic function  in $\Omega$, which is continuous in $\overline \Omega$ and Lipschitz on $\pom$. If $f\in {\rm Lip}(\pom)$ and $v$ is the almost harmonic extension of $f$ to $\Omega$, then he have}
\begin{equation}\label{eqclau9}
\left|\int_{\Omega} \nabla v\cdot \nabla w\,dm\right| \lesssim 
\|\nabla_{H,p} f\|_{L^p(\sigma)}\,\|w\|_{L^{p'}(\sigma)}+ \int_\Omega |w|  |\Delta v|\,dm.
\end{equation}
\end{lemma}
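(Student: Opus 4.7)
The plan is to establish the identity
\[
\int_\Omega \nabla v \cdot \nabla w\, dm = T_1 + T_2 - \int_\Omega w\, d(\Delta v),
\]
where $T_1 = \int_{\partial H \cap \pom} w\, \partial_{\nu_H} \wt f\, d\sigma$ and $T_2 = \sum_R \int_{\partial \Omega_R \cap \pom} w\, \partial_{\nu_R} v_R\, d\sigma$, and then bound $|T_1|+|T_2| \lesssim \|\nabla_{H,p} f\|_{L^p(\sigma)}\|w\|_{L^{p'}(\sigma)}$. To obtain the identity, I would apply Green's formula piecewise along the decomposition $\Omega = H \sqcup \bigsqcup_{R \in \ttt} \Omega_R$ from Section \ref{secorona}. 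On each Lipschitz subdomain $\Omega_R$, the function $v_R$ is harmonic with Lipschitz boundary data $\wt f|_{\partial \Omega_R}$, hence $v_R \in W^{1,2}(\Omega_R)$ and $\NN_R(\nabla v_R) \in L^2(\sigma_R)$ by the classical regularity theory in Lipschitz domains; since $w|_{\partial \Omega_R}$ is Lipschitz too (Lipschitz on $\pom$ by hypothesis and smooth on $\partial \Omega_R \setminus \pom$), Green's identity together with $\Delta v_R=0$ yields $\int_{\Omega_R} \nabla v_R \cdot \nabla w\, dm = \int_{\partial \Omega_R} w\, \partial_{\nu_R} v_R\, d\sigma_R$. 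On $H = \Omega \setminus \bigsqcup_R \overline \Omega_R$, the Carleson packing of $\ttt$ from Lemma \ref{lempack2} forces $\HH^n(\partial H)<\infty$, so $H$ has finite perimeter and Federer's Gauss-Green formula (applied to the field $w\,\nabla \wt f$ after mollifying $\wt f$ and passing to the limit) gives
\[
\int_H \nabla \wt f \cdot \nabla w\, dm = \int_{\partial^* H} w\, \partial_{\nu_H} \wt f\, d\HH^n - \int_H w\, \Delta \wt f\, dm.
\]

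Summing these identities and using that $\nu_H = -\nu_R$ on the common interior boundary $\partial \Omega_R \cap \partial H$, the contributions $\sum_R \int_{\partial \Omega_R \cap \partial H} w\,(\partial_{\nu_R} v_R - \partial_{\nu_R} \wt f)\, d\sigma_R$ together with $\int_H w\, \Delta \wt f\, dm$ reassemble into $\int_\Omega w\, d(\Delta v)$ via Lemma \ref{lem45*}, leaving $T_1$ and $T_2$ as the only surviving boundary terms on $\pom$. For $T_1$, Lemma \ref{lem46}(a) gives $|\nabla \wt f(y)| \lesssim m_{CB_{b(P)},\sigma}(\nabla_{H,p} f) \lesssim \cM_\sigma(\nabla_{H,p} f)(x)$ whenever $y$ lies in a Whitney cube adjacent to $x \in \pom$, so $|\partial_{\nu_H}\wt f(x)| \lesssim \cM_\sigma(\nabla_{H,p} f)(x)$ for $\sigma$-a.e.\ $x$; then H\"older's inequality and the $L^p(\sigma)$-boundedness of $\cM_\sigma$ give $|T_1| \lesssim \|w\|_{L^{p'}(\sigma)} \|\nabla_{H,p} f\|_{L^p(\sigma)}$.

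For $T_2$, I would introduce $\mu = \sum_R \HH^n|_{\partial \Omega_R}$, which is $n$-AD-regular---and thus Carleson in $\Omega$---by the packing of $\ttt$ combined with the uniform AD-regularity of each $\partial \Omega_R$. H\"older's inequality with respect to $\mu$ bounds $|T_2|$ by $\bigl(\int_\Omega |w|^{p'} d\mu\bigr)^{1/p'} \bigl(\int g^p d\mu\bigr)^{1/p}$, where $g = \sum_R |\partial_{\nu_R} v_R|\,\chi_{\partial \Omega_R}$. The Carleson embedding theorem for harmonic functions in $\Omega$ combined with the hypothesized $(D_{p'})$-solvability controls $\bigl(\int_\Omega |w|^{p'} d\mu\bigr)^{1/p'} \lesssim \|\NN(w)\|_{L^{p'}(\sigma)} \lesssim \|w\|_{L^{p'}(\sigma)}$, while the $L^p$-regularity solvability in each Lipschitz $\Omega_R$ yields $\|\partial_{\nu_R} v_R\|_{L^p(\sigma_R)} \lesssim \|\nabla_{t_R} \wt f\|_{L^p(\sigma_R)}$ and the estimates from the proof of Lemma \ref{lemdeltap} then give $\bigl(\int g^p d\mu\bigr)^{1/p} \lesssim \|\nabla_{H,p} f\|_{L^p(\sigma)}$. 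Combining this with $|\int_\Omega w\, d(\Delta v)| \leq \int_\Omega |w|\,|\Delta v|\,dm$ yields the claim. The hardest part, I expect, will be the rigorous justification of the Gauss-Green identity on the rough set $H$: although $H$ has finite perimeter, $\partial H$ contains the AD-regular but non-Lipschitz portion of $\pom$, so Federer's formula must be applied carefully given that $\wt f$ is only Lipschitz on $\overline \Omega$, and one must verify that the normal trace $\partial_{\nu_H} \wt f$ on $\partial^* H \cap \pom$ is well-defined $\sigma$-a.e.\ and agrees with the pointwise limit from the Whitney cubes adjacent to $\pom$.
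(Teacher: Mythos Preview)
Your formal identity is correct, and once granted, the estimates for $T_1$ and $T_2$ go through essentially as you describe (for $T_2$ you are even working harder than necessary: the integral lives on $\partial\Omega_R\cap\pom\subset\pom$, so straight H\"older against $\sigma$ suffices without any Carleson embedding for $w$). The gap is exactly the step you flag at the end. Federer's formula \rf{eq:Green-Federer} requires the integrand to be continuous on $\overline H$, and $\nabla\wt f$ is only bounded there, not continuous up to $\pom$. Your suggested fix---extend $\wt f$ Lipschitzly to $\R^{n+1}$, mollify, and pass to the limit---does not close: for $x\in\pom$ the mollified gradient $\nabla\wt f_\ve(x)$ averages $\nabla F$ over $B(x,\ve)$, and the portion of that ball lying in $\R^{n+1}\setminus\Omega$ contributes a term controlled only by $\Lip(f)$, not by $\cM_\sigma(\nabla_{H,p}f)(x)$. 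So you cannot conclude the pointwise bound $|\partial_{\nu_H}\wt f_\ve(x)|\lesssim\cM_\sigma(\nabla_{H,p}f)(x)$ needed for your $T_1$ estimate, and on the other side $\int_H w\,\Delta\wt f_\ve$ may not converge to $\int_H w\,\Delta\wt f$ because the mollifier picks up singular contributions of $\Delta F$ on $\pom$. One could likely repair this with divergence--measure field machinery and interior normal traces, but that is a nontrivial detour you have not supplied.

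The paper avoids this obstacle entirely by a different route: instead of integrating by parts all the way to $\pom$, it removes a layer of dyadic cubes of scale $2^{-k}$ touching $\pom$ to form $\Omega_k\Subset\Omega$, mollifies $v$ (not $\wt f$) to a smooth $v_\ve$, and applies the standard Green formula on $\Omega_k$. The point is then to prove
\[
\limsup_{\ve\to 0}\int_{\partial\Omega_k}|\partial_{\nu_k} v_\ve|^p\,d\HH^n \;\lesssim\; \|\nabla_{H,p}f\|_{L^p(\sigma)}^p
\]
uniformly in $k$. Since $\partial\Omega_k$ consists of finitely many cube faces sitting at distance $\approx 2^{-k}$ from $\pom$, no boundary‐trace issues arise; the work goes instead into splitting $\partial\Omega_k$ according to whether it lies in some $\Omega_R$, in $H$, or near some $\partial\Omega_R$, and controlling each piece via $\NN_R(\nabla v_R)$ and Lemma~\ref{lem46} --- morally the same ingredients as in your $T_1,T_2$ bounds, but applied on an interior hypersurface rather than on $\pom$.
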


\begin{proof}
We consider the family $I_k$ of dyadic cubes $T \subset\R^{n+1}$ with side length $2^{-k}$ such that
$3T\cap\pom\neq \varnothing$ and we let 
\begin{equation*}
\Omega_k= \Omega\setminus \bigcup_{T\in I_k} \overline{T}.
\end{equation*}
{By comparing $\pom_k$ to $\pom$, it is easy to check that $\pom_k$ is $n$-AD-regular uniformly on $k$} and that $\dist(x,\pom)\approx 2^{-k}$ for all
$x\in\pom_k$. 

By the assumptions above, Lemma \ref{lem190}, and Remark \ref{remnab}, $v,w\in W^{1,2}(\Omega)$ and thus
$$\int_{\Omega} |\nabla v\cdot \nabla w|\,dm <\infty.$$
Thus to prove \eqref{eqclau9} it suffices to show that
$$\left|\int_{\Omega_k} \nabla v\cdot \nabla w\,dm\right| \lesssim 
\|\nabla_{H,p} f\|_{L^p(\sigma)}\,\|w\|_{L^{p'}(\sigma)}+ \int_\Omega |w| \,|\Delta v|\,dm$$
uniformly on $k$. To this end, for $\ve>0$ let
$$v_\ve = v *\phi_\ve,$$
where $\{\phi_\ve\}_{\ve>0}$ is an approximation of the identity, where $\phi_\ve$ is smooth, radial, and supported on $B(0,\ve)$.  {So $v_\ve$ and $w$ are smooth functions in $ \Omega$ and thus, since $\Omega_k$ is of finite perimeter (as $\pom_k$ is AD-regular) and $\overline{\Omega}_k \subset \Omega$, we may apply} Green's formula
to get
\begin{align*}
\int_{\Omega_k} \nabla v_\ve\cdot \nabla w\,dm =\int_{\pom_k} \partial_{\nu_k} v_\ve\,w\,d\HH^n - \int_{\Omega_k} \Delta v_\ve\, w\,dm,
\end{align*}
where $\nu_k$ stands for the outer unit normal on $\pom_k$.
Since $\nabla v_\ve= \phi_\ve *\nabla v$ and $v\in W^{1,2}(\Omega)$, it is clear that
$$\lim_{\ve\to 0}\int_{\Omega_k} \nabla v_\ve\cdot \nabla w\,dm = 
\int_{\Omega_k} \nabla v\cdot \nabla w\,dm.$$
{Let now $\eta_k$ be a positive smooth function with compact support  so that $\chi_{\overline{\Omega}_k} \leq \eta_k \leq \chi_{\Omega_{k+1}}$. In light of $\Delta v_\ve = \phi_\ve * \Delta v$ and the smoothness of $w$ in $\Omega$, it follows that
$$\limsup_{\ve\to 0} \left|\int_{\Omega_k} \Delta v_\ve\, w\,dm\right| \leq \limsup_{\ve\to 0} \int  (\phi_\ve * |\Delta v|)\, |w \eta_k| \,dm \leq  \int_{\Omega} |w|\,|\Delta v|\, dm,$$
where the last inequality follows from the fact that $|\Delta v|$ is a non-negative Radon measure, $|w \eta_k| $ is a continuous function with compact support in $\Omega_{k+1}$, and $\eta_k \leq 1$.} So we have
$$\left|\int_{\Omega_k} \nabla v\cdot \nabla w\,dm\right| 
\leq \limsup_{\ve\to 0} \int_{\pom_k} \big|\partial_{\nu_k} v_\ve\,w\big|\,d\HH^n 
+\int_{\Omega} |w|\,|\Delta v|\,dm.$$
Therefore, by H\"older's inequality, 
to conclude the proof of the lemma it suffices to show that
\begin{equation}\label{eqfj5}
\limsup_{\ve\to 0} \int_{\pom_k} \big|\partial_{\nu_k} v_\ve\big|^p\,d\HH^n
\lesssim \|\nabla_{H,p} f\|_{L^p(\sigma)}^p.
\end{equation}

{Let us denote $\Gamma_k=\pom_k$  and  set
$$U_\ve = \bigcup_{R\in \ttt}\cU_\ve(\pom_R)$$
(recall that $\cU_\ve(A)$ stands for the $\ve$-neighborhood of $A$). To prove \eqref{eqfj5} we assume $\ve\ll 2^{-k}$.} Then we split
\begin{align*}
\int_{\Gamma_k}|\nabla v_\ve|^p\,d\HH^n  & = \sum_{R\in \ttt} \int_{\Gamma_k\cap \Omega_R\setminus U_\ve} |\nabla v_\ve|^p\,d\HH^n \\
&\quad + \int_{\Gamma_k\cap H\setminus U_\ve} |\nabla v_\ve|^p\,d\HH^n
+ \int_{\Gamma_k\cap U_\ve} |\nabla v_\ve|^p\,d\HH^n  =: I_{1} + I_2 + I_3.
\end{align*}
To estimate $I_{1}$, observe that, by the mean value property, since $v$ (and thus $\nabla v$) is harmonic in each $\Omega_R$ and $\phi_\ve$ is radial and supported in $B(0,\ve)$, then $\nabla v_\ve =
\phi_\ve *\nabla v
= \nabla v$ in $\Omega_R\setminus U_\ve$ {(to check this, just notice that for all $x\in\Omega_R\setminus U_\ve$, $\phi_\ve *\nabla v(x)$ can be written as a convex combination of averages $\avint_{B(x,r)} \nabla v\,dm$, with $0<r< \ve$).}  Thus,
$$I_{1} \leq \sum_{R\in\ttt} \int_{\Gamma_k\cap \Omega_R} |\nabla v|^p\,d\HH^n =:
\sum_{R\in\ttt} I_{1,R}.$$
To deal with $I_{1,R}$, we split $\Omega_R$ into a family $\WW_R$ of Whitney cubes, as in Subsection \ref{subsecwhitney} replacing $\Omega$ by $\Omega_R$. We also consider a dyadic lattice $\DD_R$ of ``cubes'' in $\pom_R$, so that the 
largest cube is $\pom_R$. To each $P\in \WW_R$ we assign a boundary cube $b_R(P)\in\DD_R$ such that
$\ell(P)=\ell(b_R(P))$ and $\dist(P,b_R(P))\approx\ell(P)$. We also consider a non-tangential
maximal operator $\NN_R$ for $\Omega_R$ associated with non-tangential cones with big enough aperture. Then we have
\begin{align*}
I_{1,R} &= \sum_{P\in\WW_R} \int_{\Gamma_k\cap P} |\nabla v|^p\,d\HH^n \\ &\lesssim 
\sum_{\substack{P\in\WW_R\\ P\cap\Gamma_k\neq\varnothing}} \inf_{x\in b_R(P)} \NN_R(\nabla v)(x)^p\,\ell(P)^n\approx \sum_{Q\in J_R} \inf_{x\in Q} \NN_R(\nabla v)(x)^p\,\ell(Q)^n,
\end{align*}
where $J_R$ is the family of cubes $Q\in\DD_R$ such that there exists some $P\in\WW_R$ with 
$Q=b_R(P)$, $P\cap\Gamma_k\neq\varnothing$. We claim that the family $J_R$ satisfies a Carleson
packing condition. Indeed, by the AD-regularity of $\Gamma_k$ (with uniform constants), for any $S\in \DD_R$, we have
$$\sum_{Q\in J_R:Q\subset S}\ell(Q)^n\lesssim \sum_{\substack{P\in\WW_R\\ P\cap\Gamma_k\neq\varnothing\\P\subset B(x_S,C_7\ell(S))}
} \HH^n(2P\cap\Gamma_k),
$$
for a suitable big constant $C_7$, where $x_s$ is the center of $S$. Then, by the bounded overlaps of the cubes $2P$ and the AD-regularity of $\Gamma_k$, we derive
$$\sum_{Q\in J_R:Q\subset S} \ell(Q)^n \lesssim \HH^n(B(x_S,C_8\ell(S))\cap\Gamma_k) \lesssim \ell(S)^n.$$
Then, by the AD-regularity of $\pom_R$, Carleson's embedding theorem, the solvability of $(\wt R_p)$ in bounded starlike Lipschitz domains (with uniform constants), Lemma \ref{lem46}(a), and \eqref{eqfac71}, we derive
\begin{align*}
I_{1,R} & \lesssim \int_{\pom_R} \NN_R(\nabla v)^p\,d\HH^n \lesssim \int_{\pom_R} |\nabla_{t_R} v|^p\,d\HH^n\\
& \lesssim \sum_{P\in\WW(\Omega):P\cap \pom_R\neq\varnothing}
m_{CB_{b(P)}, \sigma}(\nabla_{H,p} f)^p\,\ell(P)^n + \int_{\pom\cap\pom_R} |\nabla_{H,p}f|^p\,d\sigma.
\end{align*}
To conclude the estimate of $I_1$ we use that the cubes
$b(P)$ with $P\in\WW(\Omega)$ intersecting $\pom_R$ belong to $\HH$, which is a Carleson family of cubes from $\DD_\sigma$ by Lemma~\ref{lem:HCarleson}:
\begin{align}\label{eqI1*}
I_1& =\sum_{R\in\ttt} I_{1,R}\\ &\lesssim \sum_{R\in\ttt} \int_{\pom_R}\!\! |\nabla_{t_R} v|^p\,d\HH^n\lesssim \sum_{Q\in\HH}\!
m_{CB_Q, \sigma}(\nabla_{H,p} f)^p\,\ell(Q)^n + \int_{\pom} \!|\nabla_{H,p}f|^p\,d\sigma \nonumber\\ &\lesssim \|\nabla_{H,p} f\|_{L^p(\sigma)}^p,\nonumber
\end{align}
where we argued as in \eqref{eqhcar23} for the last estimate.

Next we turn our attention to the integral $I_2$. To this end, recall that $H$ is covered by a
family of cubes $w(Q)$, with $Q\in\HH$.
Further, for $x\in w(Q)\setminus U_\ve$, by Lemma \ref{lem46}(a) we have
$|\nabla v_\ve(x)|\lesssim m_{CB_Q}(\nabla_{H,p} f),$ and thus, by another application of  Carleson's theorem, we obtain
$$I_2\lesssim 
\sum_{Q\in\HH}
m_{CB_Q, \sigma}(\nabla_{H,p} f)^p\,\ell(Q)^n  \lesssim \|\nabla_{H,p} f\|_{L^p(\sigma)}^p.$$

To deal with $I_3$ we split
$$I_3 \lesssim \int_{\Gamma_k\cap U_\ve} |(\nabla v\,\chi_H)*\phi_\ve|^p\,d\HH^n +
\int_{\Gamma_k\cap U_\ve} |(\nabla v\,\chi_{\Omega\setminus H})*\phi_\ve|^p\,d\HH^n =: I_{3,1} + I_{3,2}.$$
We estimate $I_{3,1}$ very similarly to $I_2$. Recall that we have assumed that $\ve\ll 2^{-k}$, so that
$\ve$ is much smaller than  the side length of any Whitney cube $P$ of $\Omega$ such that $2P\cap \Gamma_k\neq\varnothing$.
So denoting {by $w^*(Q)$ the $(5\ell(Q))$-neighborhood of $w(Q)$,}  
we have
\begin{align*}
I_{3,1} & \leq \!\sum_{P\in\WW_R:P\cap\Gamma_k\neq\varnothing}
\int_{\Gamma_k\cap 2P} |(\nabla v\,\chi_H)*\phi_\ve|^p\,d\HH^n \\
& = \!\sum_{P\in\WW_R:P\cap\Gamma_k\neq\varnothing}
\int_{\Gamma_k\cap 2P} |(\nabla v\,\chi_{H\cap 2.1P})*\phi_\ve|^p\,d\HH^n\\
& \leq
\sum_{Q\in \HH}
\int_{ \Gamma_k\cap w(Q)} |(\nabla v\,\chi_{w^*(Q)\cap H})*\phi_\ve|^p\,d\HH^n \\ &\lesssim 
\sum_{Q\in \HH} m_{CB_Q}(\nabla_{H,p} f)^p\,\ell(Q)^n \lesssim \|\nabla_{H,p} f\|_{L^p(\sigma)}^p,
\end{align*}
where we took $Q=b(P)$ and we applied again Lemma \ref{lem46}(a) and \eqref{eqhcar23} to get the last inequality.

Finally we consider the integral $I_{3,2}$. From the smallness of $\ve$, we derive
\begin{align*}
I_{3,2} & = \sum_{R\in \ttt} \int_{\Gamma_k\cap\cU_\ve(\pom_R)}
 |(\nabla v\,\chi_{\Omega\setminus H})*\phi_\ve|^p\,d\HH^n \\
 & = \sum_{R\in \ttt} \int_{\Gamma_k\cap\cU_\ve(\pom_R)}
 |(\nabla v\,\chi_{\Omega_R})*\phi_\ve|^p\,d\HH^n =: \sum_{R\in \ttt} I_{3,2,R}. 
\end{align*}
To estimate $I_{3,2,R}$ we consider a family $K_R$ of balls $B$ centered at $\Gamma_k\cap\cU_\ve(\pom_R)$ with radius $2\ve$ having bounded overlaps that cover $\Gamma_k\cap\cU_\ve(\pom_R)$ 
(this family can be found using the Besicovitch covering theorem), so that
\begin{align}\label{eqal810}
I_{3,2,R} &\leq \sum_{B\in K_R} \int_{\Gamma_k\cap B}
 |(\nabla v\,\chi_{\Omega_R})*\phi_\ve|^p\,d\HH^n \\ & = \sum_{B\in K_R} \int_{\Gamma_k\cap B}
 |(\nabla v\,\chi_{\Omega_R\cap 2B})*\phi_\ve|^p\,d\HH^n\nonumber\\
 & 
 \lesssim \sum_{B\in K_R} \left(\frac1{r(B)^{n+1}}\int_{\Omega_R\cap 2B}|\nabla v|\,dm\right)^p\,r(B)^n
 \nonumber\\
 &\lesssim \sum_{B\in K_R} \frac1{r(B)}\int_{\Omega_R\cap 2B}|\nabla v|^p\,dm  \lesssim \frac1\ve\int_{\cU_{5\ve}(\pom_R )\cap\Omega_R}|\nabla v|^p\,dm,
\nonumber
\end{align}
where we applied H\"older's inequality in the penultimate inequality and then used that the 
balls $2B$, with $B\in K_R$, are contained in $\cU_{5\ve}(\pom_R )$ and also have bounded overlaps (because they all have the same radius).
To estimate the last integral we consider the Whitney decomposition of $\Omega_R$ into the family of cubes $\WW_R$ 
described above. Let $\WW_{R,\ve}$ be the subfamily of those cubes $P\in\WW_R$ which intersect
$\cU_{5\ve}(\pom_R )$. Since the cubes $P\in\WW_{R,\ve}$ satisfy 
$\ell(P)\approx\ell(b_R(P))\lesssim\ve$, we get
\begin{align*}
\int_{\cU_{5\ve}(\pom_R )\cap\Omega_R} |\nabla v|^p\,dm & \leq \sum_{P\in\WW_{R,\ve}}
\int_P|\nabla v|^p\,dm \\ &\lesssim \sum_{Q\in\DD_R:\ell(Q)\leq C\ve} \inf_{x\in Q} \NN_R(\nabla v)(x)^p\,\ell(Q)^{n+1}.
\end{align*}
 Consequently, by the solvability of $(\wt R_p)$ in bounded starlike Lipschitz domains,
\begin{align*}
\int_{\cU_{5\ve}(\pom_R )\cap\Omega_R} |\nabla v|^p\,dm &
\lesssim  \sum_{Q\in\DD_R:\ell(Q)\leq C\ve} \ell(Q)\int_Q |\NN_R(\nabla v)|^p\,d\HH^n\\
& \lesssim \ve\,\int_{\pom_R} |\NN_R(\nabla v)|^p\,d\HH^n \lesssim \ve\,\|\nabla_{t_R} v\|_{L^p(\HH^n|_{\pom_R})}^p.
\end{align*}
Plugging this estimate into \eqref{eqal810} and using Carleson's embedding theorem, as in \eqref{eqI1*},
 we derive
$$I_{3,2}\lesssim \frac1\ve\!\sum_{R\in \ttt}\int_{\cU_{5\ve}(\pom_R )\cap\Omega_R}\!\!|\nabla v|^p\,dm\lesssim \!\!
\sum_{R\in \ttt}\!\!\|\nabla_{t_R} v\|_{L^p(\HH^n|_{\pom_R})}^p\!\lesssim \!\|\nabla_{H,p} f \|_{L^p(\sigma)}^p.$$
Combining the estimates obtained for $I_1$, $I_2$, and $I_3$,  we obtain \eqref{eqfj5}  and the proof of the
lemma is concluded.
\end{proof}

\vv

Given $u\in W^{1,2}(\Omega)$ which is harmonic in $\Omega$, we say that $g=\partial_\nu u$ {\it in the weak sense}  if
\begin{equation}\label{eqnormal}
\int_\Omega \nabla u\cdot \nabla \vphi\,dm = \int_\pom g\,\vphi\,d\sigma
\end{equation}
for all $\vphi\in {\rm Lip}(\overline \Omega).$
We are now ready  to prove the desired one-side Rellich type estimate required for the solvability of $(R_p)$.
\vv

\begin{lemma}\label{keylemma}
Let $\Omega\subset\R^{n+1}$ satisfy the assumptions in Theorem \ref{teomain}.  Given
 $f\in {\rm Lip}(\pom)$, denote by $u$ the solution of the Dirichlet problem in $\Omega$ with boundary data $f$.
 Then $\partial_\nu u$ exists in the weak sense,
 it belongs to $L^p(\sigma)$, and it satisfies
\begin{equation}\label{eqteo41}
\|\partial_\nu u\|_{L^p(\sigma)} \lesssim \|\nabla_{H,p} f\|_{L^p(\sigma)}.
\end{equation}
\end{lemma}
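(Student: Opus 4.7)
The plan is a duality argument. For $g\in\Lip(\pom)$ and any Lipschitz extension $\vphi\in\Lip(\overline\Omega)$ of $g$, set
\begin{equation*}
T(g) := \int_\Omega \nabla u\cdot\nabla\vphi\,dm.
\end{equation*}
I will show that $T$ does not depend on the chosen extension and satisfies $|T(g)|\lesssim \|\nabla_{H,p}f\|_{L^p(\sigma)}\,\|g\|_{L^{p'}(\sigma)}$. Since $\Lip(\pom)$ is dense in $L^{p'}(\sigma)$, Riesz representation will then produce $h=\partial_\nu u\in L^p(\sigma)$ with $\|h\|_{L^p(\sigma)}\lesssim \|\nabla_{H,p}f\|_{L^p(\sigma)}$ satisfying \eqref{eqnormal}, which is precisely \eqref{eqteo41}.

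The identity on which everything hinges involves the almost harmonic extension $v$ of $f$ from Section \ref{sec:extension} and the solution $w_g$ of the continuous Dirichlet problem in $\Omega$ with boundary data $g$. Since $\Omega$ is corkscrew (hence Wiener regular), $w_g$ is harmonic in $\Omega$, continuous on $\overline\Omega$, and Lipschitz on $\pom$ with trace $g$. Both $\vphi-w_g$ and $u-v$ are continuous on $\overline\Omega$, lie in $W^{1,2}(\Omega)$, and have vanishing trace, so by a standard truncation argument both belong to $W^{1,2}_0(\Omega)$. Since $u$ and $w_g$ are weakly harmonic, testing against these functions yields
\begin{equation*}
T(g) = \int_\Omega \nabla u\cdot\nabla\vphi\,dm = \int_\Omega \nabla u\cdot\nabla w_g\,dm = \int_\Omega \nabla v\cdot\nabla w_g\,dm,
\end{equation*}
which in particular shows that $T(g)$ depends only on $g$.

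Next I would apply Lemma \ref{lematec} to $v$ with $w=w_g$ to obtain
\begin{equation*}
|T(g)| \lesssim \|\nabla_{H,p}f\|_{L^p(\sigma)}\,\|g\|_{L^{p'}(\sigma)} + \int_\Omega |w_g|\,|\Delta v|\,dm,
\end{equation*}
using $w_g|_\pom=g$ to identify the boundary $L^{p'}$ norm. For the Carleson-type term I would invoke the standard embedding
\begin{equation*}
\int_\Omega |w_g|\,|\Delta v|\,dm \lesssim \int_\pom \NN(w_g)\,\CC(\Delta v)\,d\sigma \leq \|\NN(w_g)\|_{L^{p'}(\sigma)}\,\|\CC(\Delta v)\|_{L^p(\sigma)},
\end{equation*}
which is valid because $|\Delta v|$ is a non-negative Radon measure. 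Lemma \ref{lemdeltap} bounds $\|\CC(\Delta v)\|_{L^p(\sigma)}$ by $\|\nabla_{H,p}f\|_{L^p(\sigma)}$, while the $(D_{p'})$ solvability hypothesis yields $\|\NN(w_g)\|_{L^{p'}(\sigma)}\lesssim \|g\|_{L^{p'}(\sigma)}$. Putting these estimates together produces the claimed bound on $T(g)$.

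The hard work for this lemma has really been done upstream, in the construction of the almost harmonic extension $v$ in Section \ref{sec:extension} and in the Carleson-type estimate for $\Delta v$ (Lemma \ref{lemdeltap}) together with the integration-by-parts inequality of Lemma \ref{lematec}. The present step amounts to synthesizing these with the Carleson-embedding duality above and with solvability of $(D_{p'})$, followed by a density argument and Riesz representation. The only remaining subtlety is the $W^{1,2}_0(\Omega)$ membership of the continuous zero-trace functions $u-v$ and $\vphi-w_g$, which follows from the boundedness of $\Omega$ by a standard truncation argument.
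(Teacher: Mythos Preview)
Your proposal is correct and follows essentially the same route as the paper: reduce to $\int_\Omega \nabla v\cdot\nabla w_g\,dm$ via the $W^{1,2}_0$ orthogonality relations, apply Lemma \ref{lematec}, and then control $\int_\Omega |w_g|\,|\Delta v|$ using $(D_{p'})$ and Lemma \ref{lemdeltap}. The only difference is in how the Carleson term is handled: you invoke the tent-space type embedding $\int_\Omega |w_g|\,|\Delta v|\lesssim\int_\pom \NN(w_g)\,\CC(\Delta v)\,d\sigma$ as ``standard,'' whereas the paper proves exactly this inequality in detail by a dyadic H\"older splitting with Carleson coefficients $a_Q=(\inf_Q F)^{-1}\int_{w(Q)}|\Delta v|$ and two applications of Carleson's embedding theorem. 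Your shortcut is legitimate (this is the $T^\infty/T^1$ duality of Coifman--Meyer--Stein, which carries over verbatim to the AD-regular setting via Whitney regions), so the two arguments are the same in substance; the paper's version is simply more self-contained.
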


\begin{proof}
We have to show that there exists some function $g\in L^p(\sigma)$ satisfying \eqref{eqnormal} for all
$\vphi\in {\rm Lip}(\overline \Omega)$, with 
$\|g\|_{L^p(\sigma)} \lesssim \|\nabla_{H,p} f\|_{L^p(\sigma)}$. 
Let $w$ be the solution of the Dirichlet problem in $\Omega$ with boundary data $\vphi|_\pom$.
By Remark \ref{remnab}, both $\vphi$ and $w$ belong to $W^{1,2}(\Omega)\cap C(\overline\Omega)$ and they agree on $\pom$. Thus, $\vphi-w\in W_0^{1,2}(\Omega)$, and then by the harmonicity of $u$ (which also
belongs to $W^{1,2}(\Omega)$), we deduce that
$$
\int_\Omega \nabla u\cdot \nabla(\vphi - w)\,dm=0.
$$

We now consider  the almost harmonic extension $v$ of $f$ to $\Omega$. Recall that $v\in W^{1,2}(\Omega)\cap C(\overline\Omega)$ and $v|_\pom = f$ by Lemma \ref{lem190}. Then we have
$$\int_\Omega \nabla u\cdot \nabla\vphi \,dm = \int_\Omega \nabla u\cdot \nabla w\,dm =
\int_\Omega \nabla (u-v)\cdot \nabla w\,dm + \int_\Omega \nabla v\cdot \nabla w\,dm.$$
Since $u-v\in W_0^{1,2}(\Omega)$ and $w$ is harmonic in $\Omega$, we have
$$\int_\Omega \nabla (u-v)\cdot \nabla w\,dm %= - \int_\Omega (u-v)\,\Delta w\,dm 
=0.$$
Then, by Lemma \ref{lematec} we deduce
\begin{align}\label{eqqp1}
\left|\int_\Omega \nabla u\cdot \nabla\vphi \,dm\right| & \lesssim 
\|\nabla_{H,p} f\|_{L^p(\sigma)}\,\|w\|_{L^{p'}(\sigma)}+ \int_\Omega  |w| \,|\Delta v|\, d m.
\end{align}

Next we will estimate the last integral on the right hand side above. First we denote
$$F(x) := \CC(\Delta v)(x) = \sup_{r>0} \frac1{r^n}\int_{B(x,r)} |\Delta v| \,dm.$$
By Hölder's inequality we have
\begin{align}\label{eqal8r2}
&\int_\Omega |w| \,|\Delta v| \,dm  \leq \sum_{Q\in \DD_\sigma} \|w\|_{\infty,w(Q)}
\int_{w(Q)}|\Delta v|\,dm\\
& \leq
\bigg(\sum_{Q\in \DD_\sigma} \frac{\|w\|_{\infty,w(Q)}^{p'}}{\inf_{x\in Q} F(x)}
\int_{w(Q)}|\Delta v|\,dm\bigg)^{1/p'}
\bigg(\sum_{Q\in \DD_\sigma} \inf_{x\in Q} F(x)^{p-1}
\int_{w(Q)}|\Delta v|\,dm\bigg)^{1/p}.\nonumber
\end{align}

Observe now that the coefficients
$$a_Q := \frac1{\inf_{x\in Q} F(x)}
\int_{w(Q)}|\Delta v|\,dm$$
satisfy the following packing condition, for any given $S\in\DD_\sigma$:
$$\sum_{Q\in\DD_\sigma:Q\subset S} a_Q \leq \frac1{\inf_{x\in S} F(x)} \sum_{Q\in\DD_\sigma:Q\subset S}\int_{w(Q)}|\Delta v|\,dm \leq \frac1{\inf_{x\in S} F(x)}\,\int_{CB_S}|\Delta v|\,dm. 
$$
Recall that $B_S$ is a ball centered at $x_S\in\pom$ that contains $S$ with radius $\ell(S)$. Above we chose $C$ so that all the Whitney regions $w(Q)$ with $Q\subset S$ are contained 
$CB_S$.
By the definition of the Carleson functional $\CC$, it easily follows  that
$$\int_{C B_S}|\Delta v|\,dm \lesssim \sigma(S)\inf_{x\in S} F(x),$$
and thus
$$\sum_{Q\in\DD_\sigma:Q\subset S} a_Q \lesssim \sigma(S).$$
Then, by Carleson's embedding theorem, we get
$$\sum_{Q\in \DD_\sigma} \frac{\|w\|_{\infty,w(Q)}^{p'}}{\inf_{x\in Q} F(x)}
\int_{w(Q)}\!|\Delta v|\,dm \lesssim \int_\pom \sup_{Q\ni x} \|w\|_{\infty,w(Q)}^{p'}\,d\sigma(x)
\lesssim \!\int_\pom |\NN w|^{p'}\,d\sigma,$$
where $\NN$ denotes the usual non-tangential maximal operator for $\Omega$ with a big enough aperture.
Since the Dirichlet problem is solvable in $L^{p'}(\sigma)$, 
we get
$$\sum_{Q\in \DD_\sigma} \frac{\|w\|_{\infty,w(Q)}^{p'}}{\inf_{x\in Q} F(x)}
\int_{w(Q)}|\Delta v|\,dm \lesssim \|w\|_{L^{p'}(\sigma)}^{p'}.$$

To deal with the last term on the right hand side of \eqref{eqal8r2} we apply again Carleson's theorem:
\begin{align*}
\sum_{Q\in \DD_\sigma}& \inf_{x\in Q} F(x)^{p-1}
\int_{w(Q)}|\Delta v|\,dm \\ &\leq \sum_{Q\in \DD_\sigma} \left(\avint_Q |F|\,d\sigma\right)^p\,
\frac1{\inf_{x\in Q} F(x)}
\int_{w(Q)}|\Delta v|\,dm \lesssim \int_\pom |\cM_\sigma (F)|^p\,d\sigma,
\end{align*}
where $\cM_\sigma$ is the usual Hardy-Littlewood operator (with respect to $\sigma$).
Therefore, by \eqref{eqCC5},
$$\sum_{Q\in \DD_\sigma} \inf_{x\in Q} F(x)^{p-1}
\int_{w(Q)}|\Delta v|\,dm \lesssim \|F\|_{L^{p}(\sigma)}^{p} = \|\CC(\Delta v)\|_{L^{p}(\sigma)}^{p}
\lesssim \|\nabla_{H,p} f\|_{L^p(\sigma)}^p.
$$

Plugging the above estimates into \eqref{eqal8r2}, we obtain
$$\int_\Omega |w||\Delta v|\,dm \lesssim \|\nabla_{H,p} f\|_{L^p(\sigma)}\,\|w\|_{L^{p'}(\sigma)}.$$
Together with \eqref{eqqp1},  this yields
\begin{equation}\label{eqfkg49}
\left|\int_\Omega \nabla u\cdot \nabla\vphi \,dm\right| \lesssim \|\nabla_{H,p} f\|_{L^p(\sigma)}\,\|w\|_{L^{p'}(\sigma)}= \|\nabla_{H,p} f\|_{L^p(\sigma)}\,\|\vphi\|_{L^{p'}(\sigma)}.
\end{equation}
Consider now the map $T_u: {\textup{Lip}}(\pom)\to\R$
defined by
$$T_u(\vphi) = \int_\Omega \nabla u\cdot \nabla\vphi \,dm,\quad\quad \vphi\in {\textup{Lip}}(\pom),$$
where we assume that $\vphi$ has been extended in a Lipschitz way to the whole $\overline \Omega$ in
order to be able to compute the integral above. Notice that, since $u$ is harmonic in $\Omega$, the definition of $T_u$ does
not depend on the precise Lipschitz extension of $\vphi$, by the same argument as the one in the first paragraph of this proof. So $T_u$ is well defined and it is linear. Further, by \rf{eqfkg49}
 it
extends to a bounded functional $T_u:L^{p'}(\sigma)\to\R$, and thus, by the Riesz representation theorem, there
exists some function $g=\partial_\nu u\in L^{p}(\sigma)$ satisfying the properties claimed in the lemma. 
\end{proof}

\vv

% ********************************************************************************************

\section{The end of the proof of Theorem \ref{teomain}} \label{secteomain}

Let us recall the definitions of the  single layer potential
\begin{equation}\label{def-single}
\cS f(x)= \int_{\partial \Omega} \EE(x-y) f(y) \,d\sigma(y), \,\,\quad x \in\Omega
\end{equation}
and  the  double layer potential
\begin{equation}\label{def-double}
\DD f(x)= \int_{\partial^*\om} \nu(y) \cdot \nabla_y \EE(x-y) f(y) \,d\sigma(y), \,\,\quad x \in\Omega,
\end{equation}
where $\nu$ stands for the measure theoretic outer unit normal of $\Omega$. %Note that if $\HH^n(\pom \setminus \partial_* \Omega)=0$, then since $\HH^n( \partial_* \Omega \setminus \partial^* \Omega)=0$, we have that the double layer potential is well-defined on the whole topological boundary $\pom$.

\begin{lemma}\label{lem6.1}
Let $\Omega\subset\R^{n+1}$ satisfy the assumptions in Theorem \ref{teomain}.  Given
 $f\in {\rm Lip}(\pom)$, denote by $u$ the solution of the Dirichlet problem in $\Omega$ with boundary data $f$.
 Then we have
$$u(x)=\DD(u|_\pom)(x) - \cS(\partial_\nu u|_{\partial \om})(x) \quad \mbox{ for all $x\in\Omega$,}$$
where $\DD$ and $\cS$ denote the double and single layer potentials for $\Omega$, respectively.
\end{lemma}

Recall that in Lemma \ref{keylemma} we showed that $\partial_\nu u$ exists in the weak sense and
$\partial_\nu u\in L^p(\sigma)$.

\begin{proof}
Let $\vphi:\R^{n+1}\to\R$ be a smooth radial function such that $\chi_{B(0,2)^c} \leq \vphi\leq \chi_{B(0,1)^c}$, and let $\vphi_r(y) = \vphi(r^{-1}y)$.
Given $x\in\Omega$, $y\in\pom$, we denote
\begin{equation*}
f_{x,r}(y) = \vphi_r(y-x)\,\EE(y-x).
\end{equation*}
Then, by the definition  \rf{eqnormal} of $\partial_\nu u$ and Green's formula 
\rf{eq:Green-finiteperim}, we have
\begin{equation}\label{eqp99}
\int_\pom f_{x,r}\,\partial_\nu u\,d\sigma = \int_\Omega \nabla f_{x,r}\,\nabla u\,dm = \int_{\partial^* \Omega}
\partial_\nu f_{x,r}\, u\,d\sigma - \int_\Omega \Delta f_{x,r}\,u\,dm.
\end{equation}
Since  $u\, \nabla f_{x,r} \in C(\overline \Omega) \cap \dot W^{1,1}(\Omega)$ (as $u \in C(\overline \Omega) \cap \dot W^{1,2}(\Omega)$), and $\Omega$ is a bounded open set such that $\HH^n(\pom)<\infty$, the last equality follows from \eqref{eq:Green-Federer}.
%$$\int_\Omega \dv w(x) \,dx = \int_{\partial^* \Omega} \nu(y) \cdot  w(y) \,d\HH^n(y), \,\,\, \textup{for}\,\, w\in \Lip(\overline \Omega;\R^{n+1}),$$and (an easier version of) the proof of \cite[Proposition 2.13]{HMT} since our domain is weakly accessible (we refer to \cite[p. 2587]{HMT} and the proof of Theorem \ref{thm:regularity-noncont}  for the definition and  the proof respectively).} %The last identity follows from the divergence formula in \cite[Theorem 2.8]{HMT} applied to the function $u\,\nabla f_{x,r}$ (we are allowed to apply this theorem because  the measure theoretic boundary $\partial_*\Omega$ coincides with $\pom$ up to a set of zero surface measure). 
For $r$ small enough {compared to $\dist(x,\pom)$}, the left hand side of \rf{eqp99} coincides with $\cS(\partial_\nu u)(x)$, while
the right hand side equals
\begin{multline*}
\int_{\partial^* \Omega} 
\partial_\nu  \vphi_r(y-x)\,\EE(y-x)\, u(y)\,d\sigma(y) + \int_{\partial^* \Omega}
 \vphi_r(y-x)\, \partial_\nu\EE(y-x)\,u(y)\,d\sigma(y)\\ - \int_\Omega \!\Delta f_{x,r}\,u\,dm
  = 0 + \DD(u|_\pom)(x) - \int_\Omega \Delta f_{x,r}\,u\,dm.
\end{multline*}
Observe now that $\supp (\Delta f_{x,r}) \subset \overline{B}(x,2r)$. So if we denote 
$$\wt u(y) = u(y) \big(1-\vphi_{d}(y-x)\big),$$
assuming $2r<d<2d<\dist(x,\pom)$, we have
$$\int_\Omega \Delta f_{x,r}\,u\,dm = \int \Delta f_{x,r}\,\wt u\,dm = 
\int f_{x,r}\,\Delta(\wt u)\,dm.
$$
Then, letting $r\to0$, we deduce that the integrals above tend to $\EE * \Delta\wt u(x) =\wt u(x)=u(x)$.
Therefore,
$$\cS(\partial_\nu u|_{\partial \om})(x) = \DD(u|_\pom) (x) - u(x),$$
as wished.
\end{proof}
\vv

Now we are ready to prove the first part of Theorem \ref{teomain}. Recall that the assumptions of this theorem 
imply the uniform rectifiability of $\pom$.

\begin{lemma}\label{lem6.2*}
Let $\Omega\subset\R^{n+1}$ be a bounded corkscrew domain with $n$-AD-regular boundary.  If there exists $p \in (1, 2+\ve_0)$ such that $(D_{p'})$  is solvable and $\ve_0$ is  defined in Remark \ref{rem:epsilon0}, then $(R_p)$ is solvable.
\end{lemma}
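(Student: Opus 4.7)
The plan is to deduce the regularity estimate $\|\NN(\nabla u)\|_{L^p(\sigma)} \lesssim \|f\|_{\dot W^{1,p}(\sigma)}$ from a representation formula plus the two key ingredients now at hand: the one-sided Rellich inequality (Lemma \ref{keylemma}) that controls $\partial_\nu u$ by the Haj\l asz gradient of $f$, and the $L^p$ non-tangential bounds \eqref{eq:ntboundssingle}--\eqref{eq:ntboundsgraddouble} for the gradients of the layer potentials.

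Given $f\in {\rm Lip}(\pom)$ and the solution $u$ of the continuous Dirichlet problem with data $f$, Lemma \ref{lem6.1} gives
\[
u = \DD f - \cS(\partial_\nu u)\quad\text{in }\Omega.
\]
Applying $\nabla$ and the non-tangential maximal operator, I would split
\[
\NN(\nabla u)\leq \NN(\nabla \DD f) + \NN(\nabla \cS(\partial_\nu u)).
\]
For the single layer term, \eqref{eq:ntboundssingle} yields $\|\NN(\nabla\cS (\partial_\nu u))\|_{L^p(\sigma)}\lesssim\|\partial_\nu u\|_{L^p(\sigma)}$, and then Lemma \ref{keylemma} gives $\|\partial_\nu u\|_{L^p(\sigma)}\lesssim \|\nabla_{H,p}f\|_{L^p(\sigma)} = \|f\|_{\dot W^{1,p}(\sigma)}$. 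This handles the single layer contribution.

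For the double layer term, \eqref{eq:ntboundsgraddouble} provides
\[
\|\NN(\partial_j \DD f)\|_{L^p(\sigma)}^p\lesssim \sum_{k=1}^{n+1}\|\partial_{t,k,j}f\|_{L^p(\HH^n|_{\partial^*\Omega})}^p,
\]
so it suffices to bound the HMT tangential derivatives of $f$ by $\|\nabla_{H,p}f\|_{L^p(\sigma)}$. Using Lemma \ref{lemHMT-tan} applied to a standard mollification of a Lipschitz extension of $f$ to $\R^{n+1}$ (and passing to the limit in the weak definition \eqref{eqlp1HMT} via dominated convergence, which is legitimate because the Lipschitz constants of the approximations are uniformly controlled), I obtain the pointwise identity
\[
\partial_{t,j,k}f(x) = -\nu_j(x)\,(\nabla_t f)_k(x) - \nu_k(x)\,(\nabla_t f)_j(x)\quad \text{$\sigma$-a.e.\ on }\partial^*\Omega,
\]
so in particular $|\partial_{t,j,k}f|\lesssim |\nabla_t f|$. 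Combining this with the estimate \eqref{eqfac71}, which yields $|\nabla_t f(x)|\lesssim \cM_\sigma(\nabla_{H,p}f)(x)+\nabla_{H,p}f(x)$ for $\sigma$-a.e.\ $x\in\pom$, and the $L^p$-boundedness of $\cM_\sigma$, I get
\[
\sum_{j,k}\|\partial_{t,k,j}f\|_{L^p(\sigma)} \lesssim \|\nabla_t f\|_{L^p(\sigma)}\lesssim \|\nabla_{H,p}f\|_{L^p(\sigma)} = \|f\|_{\dot W^{1,p}(\sigma)}.
\]
Putting the two contributions together proves the regularity estimate for Lipschitz data, which is exactly the solvability of $(R_p)$.

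The main obstacle is largely already absorbed into the preceding lemmas: Lemma \ref{keylemma} (the one-sided Rellich inequality) carries the substantive analytic content, and its proof rested on the corona construction and the almost-harmonic extension of Sections \ref{secorona}--\ref{sec:extension}. The only additional subtlety at this stage is verifying that the HMT tangential derivatives for a Lipschitz (not necessarily $C^1$) boundary function are controlled by the pointwise tangential gradient; this is technical but routine, handled by regularization together with the fact that Lipschitz extensions from $\pom$ to $\R^{n+1}$ can be differentiated almost everywhere, and that the identity \eqref{eqclau30} is stable under the approximation.
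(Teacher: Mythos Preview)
Your strategy matches the paper's proof exactly: representation via Lemma~\ref{lem6.1}, then the bounds \rf{eq:ntboundssingle}, \rf{eq:ntboundsgraddouble}, Lemma~\ref{keylemma}, and \rf{eqfac71}, leaving only the control of $\partial_{t,j,k}f$ by $|\nabla_t f|$ for Lipschitz (rather than $C^1$) boundary data.

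That last step is where your argument has a gap. You propose to pass from Lemma~\ref{lemHMT-tan} (stated for $C^1$ functions) to Lipschitz $f$ by mollifying an ambient Lipschitz extension $\tilde f$ and invoking dominated convergence. The problem is that $\nabla\tilde f_\tau(x)=(\nabla\tilde f*\phi_\tau)(x)$ averages $\nabla\tilde f$ over $(n{+}1)$-dimensional balls, while $\pom$ is a Lebesgue-null set; so neither pointwise $\sigma$-a.e.\ nor $L^p(\sigma)$ convergence of $\nabla_t\tilde f_\tau$ to the intrinsic $\nabla_t f$ follows from Lebesgue differentiation or dominated convergence---the off-boundary values of $\nabla\tilde f$, which depend on the chosen extension, contaminate the average. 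The weak convergence $\partial_{t,j,k}\tilde f_\tau\rightharpoonup\partial_{t,j,k}f$ is tautological (it is the definition of the right-hand side via \rf{eqlp1HMT}), so it cannot by itself identify the limit with the expression $\nu_j(\nabla_t f)_k-\nu_k(\nabla_t f)_j$; that identification is precisely the content to be proved.

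The paper supplies this step as Lemma~\ref{lemreal}, and its proof is not a mollification argument. One fixes $x$, localizes to a density set $E_{\ve,r}$ on which the difference quotients of $f$ are bounded by $|\nabla_t f(x)|+\ve$ and $\nabla_t f$ is close to $\nabla_t f(x)$ in mean, and then uses the explicit Lipschitz extension~\rf{eqext45} of $f|_{E_{\ve,r}\cap B(x,r/2)}$, which has Lipschitz constant at most $|\nabla_t f(x)|+C\ve$; for its mollifications the pointwise formula gives $|\partial_{t,j,k}g_\tau|\lesssim|\nabla_t f(x)|+\ve$, while the remainder $h=f-g$ vanishes on a set of full density at $x$ and contributes $o(1)$. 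This is elementary but genuinely different from the routine limit you sketched.
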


\begin{proof}
Given
 $f\in {\rm Lip}(\pom)$, denote by $u$ the solution of the Dirichlet problem in $\Omega$ with boundary data $f$.
 By the previous lemma, for any $x\in\Omega$,
$$\nabla u(x)=\nabla\DD(u|_\pom)(x) - \nabla\cS(\partial_\nu u|_{\pom})(x).$$
and so by \eqref{eq:ntboundssingle}, \eqref{eq:ntboundsgraddouble},  Lemma \ref{keylemma}, Lemma \ref{lemreal} below, and \rf{eqfac71}, we have
\begin{align*}
\|\NN(\nabla u)\|_{L^p(\sigma)} &\leq \|\NN(\nabla\DD(u|_\pom))\|_{L^p(\sigma)} + \|\NN(\nabla\cS(\partial_\nu u|_{\partial \om}))\|_{L^p(\sigma)}\\ &\lesssim  \left(\max_{1\leq k\leq n+1}\sum_{j=1}^{n+1} \|\partial_{t, j, k} f\|_{L^p(\HH^n|_{\partial^* \Omega})}^p\right)^{1/p}+\|\partial_\nu u|_{\pom}\|_{L^p(\sigma)}\\
&\lesssim \|\nabla_{t} f\|_{L^p(\HH^n|_{\partial^* \om})}+\|\nabla_{H,p} f\|_{L^p(\sigma)}\lesssim \|\nabla_{H,p} f\|_{L^p(\sigma)}.
\end{align*}
\end{proof}

\vv

To complete the proof of Lemma \ref{lem6.2*}, it remains to show that $$ \|\partial_{t,j,k} f\|_{L^p(\HH^n|_{\partial^* \om})} \lesssim \|\nabla_{t} f\|_{L^p(\HH^n|_{\partial^* \om})} \,\, \textup{ for every}\,\, j,k \in \{1, 2, \dots, n+1\}.$$ This follows from the next result.

\begin{lemma}\label{lemreal}
Let $\Omega\subset \R^{n+1}$ be a bounded domain with uniformly $n$-rectifiable boundary. 
Then there exists $C>0$ such that for any Lipschitz function $f:\pom\to\R$ and every $j,k \in \{1, 2, \dots, n+1\}$, we have
 \begin{equation}\label{eqnab711}
|\partial_{t,j,k} f(x)| \leq C |\nabla_{t} f(x)| \quad\mbox{ for $\HH^n|_{\partial^* \om}$-a.e.\ $x\in\partial^* \om $.}
\end{equation}
\end{lemma}

\begin{proof}
Let 
$$
E=\{ x \in \partial \om: \nabla_t f (x)\,\textup{exists}\}\quad  \textup{and}\quad E_0= \left\{x \in E: \frac{\HH^n(B(x,r) \cap E)}{\HH^n(B(x,r)}=1\right\},
$$
 and notice that $\HH^n(\partial \om \setminus E_0)=0$.
 Then, for all $\ve>0$ and $r>0$, let
\begin{multline*}
E_{\ve,r} = \bigg\{x\in E_0: \sup_{y\in B(x,r)\cap\pom}\frac{|f(y)- f(x)|}{|y-x|}\leq |\nabla_t f(x)| + \ve
\\ \text{ and } \avint_{B(x,s)\cap E_0} |\nabla_t f(y) - \nabla_t f(x)|\,d\HH^n\leq \ve \,\text{ for $0<s\leq r$} \,\bigg\},
\end{multline*}
and remark that $\HH^n(\pom\setminus E_{\ve,r})\to 0$ as $r\to0$.
We will show that for $\HH^n|_{\partial^* \Omega}$-almost every point  $x\in\partial^* \om$ which is a Lebesgue point for $\partial_{t,j,k} f$ with respect to $\HH^n|_{\partial^* \om}$,
a Lebesgue point for $\nabla_t f$ with respect to $\sigma$, and
also a density point of $E_{\ve,r}$ with respect to $\sigma$, we have
\begin{equation}\label{eqeer6}
|\partial_{t,j,k} f(x)| \lesssim |\nabla_{t} f(x)| + \ve.
\end{equation}
The lemma follows from this estimate, by letting  $x\in\partial^* \om$ be a density point for all the sets  $E_{\ve,r}$, with 
$\ve,r\in (0,1)\cap\Q$.

We fix now $x\in \partial^* \om$ as above, i.e.,  $x$ is a Lebesgue point for $\partial_{t,j,k} f$, a Lebesgue point for $\nabla_t f$, and
also a density point of $E_{\ve,r}$.
To prove \rf{eqeer6} for this point $x$, we first check that 
\begin{equation}\label{eqlipff}
{\rm Lip}(f|_{E_{\ve,r}\cap B(x,r/2)})\leq |\nabla_tf(x)| + C\,\ve.
\end{equation}
Indeed, for all $y,z \in E_{\ve,r}\cap B(x,r/2)$, 
 by the definition of $E_{\ve,r}$, we have
\begin{align*}
\frac{|f(y) - f(z)|}{|y-z|} \leq |\nabla_t f(y)| + \ve &\leq  \avint_{B(y,r/2)} |\nabla_t f - \nabla_t f(y)|\,d\sigma \\
& \quad + \avint_{B(y,r/2)} |\nabla_t f- \nabla_t f(x)|\,d\sigma + |\nabla_t f(x)|
+\ve.
\end{align*}
The first integral on the right hand side is bounded by $\ve$ and the second one by
$$C\avint_{B(x,r)} |\nabla_t f- \nabla_t f(x)|\,d\sigma \lesssim \ve,$$
taking into account that $B(y,r/2)\subset B(x,r)$.
Thus,
$$\frac{|f(y) - f(z)}{|y-z|} \leq |\nabla_t f(x)| + C\,\ve,$$
which proves \rf{eqlipff}.

Next we consider the Lipschitz extension $g:\R^{n+1}\to\R$ of $f|_{E_{\ve,r}\cap B(x,r/2)}$ given by the usual formula
\begin{equation}\label{eqext45}
g(y) = \inf_{z\in E_{\ve,r}\cap B(x,r/2)}\big(f(z) + {\rm Lip}(f|_{E_{\ve,r}\cap B(x,r/2)})\, |y-z|\big),
\end{equation}
so that ${\rm Lip}(g) = {\rm Lip}(f|_{E_{\ve,r}\cap B(x,r/2)})\leq |\nabla_tf(x)| + C\,\ve$.
We also define the function $h:\pom\to\R$ given by
$$h= f - g|_\pom.$$
Notice that 
$${\rm Lip}(h) \leq {\rm Lip}(f) + {\rm Lip}(g)\leq 2 \,{\rm Lip}(f) + C\,\ve\leq 3\,{\rm Lip}(f),$$
assuming $\ve$ small enough (and also that ${\rm Lip}(f)\neq 0$; otherwise the lemma is trivial).
Observe also that
\begin{equation}\label{eqclau000}
h(y)=0 \quad \mbox{ for all $y\in E_{\ve,r}\cap B(x,r/2)$,}
\end{equation}
 because $g=f$ in $E_{\ve,r}\cap B(x,r/2)$.

To estimate $\partial_{t,j,k} f(x)$, we consider a radial $C^\infty$ function $\vphi$ such that
$\chi_{B(0,1/2)}\leq \vphi\leq \chi_{B(0,1)}$, with $\|\nabla \vphi\|_\infty\lesssim1$,
 and we denote $\vphi_s(x) = s^{-n}\,\vphi(s^{-1}x)$ for any $s>0$.
Since $\HH^n|_{\partial^* \om}$ is a Radon measure, by the Lebesgue differentiation theorem,  for $\HH^n|_{\partial^* \om}$-a.e. $x \in \partial^* \om$, we infer that $\partial_{t,j,k} f(x)$ equals 
$$ \lim_{s\to 0} \bigg[\bigg(\int_{\partial^* \om} \vphi\Big(\frac{y-x}s\Big)\,d\HH^n(y)\bigg)^{-1}
\int_{\partial^* \om}  \partial_{t,j,k} f(y)\,\vphi\Big(\frac{y-x}s\Big)\,d\HH^n(y)\bigg].$$
As $\partial^* \Omega$ is $n$-rectifiable, we have that for $\HH^n|_{\partial^* \om}$-a.e. $x \in \partial^* \om$ the $n$-Hausdorff density exists and is equal to $1$ and tangent measures are flat. Thus, there exists a $n$-dimensional plane $V_x$ such that
$$
\lim_{s \to 0}  (2s)^{-n} \int_{\partial^* \om} \vphi\Big(\frac{y-x}s\Big)\,d\HH^n(y)=\int_{V_x} \vphi(z) \, d\HH^n(z)\approx_n 1,
$$
where the implicit constant is independent of $x$. Therefore, for such $x \in \partial^* \om$, it holds that 
\begin{align}\label{eqspl0000}
|\partial_{t,j,k} f(x)| & \lesssim \limsup_{s\to 0} \bigg|
\int_{\partial^* \om}  \partial_{t,j,k} f(y)\,\vphi_s(y-x)\,d\HH^n(y)\bigg|.
\end{align}

For simplicity, we define  $\sigma_*=\HH^n|_{\partial^* \om}$, which  clearly satisfies  $\sigma_*(B(x,r))\lesssim r^n$, for every $x \in \partial^* \Omega$ and every $0<r <\diam(\pom)$. Next we will estimate the integral on the right hand side above for $s\leq r/2$.
We split
\begin{align}\label{eqspl00}
 \bigg|
\int \partial_{t,j,k} f(y)\,\vphi_s(y-x)\,d\sigma_*(y)\bigg|& \leq 
\bigg|\int \partial_{t,j,k} g(y)\,\vphi_s(y-x)\,d\sigma_*(y)\bigg|\\
& \quad + \bigg|
\int \partial_{t,j,k} h(y)\,\vphi_s(y-x)\,d\sigma_*(y)\bigg|.\nonumber
\end{align}
To deal with the first integral on the right hand side we
consider a regularization of $g$ by means of an approximation of the identity
$\{\phi_\tau\}_{\tau>0}$, where the functions $\phi_\tau$ are radial, $C^\infty$, and supported in $B(0,\tau)$, and we denote $g_{\tau} = g*\phi_\tau$ for each $\tau>0$. 
Then $g_{\tau}\in C^\infty(\R^{n+1})$ and satisfies 
$$\|\nabla g_{\tau}\|_\infty \leq {\rm Lip}(g) \leq |\nabla_tf(x)| + C\,\ve.$$
The identity \rf{eqparts099} holds for $g_{\tau}$ because $g_\tau\in C^\infty(\R^{n+1})$, and thus
$$\|\partial_{t,j,k} g_\tau\|_\infty \leq 2\,|\nabla_tf(x)| + C\,\ve.$$
Thus, since $g_\tau$ converges uniformly to $g$ in $\pom$ as $\tau\to 0$, 
\begin{align}\label{eqghj4}
\bigg|\int \partial_{t,j,k} g(y)\,\vphi_s(y-x)\,d\sigma_*(y)\bigg| & =
\bigg|\int  g(y)\,\partial_{t,k,j} \vphi_s(y-x)\,d\sigma_*(y)\bigg|\\
& =
\lim_{\tau\to 0} \bigg|\int  g_\tau (y)\,\partial_{t,k,j}\vphi_s(y-x)\,d\sigma_*(y)\bigg|\notag
\\
& =
\lim_{\tau\to 0} \bigg|\int \partial_{t,j,k} g_\tau (y)\,\vphi_s(y-x)\,d\sigma_*(y)\bigg| \notag
\\
&\lesssim |\nabla_tf(x)| + \ve.\notag
\end{align}

Now we will estimate the last integral in \rf{eqspl00}. By \rf{eqclau000}, we have
\begin{align}\label{eqclar0h}
\bigg|
\int \partial_{t,j,k} h(y)\,\vphi_s(y-x)\,d\sigma_*(y)\bigg| &\leq
\int |h(y)\,\partial_{t,k,j}\vphi_s(y-x)|\,d\sigma_*(y)\\
& \lesssim \frac1{s^{n+1}} \int_{B(x,s)\setminus E_{\ve,r}
} |h(y)|\,d\sigma_*(y).\notag
\end{align}
Recall now that $x$ is a density point of $E_{\ve,r}$ with respect to $\sigma$. Hence, given any arbitrary $\delta>0$, we can choose $s_0>0$ small enough so that
\begin{equation}\label{eqclar01}
\sigma(B(x,2s)\setminus E_{\ve,r}) < \delta \,\sigma(B(x,2s)\quad \mbox{ for $0<s\leq s_0$}.
\end{equation}
For $0<s\leq s_0$ this implies {that for all $y\in B(x,s)\cap \pom$, there exists some $y'\in E_{\ve,r}$ such that 
$|y-y'|\leq 2C_0^{2/n}\,\delta^{1/n}s$,
%the following:
%\begin{equation}\label{eqyy'}
%\mbox{For all $y\in B(x,s)\cap \pom$ there exists some $y'\in E_{\ve,r}$ such that $|y-y'|\leq 2C_0^{2/n}\,
%\delta^{1/n}s$,}
%\end{equation}
where $C_0$ is the AD-regularity constant of $\pom$. Otherwise, assuming $2C_0^{2/n}\,\delta^{1/n} \leq 1$,
$$B(y,2C_0^{2/n}\,
\delta^{1/n}s)\subset B(x,2s) \setminus E_{\ve,r},$$
and thus 
$$\sigma\big(B(x,2s) \setminus E_{\ve,r}\big) \geq \sigma(B(y,2C_0^{2/n}\,
\delta^{1/n}s)) \geq C_0\, \delta\,(2s)^n \geq \delta\,\sigma(B(x,2s)),$$
which would contradict \rf{eqclar01}. Hence, using that ${\rm Lip}(h)\lesssim {\rm Lip}(f)$, for all points $y\in  B(x,s)\cap \pom$ and $y'$ as above,} we get
$$|h(y)| \leq |h(y'| + |h(y') - h(y)| \leq 0 + C\,{\rm Lip}(f)\,|y-y'|\lesssim 
{\rm Lip}(f)\,\delta^{1/n}\,s.$$
Plugging this estimate into \rf{eqclar0h}, we obtain
$$\bigg|
\int \partial_{t,j,k} h(y)\,\vphi_s(y-x)\,d\sigma_*(y)\bigg|\lesssim \frac1{s^{n+1}}
{\rm Lip}(f)\,\delta^{1/n}\,s\,\sigma_*(B(x,s))\lesssim {\rm Lip}(f)\,\delta^{1/n}.
$$
Since $\delta$ can be taken arbitrarily small, we infer that
$$\lim_{s\to0} 
\int \partial_{t,j,k} h(y)\,\vphi_s(y-x)\,d\sigma_*(y) =0.$$
Together with \rf{eqspl0000}, \rf{eqspl00}, and
\rf{eqghj4}, this gives
$$|\partial_{t,j,k} f(x)|\lesssim \limsup_{s\to0}\bigg|
\int \partial_{t,j,k} f(y)\,\vphi_s(y-x)\,d\sigma_*(y)\bigg| \lesssim |\nabla_tf(x)| + \ve,$$
which proves \rf{eqeer6} and concludes the proof of the lemma.
\end{proof}
\vv

The next lemma clarifies the
relationship between $\nabla_t f$ and $\partial_{t,j,k} f$ for Lipschitz functions.

\begin{lemma}\label{lemreal**}
Let $\Omega\subset \R^{n+1}$ be a bounded domain with uniformly $n$-rectifiable boundary. Then, for any 
Lipschtiz function $f:\pom\to\R$ and for every $j,k \in \{1, 2, \dots, n+1\}$, we have
\begin{equation}\label{eqnab720}
\partial_{t,j,k} f(x) =-\nu_j \,(\nabla_{t} f)_k(x) + \nu_k\,(\nabla_{t} f)_j(x)\quad\mbox{ for $\HH^n|_{\partial^* \om}$-a.e.\ $x\in\partial^* \om$.}
\end{equation}
\end{lemma}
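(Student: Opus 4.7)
I will upgrade the inequality of Lemma \ref{lemreal} to an equality by tracking the precise limit rather than just its modulus. Fix a generic $x \in \partial^*\Omega$ at which the approximate tangent plane $L_x$ and the measure-theoretic outer normal $\nu(x)$ exist, $b\beta_\pom(x,r) \to 0$ as $r\to 0$, $x$ is a Lebesgue point (with respect to $\sigma_* := \HH^n|_{\partial^*\Omega}$) of $\partial_{t,j,k}f$, $\nu$, and $\nabla_t f$, and $x$ is a density point of each set $E_{\ve,r}$ from Lemma \ref{lemreal} (for $\ve, r \in (0,1)\cap\Q$). As in Lemma \ref{lemreal}, I decompose $f = g + h$ on $B(x,r/2) \cap \pom$, where $g$ is the infimum-type Lipschitz extension \eqref{eqext45} to $\R^{n+1}$ with $\Lip(g) \leq |\nabla_t f(x)| + C\ve$, and $h = 0$ on $E_{\ve,r}\cap B(x,r/2)$. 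Testing against the bump $\vphi_s(\cdot-x)$, the $h$-contribution to $\int \partial_{t,j,k}h\cdot\vphi_s(\cdot-x)\,d\sigma_*$ is of order $o(s^n)$ as $s\to 0$ by the density argument in \eqref{eqclar0h}--\eqref{eqclar01}, so after the Lebesgue normalization it drops out. Moreover, since $g = f$ on a set of density $1$ at $x$, the characterization in Lemma \ref{lem2.2} yields $|\nabla_t g(x) - \nabla_t f(x)| \lesssim \ve$. Thus it suffices to identify the limiting $g$-contribution with $\nu_j(x)(\nabla_t g)_k(x) - \nu_k(x)(\nabla_t g)_j(x)$ up to $O(\ve)$.

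For the $g$-contribution I apply \eqref{eq:Green-Federer} to the Lipschitz vector fields $g\,\partial_j\vphi_s\,e_k$ and $g\,\partial_k\vphi_s\,e_j$ in turn; the second-order derivatives of $\vphi_s$ cancel in the difference, leaving
$$\int \partial_{t,j,k}g\cdot\vphi_s(\cdot-x)\,d\sigma_* \,=\, \int g\,\partial_{t,k,j}\vphi_s(\cdot-x)\,d\sigma_* \,=\, \int_\Omega \bigl[\partial_k g\,\partial_j\vphi_s - \partial_j g\,\partial_k\vphi_s\bigr]\,dm.$$
Rescaling $y = x + sz$ turns the volume integral into $\int_{\Omega_s}\bigl[\partial_k\tilde g_s(z)\,\partial_j\vphi(z) - \partial_j\tilde g_s(z)\,\partial_k\vphi(z)\bigr]\,dm(z)$, where $\tilde g_s(z) := s^{-1}[g(x+sz) - g(x)]$ and $\Omega_s := s^{-1}(\Omega - x)$. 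Because $b\beta_\pom(x,r)\to 0$, $\Omega_s$ converges in the Hausdorff sense of boundaries to the half-space $H:=\{z:z\cdot\nu(x) < 0\}$. Since $\{\tilde g_s\}$ is uniformly Lipschitz with $\tilde g_s(0) = 0$, Arzelà--Ascoli extracts a subsequential limit $\tilde g_0$, Lipschitz on $\R^{n+1}$, with $\nabla\tilde g_s \rightharpoonup^* \nabla\tilde g_0$ in $L^\infty$; and by the tangential differentiability of $g$ at $x$, $\tilde g_0(z) = \nabla_t g(x)\cdot z$ for $z\in L_x$.

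To evaluate the limiting half-space integral I split $\tilde g_0 = \ell + r$ with $\ell(z) := \nabla_t g(x)\cdot z$ linear and $r$ Lipschitz vanishing identically on $L_x$. The $\ell$-contribution evaluated via \eqref{eq:Green-Federer} on $H$ (whose outward normal is $\nu(x)$ and whose boundary is $L_x$) equals $[(\nabla_t g(x))_k\,\nu_j(x) - (\nabla_t g(x))_j\,\nu_k(x)]\int_{L_x}\vphi\,d\HH^n$. For the $r$-contribution I rewrite $\partial_k r\,\partial_j\vphi - \partial_j r\,\partial_k\vphi = \partial_k(r\,\partial_j\vphi) - \partial_j(r\,\partial_k\vphi)$ (again the second-order terms cancel) and apply \eqref{eq:Green-Federer} to each summand on $H$; both boundary integrals on $L_x$ vanish since $r\equiv 0$ there. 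Dividing by $\int\vphi_s\,d\sigma_* \to \int_{L_x}\vphi\,d\HH^n$ (Hausdorff density $1$ of $\sigma_*$ at $x$) and combining with the previous reductions gives $\partial_{t,j,k}f(x) = \nu_j(x)(\nabla_t f)_k(x) - \nu_k(x)(\nabla_t f)_j(x) + O(\ve)$; taking $\ve\to 0$ through a countable sequence finishes the proof (up to the sign convention displayed in the statement). The main technical obstacle is the identification of the trace $\tilde g_0|_{L_x}$ in the Arzelà--Ascoli/weak-$*$ step: it hinges on the tangential differentiability of the Lipschitz function $g$ at the generic $x$ (via Maggi's theorem, as used in Section \ref{subsec:finite perimeter}) coupled with the Hausdorff convergence $\Omega_s \to H$ guaranteed by $b\beta_\pom(x,r)\to 0$, to ensure that the weak-$*$ subsequential limit of $\nabla\tilde g_s$ inside $H$ is actually linked to $\nabla_t g(x)$ on the boundary plane.
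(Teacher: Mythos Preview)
Your blow-up approach is essentially sound but has a gap at precisely the point you flag as the ``main technical obstacle''. The extension $g$ from \eqref{eqext45} depends on the point $x$ (through the ball $B(x,r/2)$), so you cannot invoke Maggi's a.e.\ tangential-differentiability theorem to assert that $\nabla_t g(x)$ exists at your pre-chosen ``generic'' $x$: the null exceptional set depends on $g$, hence on $x$ itself, and the appeal is circular. A clean repair is to bypass $\nabla_t g(x)$ altogether. Fix once and for all a Lipschitz extension $\wt f$ of $f$ to $\R^{n+1}$ and include among the generic a.e.\ conditions on $x$ that $\wt f|_{L_x}$ is differentiable at $x$. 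Then use that $g=\wt f$ on the density-one (in $\sigma$) set $E_{\ve,r}\cap B(x,r/2)$ together with $b\beta_\pom(x,s)\to0$ and AD-regularity to produce, for every $z\in L_x$, points $y_s\in E_{\ve,r}$ with $(y_s-x)/s\to z$; since $\tilde g_s((y_s-x)/s)=s^{-1}[\wt f(y_s)-\wt f(x)]$ and $\wt f(y)-\wt f(x)=\nabla_t f(x)\cdot(y-x)+o(|y-x|)$ for $y\in\pom$ near $x$, any Arzel\`a--Ascoli limit satisfies $\tilde g_0|_{L_x}(z)=\nabla_t f(x)\cdot z$ directly. With this fix the $O(\ve)$ error actually vanishes and no $\ve\to0$ is needed.

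The paper's proof is shorter and avoids the blow-up and weak-$*$ compactness machinery. Instead of the Lipschitz $g$, it subtracts from $f$ a \emph{linear} function $h_{\ve,\ell}(y)=v_{\ve,\ell}\cdot y$ with $v_{\ve,\ell}$ within $\ve\Lip(f)$ of $\nabla_t f(x)$, chosen via a finite $\ve$-cover of $\overline B(0,\Lip(f))$. For the linear part the identity \eqref{eqnab720} is immediate from the $C^1$ case (Lemma~\ref{lemHMT-tan}); for the remainder $g_{\ve,\ell}=f-h_{\ve,\ell}$, whose tangential gradient is $O(\ve)$ on the level set $A_{\ve,\ell}=\{|\nabla_t f - v_{\ve,\ell}|\le\ve\Lip(f)\}$, Lemma~\ref{lemreal} (used as a black box) yields $|\partial_{t,j,k}g_{\ve,\ell}|\lesssim\ve\Lip(f)$ there. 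Since neither $h_{\ve,\ell}$ nor $A_{\ve,\ell}$ depends on $x$, there is no circularity. Your approach, once patched, has the merit of being self-contained (it does not lean on the smooth formula), but it needs the $L^1_{\rm loc}$ convergence $\chi_{\Omega_s}\to\chi_H$ from De Giorgi's structure theorem in addition to the Hausdorff convergence of boundaries.
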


\begin{proof}
Since $f$ is Lipschitz, we have that $|\nabla_t f(x)|\leq {\rm Lip}(f)$ for $\sigma$-a.e.\ $x\in\pom$, or equivalently, $\nabla_t f(x)\in \overline B(0, {\rm Lip}(f))$ for $\sigma$-a.e.\ $x\in\pom$.
Given any $\ve>0$, we cover $\overline B(0, {\rm Lip}(f))$ with a family of closed balls $B_{\ve,\ell}$, $\ell=1,\ldots,N(\ve)$, with radius $\ve\,{\rm Lip}(f)$ and bounded overlaps, applying Besicovitch covering theorem. 

We denote by $v_{\ve,\ell}$ the center of $B_{\ve,\ell}$ and we let
$$A_{\ve,\ell}= \big\{x\in \pom: \nabla_t f(x)\in B_{e,\ell}\big\},$$
(we understand that $A_{\ve,\ell}$ contains only points $x$ where $\nabla_t f(x)$ exists),
so that
\begin{equation}\label{eqllarg98}
|\nabla_t f(x) - \Pi_x(v_{\ve,\ell})| =
|\Pi_x\big(\nabla_t f(x) - v_{\ve,\ell}\big)| \leq |\nabla_t f(x) - v_{\ve,\ell}| \leq \ve{\rm Lip}(f)\end{equation}
for all $x\in A_{\ve,\ell}$,
where $\Pi_x$ is the orthogonal projection on the  tangent hyperplane to $\pom$ in $x$.
We consider the function $g_{\ve,\ell}:\pom\to\R$ defined by
$$g_{\ve,\ell}(x) = f(x) - v_{\ve,\ell}\cdot x.$$
Observe that $g_{\ve,\ell}$ is Lipschitz and, since the tangential gradient of the function 
$h_{\ve,\ell}(x) = v_{\ve,\ell}\cdot x$ equals
$$\nabla_th_{\ve,\ell}(x) = \Pi_x(\nabla h_{\ve,\ell}(x)) =  \Pi_x(v_{\ve,\ell}),$$
we have
$$|\nabla_t g_{\ve,\ell}(x)| = |\nabla_tf(x) - \nabla_th_{\ve,\ell}(x)| = |\nabla_tf(x) - \Pi_x(v_{\ve,\ell})|
\leq \ve\,{\rm Lip}(f)$$
for all $x\in A_{\ve,\ell}$,
where we used \rf{eqllarg98} in the last inequality.

By the preceding estimate and Lemma \ref{lemreal} applied to to  $g_{\ve,\ell}$, we obtain
\begin{equation}\label{eqllarg99}
|\partial_{t,j,k} g_{\ve,\ell}(x)| \lesssim |\nabla_t g_{\ve,\ell}(x)| \leq \ve\,{\rm Lip}(f)
\quad \mbox{ for $\HH^n|_{\partial^* \om}$-a.e.\ $x\in A_{\ve,\ell}$}.
\end{equation}
Observe now that, since $h_{\ve,\ell}$ is a $C^\infty$ function, by \rf{eqclau30}, for $\HH^n|_{\partial^* \om}$-a.e. \ $x \in \partial^* \om$,
\begin{align}\label{eqlla5}
\partial_{t,j,k} g_{\ve,\ell}(x) &= \partial_{t,j,k} f(x)  -\partial_{t,j,k} h_{\ve,\ell}(x) \\
& = \partial_{t,j,k}f(x) + \nu_j(x) (\nabla_{t} h_{\ve,\ell}(x))_k-\nu_k(x) (\nabla_{t} h_{\ve,\ell}(x))_j \notag\\
 &=\partial_{t,j,k} f(x) + \nu_j(x) (\Pi_x(v_{\ve,\ell}))_k-\nu_k(x) (\Pi_x(v_{\ve,\ell}))_j.\notag
\end{align}
Therefore, by the triangle inequality, for $\HH^n|_{\partial^* \om}$-a.e.\ $x\in A_{\ve,\ell}$,
\begin{align}\label{eqdjkq24}
|\nu_j(x)(&\nabla_{t} f(x))_k - \nu_k(x)(\nabla_t f(x))_j + \partial_{t,j,k} f(x)| \\
  & \leq |\nu_j(x)(\nabla_{t} f(x)-\Pi_x(v_{\ve,\ell}))_k - \nu_k(x)(\nabla_t f(x)-\Pi_x(v_{\ve,\ell}))_j|\notag\\
&+|\partial_{t,j,k} f(x) + \nu_j(x) (\Pi_x(v_{\ve,\ell}))_k-\nu_k(x) (\Pi_x(v_{\ve,\ell}))_j|\lesssim\ve\,{\rm Lip}(f),\notag
\end{align}
where we applied \rf{eqllarg98} to estimate the first term in the middle part of the chain of inequalities, and \rf{eqllarg99} and \rf{eqlla5}
for the second term. 
Since \rf{eqdjkq24} holds $\sigma$-a.e.\ for all $A_{\ve,\ell}$, we infer that 
$$|\nu_j(x)(\nabla_{t} f(x))_k - \nu_k(x)(\nabla_t f(x))_j + \partial_{t,j,k} f(x)|  \lesssim \ve\,{\rm Lip}(f)$$
for  $\HH^n|_{\partial^* \om}$-a.e.\ $x\in\partial^* \om$.
Finally, since $\ve$ can be taken arbitrarily small, \rf{eqnab720} follows.
\end{proof}

\vv

The second part of Theorem \ref{teomain}, which deals with the problem $(\wt R_p)$, is deduced from 
Lemma \ref{lem6.2*} and Lemma \ref{lemtec99}, which shows the connection between the Haj\l asz gradient
and the tangential gradient for Lipschitz functions under the assumptions in Theorem \ref{teomain}.
We proceed to prove it now.

\begin{proof}[Proof of Lemma \ref{lemtec99}]
Remember that we have to show that, or any Lipschitz function $f:\pom\to\R$, 
\begin{equation}\label{eqfi99}
\|\nabla_{H,p} f\|_{L^p(\sigma)} \approx \|\nabla_t f\|_{L^p(\sigma)},
\end{equation}
assuming either that  $\pom$ admits a weak $(1,p)$-Poincaré inequality  or $\Omega$ satisfies the two sided local John condition.

In \rf{eqfac71} we showed that 
$$|\nabla_t f(x)|\lesssim \cM_\sigma(\nabla_{H,p} f)(x)\quad \mbox{ for $\sigma$-a.e.\ $x\in \pom$.}$$
It is immediate to check that the arguments in \rf{eqfac71} work just under the assumption that 
the boundary of $\pom$ is uniformly $n$-rectifiable. So we deduce that
$$\|\nabla_t f\|_{L^p(\sigma)}\lesssim \|\nabla_{H,p} f\|_{L^p(\sigma)}.$$

In the converse direction, consider first the case when $\Omega$ satisfies the two sided local John condition,  which implies that $\pom$ is uniformly rectifiable and satisfies $\HH^n(\pom \setminus \partial^*\om)=0$ (see e.g. \cite[Corollary 3.14]{HMT} for the proof). By \cite[Theorem 4.27]{HMT}, we know that the Haj\l asz space
$W^{1,p}(\sigma)$ coincides with the Hofmann-Mitrea-Taylor Sobolev space $L^p_1(\pom)$ defined in 
\rf{eqlp1HMT} and, by  \cite[display  (4.3.20)]{HMT}, we have
$$\|\nabla_{H,p} f\|_{L^p(\sigma)} \lesssim \|\nabla_{t,HMT} f\|_{L^p(\sigma)},$$
 where 
 $$
\nabla_{t,HMT} f(x) := \Big(\sum_k \nu_k\,\partial_{t,j,k} f(x)\Big)_{1\leq j\leq n+1} \quad \textup{for all} \,\, x \in \partial^* \Omega.
$$
By   \cite[Lemma 3.40]{HMT} and Lemma \ref{lemreal**},  $\nabla_{t,HMT} f= -\nabla_{t} f$ $\sigma$-a.e., and so
we get $\|\nabla_{H,p} f\|_{L^p(\sigma)} \lesssim \|\nabla_t f\|_{L^p(\sigma)}$. Remark that if
$f$ is $C^1$ in a neighborhood of $\pom$, we can rely on the simpler Lemma 
\ref{lemHMT-tan}.
\vv

 Suppose now that $\pom$ admits a weak $(1,p)$-Poincaré inequality. In this case, probably \rf{eqfi99} is already known to hold. However, for completeness we show the details here.
By the Keith-Zhong theorem
\cite{KZ}, it turns out that
 $\pom$ admits a weak $(1,q)$-Poincaré inequality for some $q<p$. 
Remark now that 
$$\rho(x):=\limsup_{\pom\ni y\to x} \frac{|f(y) - f(x)|}{|y-x|}$$
is an upper gradient for $f$. Indeed, it is easy to check that
for any rectifiable curve $\gamma
\subset\pom$ with endpoints $x,y\in\pom$,
$$|f(x) - f(y)|\leq \int_\gamma \rho\,d\HH^1.$$
(see  \cite[Exercise 7.25]{Heinonen}).
Consequently, since $\pom$ supports a weak $(1,q)$ Poincaré inequality, there exists some $\Lambda>1$ such that for any ball $B$ centered at $\pom$ it holds
\begin{equation}\label{eqpoin89}
\avint_B |f- m_{B, \sigma}|\,d\sigma \leq C\,r(B)\left(\avint_{\Lambda B}\rho^q\,d\sigma\right)^{1/q}
=  C\,r(B)\left(\avint_{\Lambda B}|\nabla_t f|^q\,d\sigma\right)^{1/q},
\end{equation}
because $\rho = |\nabla_t f|$ $\sigma$-a.e., by Lemma \ref{lem2.2}.

We denote $B_{x,k}=B(x,2^{-k}|x-y|)$, $B_{y,k}=B(y,2^{-k}|x-y|)$ for $k\in\Z$, and we write
\begin{align*}
|f(x) - f(y)| &\leq |f(x) - m_{B_{x,-1}, \sigma}f| + |m_{B_{x,-1}, \sigma}f - m_{B_{y,0}, \sigma}f|\\ &\quad + |m_{B_{y,0},\sigma}f - f(y)|
\end{align*}
Using the Poincaré inequality \rf{eqpoin89} and the fact that $B_{y,0}\subset B_{x,-1}$, with $r(B_{y,0})\approx r(B_{x,-1})$, it follows that
\begin{align*}
|m_{B_{x,-1}, \sigma}f - &m_{B_{y,0},\sigma}f| \leq \avint_{B_{y,0}}|f- m_{B_{x,-1}, \sigma}f|\,d\sigma
\lesssim \avint_{B_{x,-1}}|f- m_{B_{x,-1}, \sigma}f|\,d\sigma\\
& \lesssim
|x-y|\left(\avint_{\Lambda B_{x,-1}}|\nabla_t f|^q\,d\sigma\right)^{1/q} 
\leq |x-y|\,\cM_{\sigma,q} (\nabla_t f)(x),
\end{align*}
where $\cM_{\sigma,q}$ is the maximal operator defined by
$$\cM_{\sigma,q}g(x) = \sup_{r>0}\left(\avint_{B(x,r)} |g|^q\,d\sigma\right)^{1/q}.$$
We also have
\begin{align*}
&|f(x) - m_{B_{x,-1}, \sigma}f|  \leq \sum_{k\geq-1} |m_{B_{x,k}, \sigma}f - m_{B_{x,k+1},  \sigma}f|\\
&\lesssim \sum_{k\geq-1} 2^{-k}|x-y|\left(\avint_{\Lambda B_{x,k}}|\nabla_t f|^q\,d\sigma\right)^{1/q} 
\lesssim |x-y|\,\cM_{\sigma,q} (\nabla_t f)(x).
\end{align*}
By analogous estimates, we get
$$|f(y) - m_{B_{y,0}, \sigma}f|\lesssim |x-y|\,\cM_{\sigma,q} (\nabla_t f)(y).$$
Therefore,
$$|f(x) - f(y)| \lesssim |x-y|\,\big(\cM_{\sigma,q} (\nabla_t f)(x) +\cM_{\sigma,q} (\nabla_t f)(y)\big).$$
Consequently, $C\cM_{\sigma,q} (\nabla_t f)$ is a Haj\l asz upper gradient for $f$, and since
 $\nabla_{H,p}f$ has minimal $L^p(\sigma)$ norm over all Haj\l asz upper gradients and
$\cM_{\sigma,q}$ is bounded in $L^p(\sigma)$, we obtain
$$\|\nabla_{H,p}f\|_{L^p(\sigma)} \lesssim \|\cM_{\sigma,q} (\nabla_t f)\|_{L^p(\sigma)} \lesssim
\|\nabla_t f\|_{L^p(\sigma)}.$$
\end{proof}

\vv

% *******************************************************************************************

\section{Solvability of the regularity problem with boundary data in $W^{1,p}(\pom)$}
\label{sec7*}

This section is mainly devoted to the proof of Theorem \ref{thm:1.3}.
First we show that we can easily extend the solvability of $(R_p)$ to boundary data in $C(\pom) \cap W^{1,p}(\pom)$:

\begin{theorem}\label{thrm:continuous-regularity}
Let $\Omega\subset\R^{n+1}$ be a bounded corkscrew domain with $n$-AD-regular boundary. %such that  $\HH^n(\pom\setminus \partial_*\Omega)=0$.  
If there exists $p \in (1, 2+\ve_0)$ such that $(D_{p'})$  is solvable and $\ve_0$ is  defined in Theorem \ref{teomain}, then $(R_p)$ is solvable for continuous functions in the sense that, for any continuous function $f\in W^{1,p}(\pom)\cap C(\pom)$, the solution $u$ of the Dirichlet problem satisfies
$$\|\NN(\nabla u)\|_{L^p(\sigma)} \lesssim \| f\|_{\dot W^{1,p}(\pom)}.$$
\end{theorem}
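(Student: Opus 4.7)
The strategy will be to approximate the continuous data $f$ by Lipschitz functions, apply Theorem \ref{teomain} to each approximation, and pass to the limit via Fatou's lemma and the maximum principle. My first step will be to produce a sequence of Lipschitz functions $\{f_k\}_{k \geq 1}$ on $\pom$ satisfying both $f_k \to f$ uniformly on $\pom$ and $f_k \to f$ in $W^{1,p}(\pom)$. Since $\pom$ is compact and $f$ is continuous, and since Haj\l asz's density result \cite{Hajlasz} provides Lipschitz approximations converging in $W^{1,p}(\pom)$, one can combine these ideas via an inf-convolution construction
$$f_k(x) := \inf_{y \in \pom}\bigl(f(y) + k\,|x-y|\bigr),$$
so that each $f_k$ is $k$-Lipschitz, $f_k \to f$ uniformly by the uniform continuity of $f$ on the compact set $\pom$, and the Haj\l asz upper gradients of $f_k$ can be controlled in $L^p(\sigma)$ by a Lusin-type argument exploiting the fact that $f_k$ coincides with $f$ on the set where the Hardy-Littlewood maximal function of a Haj\l asz upper gradient of $f$ is bounded by $ck$ for a suitable constant $c$.

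With such $\{f_k\}$ in hand, I will let $u_k$ denote the continuous Dirichlet solution in $\Omega$ with boundary data $f_k$ and let $u$ be the continuous Dirichlet solution with data $f$; both exist because the $n$-AD-regularity of $\pom$ implies Wiener regularity. By Theorem \ref{teomain},
\[
\|\NN(\nabla u_k)\|_{L^p(\sigma)} \lesssim \|f_k\|_{\dot W^{1,p}(\pom)}.
\]
The maximum principle yields $\|u_k - u\|_{L^\infty(\Omega)} \leq \|f_k - f\|_{L^\infty(\pom)} \to 0$, so $u_k \to u$ uniformly on $\overline \Omega$, and standard interior gradient estimates for harmonic functions then imply $\nabla u_k \to \nabla u$ locally uniformly on $\Omega$.

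To finish, for each $x \in \pom$ and any $y \in \gamma_\alpha(x) \subset \Omega$ we have $|\nabla u(y)| = \lim_k |\nabla u_k(y)| \leq \liminf_k \NN(\nabla u_k)(x)$, and taking the supremum over such $y$ gives the pointwise bound $\NN(\nabla u)(x) \leq \liminf_k \NN(\nabla u_k)(x)$ for all $x \in \pom$. Fatou's lemma then yields
\[
\|\NN(\nabla u)\|_{L^p(\sigma)} \leq \liminf_k \|\NN(\nabla u_k)\|_{L^p(\sigma)} \lesssim \liminf_k \|f_k\|_{\dot W^{1,p}(\pom)} = \|f\|_{\dot W^{1,p}(\pom)},
\]
as claimed. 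I expect the main obstacle to be the simultaneous uniform and $W^{1,p}$-approximation by Lipschitz functions in the first step: uniform approximation alone is immediate from the continuity of $f$, and $W^{1,p}$-approximation alone is the standard Haj\l asz density, but combining the two requires some care and exploits the interplay between the continuity of $f$ and the Lusin-type nature of Haj\l asz-Lipschitz approximations.
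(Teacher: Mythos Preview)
Your overall strategy—approximate by Lipschitz data, apply Theorem~\ref{teomain} to each approximant, and pass to the limit via a Fatou argument—is exactly the paper's. The difference lies in how the limit of the $u_k$ is identified with the actual solution $u$, and this is where your proposal has a gap.

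You want simultaneous uniform and $W^{1,p}$ convergence of the Lipschitz approximants $f_k\to f$, so that the maximum principle gives $u_k\to u$ uniformly. Your proposed construction is the inf-convolution $f_k(x)=\inf_{y\in\pom}\bigl(f(y)+k|x-y|\bigr)$, and you claim that $f_k$ coincides with $f$ on the set $\{\cM_\sigma g\le ck\}$ for a Haj\l asz upper gradient $g$ of $f$. This claim is not correct: the infimum defining $f_k(x)$ ranges over \emph{all} $y\in\pom$, and points $y$ with $g(y)$ large (hence $f(y)$ potentially much smaller than $f(x)$) can pull $f_k(x)$ strictly below $f(x)$ regardless of how small $\cM_\sigma g(x)$ is. The Lusin-type coincidence you invoke is a feature of the McShane extension of $f|_{\{g\le k\}}$, not of the full inf-convolution; but the McShane extension, in turn, does not converge uniformly when $p<n$, because the resulting error is of order $k\cdot\dist(x,\{g\le k\})\approx k^{1-p/n}\to\infty$. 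So neither construction alone does the job, and you have not shown that simultaneous uniform and $W^{1,p}$ Lipschitz approximation is available in this generality.

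The paper sidesteps this entirely. It takes only the standard Haj\l asz approximants (Lipschitz, converging in $W^{1,p}$, pointwise $\sigma$-a.e., and uniformly bounded), extracts a subsequence so that $u_k$ and $\nabla u_k$ converge locally uniformly to some harmonic $\wt u$, and then identifies $\wt u=u$ by a different route: the hypothesis $(D_{p'})$ forces harmonic measure to be absolutely continuous with respect to $\sigma$, so $f_k\to f$ $\omega^x$-a.e., and dominated convergence gives $u_k(x)=\int f_k\,d\omega^x\to\int f\,d\omega^x=u(x)$. This second use of the $(D_{p'})$ assumption is precisely what lets the paper avoid the delicate simultaneous-approximation issue you flagged as the main obstacle.
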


\begin{proof}
Let $1<p<\infty$ and $f\in W^{1,p}(\Omega)\cap C(\pom)$.
In view of   \cite[Theorem 5]{Hajlasz},  there exists a sequence of Lipschitz functions 
$f_k:\pom\to\R$ which converge to $f$ in $W^{1,p}(\pom)$. Further, the functions $f_k$ constructed
in that theorem converge   pointwise to $f$ $\sigma$-a.e. as well and they are uniformly bounded
(with $L^\infty$ norm depending on $\|f\|_\infty$ and $\diam(\pom)$.

By Theorem \ref{teomain}, we know that the solution $u_k$ of the Dirichlet problem with
boundary data $u_k$ satisfies
\begin{equation}\label{equkk99}
\|\NN(\nabla u_k)\|_{L^p(\sigma)} \lesssim \|\nabla_{H,p}f_k\|_{L^p(\sigma)}.
\end{equation}
Since the functions $f_k$ are uniformly bounded,  then the sequence of harmonic extensions $u_k$  is uniformly bounded, by the maximum principle.
Then, passing to a subsequence, if necessary, we can assume that both the functions $u_k$ and their
gradients $\nabla u_k$ converge uniformly on compact subsets of $\Omega$. 
Then, denoting 
$\wt u = \lim_{k\to\infty} u_k,$
by \rf{equkk99} it easily follows  that 
\begin{align}\label{equkk999}
\|\NN(\nabla \wt u)\|_{L^p(\sigma)}& \leq \limsup_{k\to\infty} \|\NN(\nabla u_k)\|_{L^p(\sigma)}\\ &\nonumber  \lesssim \limsup_{k\to\infty}\|\nabla_{H,p}f_k\|_{L^p(\sigma)}
= \|\nabla_{H,p}f\|_{L^p(\sigma)}.
\end{align}

On the other hand, by the $L^{p'}$-solvability of the Dirichlet problem, it follows that the harmonic measure in $\Omega$, denoted by $\omega_\Omega$, and the surface measure on $\pom$ are mutually absolutely continuous. So, the functions $f_k$ converge to $f$ pointwise $\omega_\Omega$-a.e.
Consequently, by the dominated convergence theorem, for any fixed $x\in\Omega$,
$$\lim_{k\to\infty} u_k(x) = \lim_{k\to\infty} \int f_k\,d\omega^x = \int f\,d\omega^x = u(x),$$
where $u$ is the solution the Dirichlet problem with boundary data $f$.
So we deduce that $u=\wt u$ and $\nabla u=\nabla\wt u$ and then by \rf{equkk999} the lemma follows.
\end{proof}

\vv

In the next theorem we extend the definition of solvability of $(R_p)$ to general boundary data in $W^{1,p}(\pom)$ and we study the pointwise convergence at the boundary.

\begin{theorem}[\bf Solvability]\label{thm:regularity-noncont}
Let $\Omega\subset\R^{n+1}$ be a bounded corkscrew domain with $n$-AD-regular boundary. %such that  $\HH^n(\pom\setminus \partial_*\Omega)=0$.
 If $(R_p)$ is solvable, then for any $f \in  W^{1,p}(\pom)$ there exists a harmonic function $u$ in $\Omega$ such that $\|\NN(\nabla u)\|_{L^p(\sigma)} \leq C \|f\|_{\dot W^{1,p}(\pom)}$ and $u \to f$ non-tangentially $\sigma$-a.e. on $\pom$.
\end{theorem}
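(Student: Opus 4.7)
The argument is by density: approximate $f\in W^{1,p}(\pom)$ by Lipschitz functions, solve the Dirichlet problem for each approximant, apply the $(R_p)$ estimate to the differences, and pass to the limit. The two main ingredients are Haj\l{}asz's density of Lipschitz functions in $W^{1,p}(\pom)$ (see \cite[Theorem 5]{Hajlasz}) and the fact that $(R_p)\Rightarrow (D_{p'})$ (Theorem \ref{propoconverse}), which ensures $\omega_{\Omega}^x\ll\sigma$ for every $x\in\Omega$.

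\emph{Step 1 (Approximation and $(R_p)$ on differences).} Choose Lipschitz $f_k\to f$ in $W^{1,p}(\pom)$ and, passing to a subsequence, assume $f_k\to f$ pointwise $\sigma$-a.e. Since $\pom$ is AD-regular, every boundary point is Wiener regular, so the continuous Dirichlet problem with data $f_k$ has a unique solution $u_k\in C(\overline\Omega)$ with $u_k|_{\pom}=f_k$. Applying $(R_p)$ to the Lipschitz function $f_k-f_j$ gives
\[
\|\NN(\nabla(u_k-u_j))\|_{L^p(\sigma)}\lesssim\|f_k-f_j\|_{\dot W^{1,p}(\pom)}\to 0.
\]
Extract a further subsequence so that $\NN(\nabla u_k)\to g$ and $f_k\to f$ hold $\sigma$-a.e.\ on $\pom$, and let $E\subset\pom$ denote the resulting full $\sigma$-measure subset.

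\emph{Step 2 (Pointwise limit and harmonicity).} For each $x\in\Omega$ pick $y\in E$ with $|x-y|\le C\dist(x,\pom)$, so that $x\in\gamma_\alpha(y)$ for a fixed aperture $\alpha$. Using the corkscrew property of $\Omega$ iteratively over dyadic scales accumulating at $y$, construct a rectifiable path $\gamma\subset\overline\Omega\cap\gamma_{\alpha'}(y)$ from $y$ to $x$ of length $\lesssim|x-y|$. Since $u_k\in C(\overline\Omega)$, the fundamental theorem of calculus gives $u_k(x)-f_k(y)=\int_\gamma\nabla u_k\cdot d\ell$, hence
\[
|u_k(x)-u_j(x)|\le|f_k(y)-f_j(y)|+C|x-y|\,\NN(\nabla(u_k-u_j))(y)\to 0,
\]
and $u(x):=\lim_k u_k(x)$ is well-defined. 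A covering argument (any compact $K\subset\Omega$ is contained in a finite union of cones $\gamma_\alpha(y_i)$ with $y_i\in E$) shows that $|\nabla u_k|$ is uniformly bounded on $K$, so $\{u_k\}$ is equi-Lipschitz on $K$; combined with pointwise Cauchyness this upgrades to local uniform convergence, hence $u$ is harmonic and $\nabla u_k\to\nabla u$ locally uniformly. Fatou's lemma then yields
\[
\|\NN(\nabla u)\|_{L^p(\sigma)}\le\liminf_k\|\NN(\nabla u_k)\|_{L^p(\sigma)}\lesssim\|f\|_{\dot W^{1,p}(\pom)}.
\]

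\emph{Step 3 (Non-tangential convergence to $f$).} For $\sigma$-a.e.\ $y\in E$ we have $\NN(\nabla u)(y)<\infty$. Given $z\in\gamma_\alpha(y)$, take a rectifiable path $\gamma_{y,z}\subset\overline\Omega\cap\gamma_{\alpha'}(y)$ from $y$ to $z$ of length $\lesssim|z-y|$. Then for every $k$ one has $|u_k(z)-f_k(y)|\le C|z-y|\,\NN(\nabla u_k)(y)$, and letting $k\to\infty$,
\[
|u(z)-f(y)|\le C|z-y|\,\NN(\nabla u)(y)\to 0\quad\text{as }z\to y\text{ non-tangentially.}
\]

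\textbf{Main obstacle.} The delicate geometric step is constructing short non-tangential paths from boundary points $y\in E$ to arbitrary interior points $x\in\Omega$ using only the corkscrew condition and AD-regularity (no Harnack chain or local John hypothesis is assumed in the statement). One descends through corkscrew balls on dyadic scales approaching $y$, tracking both the total arc-length and a uniform cone aperture. A cleaner route, once $(D_{p'})$ is available, is to obtain pointwise convergence of $u_k$ at a single distinguished corkscrew point via the Poisson representation $u_k(x^*)=\int f_k\,k^{x^*}\,d\sigma$, using the reverse Hölder bound on $k^{x^*}$ coming from $(D_{p'})$ (combined with a truncation argument on $f$ in $W^{1,p}$), and then propagate convergence throughout $\Omega$ by interior harmonicity and connectedness of $\Omega$.
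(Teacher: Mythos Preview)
The central gap is in Step 2. The corkscrew condition together with AD-regularity does \emph{not} yield rectifiable non-tangential paths from boundary points to arbitrary interior points. Iterating corkscrew balls at dyadic scales produces points $x_k\in\gamma_\alpha(y)$ with $|x_k-y|\approx 2^{-k}$, but the segment $[x_k,x_{k+1}]$ may exit $\Omega$; connecting them within a fixed cone is precisely what the (weak) local John condition provides, and that condition is \emph{not} assumed here. The paper is careful about this distinction: existence (Theorem~\ref{thm:regularity-noncont}) is stated under corkscrew alone, while uniqueness (Lemma~\ref{lemuniqueness}) requires weak local John. Your path-based control of $|u_k(x)-u_j(x)|$ therefore does not go through, and with it the local equi-Lipschitz bound and the pointwise Cauchy property both collapse.

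The alternative route you sketch via $(D_{p'})$ also has a gap. From Theorem~\ref{propoconverse} and Theorem~\ref{teo9.2} you obtain $d\omega^{x}/d\sigma\in L^{p}(\sigma)$, but to pass to the limit in $u_k(x)=\int f_k\,(d\omega^{x}/d\sigma)\,d\sigma$ you need $f_k\to f$ in $L^{p'}(\sigma)$. Haj\l{}asz--Sobolev embedding on the $n$-AD-regular boundary gives $W^{1,p}\hookrightarrow L^{np/(n-p)}$ for $p<n$, and $np/(n-p)\ge p'$ only when $p\ge 2n/(n+1)$; for $1<p<2n/(n+1)$ the pairing fails, and the vague ``truncation argument'' does not repair this since $f\in W^{1,p}(\pom)$ need not be bounded.

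The paper circumvents the path issue entirely by manufacturing compactness of $\{u_j\}$ in a different way: it builds an explicit Lipschitz extension $\wt w$ of each $\wt f_j=f_j-f_j(0)$ across $\pom$ to $\R^{n+1}$ (using the almost harmonic extension of Section~\ref{sec:extension} on the auxiliary annular domain $B(0,4\diam\pom)\setminus\pom$), glues it to the harmonic solution $u_j$ in $\Omega$, invokes a $W^{1,p}$ removability lemma for $\pom$, and applies the Sobolev inequality on $\R^{n+1}$ to obtain $\|u_j\|_{L^{p^*}(\Omega)}\lesssim_{\diam\pom}\nabla_H f_j(0)+\|f_j\|_{\dot W^{1,p}}$. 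This uniform $L^{p^*}$ bound feeds Montel's theorem and gives local uniform convergence of $u_j$ and $\nabla u_j$ without any connectivity hypothesis. Only for the trace identification (your Step~3) does the paper use paths, and there it uses straight segments inside genuine Euclidean cones $C^\pm(\xi,\theta,h)\subset\Omega$ that exist at $\sigma$-a.e.\ tangent point of the rectifiable boundary --- this is local and does not require reaching deep interior points.
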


\begin{proof}
In order to define a harmonic solution $u$ for $f \in  W^{1,p}(\pom)$ as required in the theorem, 
first we consider $f \in  W^{1,p}(\pom) \cap \Lip(\pom)$ and we explain how to extend $f$ to the whole $\R^{n+1}$ in a controlled way in $\dot W^{1,p}(\pom)$.
Suppose that $0 \in \pom$, and define $\wt f= f-  f(0)$ on $\pom$ and $\wt f=0$ on  $ \partial B(0, 4 \diam(\pom))$. Note that if $x \in \pom$ and $y \in \partial B(0, 4 \diam(\pom))$, it holds that 
\begin{align*}
|\wt f(x) - \wt f(y)|&=| f(x) -  f(0)| \leq |x| (\nabla_{H,p} f(x)+\nabla_{H,p} f(0))\\
& \leq \diam(\pom) (\nabla_{H,p} f(x)+\nabla_{H,p} f(0))\\& \leq |x-y| (\nabla_{H,p} f(x)+\nabla_{H,p} f(0)).
\end{align*}
Set $\widetilde \Omega:=B(0,4 \diam(\pom)) \setminus \pom$ and 
\begin{equation*}
\wt \sigma=
\begin{dcases}
\sigma, &\textup{on}\,\, \pom\\
\HH^n|_{\partial B(0,4 \diam(\pom))},&\textup{on} \,\,\partial B(0,4 \diam(\pom)).
\end{dcases}
\end{equation*} 
Set also
\begin{equation*}
\wt g(x)=
\begin{dcases}
\nabla_{H,p} f(x), &\textup{if}\,\,x\in \pom\\
\nabla_{H,p} f(0),&\textup{if}\,\,x\in \partial B(0, 4 \diam(\pom)).
\end{dcases}
\end{equation*} 
 Since $\partial\widetilde \Omega$ is compact, $\nabla_{H,p} f(0) \in L^p(\wt \sigma)$  and thus, $\wt g$ is a Haj\l asz upper gradient of $\wt f$ on $ \partial \widetilde \Omega$. It is clear from the argument above that $\wt f \in \Lip(\partial \widetilde \Omega)$
% setting  $$A_Q(x) =\left( \avint_{B_Q} g(y) \,d\sigma(y) \right) (x-x_Q) + \avint_{B_Q} f(y) \,d\sigma(y),$$ 
and so we can construct $\wt w$ in $\widetilde \Omega$ as in Lemma \ref{lem:Lip-ext} such that %$\wt w$ is compactly supported in   $B(0, M \diam \pom)$ for some $M>0$ large,  
$\wt w$ is Lipschitz continuous in $B(0,4 \diam(\pom))$ with $\Lip(\wt w) \lesssim \Lip (\wt f)$ and $\wt w \to \wt f$ continuously on $\partial \widetilde \Omega$. Finally we extend $\wt w$ by $0$ in $\R^{n+1} \setminus B(0, 4 \diam(\pom))$. 

Additionally, it is not hard to show that
\begin{equation}\label{eq:6.6}
\|\NN_{\widetilde \Omega}(\nabla \wt w)\|_{L^p(\wt \sigma)} \leq C \|\wt f\|_{\dot W^{1,p}(\partial \widetilde \Omega)}.
\end{equation}
 %Here, since we have not assumed that $\widetilde \Omega$ satisfies the interior corkscrew condition, if there is no Whitney region associated to a cube $Q$, we simply have that $\wt w $ is  zero there. %Notice that one does not need an exterior corkscrew condition in order to prove the non-tangential  convergence. It is enough to use that $\sigma$-a.e. point on $\pom$ is a (truncated) cone point for $\Omega_{\textup{ext}}$ (see, for instance, \cite[Corollary 1.6]{Mo}). 
Indeed, fix $\xi \in \partial \widetilde \Omega$ and let $Q \in \DD(\partial \widetilde \Omega)$ so that $\xi \in Q$. If $P_0  \in w(Q)$ and  $x\in 1.1 P_0$, by Lemma \ref{lem46},
\begin{align*}
|\nabla \wt w(x)| &\lesssim \frac1{\ell(Q)}\,m_{CB_{Q}, \sigma}(\nabla_{H,p} \wt f)\,\ell(Q) + m_{CB_{Q}, \sigma}(\nabla_{H,p} \wt f)\\
& \lesssim m_{CB_{Q}, \sigma}(\nabla_{H,p} \wt f) \lesssim \mathcal{M}_{\wt \sigma}(\nabla_{H,p} \wt f)(\xi),
\end{align*}
 which readily implies that 
$$
\NN_{\widetilde \Omega}(\nabla \wt w)(\xi) \lesssim \mathcal{M}_{\wt \sigma}(\nabla_{H,p} \wt f)(\xi),
$$
and thus \eqref{eq:6.6}. 
Thus,
\begin{align*}
\int_{\wt \Omega} |\nabla \wt w|^p\,dm & \approx \!\sum_{Q\in \DD(\partial \wt\Omega)} \int_{w(Q)} |\nabla \wt w|^p\,dm 
\leq \!\sum_{Q\in \DD(\partial \wt\Omega)} \inf_{x\in Q} |\NN_{\wt\Omega}(\nabla \wt w)(x)|^p\,\ell(Q)^{n+1}\\
& \lesssim \sum_{Q\in \DD(\partial \wt\Omega)} \!\!\ell(Q) \int_Q |\NN_{\wt\Omega}(\nabla \wt w)|^pd\sigma\lesssim \diam(\pom)
\int_{\partial\wt\Omega} |\NN_{\wt\Omega}(\nabla \wt w)|^p\,d\sigma\\
& \lesssim \diam(\pom) \|\wt f\|_{\dot W^{1,p}(\partial \widetilde \Omega)}^p \lesssim_{\diam(\pom)} \left(\nabla_{H,p} f(0)^p+\| f\|_{\dot W^{1,p}(\partial  \Omega)}^p \right).
\end{align*}
%$$\| \nabla \wt w \|_{L^p(\widetilde \Omega)} \lesssim \diam(\pom)^{1/p} \|\wt f\|_{\dot W^{1,p}(\partial \widetilde \Omega)} \lesssim_{\diam(\pom)} \left(\nabla_{H,p} f(0)+\| f\|_{\dot W^{1,p}(\partial  \Omega)} \right).$$ 

If $w$ is the solution of the Dirichlet problem with data $\wt f$ in $\Omega$, we set
\begin{equation*}
w'(x)=
\begin{dcases}
w(x), &\textup{if}\,\,x\in \Omega\\
\wt f(x), &\textup{if}\,\,x\in \pom\\
\wt w(x), &\textup{if}\,\,x\in \R^{n+1} \setminus \overline \Omega.
\end{dcases}
\end{equation*} 
Remark that $w \in  W^{1,p}(\Omega) \cap C^\alpha(\overline \Omega)$ for some $\alpha>0$,  where $C^\alpha(\overline \Omega)$ is the class of $\alpha$-H\"older continuous functions in $\overline \Omega$ (see e.g. \cite[Theorem 6.44]{HKM}). Therefore, since $\wt w $ is compactly supported and $\Omega$ is bounded, we readily  infer that $w' \in C^\alpha(\R^{n+1})$ and is compactly supported (thus bounded in $\R^{n+1}$). As we have proved that $w' \in \dot W^{1,p}(\R^{n+1}\setminus \pom)$ and $w'$ is bounded in $\R^{n+1}$, it trivially holds that $w'  \in  W^{1,p}(\R^{n+1}\setminus \pom)$. Applying \cite[Lemma 11]{HaM}, we deduce that $w' \in  W^{1,p}(\R^{n+1})$, and by Sobolev's inequality we obtain
\begin{align*}
 \|w\|_{L^{p^*}(\Omega)} &\leq  \|w'\|_{L^{p^*}(\R^{n+1})} \lesssim  \|\nabla w'\|_{L^{p}(\R^{n+1})} \\ &\lesssim_{\diam(\pom)} \left(\nabla_{H,p} f(0)+\| f\|_{\dot W^{1,p}(\partial  \Omega)} \right).
\end{align*}

Consider now an arbitrary function $f \in  W^{1,p}(\pom)$.  By density, there exists a sequence $f_j \in \Lip(\pom)$ such that $f_j \to f$ in $ W^{1,p}(\pom)$. Passing to a subsequence, we also have that $f_j \to f$ and  $\nabla_{H,p} f_j \to \nabla_{H,p} f$ pointwisely $\sigma$-a.e. on $\pom$, and without loss of generality, we can  assume that $f_j(0) \to f(0)$ and  $\nabla_{H,p} f_j(0) \to \nabla_{H,p} f(0)$. If $u_j(x)=\int \wt f_j\,d\omega^x$, where $\wt f_j= f_j-f_j(0)$, since $(R_p)$ is solvable,  it holds that 
\begin{equation}
\|\NN(\nabla (u_j - u_k))\|_{L^p(\sigma)}  \leq C \|f_j-f_k\|_{\dot W^{1,p}(\pom)},\label{align1}
\end{equation}
\begin{equation}
\|u_j-u_k\|_{L^{p^*}(\Omega)}  \lesssim_{\diam(\pom)} |\nabla_{H,p} f_j(0)- \nabla_{H,p} f_k(0)| + \|f_j-f_k\|_{\dot W^{1,p}(\pom)},\label{align2}
\end{equation}
 with bounds independent of $j$ and $k$, where we used that $\| \wt f_j- \wt f_k\|_{\dot W^{1,p}(\pom)}=\|  f_j-  f_k\|_{\dot W^{1,p}(\pom)}$.  Therefore, by  \rf{align2}, after passing to a subsequence, we can find a harmonic function $u$ in $\Omega$ such that $u_j$ and $\nabla u_j$ respectively converge to $u$ and $\nabla u$ uniformly on compact subsets of $\Omega$ (this follows from Montel's theorem for harmonic functions, taking into account that $\sup_j\|u_j\|_{L^{p^*}(\Omega)}<\infty$).  Moreover, if we define 
\begin{equation}\label{eq:truncated-nt}
 \NN^\ve (F)(\xi):= \sup_{\gamma(\xi) \cap \{x \in\Omega: \delta_\Omega(x)>\ve\}} |F(x)|,
 \end{equation}
  we have that 
 $$
\|\NN^\ve(\nabla u)\|_{L^p(\sigma)} \lesssim \liminf_{j \to \infty}\|\NN^\ve(\nabla u_j)\|_{L^p(\sigma)} \lesssim \liminf_{j \to \infty} \|f_j\|_{\dot W^{1,p}(\pom)}=\|f\|_{\dot W^{1,p}(\pom)}.
 $$
Since  $\NN^\ve(\nabla u)$ increases to $\NN(\nabla u)$ as $\ve \to 0$, it readily holds 
  $$
\|\NN(\nabla u)\|_{L^p(\sigma)} \lesssim \|f\|_{\dot W^{1,p}(\pom)}.
$$

Now it remains to show that, for any $\alpha>0$, 
$$\lim_{\gamma_\alpha(\xi)\ni x\to \xi} u(x) = \wt f(\xi) = f(\xi) - f(0)
\quad \mbox{ for $\sigma$-a.e.\ $\xi\in\pom$.}$$
To this end, first we take $\beta>\alpha$  and we note that, by Lemma \ref{lemsurfacetot}, 
 for any fixed aperture $\beta>0$, for $\sigma$-a.e.\ $\xi\in\pom$ there exists 
$r_{\xi,\beta}>0$ such that 
$$
\gamma_\beta(\xi)\cap B(\xi,r_{\xi,\beta}) \subset \bigcup_{R\in\ttt}\Omega_R.
$$
and moreover, for $\sigma$-a.e.\ $\xi\in\pom$ belongs to $\bigcup_{R\in\ttt}\partial\Omega_R^{\pm}$. 
 Then, since, up to a set of zero surface measure, all points in 
$\partial\Omega_R^\pm$ are tangent points and $R\in\ttt$ is countable at most, we deduce that $\sigma$-a.e.\ $\xi\in\pom$ belongs to at most two Lipschitz domains $\Omega_R^+$, $\Omega_R^-$ (it may happen that $\Omega_R^-=\varnothing$).
Further, reducing $r_{\xi,\beta}$ if necessary, we may assume that $\gamma_\beta(\xi)\cap B(\xi,r_{\xi,\beta})\subset \Omega_R=
\Omega_R^+\cup\Omega_R^-$. From this condition for $\alpha<\beta$, it easily follows that $\gamma_\alpha(\xi)\cap B(\xi,r_{\xi,\alpha})$ is contained in a non-tangential cone for $\Omega_R$ for a suitable $r_{\xi,\alpha}\in (0,r_{\xi,\beta})$, and moreover we can write
$$\gamma_\alpha(\xi)\cap B(\xi,r_{\xi,\alpha}) := \big(\gamma_\alpha^+(\xi)\cap B(\xi,r_{\xi,\alpha})\big) \cup 
\big(\gamma_\alpha^-(\xi)\cap B(\xi,r_{\xi,\alpha})\big),$$
with $\gamma_\alpha^\pm(\xi)\cap B(\xi,r_{\xi,\alpha})\subset\Omega_R^\pm$. By abusing notation, we assume that
$\gamma_\alpha^\pm(\xi)$ are also non tangential cones for $\Omega_R^\pm$.

Next we apply  a well-known argument of Kenig and Pipher (see \cite[pp. 461-462]{KP}) which shows that 
if $\NN_{\Omega_R^\pm}(\nabla u)(\xi)<\infty$, then 
$u$ has non-tangential trace $\sigma$-a.e.\ on $\pom_R^\pm$ in the sense that both limits
\begin{equation}\label{eqlimopw}
\lim_{\gamma_\alpha^+(\xi)\ni x\to \xi} u(x), \quad \lim_{\gamma_\alpha^-(\xi)\ni x\to \xi}u(x)
\end{equation}
exist. Indeed, for arbitrary $x,y\in \gamma_\alpha^+(\xi)\cap B(\xi,r_{\xi,\alpha})$ there exists a non-tangential path $\ell_{x,y}$ (for $\Omega_R^+$) joining $x$ and $y$ contained in $\gamma_\alpha^+(\xi)$ with
length comparable to $|x-y|$. So
$$|u(x)-u(y)|\leq \int_{\ell_{x,y}}|\nabla u|\,d\HH^1 \lesssim \NN_{\Omega_R^+}u(\xi)\,|x-y|\leq \NN u(\xi)\,|x-y|
.$$
Here we assumed
that $\NN=\NN_\alpha$ as $\| \NN_\alpha(\nabla u)\|_{L^p(\pom)} \approx_{\alpha, \beta} \| \NN_\beta(\nabla u)\|_{L^p(\pom)}$ for any $\alpha \neq \beta$,  by \cite[Proposition 2.2]{HMT}.
By the Cauchy criterion, it follows that 
$$ u^+(\xi):=\lim_{\gamma_\alpha^+(\xi)\ni x\to \xi} u(x)$$
exists. The same argument shows the existence of $u^-(\xi):=\lim_{\gamma_\alpha^+(\xi)\ni x\to \xi} u(x)$.
Then we deduce
\begin{equation}\label{eq:trace-ineq}
\sup_{x \in \gamma^\pm_\alpha(\xi)\cap B(\xi, \ve)} |u(x)- u^\pm(\xi)| \lesssim \ve \,
\NN(\nabla u)(\xi)\quad\textup{for} \,\sigma\textup{-a.e.}\,\, \xi \in \pom,\; 0<\ve< r_{\xi,\alpha}.
\end{equation}

Next we should show that $u^+(\xi)=u^-(\xi)=\wt f(\xi)$ $\sigma$-a.e.
Arguing as above, by \eqref{align1},
$$ \|\NN(\nabla (u_j - u) )\|_{L^p(\sigma)} \lesssim  \|f_j-f\|_{\dot W^{1,p}(\pom)} \to 0 \,\,\textup{as}\,\, j \to \infty.
 $$
 Thus, passing to a subsequence, we infer that $\NN(\nabla (u_j - u)) \to 0$ $\sigma$-a.e. on $\pom$.  
For a given $\xi\in\pom$ such that $\NN(\nabla u)(\xi)<\infty$ and $0<\ve<h(\xi)$, we fix $x^\pm\in \gamma^\pm_\beta(\xi)$ with $|x^\pm-\xi|\leq \ve$.
 By \rf{eq:trace-ineq} we have
$$ |u(x^\pm)- u^\pm(\xi)| \lesssim \ve \,\NN(\nabla u)(\xi).$$ 
Then, by the triangle inequality,
\begin{align*}
|&u^\pm(\xi) - \wt f(\xi)| \\&\leq |u^\pm(\xi) - u(x^\pm)| + |u(x^\pm) - u_j(x^\pm)| + |
u_j(x^\pm) - \wt f_j(\xi)| + |\wt f_j(\xi)-\wt f(\xi)|
\\
& \lesssim \ve \,\NN(\nabla u)(\xi)+ |u(x^\pm)- u_j(x^\pm)| + \ve \,\NN(\nabla u_j)(\xi)+ |\wt f_j(\xi)-\wt f(\xi)|.
\end{align*}
Here we used that $u_j \to f_j$ continuously and so $u_j^+(\xi)=u_j^-(\xi)=\wt f_j(\xi)$.
Letting $j\to\infty$, as $u_j(x^\pm)\to u(x^\pm)$, $\NN(\nabla (u_j-u) )(\xi) \to 0$, and  $\wt f_j(\xi)\to \wt f(\xi)$, we obtain
$$|u^\pm(\xi) - \wt f(\xi)| \lesssim \ve \,\NN(\nabla u)(\xi).$$ 
Since $\ve$ is arbitrarily small, we infer that $u^\pm(\xi) = \wt f(\xi)$ and our theorem is concluded as  $u+f(0)$ is the desired solution.
\end{proof}

\vv

Theorem \ref{thm:1.3} is a direct consequence of Theorem \ref{thm:regularity-noncont} and the next lemma.

\begin{lemma}[\bf Uniqueness]\label{lemuniqueness}
Let $\Omega\subset\R^{n+1}$ be an open set with bounded $n$-AD-regular boundary satisfying the weak local John condition.
Let $u:\Omega\to\R$ be a harmonic function, vanishing at $\infty$ when $\Omega$ is unbounded, which has a vanishing non-tangential limit for $\sigma$-a.e.\ $x\in\pom$ and such that $\|\NN(\nabla u)\|_{L^p(\sigma)}<\infty$. Then $u$ vanishes identically in $\Omega$.
\end{lemma}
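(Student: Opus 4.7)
My plan is to derive a pointwise upper bound on $|u(x)|$ in terms of the boundary distance $\delta_\Omega(x)$ and $\|\NN(\nabla u)\|_{L^p(\sigma)}$ by exploiting the weak local John condition, and then to conclude $u\equiv 0$ via the maximum principle on an exhaustion of $\Omega$.

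\textbf{Step 1 (Pointwise bound).} Fix $x\in\Omega$. The weak local John condition at $x$ furnishes a Borel set $F\subset B(x,\Lambda\,\delta_\Omega(x))\cap\pom$ with
$\sigma(F)\geq\lambda\,\sigma(B(x,\Lambda\,\delta_\Omega(x))\cap\pom)\gtrsim\delta_\Omega(x)^n$ (using $n$-AD-regularity), and, for every $z\in F$, a rectifiable path $\gamma_z:[0,1]\to\overline{\Omega}$ with $\gamma_z(0)=z$, $\gamma_z(1)=x$, length at most $\theta^{-1}|x-z|\leq\theta^{-1}\Lambda\,\delta_\Omega(x)$, and $\dist(\gamma_z(t),\pom)\geq\theta\,|\gamma_z(t)-z|$ for all $t$. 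The last property forces $\gamma_z$ into the non-tangential cone $\gamma_\alpha(z)$ with $\alpha=\theta^{-1}-1$. For $\sigma$-a.e.\ $z\in F$ we have both $u^*(z)=0$ and $\NN_\alpha(\nabla u)(z)<\infty$; hence $|\nabla u|$ is bounded by $\NN_\alpha(\nabla u)(z)$ along $\gamma_z$ and $u(\gamma_z(s))\to 0$ as $s\to 0^+$. The fundamental theorem of calculus along $\gamma_z$ then gives
\[
|u(x)|=\Big|\int_0^1\nabla u(\gamma_z(t))\cdot\gamma_z'(t)\,dt\Big|\leq \theta^{-1}|x-z|\,\NN_\alpha(\nabla u)(z)\leq C\,\delta_\Omega(x)\,\NN_\alpha(\nabla u)(z).
\]
Raising to the $p$-th power, integrating in $z$ over $F$, and using $\sigma(F)\gtrsim\delta_\Omega(x)^n$ yields the key pointwise estimate
\[
|u(x)|\leq C\,\delta_\Omega(x)^{1-n/p}\,\|\NN(\nabla u)\|_{L^p(\sigma)}.
\]

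\textbf{Step 2 (Conclusion via maximum principle).} Let $\Omega_k:=\{x\in\Omega:\delta_\Omega(x)>1/k\}$. Since $u$ is harmonic on $\Omega$ and continuous on $\overline{\Omega_k}$, the maximum principle gives $\sup_{\Omega_k}|u|\leq\sup_{\pom_k}|u|$. When $p>n$, the pointwise bound gives $\sup_{\pom_k}|u|\leq C\,k^{-(1-n/p)}\|\NN(\nabla u)\|_{L^p(\sigma)}\to 0$ as $k\to\infty$, so $u\equiv 0$ on $\Omega$. In the range $1<p\leq n$ the pointwise bound no longer produces boundary decay; there I would upgrade the argument by combining it with the Carleson-type inequality $\int_\Omega|\nabla u|^p\,dm\lesssim\diam(\Omega)\int_\pom|\NN(\nabla u)|^p\,d\sigma$ already used in the proof of Theorem \ref{thm:regularity-noncont} in order to place $u$ in $W^{1,p}(\Omega)$, and then use the path connectivity furnished by the weak local John condition to transfer the $\sigma$-a.e.\ vanishing of non-tangential limits into the vanishing of the Sobolev trace of $u$, so that $u\in W^{1,p}_0(\Omega)$; harmonicity then gives $u\equiv 0$ by uniqueness of the weak Dirichlet problem with zero data. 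The unbounded case is treated in the same way after first reducing it, by inversion, to a bounded domain with compact boundary, using that $u$ vanishes at infinity.

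\textbf{Main obstacle.} The delicate range is $1<p\leq n$: the pointwise estimate blows up as $\delta_\Omega(x)\to 0$, so the direct maximum-principle step is unavailable, and one is forced to identify the a.e.\ vanishing of the non-tangential trace of $u$ with the vanishing of its Sobolev trace in $W^{1,p}_0(\Omega)$. Carrying out this identification is the technical heart of the proof, and it relies crucially on the chains of non-tangentially accessible paths provided by the weak local John condition to couple interior $W^{1,p}$ information to boundary behavior.
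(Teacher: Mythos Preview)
Your Step~1 is correct, and for $p>n$ the maximum-principle argument in Step~2 succeeds.  The problem is the range $1<p\le n$, which is precisely the range needed in the paper's applications (where $p$ is close to~$1$).  There you only sketch a plan: put $u\in W^{1,p}_0(\Omega)$ and invoke ``uniqueness of the weak Dirichlet problem with zero data.''  But for $p<2$ this uniqueness is \emph{not} a general fact---in rough domains there can be nontrivial harmonic functions in $W^{1,p}_0(\Omega)$---and even for $2\le p\le n$ the identification of $\sigma$-a.e.\ vanishing of the non-tangential trace with membership in $W^{1,p}_0(\Omega)$ is a nontrivial step you have not supplied.  So the proposal has a genuine gap in the crucial range.

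The paper's proof is quite different and handles all $p\in(1,\infty)$ uniformly.  Fix $x\in\Omega$ and, for small $\delta>0$, let $\vphi_\delta$ be a Whitney-adapted cutoff equal to~$1$ where $\dist(\,\cdot\,,\pom)\gtrsim\delta$ and~$0$ where $\dist(\,\cdot\,,\pom)\lesssim\delta$.  Since $u$ is harmonic, $\Delta(\vphi_\delta u)=2\nabla\vphi_\delta\cdot\nabla u+u\,\Delta\vphi_\delta$ is supported in the shell $\{\dist(\,\cdot\,,\pom)\approx\delta\}$, and one writes
\[
u(x)=\int_\Omega G(x,y)\,\Delta(\vphi_\delta u)(y)\,dy=:I_1+I_2.
\]
The key extra ingredient is the H\"older continuity of the Green function at the boundary, $G(x,y)\lesssim\dist(y,\pom)^a$ for some $a>0$ depending only on the AD-regularity.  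This factor $\delta^a$ supplies the decay your pointwise estimate lacks when $p\le n$.  For $I_1$ one bounds $|\nabla u|$ by $\NN(\nabla u)$ on the associated boundary cubes and gets $I_1\lesssim\delta^a\|\NN(\nabla u)\|_{L^1(\sigma)}$.  For $I_2$ one needs $|u(y)|$ on the shell; here the weak local John condition is used exactly as in your Step~1, but only in the averaged form $|u(y)|\lesssim\delta\,(1+\essinf_{z\in G_Q}\NN(\nabla u)(z))$, which combined with the $\delta^a$ from the Green function yields $I_2\lesssim\delta^a(\sigma(\pom)+\|\NN(\nabla u)\|_{L^1(\sigma)})$.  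Letting $\delta\to0$ gives $u(x)=0$.
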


Recall that the  weak local John condition is defined in Section \ref{subsecwhitney}.

\begin{proof}
Consider the family of Whitney cubes $\WW(\Omega)$ and let $\{\vphi_Q\}_{Q\in\WW(\Omega)}$ be a partition of unity of $\Omega$ so that each $\vphi_Q$ is supported in $1.1Q$ and $\|\nabla\vphi_Q\|_\infty\lesssim \ell(Q)^{-1}$.
For each $\delta\in(0,\diam(\Omega))$, let 
\begin{equation}\label{eqwhitneydelta}
\WW_\delta(\Omega)= \{Q\in \WW(\Omega):\ell(Q)\geq \delta\}
\end{equation}
and
$$\vphi_\delta = \sum_{Q\in \WW_\delta(\Omega)} \vphi_Q.$$
Also, we define
$$u_\delta = \vphi_\delta\,u.$$
Notice that $u_\delta\in C^\infty(\R^{n+1})$ and $\supp u_\delta\subset\Omega$. From the properties
of the Whitney cubes, it easily follows  that there exists some constant $C_9>0$ (depending on the parameters of the construction of the Whitney cubes) such that
$\vphi_\delta(x) =0$ if $\dist(x,\pom)\leq C_9^{-1}\delta$, and 
$\vphi_\delta(x) =1$ if $\dist(x,\pom)\geq C_9\delta$.
Consequently, if we let
$$\WW_0(\Omega)= \big\{Q\in\WW(\Omega):C_{10}^{-1}\delta\leq\ell(Q)\leq C_{10}\delta\big\}$$
for a suitable constant $C_{10}$ depending on $C_9$, we infer that
$$\supp\nabla\vphi_\delta \cup \supp\Delta\vphi_\delta \subset 
\big\{x\in\Omega: C_9^{-1}\delta\leq \dist(x\,\pom)\leq C_9\,\delta \big\}\subset 
\bigcup_{Q\in\WW_0(\Omega)} Q.$$

Fix $x\in\Omega$ and let $\delta\ll\dist(x,\pom)$.
Then, since $u$ is harmonic (and vanishes at $\infty$ when $\Omega$ is unbounded), we have
\begin{align*}
u(x) & = u_\delta(x) = \int G(x,y)\,\Delta u_\delta(y)\,dy\\
& =
2\int G(x,y)\,\nabla u(y)\cdot \nabla \vphi_\delta(y)\,dy + \int G(x,y)\,u(y)\,\Delta \vphi_\delta(y)\,dy =: I_1 + I_2.
\end{align*}
Since $G(x,\cdot)$ is   H\"older continuous at $\pom$, there exists some $a>0$ such that
$$G(x,y)\lesssim \dist(y,\pom)^{a}\quad \mbox{ for $y\in\Omega$ such that $\dist(y,\pom)\leq \frac12\,\dist(x,\pom)$},$$
with the implicit constant depending on $\dist(x,\pom)$.
Then, concerning $I_1$, we have
$$I_1 \lesssim \sum_{Q\in\WW_0} \frac1\delta \int_{Q} G(x,y)\,|\nabla u(y)|\,dy
\lesssim \delta^{a-1} \sum_{Q\in\WW_0}\int_{Q} |\nabla u(y)|\,dy.
$$
Notice now that, for any $Q\in\WW_0$,
$$|\nabla u(y)|\leq \NN(\nabla u)(z)\quad \mbox{ for all $y\in Q$, $z\in b(Q)$,}$$
assuming the operator $\NN$ to be associated to non-tangential regions with big enough aperture.
So we get
\begin{align*}
I_1 & \lesssim  \delta^{a-1} \sum_{Q\in\WW_0}\ell(Q)\,\int_{b(Q)} \NN(\nabla u)(z)\,d\sigma(z)
\\
& \lesssim \delta^{a}\|\NN(\nabla u)\|_{L^1(\sigma)} \leq \delta^{a}\|\NN(\nabla u)\|_{L^p(\sigma)}\,\sigma(\pom)^{1/p'}.
\end{align*}

Next we turn to $I_2$. Using again the H\"older continuity of $G(x,\cdot)$ at $\pom$ we get
$$I_2 \lesssim \sum_{Q\in\WW_0} \frac1{\delta^2} \int_{1.1Q} G(x,y)\,|u(y)|\,dy
\lesssim \delta^{a-2} \sum_{Q\in\WW_0}\int_{Q} |u(y)|\,dy.
$$ 
To bound $u(y)$ for $y\in Q$, observe that, since $\Omega$ 
satisfies the weak local John condition, then there exists
a subset $G_Q\subset C_3Q\cap \pom$ (for some some $C_3>1$ big enough) with 
$\sigma(G_Q)\approx \ell(Q)^n$ such that for each $z\in G_Q$ 
there is a non-tangential path $\gamma_{x_Q,z}$ that joins the center $x_Q$ of $Q$ and $z$ with $\HH^1(\gamma_{x_Q,z})\lesssim |x_Q-z|\approx\delta$. For each $y\in Q$ and $z\in G_Q$ we let $\gamma_{y,z}$ be the union of the path
$\gamma_{x_Q,z}$ and the segment $[x_Q,y]$, so that $\gamma_{y,z}$ is a non-tangential path joining $y$ and $z$ with 
$\HH^1(\gamma_{y,z})\lesssim |y-z|\approx\delta$.

Assuming the operator $\NN$ to be associated to non-tangential regions $\gamma_\alpha$ with big enough aperture, we deduce that $\gamma_{y,z}\setminus\{z\}$ is contained in $\gamma_\alpha(z)$. Further, since the non-tangential limit of $u$ vanishes $\sigma$-a.e., 
we infer that for $\sigma$-a.e.\ $z\in G_Q$ there is some point $z'\in \gamma_{y,z}\setminus \{z\}$ such that $u(z')\leq \delta$.
Integrating along the path $\gamma_{y,z'}$ (this is the subpath consisting of the points from $\gamma_{y,z}$ lying between $y$ and $z'$), we deduce that
\begin{align*}
|u(y)| &\leq |u(z')| + \int_{\xi\in \gamma_{y,z'}}|\nabla u(\xi)|\,d\HH^1(\xi) \lesssim \delta + 
\sup_{\xi\in \gamma_{y,z'}}|\nabla u(\xi)|\,\,\HH^1(\gamma_{y,z})\\ & \lesssim \delta \,(1+ \NN(\nabla u)(z)).
\end{align*}
Hence,
$$u(y) 
 \lesssim \delta + \delta \,\essinf_{z\in G_Q}\NN(\nabla u)(z).$$
Therefore, taking into account that the sets $G_Q$, with $Q\in\WW_0$, have bounded overlaps, 
we obtain
\begin{align*}
I_2 & \lesssim  
\delta^{a-2} \sum_{Q\in\WW_0}\ell(Q)\,\int_{G_Q} (\delta + \delta\,\NN(\nabla u)(z))\,d\sigma(z)
\\
& \lesssim \delta^{a}\,\sigma(\pom) + \delta^{a}\|\NN(\nabla u)\|_{L^1(\sigma)} \leq \delta^{a}\,\sigma(\pom) +\delta^{a}\|\NN(\nabla u)\|_{L^p(\sigma)}\,\sigma(\pom)^{1/p'}.
\end{align*}

Combining all the estimates above, we obtain
$$|u(x)|\lesssim \delta^{a}\,\sigma(\pom) +\delta^{a}\|\NN(\nabla u)\|_{L^p(\sigma)}\,\sigma(\pom)^{1/p'}.$$
Since $\delta$ can be taken arbitrarily small, we infer that $u(x)=0$.
\end{proof}

\vv

% ********************************************************************************************

\section{Invertibility of the single layer potential operator}\label{sec8*}

We will need the following technical result.

\begin{lemma}\label{lemlimfac}
Let $\Omega\subset\R^{n+1}$ be an open set with bounded $n$-AD-regular boundary satisfying the corkscrew condition, and let $f\in
L^p(\sigma)$, for $p\in [1,\infty]$. Then, for $\sigma$-a.e.\ $x\in\pom$, the non-tangential limit of $\cS f$ at $x$ equals $\cS f(x)$. That is,
\begin{equation}\label{eqlimnt7}
\lim_{\gamma_\alpha(x)\ni y\to x} \cS f(y) = \cS f(x).
\end{equation}
Moreover, for $p \in (1,\infty]$, it holds that 
\begin{equation}\label{eq:Lpboundsingle}
\|\cS f\|_{L^p(\pom)} \lesssim \diam(\pom) \|f\|_{L^{p}(\pom)}.
\end{equation}
\end{lemma}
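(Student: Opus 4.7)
My plan is to reduce the lemma to two standard ingredients: a pointwise bound of $|\mathcal{S} f|$ by a Hardy--Littlewood maximal function, and a splitting argument that uses the standard gradient estimate $|\nabla\EE(z)| \lesssim |z|^{-n}$ together with the fact that for $y \in \gamma_\alpha(x)$ one has $|y-z| \geq \dist(y,\pom) \geq |y-x|/(1+\alpha)$ for all $z \in \pom$.

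\textbf{Step 1 (the $L^p$ bound).} Using $|\EE(x-y)| \lesssim |x-y|^{1-n}$ together with a dyadic decomposition of $\pom$ into annuli $A_k = B(x,2^{-k}\diam(\pom))\setminus B(x,2^{-k-1}\diam(\pom))$, $k\geq 0$, and the upper AD-regularity bound $\sigma(B(x,r))\lesssim r^n$, I would show
\[
|\mathcal{S}f(x)| \lesssim \sum_{k\geq 0} (2^{-k}\diam(\pom))^{1-n}\int_{B(x,2^{-k}\diam(\pom))}|f|\,d\sigma \lesssim \diam(\pom)\,\mathcal{M}_\sigma f(x).
\]
The bound \eqref{eq:Lpboundsingle} then follows immediately for $p\in(1,\infty]$ from $L^p(\sigma)$-boundedness of $\mathcal{M}_\sigma$. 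In particular this pointwise estimate also gives $\mathcal{S}|f|(x)<\infty$ and $\mathcal{M}_\sigma f(x)<\infty$ for $\sigma$-a.e.\ $x\in\pom$ when $p\in(1,\infty]$; for $p=1$, Fubini together with $\int |\EE(x-z)|\,d\sigma(x)\lesssim \diam(\pom)$ shows $\mathcal{S}|f|\in L^1(\sigma)$, so it is still finite $\sigma$-a.e.

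\textbf{Step 2 (non-tangential convergence).} Fix any $x\in\pom$ at which $\mathcal{M}_\sigma f(x)<\infty$ and $\mathcal{S}|f|(x)<\infty$; this excludes only a $\sigma$-null set. For $y\in\gamma_\alpha(x)$, set $r_0 = |y-x|$ and split
\[
\mathcal{S}f(y)-\mathcal{S}f(x) = \Big(-\!\!\int_{B(x,Cr_0)}\!\!\EE(x-z)f(z)\,d\sigma(z)\Big) + \Big(\int_{B(x,Cr_0)}\!\!\EE(y-z)f(z)\,d\sigma(z)\Big) + I_{\mathrm{far}},
\]
where $I_{\mathrm{far}}=\int_{\pom\setminus B(x,Cr_0)}(\EE(y-z)-\EE(x-z))f(z)\,d\sigma(z)$ and $C=C(\alpha)$ is chosen so that $|x-z|\geq C r_0$ implies $|w-z|\geq |x-z|/2$ for $w\in[x,y]$.

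\textbf{Step 3 (the three pieces).} The first term tends to $0$ as $r_0\to 0$ by dominated convergence, because $\mathcal{S}|f|(x)<\infty$. For the second term, the key observation $|y-z|\geq r_0/(1+\alpha)$ yields $|\EE(y-z)|\lesssim r_0^{1-n}$ uniformly on $B(x,Cr_0)$, whence
\[
\Big|\!\int_{B(x,Cr_0)}\!\!\EE(y-z)f(z)\,d\sigma(z)\Big| \lesssim r_0^{1-n}\int_{B(x,Cr_0)}|f|\,d\sigma \lesssim r_0\,\mathcal{M}_\sigma f(x) \to 0.
\]
For $I_{\mathrm{far}}$, the mean value theorem and the gradient estimate give $|\EE(y-z)-\EE(x-z)|\lesssim r_0/|x-z|^n$ on $\pom\setminus B(x,Cr_0)$; decomposing into dyadic annuli around $x$ out to $\diam(\pom)$ and using AD-regularity,
\[
|I_{\mathrm{far}}| \lesssim r_0\!\!\sum_{k=0}^{K}\!\! (2^k r_0)^{-n}\!\!\int_{B(x,2^{k+1}Cr_0)}\!\!|f|\,d\sigma \lesssim r_0\,K\,\mathcal{M}_\sigma f(x),
\]
where $K\approx\log(\diam(\pom)/r_0)$, so $|I_{\mathrm{far}}|\lesssim r_0\log(1/r_0)\mathcal{M}_\sigma f(x)\to 0$. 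Combining the three estimates gives \eqref{eqlimnt7}. No step presents a real obstacle; the only mildly delicate point is the logarithmic blow-up in $I_{\mathrm{far}}$, which is absorbed by the prefactor $r_0$.
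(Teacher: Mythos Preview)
Your proof is correct. The paper obtains the $L^p$ bound exactly as you do, via the pointwise estimate $|\mathcal S f|\lesssim \diam(\partial\Omega)\,\mathcal M_\sigma f$. For the non-tangential limit, however, it argues a little differently: instead of your moving cutoff $Cr_0=C|y-x|$, it fixes $\delta_1>0$ with $\int_{B(x,\delta_1)}|x-z|^{1-n}|f(z)|\,d\sigma(z)<\varepsilon$ and then simply invokes continuity of $\mathcal S(f\chi_{B(x,\delta_1)^c})$ on $\tfrac12 B(x,\delta_1)$ for the far part, avoiding the mean-value estimate and the logarithmic factor altogether. For the near part at $y$ it uses the stronger geometric inequality $|y-z|\gtrsim|x-z|$ for all $z\in\partial\Omega$ (indeed $|x-z|\le|x-y|+|y-z|\le(2+\alpha)|y-z|$ when $y\in\gamma_\alpha(x)$), which gives $\mathcal S(|f|\chi_B)(y)\lesssim\mathcal S(|f|\chi_B)(x)<\varepsilon$ directly. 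Your route is more quantitative, producing an explicit rate of order $r_0\log(1/r_0)$; the paper's is slightly cleaner and requires only $\mathcal S|f|(x)<\infty$ rather than also $\mathcal M_\sigma f(x)<\infty$.
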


Although this result is already known, we prove it here for completeness.

\begin{proof}
 If we set $B_k:=B(x,\rho_k):=B(x,2^{-k}\diam(\pom))$, by the estimate
\begin{align*}
|\cS f(x)| &\leq \sum_{k \geq 0} \int_{ B_k\setminus B_{k+1} } \frac{|f(y)|}{|x-y|^{n-1}} \,d\sigma(y)\\
& \approx \sum_{k \geq 0} \frac{1}{{\rho_k}^{n-1}} \int_{B_k} |f(y)|\,d\sigma(y) \lesssim \diam(\pom)\, \mathcal{M}_\sigma(f)(x),
\end{align*}
and the $L^p(\sigma)$ boundedness of $\mathcal{M}_\sigma$, we get  \rf{eq:Lpboundsingle}. Thus,
$$\cS (|f|)(x) <\infty \quad\mbox{ for $\sigma$-a.e.\ $x\in\pom$.}$$

We will show  that \rf{eqlimnt7} holds for all $x\in \pom$ such that $\cS (|f|)(x) <\infty$. Indeed,
given $\ve>0$, let $\delta_1>0$ be such that
$$\int_{B(x,\delta_1)} \frac1{|x-z|^{n-1}}\,d\sigma(z) \leq \ve.$$
Denote $B= B(x,\delta_1)$ and notice now that the function $\cS(f\chi_{B^c})$ is continuous in $\frac12B$. Thus there exists some $\delta_2\in (0,\delta_1/2)$ such that
$$\big|\cS(f\chi_{B^c})(x) - \cS(f\chi_{B^c})(y)\big|\leq \ve\quad \mbox{ if $|x-y|\leq\delta_2.$}$$
Then, for all $y\in\gamma_\alpha(x)$ such that $|x-y|\leq\delta_2$, we have
\begin{align*}
\big|\cS f(x) - \cS f(y)\big| & \leq \cS(|f|\chi_{B})(x) + \cS(|f|\chi_{B})(y)
+
\big|\cS(f\chi_{B^c})(x) - \cS(f\chi_{B^c})(y)\big|\\ &\lesssim \ve + \cS(|f|\chi_{B})(y).
\end{align*}
To estimate $\cS(|f|\chi_{B})(y)$ we use the fact that $|y-z|\gtrsim |x-z|$ for all $z\in\pom$ (because $y\in\gamma_\alpha(x)$), and thus
\begin{align*}
\cS(|f|\chi_{B})(y) &\approx \int_{B} \frac1{|y-z|^{n-1}}\,|f(z)|\,d\sigma(z)
\lesssim \int_{B} \frac1{|x-z|^{n-1}}\,|f(z)|\,d\sigma(z)\\ &\nonumber \approx\cS(|f|\chi_{B})(x)\leq \ve.
\end{align*}
Hence, 
$\big|\cS f(x) - \cS f(y)\big|\lesssim \ve,$
which proves \rf{eqlimnt7} and concludes the proof.
\end{proof}

\vv

Next we need to introduce the  notion of solvability of the regularity problem in unbounded domains with bounded
boundary and extend Theorem \ref{teomain} to this type of domains.
Given an unbounded domain $\Omega \subset \R^{n+1}$ with bounded $n$-AD-regular boundary, we say that the regularity problem $( R_{p})$ is solvable for the Laplacian if  there exists some constant $C_{ R_p}>0$  such that, for any  Lipschitz function $f:\pom\to\R$, the solution $u:\Omega\to\R$ (vanishing at $\infty$) of the continuous
Dirichlet problem for the Laplacian in $\Omega$ with boundary data $f$ satisfies
\begin{equation}\label{eq:main-est-reg**}
\|\NN(\nabla u)\|_{L^p(\sigma)} \leq C_{ R_p}\|f\|_{W^{1,p}(\sigma)}.
\end{equation}
 Notice that, unlike in \rf{eq:main-est-reg} in the case of bounded domains, the estimate \rf{eq:main-est-reg**} involves the inhomogeneous norm
$\|\cdot\|_{W^{1,p}(\sigma)}$. In fact, for nice unbounded domains with bounded boundary the estimate \rf{eq:main-est-reg} fails in general. For example,
in the case that $\Omega=\R^{n+1}\setminus\overline B(0,1)$ and $u(x)=|x|^{1-n}$, we have that $u|_{\partial B(0,1)}=1$ and thus $\| u|_{\partial B(0,1)} \|_{\dot W^{1,p}(\sigma)}=0$ but  $\|\NN(\nabla u)\|_{L^p(\sigma)} =c_n > 0$.

\vv

\begin{theorem}[\bf Solvability in unbounded domains with compact boundary]\label{lemunbounded}
Let $p \in (1,\infty)$ and let $\Omega\subset\R^{n+1}$,  with $n\geq2$, be an unbounded corkscrew domain with bounded $n$-AD-regular boundary %such that $\HH^n(\pom \setminus \partial_*\Omega)=0$ and 
such that there exists $x_0 \in \R^{n+1} \setminus \overline \Omega$ with $\dist(x_0, \pom) \approx \diam(\pom)$. If $(D_{p'})$ is solvable,
then $(R_p)$ is solvable  with constants independent of $ \diam(\pom)$.  Moreover, for any $f \in  W^{1,p}(\pom)$, there exists a harmonic function $u$ in $\Omega$ such that $\|\NN(\nabla u)\|_{L^p(\sigma)} \leq C \|f\|_{ W^{1,p}(\pom)}$ and $u \to f$ non-tangentially $\sigma$-a.e. on $\pom$. If  $\Omega$ satisfies the weak local John condition, then this solution is unique.
\end{theorem}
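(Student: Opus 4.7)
The plan is to reduce the statement to the bounded case (Theorems \ref{teomain}, \ref{thm:1.3}, and \ref{lemuniqueness}) by means of a Kelvin-type transform centered at the point $x_0$. Set $K(x) = x_0 + r_0^2\,\dfrac{x-x_0}{|x-x_0|^2}$, where $r_0 \approx \diam(\pom)$ is chosen so that $K(\pom)$ lies in a bounded set of diameter $\approx\diam(\pom)$. Since $\dist(x_0,\pom)\approx\diam(\pom)$, the map $K$ is bilipschitz on a neighborhood of $\overline\Omega$ (after removing a small neighborhood of $\infty$), and it sends $\Omega$ to a bounded domain $\Omega^*:=K(\Omega)$. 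Standard computations show that $\Omega^*$ is a bounded corkscrew domain with $n$-AD-regular boundary, with constants depending only on those of $\Omega$ (and independent of $\diam(\pom)$ because of the scale invariance induced by the choice of $r_0$).

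The crucial analytic fact is the Kelvin identity: a function $u:\Omega\to\R$ is harmonic and vanishes at $\infty$ if and only if $u^*(y) := |y-x_0|^{-(n-1)}\,r_0^{n-1}\,u(K^{-1}(y))$ is harmonic in $\Omega^*$. First I would use this to transfer $(D_{p'})$ from $\Omega$ to $\Omega^*$: given Dirichlet data $g\in C_c(\pom^*)$, the boundary data $f:=g\circ K$ lies in $C_c(\pom)$ and $\|f\|_{L^{p'}(\sigma)}\approx\|g\|_{L^{p'}(\sigma^*)}$ by the bilipschitz character of $K|_\pom$ and the fact that $|y-x_0|\approx r_0$ on $\pom^*$. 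The corresponding solutions $u,u^*$ satisfy $|u(x)|\approx |u^*(K(x))|$, and the non-tangential cones of $\Omega$ correspond (up to aperture) to non-tangential cones of $\Omega^*$, so $\|\NN_{\Omega^*}(w^*)\|_{L^{p'}(\sigma^*)}\approx\|\NN_\Omega(u)\|_{L^{p'}(\sigma)}$ where $w^*$ denotes the harmonic extension in $\Omega^*$ of $g$. This yields solvability of $(D_{p'})$ in $\Omega^*$.

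By Theorem \ref{teomain} (and Theorem \ref{thm:1.3}) applied to $\Omega^*$, the regularity problem $(R_p)$ is solvable in $\Omega^*$ with constants depending only on the allowed parameters. Given a Lipschitz datum $f$ on $\pom$, set $g(y) := |y-x_0|^{-(n-1)}r_0^{n-1}\,f(K^{-1}(y))$ on $\pom^*$; since $|y-x_0|\approx r_0$ on $\pom^*$ and $K$ is bilipschitz on $\pom$, $g$ is Lipschitz on $\pom^*$ with $\|g\|_{W^{1,p}(\sigma^*)}\lesssim\|f\|_{W^{1,p}(\sigma)}$ (here the constants are independent of $r_0$ because the factor $r_0^{n-1}$ exactly compensates the Jacobian of $K|_\pom$ in $L^p$, taking into account the normalization of the Haj\l asz gradient). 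Let $w^*$ be the solution in $\Omega^*$ and set $u(x):= |x-x_0|^{-(n-1)}r_0^{n-1}\,w^*(K(x))$, which is the harmonic extension of $f$ in $\Omega$. A direct chain-rule computation gives
\[
\nabla u(x) = |x-x_0|^{-(n-1)}r_0^{n-1}\bigl[(DK(x))^{T}\nabla w^*(K(x))\bigr] - (n-1)|x-x_0|^{-(n+1)}r_0^{n-1}(x-x_0)\,w^*(K(x)).
\]
Since $|DK(x)|\lesssim |x-x_0|^{-2}r_0^2$ and $|x-x_0|\gtrsim r_0$ for $x\in\Omega$, the first term contributes $|x-x_0|^{-(n+1)}r_0^{n+1}|\nabla w^*(K(x))|$, whose non-tangential maximal function pulled back to $\pom$ is comparable (via change of variables) to $\|\NN_{\Omega^*}(\nabla w^*)\|_{L^p(\sigma^*)}\lesssim\|g\|_{\dot W^{1,p}(\sigma^*)}\lesssim\|f\|_{W^{1,p}(\sigma)}$; the second (``lower order'') term is controlled by $\|\NN_{\Omega^*}(w^*)\|_{L^p(\sigma^*)}\lesssim\|g\|_{L^p(\sigma^*)}\lesssim\|f\|_{L^p(\sigma)}$ via the solvability of $(D_{p'})$ in $\Omega^*$ together with \eqref{eq:Lpboundsingle}-type estimates (or the maximum principle). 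This delivers \eqref{eq:main-est-reg**} with constants independent of $\diam(\pom)$.

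The extension to general $f\in W^{1,p}(\pom)$ and the non-tangential convergence $u\to f$ $\sigma$-a.e.\ follow by transferring Theorem \ref{thm:regularity-noncont} from $\Omega^*$ to $\Omega$ via the same Kelvin correspondence, using that Lipschitz functions are dense in $W^{1,p}(\pom)$ and that the factor $|y-x_0|^{-(n-1)}r_0^{n-1}$ is a Lipschitz bounded-above-and-below multiplier on $\pom^*$. Finally, uniqueness under the weak local John condition is immediate from Lemma \ref{lemuniqueness}, whose statement is already formulated for unbounded $\Omega$ (requiring the solution to vanish at $\infty$, as we do here). The main obstacle is the bookkeeping in Step involving $\nabla u$ versus $\nabla u^*$: one must verify that the lower-order term produced by the Kelvin factor $|x-x_0|^{-(n-1)}$ is indeed controlled by $\|f\|_{L^p(\sigma)}$ via $(D_{p'})$ and accounts precisely for the need to use the inhomogeneous norm $\|f\|_{W^{1,p}(\sigma)}$ rather than the homogeneous one in \eqref{eq:main-est-reg**}, which, as noted in the remark preceding the theorem, cannot be replaced by the homogeneous norm (as shown by the example $u(x)=|x|^{1-n}$).
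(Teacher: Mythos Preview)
Your overall strategy via the Kelvin transform is exactly the paper's approach, and the transfer of $(D_{p'})$ to $\Omega^*$, the application of Theorem~\ref{teomain} there, the chain-rule formula for $\nabla u$, and the appeal to Lemma~\ref{lemuniqueness} for uniqueness are all correct. However, your control of the lower-order term has a genuine gap.

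You claim that the contribution $|x-x_0|^{-n}r_0^{n-1}|w^*(K(x))|$ is controlled by $\|\NN_{\Omega^*}(w^*)\|_{L^p(\sigma^*)}\lesssim\|g\|_{L^p(\sigma^*)}$ ``via the solvability of $(D_{p'})$ in $\Omega^*$.'' But $(D_{p'})$ only gives $\|\NN_{\Omega^*}(w^*)\|_{L^{p'}(\sigma^*)}\lesssim\|g\|_{L^{p'}(\sigma^*)}$, which is the wrong exponent; the maximum principle gives $L^\infty$ bounds in terms of $\|g\|_\infty$, which are not controlled by $\|g\|_{W^{1,p}}$ uniformly in $\diam(\pom)$. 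The paper closes this gap differently: since $(D_{p'})$ is solvable in $\Omega^*$, the result of \cite{AHMMT} ensures that $\Omega^*$ satisfies the \emph{weak local John condition}. For each $z\in\Omega^*$ one then integrates $\nabla w^*$ along a non-tangential path to a good boundary point $w$ to obtain
\[
|w^*(z)|\lesssim \diam(\pom)\,\NN_{\Omega^*}(\nabla w^*)(w) + |g(w)|,
\]
and averaging over the set of admissible $w$ yields
\[
\diam(\pom)^{-1}|w^*(z)|\lesssim \cM_{\sigma^*}\bigl(\NN_{\Omega^*}(\nabla w^*)\bigr)(x^*) + \diam(\pom)^{-1}\cM_{\sigma^*}g(x^*).
\]
The first term is then absorbed by the already-established bound $\|\NN_{\Omega^*}(\nabla w^*)\|_{L^p}\lesssim\|g\|_{\dot W^{1,p}}$, and the second is the $L^p$ term accounting for the inhomogeneous norm. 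Without this path-integration argument (or an equivalent device producing an $L^p$ bound on $w^*$ in terms of its gradient), your proof does not go through.
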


\begin{proof}
We will deduce this lemma from Theorem \ref{teomain}, by using the Kelvin transform.
Indeed, by hypothesis, there exists a point $x_0\in \R^{n+1} \setminus \overline \Omega$ such that  $\dist(x_0,\pom) \approx \diam(\pom)$. Without loss of generality we assume that $x_0=0$.

Given $x\in\R^{n+1}\setminus\{0\}$, we let 
$x^*= \frac{\diam(\pom)^2}{|x|^2}\,x.$ 
Notice that the map defined by $I(x)=x^*$ is an involution of $\R^{n+1}\cup\{\infty\}$, understanding that
$I(0)=\infty$.
For $r>0$, we denote $r^*= \frac{\diam(\pom)^2}{r^2}\,r = \frac{\diam(\pom)^2}{r}$.
We also set
$$\Omega^* =
\big\{x^*:x\in\Omega\big\}\cup \{0\}$$ 
(so identifying $\Omega$ with $\Omega\cup\{\infty\}$, we have $\Omega^*=I(\Omega)$).
Given a function $f:\R^{n+1}\supset E\to \R$, its Kelvin transform is defined
by
$$ f^*(x^*)= \frac{\diam(\pom)^{n-1}}{|x^*|^{n-1}}\,f(x),$$
understanding that $f(\infty)=0$.
We let the reader check that, if $u:\Omega\to \R$  vanishes at $\infty$, then $\Delta u=0$ in $\Omega$ if and only if $\Delta (u^*)=0$ in $\Omega^*$. Further, $(u^*)^*=u$.  Now it is not hard to see that if $\| u\|_{L^\infty(\Omega \cap 2B \setminus B)} < \infty$, where $B$ is a ball centered on $\xi_0 \in \pom$ with radius $2 \diam(\pom)$, then  $|u(x)|=O(|x-\xi_0|^{-n+1})$ as $x \to \infty$. Indeed, since $\EE(x-\xi_0)= c_n |x-\xi_0|^{1-n}$ is the fundamental solution for the Laplacian with pole at $\xi_0$, then  we may choose a constant $\kappa_0 \approx \| u\|_{L^\infty(\Omega \cap (2B \setminus B) )} \diam(\pom)^{n-1}$, so that $|u(x)| \leq v(x):= \kappa_0 \, \EE(x-\xi_0)$ for any $ x \in \Omega \cap (2B \setminus B)$. Then, since both $u$ and $v$ vanish at infinity, we may apply the maximum principle and deduce that $|u(x)| \lesssim |x-\xi_0|^{1-n}$ for any $x \in \Omega \setminus 2B$. Therefore,  $|u(x)| \lesssim |x|^{1-n}$ for any $x \in \Omega \setminus B(0,  3 \diam(\pom))$ and so $|u^*(0)| \lesssim_n\| u\|_{L^\infty(\Omega \cap (2B \setminus B) )} $.

Let $\rho_0= M \diam(\pom)$ for some $M \geq 4 $ be such that
$$  \pom \subset B(0,\rho_0) \subset B(0,2\rho_0) \quad \textup{and}\quad B(0,(2\rho_0)^*) \subset B(0,\rho_0^*) \subset \R^{n+1} \setminus \overline \Omega.$$
To shorten notation, we write $r_0:= (2\rho_0)^*$ and $R_0:=2\rho_0$.
It is immediate to check that the involution $I$ is bilipschitz
in the annulus $A(0,r_0,R_0)$  with uniform constants, and in particular on $\pom$. From this fact it easily follows that
$\pom^*$ is $n$-AD-regular  with uniform bounds since $I(\pom)=\pom^*$ and $\HH^n(E) \approx \HH^n(I^{-1}(E))$ for any $E \subset \pom^*$.  Moreover,
for any function $f:\pom\to\R$,
$$\|f\|_{L^p(\HH^n|_\pom)} \approx \|f^*\|_{L^p(\HH^n|_\pom*)},$$
with constants independent of $\diam(\pom)$.
Further, the involution $I$ transforms a non-tangential (unbounded) region for $\Omega$ into a non-tangential region for $\Omega^*$ containing some ball centered at the origin.

Next we will check that the solvability of $(D_{p'})$ for $\Omega$ implies the solvability of $(D_{p'})$ for $\Omega^*$. To this end, consider $g\in C(\Omega^*)$ and the solution $v:{\Omega^*}\to\R$ of the
Dirichlet problem for $ \Omega^*$ with boundary data $g$. We assume the non-tangential regions
$\gamma_{\Omega^*}(\cdot)$ to have a big enough aperture so that $\overline{B(0,r_0)}\subset \gamma_{\Omega^*}(x^*)$ for every $x^*\in\pom^*$. In this way, by the maximum principle,
\begin{align*}
\NN_{\Omega^*}v(x^*) = \!\!\sup_{y\in\gamma_{\Omega^*}(x^*)} \!\!|v(y)| = \!\!\sup_{y\in\gamma_{\Omega^*}(x^*)\setminus B(0,r_0)}\!\!|v(y)|
= \!\!\sup_{y\in I(\gamma_{\Omega^*}(x^*)\setminus B(0,r_0))}|v(y^*)|
\end{align*}
By the discussion above, assuming the aperture of $\gamma_\Omega(\cdot)$ big enough, we have
$$I(\gamma_{\Omega^*}(x^*)\setminus B(0,r_0))\subset\gamma_\Omega(x)\cap B(0,R_0)\subset A(0,r_0,R_0).$$
Hence,
\begin{align*}
\NN_{\Omega^*}v(x^*) &\leq \sup_{y\in \gamma_\Omega(x) \cap  B(0,R_0)}|v(y^*)|\approx 
\sup_{y\in \gamma_\Omega(x)\cap B(0,R_0)}\frac{ \diam(\pom)^{n-1}}{|y|^{n-1}}\,|v(y^*)| \\ &\leq \NN_{\Omega}(v^*)(x).
\end{align*}
%allowing from now on all the implicit constants in this proof to depend on $\diam(\Omega)$.
It is clear that $u:=v^*$ is the solution of the Dirichlet problem in $\Omega$ with boundary data $f:=g^*$, and 
then
\begin{align}
\label{eqak590}
\|\NN_{\Omega^*}v\|_{L^p(\HH^n|_{\pom^*})}^p & \lesssim \int_{\pom^*}\NN_{\Omega}u(x)^p\,
d\HH^n(x^*)\\
&\approx \int_{\pom}\NN_{\Omega}u(x)^p\,d\sigma(x) \lesssim \|f\|_{L^p(\sigma)}^p \approx
\|g\|_{L^p(\HH^n|_{\pom^*})}^p,\nonumber
\end{align}
where we took into account that the image measure $I_\#(\HH^n|_{\pom^*})$ is comparable to
$\sigma \equiv\HH^n|_{\pom}$, because $I$ is bilipschitz from $\pom^*$ to $\pom$. 
So the solvability of $(D_{p'})$ for $\Omega^*$ holds, as wished.

Since $\Omega^*$ is bounded, from Theorem \ref{teomain} we deduce that $(R_p)$ is solvable for $\Omega^*$. We will transfer this solvability to $\Omega$ by the Kelvin transform again.
So let $f:\pom\to\R$ be a Lipschitz function and let $u:\Omega\to\R$ be the solution (vanishing at $\infty$) of the continuous
Dirichlet problem in $\Omega$ with boundary data $f$.
We assume the non-tangential regions
$\gamma_{\Omega}(\cdot)$ to have a big enough aperture so that $\overline{B(0,R_0)}\subset \gamma_{\Omega}(x)$ for every $x\in\pom$. In this way, by the maximum principle, arguing as above, we obtain
\begin{align}\label{eqak591}
\NN_{\Omega}(\nabla u)(x) & = \sup_{y\in\gamma_{\Omega}(x)}|\nabla u(y)| = \sup_{y\in\gamma_{\Omega}(x)\setminus B(0,R_0)}|\nabla u(y)|\\
& = \sup_{z\in I(\gamma_{\Omega}(x)\setminus B(0,R_0))}|\nabla u(z^*)|
 \leq \sup_{z\in \gamma_{\Omega^*}(x^*)\setminus B(0,r_0)}|\nabla u(z^*)|.\nonumber
\end{align}
Observe now that $v:=u^*$ is the solution of the Dirichlet problem for $\Omega^*$ with boundary data 
$g:=f^*$, and we have that for $y \in \Omega^*$,
 \begin{equation}\label{eqak86}
 \nabla v(y) = \nabla \Big(\frac{\diam(\pom)^{n-1}}{|y|^{n-1}}\Big)\, u(y^*) + \frac{ \diam(\pom)^{n-1}}{|y|^{n-1}}\,\nabla u(y^*)\,DI^{-1}(y),
 \end{equation}
where $DI^{-1}$ is the Jacobian matrix of the map $I^{-1}$. From this identity, the fact that $I(\gamma_{\Omega^*}(x^*)\setminus B(0,r_0))\subset A(0,r_0,R_0)$,  that $I$ is bilipschitz in $A(0,r_0,R_0)$, and $DI^{-1}(z)=(DI(I^{-1}(z)))^{-1}$, we deduce that for all $z\in \gamma_{\Omega^*}(x^*)\setminus B(0,r_0)$,
 \begin{equation}\label{eqak592}
|\nabla u(z^*)|\lesssim |\nabla v(z)| + \diam(\pom)^{-1}|u(z^*)|\lesssim |\nabla v(z)| + \diam(\pom)^{-1} |v(z)|.
\end{equation}

To estimate $v(z)$, we take into account that the $(D_{p'})$ solvability in $\Omega^*$ implies that
$\Omega^*$ satisfies the weak local John condition, by \cite{AHMMT} (recall the  weak local John condition was defined in \rf{eqwlj11}). This condition ensures that there exists $C>1$ and
 $G_z\subset\pom^*\cap B(z,C\dist(z,\pom^*))$ with $\HH^n(G_z)\approx\dist(z,\pom^*)^n$
such that all $w \in G_z$ can be joined to $z$ by 
 a non-tangential path $\gamma_{z,w}$ satisfying $\HH^1(\gamma_{z,w})\lesssim |z-w|$.
Integrating along $\gamma_{z,w}$ and assuming the aperture of the cone $\gamma_{\Omega^*}(w)$ big enough, we obtain
\begin{equation*} 
|v(z)|\lesssim \diam(\pom) \!\sup_{y\in \gamma_{\Omega^*}(w)} \!|\nabla v(y)| + |g(w)| = \diam(\pom) \,\NN_{\Omega^*}(\nabla v)(w) +  |g(w)|
\end{equation*}
for all $w\in G_z$.
Averaging over $w\in G_z$, and taking into account that  
$$G_z\subset\pom^*\cap B(z,C\dist(z,\pom^*))\subset\pom^*\cap B(x^*,C'|x^*-z|)$$
and that $\HH^n(G_z)\approx |x^*-z|^n$,
we infer that
$$ \diam(\pom)^{-1}|v(z)| \lesssim  \cM_{\HH^n|_{\pom^*}} (\NN_{\Omega^*}(\nabla v))(x^*) +\diam(\pom)^{-1}  \cM_{\HH^n|_{\pom^*}}  g(x^*).$$
Combining this estimate with \rf{eqak591} and \rf{eqak592},  for $\HH^n$-a.e.\ $x^*\in\pom^*$ we get
\begin{align*}
\NN_{\Omega}(\nabla u)(x) & \lesssim \NN_{\Omega^*}(\nabla v)(x^*) + 
\cM_{\HH^n|_{\pom^*}} (\NN_{\Omega^*}(\nabla v))(x^*) \\ &\quad+ \diam(\pom)^{-1}\cM_{\HH^n|_{\pom^*}}  g(x^*)\\
& \lesssim 
 \cM_{\HH^n|_{\pom^*}} (\NN_{\Omega^*}(\nabla v))(x^*) + \diam(\pom)^{-1}\cM_{\HH^n|_{\pom^*}}  g(x^*).
\end{align*}

From the preceding inequality, arguing as in \rf{eqak590} and using the $L^p(\HH^n|_{\pom^*})$ boundedness of $\cM_{\HH^n|_{\pom^*}}$ and the 
solvability of $(R_p)$ for $\Omega^*$, we deduce that
\begin{align*}
\|\NN_{\Omega}(\nabla u) \|_{L^p(\sigma)}  &\lesssim    \|\NN_{\Omega^*}(\nabla v) \|_{L^p(\HH^n|_{\pom^*})} +  \diam(\pom)^{-1} \|g \|_{L^p(\HH^n|_{\pom^*})}\\
& \lesssim   \|\nabla_{H,p} g \|_{L^p(\HH^n|_{\pom^*})} + \diam(\pom)^{-1} \|f \|_{L^p(\sigma)}.
\end{align*}
%Then, using the Poincaré inequality in the Haj\l asz Sobolev space $W^{1,p}(\pom)$, together with the fact that $f$ has zero mean with respect to $\sigma|_\pom$, we derive $$\|\NN_{\Omega}(\nabla u) \|_{L^p(\sigma)}  \lesssim \|\nabla_{H,p} g \|_{L^p(\HH^n|_{\pom^*})} +  \|\nabla_{H,p} f \|_{L^p(\sigma)}.$$
To conclude the proof of the lemma (with constants independent of $\diam(\pom)$) it just remains to check
that
\begin{equation}\label{claim7245}
 \|\nabla_{H,p} g \|_{L^p(\HH^n|_{\pom^*})}\lesssim  \frac1{\diam(\Omega)} \|f\|_{L^p(\sigma)} + \|\nabla_{H,p} f \|_{L^p(\sigma)}.
\end{equation}
To this end, we consider arbitrary points $x^*,y^*\in\pom^*$, and we set
\begin{align*}
|g(x^*)& - g(y^*)|  = |f^*(x^*) - f^*(y^*)| = \bigg| \frac{\diam(\Omega)^{n-1}}{|x^*|^{n-1}} \,f(x) -  \frac{\diam(\Omega)^{n-1}}{|y^*|^{n-1}} \,f(y)\bigg|\\
& \leq 
 \bigg| \frac{\diam(\Omega)^{n-1}}{|x^*|^{n-1}} -  \frac{\diam(\Omega)^{n-1}}{|y^*|^{n-1}} \bigg|\,|f(x)| + 
 \frac{\diam(\Omega)^{n-1}}{|y^*|^{n-1}} \,|f(x) - f(y)| \\
 & \lesssim \frac{|x^*-y^*|}{\diam(\Omega)} \,|f(x)| + |f(x) - f(y)|.
\end{align*}
By the definition of $\nabla_{H,p} f$ and the bilipschitzness of $I$, we have
\begin{align*}
|f(x) - f(y)| &\leq \big(|\nabla_{H,p}f(x)| + |\nabla_{H,p}f(y)|\big) \,|x-y|&\\ &\approx   \big(|\nabla_{H,p}f(x)| + |\nabla_{H,p}f(y)|\big) \,|x^*-y^*|.
\end{align*}
Hence, the function
$$G(x^*) := C\bigg( \frac{|f(x)|}{\diam(\Omega)}  + |\nabla_{H,p}f(x)|\bigg)$$
is a Haj\l asz upper gradient of $g$ and so%. Thus, using again the Poincaré inequality in the Haj\l asz Sobolev space $W^{1,p}(\pom)$ and the zero $\sigma$-mean of $f$, we deduce
$$
 \|\nabla_{H,p} g \|_{L^p(\HH^n|_{\pom^*})}\lesssim \frac1{\diam(\Omega)} \|f\|_{L^p(\sigma)} + \|\nabla_{H,p} f \|_{L^p(\sigma)},
 %\lesssim \|\nabla_{H,p} f \|_{L^p(\sigma)},
$$
which proves the claim \rf{claim7245}.

It only remains to prove the last part of the lemma. To this end, fix $f \in W^{1,p}(\pom)$  and let $g=f^*$. Then, since $(R_p)$ is solvable in $\Omega^*$, for the  function $g \in W^{1,p}(\pom^*)$ we may apply Theorem \ref{thm:1.4} and find a harmonic function $v:\Omega^* \to \R$ such that $ \|\NN_{\Omega^*}(\nabla v) \|_{L^p(\HH^n|_{\pom^*})}  \lesssim  \|\nabla_{H,p} g \|_{L^p(\HH^n|_{\pom^*})}$ and $ v \to g$ for $\HH^n|_{\pom^*}$-a.e. $x \in \pom^*$. If we set $u:=v^*$ then it is clear that $u$ is harmonic in $\Omega$ and by similar considerations as above, we can show that $u \to f$ n.t. in $\Omega$ and $\|\NN_{\Omega}(\nabla u) \|_{L^p(\sigma)}  \lesssim \|\nabla_{H,p} f \|_{L^p(\sigma)}$. Uniqueness follows from Lemma \ref{lemuniqueness} (which is still true in this case). We leave the details for the interested reader.
\end{proof}

%\begin{remark}Note that the assumption of the exterior corkscrew condition in Lemma \ref{lemunbounded} is in fact too strong. For our purposes, it would be enough to assume the existence of just one  exterior corkscrew point at the level of $\diam(\pom)$.\end{remark}

\vv

\begin{remark}[\bf One-sided Rellich inequality]\label{rem:8.3}
Under the assumptions of Theorem \ref{lemunbounded}, given any function $f\in
{\rm Lip}(\pom)$  and the solution $u$ of the Dirichlet problem with boundary data $f$, 
we deduce that $\partial_\nu u$ exists in the weak sense (see \rf{eqnormal}),
 it belongs to $L^p(\sigma)$, and it satisfies
\begin{equation}\label{eqnutang}
\|\partial_\nu u\|_{L^p(\sigma)} \lesssim  \|f\|_{ W^{1,p}(\pom)}.
\end{equation}
To prove this, notice first that  as $u^*$ is the solution to the Dirichlet problem with data $f^*$ in $\Omega^*$, it holds that $u^* \in W^{1,2}(\Omega^*)$, and so by the bound $|\nabla u(z) |\lesssim |z|^{-n}$ for $z \in \Omega \setminus B(0, 3 \diam(\pom))$ (since $ u$ is harmonic in $\Omega$ and vanishes at $\infty$, we have shown in the proof of Theorem \ref{lemunbounded} that $|u(z)| \lesssim |z|^{-n+1}$ for such $z$), the equation \rf{eqak86}, and arguing as in the proof of \rf{eqak592},  it also holds that $u\in  \dot W^{1,2}(\Omega)$. %infimizes the Dirichlet energy among all functions from  $C^\infty(\Omega)\cap C(\overline \Omega)$  with the same trace  as $u$ in $\pom$. 
Then,
applying the Riesz representation theorem and arguing as in the proof of Lemma \ref{keylemma},
it suffices to check that, for any $\vphi\in C_c^\infty(\R^{n+1})$,
\begin{equation}\label{eqnab**}
\left|\int_\Omega \nabla u\cdot \nabla\vphi \,dm\right| \lesssim  \|f\|_{ W^{1,p}(\pom)}\,\|\vphi\|_{L^{p'}(\sigma)}.
\end{equation}
To this end, for any $\delta\in(0,\diam(\pom)/2)$, consider the family $\WW_\delta(\Omega)$ of Whitney cubes for $\Omega$ with side length
at least $\delta$, as in \rf{eqwhitneydelta}, and let
$\Omega_\delta$ be the interior of $\bigcup_{Q\in \WW_\delta(\Omega)} Q$. Since the boundary of
$\Omega_\delta$ is made up of finitely many faces of cubes with side length comparable to $\delta$  and $u$ is smooth in a neighborhood of $\overline \Omega_\delta$,
we can apply Green's formula in $\Omega_\delta$ (which is a set of finite perimeter since it is clear that $\HH^n(\partial^* \om_\delta)<\infty$)  to deduce that  
$$\int_{\Omega_\delta} \nabla u\cdot \nabla\vphi \,dm 
= \int_{\partial\Omega_\delta} \partial_{\nu_\delta} u\,\vphi\,d\HH^n,
$$
where $\d_{\nu_\delta} u$ stands for the non-tangential trace  of $\nu_\delta \cdot \nabla u$ on $\pom_\delta$.

Let $I_\delta$ be the subfamily of the cubes from $\WW_\delta(\Omega)$ whose closure intersects $\partial\Omega_\delta$ and denote by $\omega_\vphi(\cdot)$ the modulus of continuity of $\vphi$. Then we have
\begin{align*}
\left|\int_{\Omega_\delta} \nabla u\cdot \nabla\vphi \,dm \right|
& \leq \sum_{Q\in I_\delta} \int_{\overline Q\cap \partial\Omega_\delta} |\nabla u|\,|\vphi|\,d\HH^n\\
& \lesssim \sum_{Q\in I_\delta} \inf_{y\in b(Q)} \NN(\nabla u)(y)\,
\inf_{y\in b(Q)}\big( |\vphi(y)| + \omega_\vphi(C\delta)\big)
\,\ell(Q)^n.
\end{align*}
By H\"older's inequality, we get
\begin{align*}
\biggl|\int_{\Omega_\delta}  \nabla u\cdot \nabla\vphi \,dm \biggr| &\lesssim \Bigl(\sum_{Q\in I_\delta} \inf_{y\in b(Q)} \NN(\nabla u)(y)^p\,\,\ell(Q)^n\Bigr)^{1/p}\\ &\quad
\Bigl(\sum_{Q\in I_\delta}\inf_{y\in b(Q)}\big( |\vphi(y)| + \omega_\vphi(C\delta)\big)^{p'}
\,\ell(Q)^n\Bigr)^{1/p'}\\
&\lesssim \left(\int_\pom \NN(\nabla u)^p\,d\sigma\right)^{1/p}
\left(\int\big(|\vphi| + \omega_\vphi(C\delta)\big)^{p'}\,d\sigma\right)^{1/p'}.
\end{align*}
By the $(R_p)$ solvability, we deduce
\begin{align*}
\biggl|\int_{\Omega_\delta} & \nabla u\cdot \nabla\vphi \,dm \biggr| \lesssim  \|f\|_{ W^{1,p}(\pom)}
\,\big(\|\vphi\|_{L^{p'}(\sigma)}+ \omega_\vphi(C\delta)\big).
\end{align*}
Letting $\delta\to 0$ and applying the the dominated convergence theorem (recall that
$\nabla u\in L^2(\Omega)$), the estimate \rf{eqnab**} follows.
\end{remark}

\vv

\begin{lemma}\label{lem6.1**}
Let $p \in (1,\infty)$ and let $\Omega\subset\R^{n+1}$ be an unbounded corkscrew domain with bounded $n$-AD-regular boundary such that %$\HH^n(\pom \setminus \partial_*\Omega)=0$ and
 there exists $x_0 \in \R^{n+1} \setminus \overline \Omega$ with $\dist(x_0, \pom) \approx \diam(\pom)$. If $(D_{p'})$ is solvable, given any function $f\in \Lip(\pom)$, let $u$ denote the solution of the Dirichlet problem with boundary data $f$.
Then $\partial_\nu u$ exists in the weak sense (see \rf{eqnormal}), it belongs to $L^p(\sigma)$, and we have
$$u(x) = \DD f(x) + \cS (\partial_\nu u)(x) \quad \mbox{ for all $x\in\Omega$.}$$
\end{lemma}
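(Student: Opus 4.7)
The plan is to mimic the proof of Lemma \ref{lem6.1}, adapting it to the unbounded setting by truncating at infinity. The first assertion, that $\partial_\nu u$ exists in the weak sense and belongs to $L^p(\sigma)$, is already contained in Remark \ref{rem:8.3}, which in turn follows from Theorem \ref{lemunbounded} together with a Riesz representation argument of the same type as in Lemma \ref{keylemma}. Hence the bulk of the work is establishing the integral representation.

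To set up the representation, let $\vphi:\R^{n+1}\to[0,1]$ be a smooth radial cut-off with $\chi_{B(0,2)^c}\leq\vphi\leq\chi_{B(0,1)^c}$, set $\vphi_r(z):=\vphi(r^{-1}z)$, and let $\chi_R$ be a smooth cut-off with $\chi_{B(0,R)}\leq\chi_R\leq\chi_{B(0,2R)}$ satisfying $|\nabla\chi_R|\lesssim R^{-1}$ and $|\Delta\chi_R|\lesssim R^{-2}$. Define the doubly truncated test function
\[
F_{x,r,R}(y):=\vphi_r(y-x)\,\EE(y-x)\,\chi_R(y).
\]
For $r\ll\dist(x,\pom)$ and $R$ large enough that $B(0,R)$ contains $\pom$ and $x$, $F_{x,r,R}$ is smooth, compactly supported in $\R^{n+1}$, and agrees with $\EE(y-x)$ on a neighbourhood of $\pom$. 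Applying the weak definition \eqref{eqnormal} of $\partial_\nu u$ (extended, as in Remark \ref{rem:8.3}, to compactly supported Lipschitz test functions in the unbounded setting) together with the Federer--Green formula \eqref{eq:Green-Federer} inside the bounded finite-perimeter set $\Omega\cap B(0,2R)$, I would obtain, exactly as in Lemma \ref{lem6.1},
\[
\int_\pom F_{x,r,R}\,\partial_\nu u\,d\sigma \;=\; \int_{\partial^*\Omega} u\,\partial_\nu F_{x,r,R}\,d\sigma \;-\; \int_\Omega u\,\Delta F_{x,r,R}\,dm,
\]
the vanishing of any contribution from $\partial B(0,2R)$ being ensured by $\chi_R\equiv 0$ there.

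Next I would let $r\to 0$ and then $R\to\infty$. The left-hand side converges to $\cS(\partial_\nu u)(x)$ and the first term on the right-hand side converges to $\DD f(x)$, in both cases by dominated convergence using $\partial_\nu u\in L^p(\sigma)$ (respectively $f\in\Lip(\pom)$) together with the local integrability of $\EE(\,\cdot\,-x)$ on $\pom$. The bulk term decomposes as
\[
\Delta F_{x,r,R} \;=\; \chi_R\,\Delta\bigl[\vphi_r(\,\cdot\,-x)\EE(\,\cdot\,-x)\bigr] + 2\nabla\bigl[\vphi_r(\,\cdot\,-x)\EE(\,\cdot\,-x)\bigr]\cdot\nabla\chi_R + \vphi_r(\,\cdot\,-x)\EE(\,\cdot\,-x)\,\Delta\chi_R,
\]
and the first piece, which is supported in $B(x,2r)$ and is unaffected by $\chi_R$ for small $r$, contributes $\pm u(x)$ in the limit $r\to 0$ by the mollification argument of Lemma \ref{lem6.1} (introducing $\wt u:=u(1-\vphi_d(\,\cdot\,-x))$ and using $\EE*\Delta\wt u(x)=\wt u(x)=u(x)$).

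The genuinely new ingredient, and the main obstacle, is controlling the two remaining pieces, which are supported in the annulus $\{R\leq|y|\leq 2R\}$ and must vanish as $R\to\infty$. The key input is the decay estimate $|u(y)|\lesssim|y|^{1-n}$ for $|y|$ large, which was established in the course of the proof of Theorem \ref{lemunbounded} by a maximum principle comparison with the fundamental solution; interior gradient estimates for harmonic functions then upgrade this to $|\nabla u(y)|\lesssim|y|^{-n}$. Since also $|\EE(y-x)|\lesssim|y|^{1-n}$ and $|\nabla_y\EE(y-x)|\lesssim|y|^{-n}$ on this annulus, a direct computation gives
\[
\left|\int_\Omega u\,\nabla[\vphi_r(\,\cdot\,-x)\EE(\,\cdot\,-x)]\cdot\nabla\chi_R\,dm\right| + \left|\int_\Omega u\,\vphi_r(\,\cdot\,-x)\EE(\,\cdot\,-x)\,\Delta\chi_R\,dm\right| \;\lesssim\; R^{1-n},
\]
which vanishes as $R\to\infty$ for $n\geq 2$. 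Assembling these limits yields the desired representation formula, with the precise sign determined by the chosen normalization of $\EE$.
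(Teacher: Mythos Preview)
Your proof is correct and uses the same decay estimate $|u(y)|\lesssim|y|^{1-n}$ at infinity as the paper, but the truncation is organized differently. The paper truncates the \emph{domain}: it sets $\Omega_r=\Omega\cap B(0,r)$, observes that solvability of $(D_{p'})$ in $\Omega$ transfers to $\Omega_r$, applies Lemma~\ref{keylemma} and Lemma~\ref{lem6.1} verbatim in the bounded domain $\Omega_r$ to obtain $u=\DD_{\Omega_r}(u|_{\pom_r})-\cS_{\Omega_r}(\partial_\nu u|_{\pom_r})$, and then lets $r\to\infty$ and shows directly that the double and single layer contributions from $\partial B(0,r)$ vanish. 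You instead truncate the \emph{test function} via the outer cut-off $\chi_R$, keep the domain $\Omega$ itself, and redo the Green-formula computation of Lemma~\ref{lem6.1} by hand with the modified test function $F_{x,r,R}$.

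Both routes land on the same key ingredient (the decay of $u$ at infinity from Theorem~\ref{lemunbounded}) and are of comparable length. The paper's approach is more modular, reusing Lemma~\ref{lem6.1} as a black box at the cost of first checking $(D_{p'})$ solvability in $\Omega_r$ (to invoke Lemma~\ref{keylemma} there). Your approach avoids that intermediate step, since you only need $\partial_\nu u$ on $\pom$ (supplied by Remark~\ref{rem:8.3}) and handle infinity through the smooth cut-off rather than a hard boundary; this is arguably cleaner, though it unpacks the Green-formula argument again rather than citing it.
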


\begin{proof}
Let $r>0$ be such that $\pom\subset B(0,r/2)$, and denote $\Omega_r= \Omega \cap B(0,r)$, so that $\pom_r=\pom\cap\partial B(0,r)$.
It is clear that $\Omega_r$ is bounded corkscrew domain with $n$-AD-regular boundary (with constant depending on $r$). % such that $\HH^n(\pom_r \setminus \partial_*\Omega_r)=0$.
Further, from the solvability of $(D_{p'})$ in $\Omega$, one easily deduces the
solvability of $(D_{p'})$ in $\Omega_r$. This can be proved by using Theorem \ref{teo9.2} below and the maximum principle.

From Lemma \ref{keylemma} applied to the boundary function $f_r$ equal to $f$ in $\pom$ and to $u$ in $\partial B(0,r)$, it follows that $\partial_\nu u|_{\pom_r}$ exists in the weak sense, and it belongs to $L^{p}(\HH^n|_{\pom_r})$. Also, 
by Lemma \ref{lem6.1} we have
$$u(x)=\DD_{\Omega_r}(u|_{\pom_r})(x) - \cS_{\Omega_r}(\partial_\nu u|_{\pom_r})(x) \quad \mbox{ for all $x\in\Omega_r$,}$$
where $\DD_{\Omega_r}$ and $\cS_{\Omega_r}$ denote the double and single layer potentials for $\Omega_r$, respectively.

It is clear that the restriction of $\partial_\nu u|_{\pom_r}$ to $\pom$ coincides $\partial_\nu u|_{\pom}$ and it is independent of
$r$. So $\partial_\nu u|_{\pom}\in L^{p}(\sigma)$.
We claim that, for any $x\in\Omega$,
\begin{equation}\label{eqclaim287}
\lim_{r\to\infty} \DD_{\Omega_r}(u|_{\pom_r})(x) = \DD(u|_{\pom})(x), \,\,\quad \lim_{r\to\infty} \cS_{\Omega_r}(\partial_\nu u|_{\pom_r})(x) = \cS(\partial_\nu u|_{\pom})(x),
\end{equation}
which would prove the assertion (a). To show the first identity, we have to check that
$$\lim_{r\to 0} \int_{\partial B(0,r)} \nu(y) \cdot \nabla_y \EE(x-y)\, u(y) \,d\HH^n(y)=0.$$
Since $u$ vanishes at $\infty$, we have $|u(y)|\lesssim |y|^{1-n}$ for all $y$ far away from $\pom$, with the implicit constant depending on $u$. Thus,
\begin{align*}
\left|\int_{\partial B(0,r)} \!\nu(y) \cdot \nabla_y \EE(x-y)\, u(y) \,d\HH^n(y)\right| & \lesssim_u
\int_{\partial B(0,r)} \frac1{|x-y|^n\,r^{n-1}} \,d\HH^n(y) \\ &\lesssim \frac{r^{n}}{\big||x|-r\big|^n\,r^{n-1}},
\end{align*}
which tends to $0$ as $r\to\infty$. An analogous estimate which we leave for the reader proves the second identity in \rf{eqclaim287}.
\end{proof}

\vv
Next we are ready to prove Theorem \ref{teo-invert}. For the reader's convenience we announce it here again as a lemma. 

\vv

\begin{lemma}[\bf Invertibility of layer potentials] \label{lem-invert}
Let $p \in  (1,\infty)$ and let $\Omega\subset\R^{n+1}$ be a bounded two-sided corkscrew domain with $n$-AD-regular boundary such that $\R^{n+1} \setminus \overline{\Omega}$ is connected.
Suppose either that $\Omega$
satisfies the two-sided 
local John condition or that $\pom$ supports a weak $(1,p)$-Poincar\'e inequality.
If $(D_{p'})$ is solvable both for 
 $\Omega$ and for $\R^{n+1}\setminus \overline\Omega$, then
$\cS: L^p(\pom)\to  W^{1,p}(\pom)$ is bounded and invertible.
\end{lemma}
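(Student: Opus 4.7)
The plan is to establish three properties of the bounded linear map $\cS:L^p(\pom)\to W^{1,p}(\pom)$: boundedness, a two-sided estimate giving injectivity with closed range, and surjectivity. For boundedness, Lemma \ref{lemlimfac} handles the $L^p(\sigma)$ component of the norm. For the gradient component, $u=\cS f$ is harmonic in $\Omega$ with $\|\NN(\nabla u)\|_{L^p(\sigma)}\lesssim \|f\|_{L^p(\sigma)}$ by \eqref{eq:ntboundssingle}, and on $\pom$ this forces the pointwise bound $|\nabla_t\cS f(x)|\lesssim \NN(\nabla u)(x)$ for $\sigma$-a.e.\ $x$ via Lemma \ref{lem2.2} applied to the non-tangential trace of $\cS f$ (whose existence is guaranteed by Lemma \ref{lemlimfac}). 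Lemma \ref{lemtec99} then gives $\|\nabla_{H,p}\cS f\|_{L^p(\sigma)}\approx \|\nabla_t\cS f\|_{L^p(\sigma)}\lesssim \|f\|_{L^p(\sigma)}$ under either of our two hypotheses, completing the boundedness part.

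The two-sided estimate and surjectivity both stem from a distributional jump identity. Given $g\in W^{1,p}(\pom)$, I would solve the regularity problem in $\Omega$ via Theorem \ref{thm:1.3} and in $\Omega^-:=\R^{n+1}\setminus\overline\Omega$ via Theorem \ref{lemunbounded}, obtaining harmonic $u,v$ with non-tangential limit $g$ and $\|\NN(\nabla u)\|_{L^p(\sigma)}+\|\NN(\nabla v)\|_{L^p(\sigma)}\lesssim \|g\|_{W^{1,p}(\pom)}$. Uniqueness, needed later to identify $u=v=\cS f$ in the injectivity step, is provided by Lemma \ref{lemuniqueness}, applicable in both $\Omega$ and $\Omega^-$ because $(D_{p'})$ solvability implies the weak local John condition via IBPCAD and \cite{AHMMT}. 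Lemma \ref{keylemma} together with Remark \ref{rem:8.3} produce weak normal derivatives $\partial_\nu^+ u,\partial_\nu^- v\in L^p(\sigma)$ whose $L^p$-norms are controlled by $\|g\|_{W^{1,p}(\pom)}$. Setting $w:=u\chi_\Omega+g\chi_\pom+v\chi_{\Omega^-}$ and applying Green's second identity separately in $\Omega$ and $\Omega^-$ against $\phi\in C_c^\infty(\R^{n+1})$, with the $u\,\partial_\nu\phi$ and $v\,\partial_\nu\phi$ contributions cancelling thanks to $u|_\pom=v|_\pom=g$, should produce
\[
\Delta w = (\partial_\nu^- v-\partial_\nu^+ u)\,\sigma \quad\text{on }\R^{n+1}.
\]

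With this jump identity in hand, both directions are immediate. For injectivity: if $u=v=\cS f$ with $f\in L^p(\pom)$, then since $\cS f=\EE\ast(f\sigma)$ also satisfies $\Delta(\cS f)=f\sigma$ as distributions, we obtain $f=\partial_\nu^- v-\partial_\nu^+ u$ $\sigma$-a.e., whence $\|f\|_{L^p(\sigma)}\lesssim \|\cS f\|_{W^{1,p}(\pom)}$. For surjectivity: given $g$ and the associated $u,v$, set $f:=\partial_\nu^- v-\partial_\nu^+ u\in L^p(\pom)$, so that $\Delta w=f\sigma=\Delta(\cS f)$; since both $w$ and $\cS f$ are harmonic on $\R^{n+1}\setminus\pom$, lie in $W^{1,p}_{\loc}(\R^{n+1})$ across $\pom$ (via the Haj\l asz--Martio gluing used in the proof of Theorem \ref{thm:regularity-noncont}), and vanish at infinity because $n\geq 2$, the difference $w-\cS f$ is classically harmonic on all of $\R^{n+1}$ with vanishing limit at infinity, hence identically zero by Liouville, and restricting to $\pom$ gives $\cS f=g$. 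The main obstacle will be the rigorous derivation of the jump identity in this rough setting, where $\pom$ is only uniformly rectifiable and the normal derivatives exist only in the weak sense of \eqref{eqnormal}: this should be handled by a careful pairing of \eqref{eqnormal} with the Lipschitz restrictions of $\phi\in C_c^\infty(\R^{n+1})$ to $\overline\Omega$ and to $\overline{\Omega^-}$, together with extending the one-sided Rellich estimate of Lemma \ref{keylemma} from Lipschitz to $W^{1,p}$ boundary data by density and the uniform $L^p$-bounds on $\partial_\nu$.
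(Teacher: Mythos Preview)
Your overall scheme---bounded, injective with closed range, then surjective---matches the paper, but the implementation differs at two key points. For injectivity, the paper invokes directly the pointwise jump relation $f=\partial_{\nu,+}\cS f-\partial_{\nu,-}\cS f$ (non-tangential limits) from the Riesz transform jump formulas on rectifiable sets \cite{Tolsa-jumps}, and then identifies $\cS f|_{\Omega^\pm}$ with the regularity solutions via uniqueness (Lemma~\ref{lemuniqueness}) to bound $\|\partial_{\nu,\pm}\cS f\|_{L^p}\leq\|\NN_{\Omega^\pm}(\nabla\cS f)\|_{L^p}\lesssim\|\cS f\|_{W^{1,p}}$; no distributional detour is needed. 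For surjectivity, the paper avoids both your Liouville argument and the extension of the weak normal derivative beyond Lipschitz data: for $f\in\Lip(\pom)$ the representation formulas of Lemmas~\ref{lem6.1} and~\ref{lem6.1**} (which bring in the \emph{double} layer and rely on $v^\pm\in W^{1,2}$ via Remark~\ref{remnab}) yield, after non-tangential limits and the double-layer jump $\DD_+f-\DD_-f=f$, the explicit inversion $f=\cS(\partial_{\nu,-}v^--\partial_{\nu,+}v^+)$; density of Lipschitz functions in $W^{1,p}(\pom)$ then suffices since the range is already closed. Your distributional jump plus Liouville route can plausibly be made rigorous, but the issues you flag---making sense of \eqref{eqnormal} when $u$ is only in $W^{1,p}(\Omega)$ rather than $W^{1,2}(\Omega)$, and matching the $W^{1,p}$-trace with the non-tangential limit---are exactly what the paper's reduction to Lipschitz data sidesteps. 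The trade-off: your approach never touches the double layer, while the paper's is shorter and stays within the $W^{1,2}$ framework where the requisite Green-type identities are already in hand.
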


\begin{proof}
The fact that $\cS$ is bounded from $L^p(\pom)$ to $ W^{1,p}(\pom)$ follows from \rf{eq:Lpboundsingle} and the fact that
$\pom$ is uniformly rectifiable, which in turn implies that the $n$-dimensional Riesz transform
is bounded in $L^p(\sigma)$. See also  \cite[Corollary 3.28]{HMT} for more details.

To prove the invertibility of $\cS$
notice that, by Theorems \ref{teomain} and 
\ref{lemunbounded}, it holds that $(\wt R_p)$ is solvable both for 
 $\Omega$ and for $(\overline\Omega)^c$. Further, by \cite{AHMMT}, both $\Omega$ and $(\overline\Omega)^c$  satisfy the weak local John condition.
 To simplify notation, we will also write $\Omega^+:=\Omega$ and $\Omega^- = (\overline\Omega)^c$.

First we will show that
\begin{equation}\label{eqinv22}
\|f\|_{L^p(\sigma)}\leq C\,\|\cS f\|_{ W^{1,p}(\pom)}\quad \mbox{ for all $f\in L^p(\sigma)$.}
\end{equation}
From the $n$-rectifiability of $\pom$ and the jump relations for the Riesz transform (see \cite{Tolsa-jumps}, for example) we deduce
that the non-tangential limits
$$
\partial_{\nu,\pm} \cS f(x) := \lim_{\gamma_\alpha^{\pm}\ni y\to x} \nabla \cS f(y)\cdot \nu(x)
$$
exist for $\sigma$-a.e.\ $x\in\pom$ and moreover
\begin{equation}\label{eqinv23}
f(x) = \partial_{\nu,+} \cS f(x) - \partial_{\nu,-} \cS f(x).
\end{equation}
Remark that in this identity $\partial_{\nu,+} \cS f(x)$ and $\partial_{\nu,-} \cS f(x)$ should be understood as non-tangential limits. We claim that
$$\|\partial_{\nu,\pm} \cS f(x)\|_{L^p(\sigma)} \lesssim \|\cS f\|_{ W^{1,p}(\pom)}.$$
Observe that \rf{eqinv22} follows from this claim and \rf{eqinv23}.
To prove the claim, recall that by Theorems \ref{thm:regularity-noncont} and  \ref{lemunbounded}, there are functions $u^+$, $u^-$ harmonic in $\Omega^+$, $\Omega^-$, respectively, such that 
$$\|\NN_{\Omega^+}(\nabla u^+ )\|_{L^p(\sigma)} + \|\NN_{\Omega^-}(\nabla u^- )\|_{L^p(\sigma)}\leq C \|\cS f\|_{ W^{1,p}(\pom)}<\infty$$
and
$$\lim_{\gamma_\alpha^{\pm}\ni y\to x} u^\pm (y) = \cS f(x)  \quad \mbox{ for $\sigma$-a.e.\ $x\in\pom$.}
$$
%Remark that Theorem \ref{thm:regularity-noncont} has been only proved for bounded domains. However, by very similar arguments, one can show that it also holds for unbounded domains with bounded boundary, such as $\Omega^-$ (alternatively, it can be deduced from the case of bounded domains by using the Kelvin transform, as in Lemma \ref{lem-invert}). We leave the details for the reader.

From the $L^p(\sigma)$ boundedness of the maximal Riesz transform and standard Calder\'on-Zygmund estimates, we also have
$$\|\NN_{\Omega^+}(\nabla \cS f)\|_{L^p(\sigma)} + \|\NN_{\Omega^-}(\nabla \cS f)\|_{L^p(\sigma)}\leq C 
\|f\|_{L^p(\sigma)}<\infty$$
and, by Lemma \ref{lemlimfac},
$$\lim_{\gamma_\alpha^{\pm}\ni y\to x} \cS f (y) = \cS f(x)\quad \mbox{ for $\sigma$-a.e.\ $x\in\pom$.}
$$
Consequently, since the harmonic functions $w^\pm:=\cS f -u^\pm$ vanishes n.t.\! at $\sigma$-a.e. point on $\pom$ and satisfies $\|\NN_{\Omega^\pm}(\nabla w^\pm)\|_{L^p(\sigma)}<\infty$, we may apply Lemma \ref{lemuniqueness} and  Theorem \ref{lemunbounded} and  infer that $\cS f= u^\pm $ in $\Omega^\pm$.
Therefore, 
$$\NN_{\Omega^\pm}(\nabla \cS f) = \NN_{\Omega^\pm}(\nabla u^\pm)\quad\text{ in }\, L^p(\sigma) ,$$
and so
\begin{align*}
\|\partial_{\nu,\pm} \cS f(x)\|_{L^p(\sigma)}&\leq \|\NN_{\Omega^\pm}(\nabla \cS f)\|_{L^p(\sigma)}\\
&  =\|\NN_{\Omega^\pm}(\nabla u^\pm)\|_{L^p(\sigma)} \leq C \|\cS f\|_{ W^{1,p}(\pom)}<\infty,
\end{align*}
which proves our claim and thus \rf{eqinv22}, which implies  that $\cS$ is injective and has closed range. 

To complete the proof of the invertibility of $\cS$ it suffices to show that its range is dense in 
$ W^{1,p}(\pom)$.
To this end, we will show that ${\rm Lip}(\pom)\subset \cS(L^p(\sigma))$ (recall that ${\rm Lip}(\pom)
$ is dense in the space $ W^{1,p}(\pom)$ \cite[Theorem 5]{Hajlasz}). Given $f\in {\rm Lip}(\pom)$, we set $v^\pm$ to be the solution of the
(continuous) Dirichlet problem in $\Omega^\pm$ with boundary data $f$.
By the solvability of $(\wt R_p)$ in $\Omega^\pm$ and 
Lemmas \ref{lem6.1} and \ref{lem6.1**}, we have
$$
v^\pm(x)= \pm \DD (f)(x) \mp \cS(\partial_{\nu,\pm} v^\pm|_{\pom})(x) \quad \mbox{ for all $x\in\Omega^\pm$,}$$
where $\partial_{\nu,\pm} v^\pm|_\pom\in L^p(\sigma)$ should be understood in the weak sense. Taking non-tangential limits in $\Omega^\pm$, we obtain
$$f(x) = \pm\DD_\pm (f)(x) \mp \cS(\partial_{\nu,\pm} v^\pm|_{\pom})(x) \quad \mbox{ for $\sigma$-a.e.\ $x\in\pom$.}$$
Since $\DD_+ (f)(x) - \DD_- (f)(x) = f(x)$ for $\sigma$-a.e.\ $x$, summing both identities we get
$$f(x) = \cS(\partial_{\nu,-} v^-|_{\pom} - \partial_{\nu,+} v^+|_{\pom})(x) \quad \mbox{ for $\sigma$-a.e.\ $x\in\pom$,}$$
which implies that $f$ belongs to $\cS(L^p(\sigma))$. Therefore, since $S$ has closed range
and ${\rm Lip}(\pom)$ is dense in $W^{1,p}(\pom)$, it holds that  $W^{1,p}(\pom) \subset\cS(L^p(\sigma))$, which  finishes the proof of the lemma.
\end{proof}
\vv

% ********************************************************************************************

\section{From the regularity problem to the Dirichlet problem}\label{secconverse}

In this section we will prove Theorem \ref{propoconverse}. That is, given a bounded domain 
 $\Omega\subset\R^{n+1}$ with $n$-AD-regular boundary, we suppose the regularity problem
for the Laplacian is solvable in $L^{p}$ for some and $p\in (1,\infty)$, and then we have to show that the Dirichlet problem is solvable in $L^{p'}$.

We need the following result.

\begin{lemma}\label{lemrev}
Let $\Omega\subset\R^{n+1}$ be a  domain with bounded $n$-AD-regular boundary. Given $x \in\Omega$, denote by $\omega^x$ the
harmonic measure for $\Omega$ with pole at $x$. Suppose that $\omega^x$ is 
absolutely continuous with respect to surface measure for every $x$.
Let $p\in (1,\infty)$ and $\Lambda>1$ and suppose that, for every ball $B$ centered  at $\pom$ with $\diam(B)\leq 2\diam(\Omega)$ and all $x \in \Lambda B$ such that $\dist(x,\pom)\geq \Lambda^{-1}r(B)$, it holds
\begin{equation}\label{eqrever}
\left(\avint_{\Lambda B} \left(\frac{d\omega^{x}}{d\sigma}\right)^p\,d\sigma\right)^{1/p} \leq \kappa\,\sigma(B)^{-1},
\end{equation}
for some $\kappa>0$.
Then, if $\Lambda$ is big enough, the Dirichlet problem is solvable in $L^{s}$, for $s>p'$. 
Further, for all $f\in L^{p'}(\sigma)\cap C(\pom)$, its harmonic extension $u$ to $\Omega$ satisfies
\begin{equation}\label{eqrever2*}
\|\NN(u)\|_{L^{p',\infty}(\sigma)}\lesssim \kappa\,\|f\|_{L^{p'}(\sigma)}.
\end{equation}
\end{lemma}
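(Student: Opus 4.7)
The strategy is to reduce everything to the pointwise non-tangential estimate
\[
\NN(u)(x)\lesssim \kappa\,\cM_\sigma(|f|^{p'})(x)^{1/p'},\qquad \text{for $\sigma$-a.e.\ }x\in\pom.
\]
Once this bound is in hand, both conclusions of the lemma are immediate from standard maximal-function theory: the weak-type estimate \eqref{eqrever2*} follows from the weak-$(1,1)$ boundedness of $\cM_\sigma$ applied to $|f|^{p'}\in L^1(\sigma)$, while solvability in $L^s$ for any $s>p'$ follows from the $L^{s/p'}(\sigma)$-boundedness of $\cM_\sigma$.

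To establish the pointwise bound, I would fix $x\in\pom$ and $y\in\gamma_\alpha(x)$, set $r=\dist(y,\pom)$ and $B_0=B(x,r)$, and choose $\Lambda$ larger than a fixed multiple of $1+\alpha$. Then $y$ plays the role of a corkscrew point for $B_0$, so that \eqref{eqrever} yields $\|k^y\|_{L^p(\Lambda B_0)}\leq \kappa\,\sigma(B_0)^{-1/p'}$, where $k^y=d\omega^y/d\sigma$. Writing $u(y)=\int_\pom f\,k^y\,d\sigma$ and splitting the integral into a local piece over $\Lambda B_0$ and a tail over $\pom\setminus \Lambda B_0$, Hölder's inequality and the reverse Hölder bound immediately give
\[
\int_{\Lambda B_0}|f|\,k^y\,d\sigma \leq \|k^y\|_{L^p(\Lambda B_0)}\,\|f\|_{L^{p'}(\Lambda B_0)}\leq \kappa\left(\avint_{\Lambda B_0}|f|^{p'}\,d\sigma\right)^{1/p'}\leq \kappa\,\cM_\sigma(|f|^{p'})(x)^{1/p'}.
\]

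For the tail, I would decompose $\pom\setminus \Lambda B_0=\bigcup_{k\geq 1}A_k$ with $A_k\subset 2^k\Lambda B_0\setminus 2^{k-1}\Lambda B_0$. For a generic Borel set $E\subset A_k$, the maximum principle applied to the harmonic function $z\mapsto \omega^z(E)$ in the subdomain $\Omega\cap B(x,2^{k-1}\Lambda r)$ (whose boundary portion lying in $\pom$ is disjoint from $E$) gives
\[
\omega^y(E)\leq \sup_{z\in \Omega\cap \partial B(x,2^{k-1}\Lambda r)}\omega^z(E).
\]
Combining this with the corkscrew condition at scale $2^k r$ to place a corkscrew point $z_k$ inside $2^k\Lambda B_0$ with $\dist(z_k,\pom)\gtrsim 2^k r$, and applying \eqref{eqrever} at the scale $2^k\Lambda B_0$ with pole $z_k$, one obtains $\omega^z(E)\leq \kappa\,\sigma(2^k\Lambda B_0)^{-1/p'}\sigma(E)^{1/p'}$ and hence, by Hölder's inequality on each annulus, a bound of the form
\[
\int_{A_k}|f|\,k^y\,d\sigma \lesssim \kappa\left(\avint_{2^k\Lambda B_0}|f|^{p'}\,d\sigma\right)^{1/p'}\!\!\cdot c_k,
\]
where $c_k$ is a summable factor extracted from the fact that $\omega^y$ is a probability measure and each annulus can carry only a small fraction of its mass (so that $c_k\lesssim 2^{-k\tau}$ for some $\tau>0$). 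Summing over $k\geq 1$ and combining with the local piece gives the desired pointwise control.

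The main obstacle lies in the tail estimate for the annuli: in our setting we have only the one-sided corkscrew condition and AD-regularity, not an NTA structure, so one cannot invoke a Harnack chain to compare $\omega^y$ directly with $\omega^{z_k}$. The way around this is precisely the maximum-principle reduction above, which bypasses any need for interior Harnack comparison by transferring the pole to a corkscrew point at the appropriate scale, at which \eqref{eqrever} is applicable. Once the geometric tail factor is in place the rest is routine; the pointwise estimate follows, and with it the weak-type estimate \eqref{eqrever2*} and solvability of $(D_s)$ for $s>p'$.
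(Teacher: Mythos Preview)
Your tail estimate has a genuine gap. After the maximum principle step you have
\[
\omega^y(E)\leq \sup_{z\in \Omega\cap \partial B(x,2^{k-1}\Lambda r)}\omega^z(E),
\]
but the points $z$ appearing in this supremum may lie arbitrarily close to $\pom$, so the hypothesis \eqref{eqrever} simply does not apply at such poles. Introducing a corkscrew point $z_k$ with $\dist(z_k,\pom)\gtrsim 2^k r$ does not help: you have no way to pass from the supremum over all $z$ on the sphere to the single corkscrew point $z_k$ without a Harnack chain (or a boundary comparison principle), which is precisely the structure you acknowledge is missing. Consequently the bound $\omega^z(E)\leq \kappa\,\sigma(2^k\Lambda B_0)^{-1/p'}\sigma(E)^{1/p'}$ is unjustified, and your claimed geometric decay $c_k\lesssim 2^{-k\tau}$ has no source. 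The fact that $\omega^y$ is a probability measure yields at best $\sum_k\omega^y(A_k)\leq 1$, which after H\"older gives factors $\omega^y(A_k)^{1/p}$ that need not be summable and in any case are not controlled by $\cM_\sigma(|f|^{p'})(x)^{1/p'}$.

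The paper avoids this obstacle altogether by \emph{not} attempting a direct pointwise bound $\NN(u)\lesssim\kappa\,\cM_\sigma(|f|^{p'})^{1/p'}$. For the far piece $u_1$ (boundary data vanishing on $B(\xi,Md_x)$) it first bounds the solid average $\avint_{B(\xi,Md_x)}u_1\,dm$ by $\cM_\sigma(\NN u)(\xi)$ via a Whitney decomposition, and then uses boundary H\"older continuity (available because AD-regularity implies the CDC) to obtain $u_1(x)\lesssim M^{-\alpha}\cM_\sigma(\NN u)(\xi)$. This yields the self-referential inequality
\[
\NN u(\xi)\leq \kappa\,C(M)\,\cM_{\sigma,p'}f(\xi)+\frac{C}{M^\alpha}\,\cM_\sigma(\NN u)(\xi),
\]
which can be absorbed in $L^s$ for $s>p'$ (and in $L^{p',\infty}$) by choosing $M$ large. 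The H\"older-continuity step is exactly what supplies the missing decay that your annular decomposition cannot produce under the hypotheses at hand.
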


%Remark that a related result has been proved in \cite[Propostion 4.5]{Hofmann-Le}, however the assumptions in that proposition require an estimate for the left hand side of \rf{eqrever} for all $x$ away from $4B$, which is not suitable for our purposes.

%it suffices\footnote{In fact, it is equivalent, by the arguments in the proof of \cite[Proposition 2]{Hofmann-survey}.} to show that the harmonic measure $\omega^x$ is 
%absolutely continuous with respect to surface measure and that, for any ball $B$ centered in 
%$\pom$,

\begin{proof}
Let $f\in C(\pom)$ and let $u$ the solution of the Dirichlet problem in $\Omega$ with boundary data $f$.
Suppose that $f\geq0$.
Consider a point $\xi\in\pom$ and a non-tangential cone $\gamma(\xi)\subset\Omega$, with vertex $\xi$ and  with a fixed aperture.  Fix a point $x\in\gamma(\xi)$ such denote $d_x = \dist(x,\pom)$.
We intend to estimate $u(x)$, first assuming $d_x\leq 2\diam(\pom)$.

To this end,  we pick a smooth function $\vphi$ which equals $1$ in $B(0,1)$ and vanishes in $\R^{n+1}\setminus B(0,2)$. For some $M>4$ to be chosen later, we denote
$$\vphi_M(y) =\vphi\Big(\frac{y}{M d_x}\Big).$$
We set 
$$f_0 (y) = f(y)\, \vphi_M(y-\xi),\qquad\quad f_1(y) = f(y)-f_0(y),$$
and we denote by $u_0$ and $u_1$ the corresponding solutions of the associated Dirichlet problems so that $u=u_0 + u_1$.

To estimate $u_0(x)$ we use \rf{eqrever} to show that
\begin{align*}
u_0(x) & = \int f_0\,d\omega^x \leq 
\int_{B(\xi,2Md_x)} f\,\frac{d\omega^x}{d\sigma}\,d\sigma\\
& \leq 
\left(\int_{B(\xi,2Md_x)} |f|^{p'}\,d\sigma\right)^{1/p'}
\left(\int_{B(\xi,2Md_x)}
\left(\frac{d\omega^x}{d\sigma}\right)^p\,d\sigma\right)^{1/p}\\
& \leq \kappa\,C(M)\,\cM_{\sigma,p'}f(\xi)\,\frac{\sigma(B(\xi,2Md_x))^{1/p'}}{\sigma(B(\xi,d_x))^{1/p'}}\lesssim \kappa\,C(M)\,\cM_{\sigma,p'}f(\xi),
\end{align*}
assuming $\Lambda\geq 2M$.

To deal with $u_1(x)$, we first estimate  $\avint_{B(\xi,M d_x)}u_1\,dm$. To do so, we consider the 
splitting of $\Omega$ into the usual family of Whitney cubes and we denote by $I_B$ the family of those
cubes that intersect $B:=B(\xi,Md_x)$. By the properties of $\WW(\Omega)$, the cubes $P\in I_B$ are contained
in $CB:=B(\xi,CMd_x)$, for some $C$ depending just on $n$ and the parameters in the construction of $\WW(\Omega)$. Then, taking into account that
$u_1\leq u$, we have
\begin{align}\label{eqNN99}
\int_{B(\xi,Md_x)} u_1\,dm & \leq \sum_{P\in I_B} \int_P  u\,dm \leq 
\sum_{P\in I_B} \inf_{y\in b(P)}\NN u(y)\,\ell(P)^{n+1} \\
& \lesssim
\sum_{Q\in\DD_\sigma:Q\subset C'B} \ell(Q)\,\int_Q \NN u\,d\sigma\lesssim Md_x
\int_{C'B} \NN u\,d\sigma,\nonumber
\end{align}
where in the second inequality we took into account that $d_x\leq2\diam(\pom)$.
So we deduce
$$\avint_{B(\xi,Md_x)} u_1\,dm\lesssim \avint_{C'B} \NN u\,d\sigma\lesssim \cM_\sigma(\NN u)(\xi).$$
Now, taking into account that $f_1$ vanishes in $B(\xi,Md_x)$, from the H\"older continuity 
of $u_1$ in $\pom\cap B(\xi.Md_x/2)$, we infer that
$$u_1(x) \lesssim \frac1{M^\alpha}\,\avint_{B(\xi,Md_x)} u_1\,dm\lesssim \frac1{M^\alpha}\,\cM_\sigma(\NN u)(\xi),$$
for some $\alpha>0$ depending just on the AD-regularity constant of $\pom$.

Altogether, we have
\begin{equation}\label{eqnab84}
u(x) \leq \kappa\,C(M)\,\cM_{\sigma,p'}f(\xi) + \frac C{M^\alpha}\,\cM_\sigma(\NN u)(\xi)
\end{equation}
for all $x\in\gamma(\xi)$
with $d_x\leq 2\diam(\pom)$.
 In case that $\Omega$ is unbounded, it turns out that the closure of $A:=\{x\in\Omega: d_x > 2\diam(\pom)\}$ is contained in the cone $\gamma(\xi)$ if the aperture of
$\gamma(\xi)$ is assumed to be big enough. Thus, by the maximum principle, since \rf{eqnab84} holds for $x\in\partial A$ and $u$ vanishes at $\infty$,
it follows that the same estimate is also valid for $x\in \gamma(\xi)\cap A.$
Hence \rf{eqnab84} holds for all $x\in\gamma(\xi)$ in any case.
So we obtain
\begin{equation}\label{eqboot1}
\NN u(\xi) \leq \kappa\,C(M)\,\cM_{\sigma,p'}f(\xi) + \frac C{M^\alpha}\,\cM_\sigma(\NN u)(\xi)\quad \mbox{ for
all $\xi\in\pom$.}
\end{equation}
Thus, for $s>p'$,
\begin{align*}
\|\NN u\|_{L^s(\sigma)} &\leq \kappa\,C(M)\,\|\cM_{\sigma,p'}f\|_{L^s(\sigma)} + \frac C{M^\alpha}\,
\|\cM_\sigma(\NN u)\|_{L^s(\sigma)} \\ & \leq \kappa\,C'(M)\,\|f\|_{L^s(\sigma)} + 
\frac{C'}{M^\alpha}\,
\|\NN u\|_{L^s(\sigma)} 
.
\end{align*}
Since $f$ is continuous $\pom$ is bounded, $\|\NN u\|_{L^s(\sigma)} <\infty$, and hence, choosing
$M$ (and thus $\Lambda$) big enough, we get
$$\|\NN u\|_{L^s(\sigma)} \leq \kappa\,C'(M)\,\|f\|_{L^s(\sigma)}.$$

\vv

Regarding the last statement of the lemma, recall that $\cM_{\sigma,p'}$ is bounded from 
$L^{p'}(\sigma)$ to $L^{p',\infty}(\sigma)$ and that 
$\cM_\sigma$ is bounded in $L^{p',\infty}(\sigma)$. Then, from \rf{eqboot1} we infer that
\begin{align*}
\|\NN u\|_{L^{p',\infty}(\sigma)} &\leq \kappa\,C(M)\,\|\cM_{\sigma,p'}f\|_{L^{p',\infty}(\sigma)} + \frac C{M^\alpha}\,
\|\cM_\sigma(\NN u)\|_{L^{p',\infty}(\sigma)} \\
&\lesssim \kappa\,C(M)\,\|f\|_{L^{p'}(\sigma)} + \frac C{M^\alpha}\,
\|\NN u\|_{L^{p',\infty}(\sigma)}.
\end{align*}
Since $\|\NN u\|_{L^{p',\infty}(\sigma)}<\infty$, the latter gives \rf{eqrever2*} for $M$ and $\Lambda$ big enough. 
\end{proof}

\vv

\begin{theorem}\label{teo9.2}
Let $\Omega\subset\R^{n+1}$ be a domain with bounded $n$-AD-regular boundary. Given $x \in\Omega$, denote by $\omega^x$ the
harmonic measure for $\Omega$ with pole at $x$.
For $p\in (1,\infty)$, the following are equivalent:
\begin{itemize}
\item[(a)] $(D_{p'})$ is solvable for $\Omega$.

\item[(b)] The harmonic measure $\omega$ is absolutely continuous with respect to $\sigma$ and
for every ball $B$ centered in $\pom$ and for all $x\in \Omega\cap 3B\setminus 2B$ with $\diam(B)\leq 2\diam(\pom)$, it holds
$$\left(\avint_{B} \left(\frac{d\omega^{x}}{d\sigma}\right)^p\,d\sigma\right)^{1/p} \lesssim \sigma(B)^{-1}.$$

\item[(c)] The harmonic measure $\omega$ is absolutely continuous with respect to $\sigma$ and there is some $\Lambda>1$ big enough such that, for every ball $B$ centered in $\pom$ with $\diam(B)\leq 2\diam(\pom)$
and all $x \in \Lambda B$ such that $\dist(x,\pom)\geq \Lambda^{-1}r(B)$, it holds
$$
\left(\avint_{\Lambda B} \left(\frac{d\omega^{x}}{d\sigma}\right)^p\,d\sigma\right)^{1/p} \lesssim_\Lambda \sigma(B)^{-1}.
$$
\end{itemize}
\end{theorem}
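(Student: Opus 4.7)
I would follow the cyclic chain $(a)\Rightarrow(b)\Rightarrow(c)\Rightarrow(a)$. For $(a)\Rightarrow(b)$, fix a ball $B$ centered on $\pom$ with $\diam(B)\leq 2\diam(\Omega)$ and a point $x\in\Omega\cap(3B\setminus 2B)$. Given a non-negative $f\in L^{p'}(\sigma)\cap C(\pom)$ with $\supp f\subset B$, let $u$ be the Dirichlet extension; then $u$ vanishes continuously on $\pom\setminus\overline B$. The key step is the pointwise estimate
$$
u(x)\lesssim\avint_{\alpha B\cap\pom}\NN u\,d\sigma\quad\text{for some fixed }\alpha>3.
$$
When $\dist(x,\pom)\gtrsim r(B)$, this is immediate from the cone containment $x\in\gamma_\alpha(\xi)$ for $\xi\in\alpha B\cap\pom$ with $\alpha$ sufficiently large, which forces $u(x)\leq\NN u(\xi)$ on the whole of $\alpha B\cap\pom$. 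For arbitrary $x$, the same estimate is obtained by a change-of-pole / Carleson-type estimate for non-negative harmonic functions vanishing on $\pom\setminus\overline B$, valid in corkscrew domains with AD-regular boundary. Combining with $(D_{p'})$ and H\"older's inequality yields $u(x)\lesssim\sigma(B)^{-1/p'}\|f\|_{L^{p'}(\sigma)}$; since $u(x)=\int_B f\,k_x\,d\sigma$, dualizing in $f$ gives $\|k_x\|_{L^p(B,\sigma)}\lesssim\sigma(B)^{-1/p'}$, i.e., $(b)$.

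For $(b)\Rightarrow(c)$, cover $\Lambda B\cap\pom$ by $\lesssim\Lambda^n$ surface balls $B_j$ centered on $\pom$ with radius $r(B)$. For each $j$, pick an interior corkscrew point $y_j\in 3B_j\setminus 2B_j$ with $\dist(y_j,\pom)\approx r(B)$; $(b)$ then gives $\|k_{y_j}\|_{L^p(B_j,\sigma)}\lesssim\sigma(B)^{-1/p'}$. For the given $x\in\Lambda B$ with $\dist(x,\pom)\geq\Lambda^{-1}r(B)$, the classical change-of-pole formula, which rests on the doubling of $\omega$ (a consequence of $(b)$) together with Bourgain's lemma in corkscrew domains, yields
$$
\frac{k_x(y)}{\omega^x(B_j)}\approx\frac{k_{y_j}(y)}{\omega^{y_j}(B_j)}\quad\text{for }y\in B_j,
$$
so using $\omega^x(B_j)\leq 1$ and $\omega^{y_j}(B_j)\gtrsim 1$ we deduce $k_x\lesssim_\Lambda k_{y_j}$ on $B_j$. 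Summing the resulting $L^p$-bounds in $j$, using $\sigma(\Lambda B)\approx\Lambda^n\sigma(B)$, gives $\|k_x\|_{L^p(\Lambda B,\sigma)}\lesssim_\Lambda\sigma(B)^{-1/p'}$, which is $(c)$.

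For $(c)\Rightarrow(a)$, Lemma~\ref{lemrev} yields from $(c)$ the weak-type bound $\|\NN u\|_{L^{p',\infty}(\sigma)}\lesssim\|f\|_{L^{p'}(\sigma)}$ and strong $L^s$ solvability for all $s>p'$. To reach $s=p'$, I would invoke the self-improvement of the reverse H\"older inequality in $(c)$: by a Gehring-type argument applied uniformly in the pole $x$, there exists $\epsilon=\epsilon(p,\Lambda,C_0)>0$ such that $(c)$ holds with exponent $p+\epsilon$ in place of $p$. A second application of Lemma~\ref{lemrev} then gives strong $L^s$ solvability for all $s>(p+\epsilon)'$, and since $(p+\epsilon)'<p'$, this includes $s=p'$, proving $(a)$. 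The main obstacle lies in the pointwise estimate in the step $(a)\Rightarrow(b)$ when $x$ is close to $\pom$ (i.e., outside the reach of the elementary cone argument), where one must invoke the change-of-pole / Carleson-estimate machinery that relies crucially on the corkscrew condition and the AD-regularity of $\pom$; the change-of-pole argument in $(b)\Rightarrow(c)$ and the Gehring self-improvement in $(c)\Rightarrow(a)$ are comparatively routine.
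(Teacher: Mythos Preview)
Your cyclic scheme $(a)\Rightarrow(b)\Rightarrow(c)\Rightarrow(a)$ has genuine gaps in the last two implications, and your ``main obstacle'' in the first is in fact not an obstacle at all.

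\textbf{On $(a)\Rightarrow(b)$.} The pointwise bound $u(x)\lesssim\avint_{\alpha B\cap\pom}\NN u\,d\sigma$ for \emph{every} $x\in 3B\setminus 2B$ does not require any change-of-pole or Carleson-type machinery. Since $f$ is supported in $B$, $u$ vanishes continuously on $\pom\setminus\overline B$, so $|u|$ extended by $0$ to $\R^{n+1}\setminus\Omega$ is subharmonic in the full annulus $4B\setminus\overline B$ (not merely in $\Omega\cap(4B\setminus\overline B)$). Hence for any $x\in 3B\setminus 2B$, regardless of $\dist(x,\pom)$, one has $|u(x)|\lesssim\avint_{4B}|u|\,dm$, and the right side is controlled by $\avint_{CB\cap\pom}\NN u\,d\sigma$ via the Whitney decomposition. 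This is the paper's argument and it completely bypasses what you flag as the main difficulty.

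\textbf{On $(b)\Rightarrow(c)$.} The change-of-pole comparison $k_x/\omega^x(B_j)\approx k_{y_j}/\omega^{y_j}(B_j)$ is a CFMS-type estimate that relies on the boundary Harnack principle, which in turn requires Harnack chains. You are working in a corkscrew domain with AD-regular boundary only; no Harnack chain is assumed, and this comparison is \emph{not} available in general. Doubling of $\omega$ plus Bourgain's lemma do not suffice to recover it. The paper avoids this entirely by proving $(a)\Rightarrow(c)$ directly (same subharmonicity argument as $(a)\Rightarrow(b)$, with balls of the right scale) and never arguing $(b)\Rightarrow(c)$ on its own.

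\textbf{On $(c)\Rightarrow(a)$.} Gehring's lemma applies to a \emph{fixed} nonnegative function satisfying a reverse H\"older inequality at all scales. Condition $(c)$ gives $(\avint_{\Lambda B}k_x^p\,d\sigma)^{1/p}\lesssim\sigma(B)^{-1}$ only for poles $x$ at distance $\approx r(B)$ from $\pom$; the pole changes with the ball, so you are not in position to self-improve the exponent directly. The paper resolves this as follows: in the step $(b)\Rightarrow(a)$, Bourgain's estimate together with the maximum principle upgrades $(b)$ to $(\avint_B k_y^p\,d\sigma)^{1/p}\lesssim\omega^y(8B)/\sigma(B)$ for \emph{every} $y\in\Omega\setminus 2B$, which is a genuine reverse H\"older inequality for a fixed pole $y$; Gehring then gives exponent $p+\ve$, a covering argument yields the hypothesis of Lemma~\ref{lemrev} at exponent $p+\ve$, and hence $(D_{p'})$. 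For $(c)$, the paper instead proves $(c)\Rightarrow(b)$: the weak-type bound $\|\NN u\|_{L^{p',\infty}(\sigma)}\lesssim\|f\|_{L^{p'}(\sigma)}$ from Lemma~\ref{lemrev} plus Kolmogorov's inequality and the same subharmonicity trick give $|u(x)|\lesssim\sigma(B)^{-1/p'}\|f\|_{L^{p'}(\sigma)}$ for $x\in 3B\setminus 2B$, which is $(b)$ by duality.

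In summary: replace your $(b)\Rightarrow(c)$ and your Gehring-on-$(c)$ by the paper's route $(c)\Rightarrow(b)\Rightarrow(a)$ (subharmonicity + Kolmogorov for the first, Bourgain + maximum principle + Gehring for the second), and use subharmonicity of the zero-extension of $|u|$ to eliminate the obstacle you identified in $(a)\Rightarrow(b)$.
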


 Remark that the
implication (a) $\Leftrightarrow$ (b) may  already be known,
although we have not found any reference where this is stated. See \cite{Hofmann} for some related results. On the other hand, the equivalence (a) $\Leftrightarrow$ (c) is new, as far as we know.

\begin{proof}
{\bf (a) $\boldsymbol\Rightarrow$ (b)}. By duality, it is enough to show that 
for every ball $B$ centered in $\pom$, for all $x\in \Omega\cap 3B\setminus 2B$, and all 
$f\in C_c(\pom\cap B)$,
$$\left|\int_{B} f\,d\omega^x\right| \lesssim \|f\|_{L^{p'}(\sigma)}\sigma(B)^{-1/p'}.$$
%Clearly, we may assume that $f$ is a non-negative function. 
Denoting by $u$ the harmonic extension
of $f$ to $\Omega$, the preceding inequality can be rewritten as
$$%\begin{equation}\label{equf88}
|u(x)| \lesssim \|f\|_{L^{p'}(\sigma)}\sigma(B)^{-1/p'}.
$$%\end{equation}

To prove the latter inequality, by standard arguments (as in \rf{eqNN99}, say) and the $L^{p'}$ solvability of the Dirichlet problem, it follows that
\begin{align*}
\avint_{4B} |u|\,dm & \lesssim \avint_{CB\cap\pom} |\NN(u)|\,d\sigma
\leq\left(\avint_{CB\cap\pom} |\NN(u)|^{p'}\,d\sigma\right)^{1/p'}\\ &\lesssim \|f\|_{L^{p'}(\sigma)}\sigma(B)^{-1/p'}.
\end{align*}
By the subharmonicity of $|u|$ (extended by $0$ in $\Omega^c$) in $4B\setminus B$, we
have
$$|u(x)|\lesssim\avint_{4B} |u|\,dm\quad\mbox{for all $x\in \Omega\cap 3B\setminus 2B$.}$$
Together with the previous estimate, this implies (b).

\vv
{\bf (a) $\boldsymbol\Rightarrow$ (c)}. The arguments are almost the same as the ones in the proof of 
(a) $\Rightarrow$ (b), just replacing the condition $x\in \Omega\cap 3B\setminus 2B$  by
$x\in \Omega\cap \Lambda B$, $\dist(x,\pom)\geq \Lambda^{-1}\,r(B)$. We leave the details for the reader.
\vv

{\bf (b) $\boldsymbol\Rightarrow$ (a)}.
First we will show that there exists some $\ve>0$ such that
for any
 ball $B$ centered in $\pom$ with $\diam(B)\leq 2\diam(\pom)$ and for all $x\in \Omega\setminus 6B$,
\begin{equation}\label{eqrever10}
\left(\avint_{B} \left(\frac{d\omega^{x}}{d\sigma}\right)^{p+\ve}\,d\sigma\right)^{1/{(p+\ve)}} \lesssim \sigma(B)^{-1},
\end{equation}
To this end, notice first that,  for all $x\in \Omega\cap\partial (2B)$, by the so-called Bourgain's estimate,
$\omega^x(8B)\gtrsim 1.$
Then, for any function $f\in C_c(\pom)$, the assumption in (b) and the preceding estimate give
$$|u(x)| \leq C \, \|f\|_{L^{p'}(\sigma)}\sigma(B)^{-1/p'}\leq C \, \|f\|_{L^{p'}(\sigma)}
\frac{\omega^x(8B)}{
\sigma(B)^{1/p'}}
\quad \mbox{ for all $x\in \Omega\cap  \partial(2B)$,}$$
where, as above, $u$ is the harmonic extension of $f$ to $\Omega$.
By the maximum principle we infer that the above inequality also holds for all $y\in\Omega\setminus 2B$. By duality it follows that
$$\left(\,\avint_{B} \left(\frac{d\omega^{y}}{d\sigma}\right)^p\,d\sigma\right)^{1/p} \lesssim \frac{\omega^y(8B)}{\sigma(B)}
\quad\mbox{ for all $y\in\Omega\setminus 2B$}
.$$
So for any given ball $B_0$ centered in $\pom$ with $\diam(B_0)\leq 2\diam(\pom)$ and $y\in \Omega\setminus 6B_0$ and any ball $B'$ centered
at $1.1B_0\cap \pom$ with $r(B')\leq 2r(B_0)$, we have
$$\left(\,\avint_{B'} \left(\frac{d\omega^{y}}{d\sigma}\right)^p\,d\sigma\right)^{1/p} \lesssim \frac{\omega^y(8B')}{\sigma(B')}
.$$
By Gehring's lemma (see \cite[Theorem 6.38]{Giaquinta-Martinazzi}, for example) adapted to $n$-AD-regular sets, there exists some $\ve>0$ such that
$$\bigg(\,\avint_{B_0} \left(\frac{d\omega^{y}}{d\sigma}\right)^{p+\ve}\,d\sigma\bigg)^{1/(p+\ve)} \lesssim \frac{\omega^y(8B_0)}{\sigma(B_0)}
,$$
which yields \rf{eqrever10}.

Next we intend to apply Lemma \ref{lemrev} with $p+\ve$ in place of $p$. To this end, given $\Lambda >1$, a ball $B$ centered in $\pom$
with $\diam(B)\leq 2\diam(\pom)$, and $x\in \Lambda B$ with $\dist(x,\pom)\geq \Lambda^{-1}r(B)$,
we cover $B\cap\pom$ with a family of balls $B_i$, $i\in I_B$, with $r(B_i)=(100\Lambda)^{-1}r(B)$, so that
the balls $B_i$ are centered at $B\cap\pom$, $x\not\in 6B_i$ for any $i\in I_B$, and $\# I_B\leq C(\Lambda)$.
Applying \rf{eqrever10} to each of the balls $B_i$ and summing over $i\in I_B$, we infer
that
$$
\left(\avint_{\Lambda B} \left(\frac{d\omega^{x}}{d\sigma}\right)^{p+\ve}\,d\sigma\right)^{1/(p+\ve)}\leq C(\Lambda)\,\sigma(B)^{-1}.
$$
From Lemma \ref{lemrev} we deduce that $(D_s)$ is solvable for $s>(p+\ve)'$, and thus in particular for $s=p'$.
\vv

{\bf (c) $\boldsymbol\Rightarrow$ (b)}.
We will argue in the same way as in the proof of (a) $\boldsymbol\Rightarrow$ (b), using 
the estimate \rf{eqrever2*} instead of the solvability of $(D_{p'})$.
Again by duality, it suffices to show that 
for every ball $B$ centered in $\pom$ with $\diam(B)\leq 2\diam(\pom)$, for all $x\in \Omega\cap 3B\setminus 2B$ and all 
$f\in C_c(\pom\cap B)$, the harmonic extension $u$ of $f$ to $\Omega$ satisfies
\begin{equation}\label{eqsuf834}
|u(x)| \lesssim \|f\|_{L^{p'}(\sigma)}\sigma(B)^{-1/p'}.
\end{equation}

By standard arguments, the Kolmogorov inequality, and \rf{eqrever2*}, we have
$$\avint_{4B}|u|\,dm \lesssim \avint_{CB} \NN(u)\,d\sigma
\lesssim \|\NN(u)\|_{L^{p',\infty}(\sigma)}\,\sigma(B)^{-1/p'}
\lesssim \|f\|_{L^{p'}(\sigma)}\,\sigma(B)^{-1/p'}
.$$
Since $f$ vanishes in $\pom\setminus B$, by the subharmonicity of $|u|$ (extended by $0$ to $\Omega^c$) 
in $4B\setminus B$ we have
$$|u(x)| \lesssim\avint_{4B} |u|\,dm\quad\mbox{for all $x\in \Omega\cap 3B\setminus 2B$,}$$
which, together with the previous estimate, implies \rf{eqsuf834}.
\end{proof}
\vv

\begin{remark}\label{rem9.3}
The arguments in the above proof of (b) $\Rightarrow$ (a) show that  solvability of $(D_{p'})$ for some
$p'\in (1,\infty)$ implies  solvability of $(D_{p'-\ve})$ for some $\ve>0$.
\end{remark}

\vv

\begin{remark}\label{rem9.4}
The above theorem also holds if $\pom$ is unbounded. Indeed, the only place where the boundedness of $\pom$ is used is in Lemma
\ref{lemrev}, to ensure that $\|\NN u\|_{L^s(\mu)} <\infty$ and $\|\NN u\|_{L^{p',\infty}(\sigma)}<\infty$.
A way of circumventing this technical problem is the following. For $r>0$, consider the open set $\Omega_r:=\Omega\cap B(0,r)$. It is easy to check that $\pom_r$ is $n$-AD-regular and that an estimate such as \rf{eqrever} also holds for the harmonic measure $\omega_{\Omega_r}$,
with bounds uniform on $r$, so that $(D_s)$ is solvable for $\Omega_r$, with $s>p'$, and \rf{eqrever2*} also holds.
Given $f\in C(\pom)$ with compact support, let $r>0$ be big enough so that $\supp f\subset B(0,r)$, and let $f_r:\pom_r\to\R$
be such that $f_r=f$ in $\pom\cap B(0,r)$ and $f_r= 0$ in $\pom_r\cap\Omega$. The we apply Lemma \ref{lemrev} 
to the solution $u_r$ of the Dirichlet problem with data $f_r$ in $\Omega_r$.
 Letting $r\to\infty$, then one easily deduces that $\|\NN u\|_{L^s(\sigma)} \lesssim \kappa\|f\|_{L^s(\sigma)}$, as well as the related estimate 
 \rf{eqrever2*}. Since the case when $\pom$ is unbounded is not the main  focus of our work, we leave the details for the reader.
\end{remark}

\vv

\begin{proof}[\bf Proof of Theorem \ref{propoconverse}]
We will show that the assumptions in Theorem \ref{teo9.2} (c) hold. To this end, we will prove that for $B$, $\Lambda$, and $x \in \Omega$ as in Theorem \ref{teo9.2} (c),
\begin{equation}\label{eqclau883}
\left(\avint_{\Lambda B} \big(\cM_{\sigma,0}\, \omega^x\big)^p \,d\sigma\right)^{1/p} \lesssim_\Lambda \sigma(B)^{-1},
\end{equation}
where $\cM_{\sigma,0}$ is the truncated maximal operator defined by 
$$\cM_{\sigma,0} \,\tau(\xi) = \sup_{0<r\leq \dist(x,\pom)/4} \frac{|\tau|(B(\xi,r))}{\sigma(B(\xi,r))},$$
for any signed Radon measure $\tau$.

Given a ball $B_\xi=B(\xi,r)$, with $\xi\in\pom\cap\Lambda B$, $0<r\leq \dist(x,\pom)/4$, (so that $x\not \in 4B_\xi$), consider a smooth non-negative function $\vphi_{B_\xi}$ which equals $1$ on $B_\xi$ and vanishes away from $2B_\xi$. Then we have
\begin{align*}
\omega^x(B_\xi) & \leq \int \vphi_{B_\xi}\,d\omega^x \\ &= - \int \nabla_y G(y,x) \cdot \nabla \vphi_{B_\xi}(y)\,dm(y) \lesssim \frac1r\int_{2B_\xi} |\nabla_y G(y,x)|\,dm(y),
\end{align*}
where $G$ is the Green function for the Laplacian in $\Omega$.
By estimates analogous to the ones in \rf{eqNN99}, we derive
$$\int_{2B_\xi} |\nabla_y G(y,x)|\,dm(y) \lesssim r\,\int_{CB_\xi} \NN_0(\nabla_1 G(\cdot,x))\,d\sigma,$$
where $\nabla_1$ denotes the gradient with respect to the first variable
and $$\NN_0 u(\xi) = \sup_{y\in\gamma(\xi),|y-\xi|\leq \dist(x,\pom)/2}|u(y)|.$$
Therefore,
$$\frac{\omega^x(B(\xi,r))}{\sigma(B(\xi,r))}\lesssim \avint_{B(\xi,Cr)} \NN_0(\nabla_1 G(\cdot,x))\,d\sigma.$$
Taking the supremum over $0<r\leq \dist(x,\pom)/4$, we derive
$$\cM_{\sigma,0}\,\omega^x(\xi) \lesssim \cM_\sigma( \NN_0(\nabla_1 G(\cdot,x))(\xi).$$
Thus,
\begin{equation}\label{eqfkf4}
\|\cM_{\sigma,0}\, \omega^x\|_{L^p(\sigma)} \lesssim 
\|\cM_\sigma( \NN_0(\nabla_1 G(\cdot,x)))\|_{L^p(\sigma)} \lesssim
\|\NN_0(\nabla_1 G(\cdot,x))\|_{L^p(\sigma)}.
\end{equation}

Recall now that the Green function can be written as
\begin{equation}\label{eqgreen77}
G(y,x) = \EE(y-x) - \int_\pom \EE(y-z)\,d\omega^x(z)\quad 
\mbox{ for all $y,x\in\Omega$, $y\neq x$,}
\end{equation}
where $\EE$ is the fundamental solution of the Laplacian. See for example \cite{AHM3TV}. Then,
$$\nabla_y G(y,x) = K(y-x) - \int_\pom \nabla_y \EE(y-z)\,d\omega^x(z) = K(y-x) - \RR\omega^x(y),$$
where $\RR$ is the $n$-dimensional Riesz transform and $K$ is its kernel, i.e., $K=\nabla \EE$.
Therefore, for all $\xi\in\pom$,
$$\NN_0(\nabla_1 G(\cdot,x))(\xi) \lesssim \NN_0(K(\cdot-x))(\xi) + \NN_0(\RR\omega^x)(\xi).$$
Since $|K(y-x)|\lesssim 1/|x-\xi|^{-n}$ for all $y$ such that $|y-\xi|\leq \dist(x,\pom)/2$, we have
\begin{align}\label{eqnabv4}
\|\NN_0(\nabla_1 G(\cdot,x))\|_{L^p(\sigma)}^p&\lesssim \int \frac1{|x-\xi|^{np}}\,d\sigma(\xi) + 
\|\NN(\RR\omega^x )\|_{L^p(\sigma)}^p\\
& \lesssim \frac1{\dist(x,\pom)^{n(p-1)}} +
\|\NN(\RR\omega^x )\|_{L^p(\sigma)}^p,\nonumber
\end{align}
where we used the upper AD-regularity of $\sigma$ to estimate $\int \frac1{|x-\xi|^{np}}\,d\sigma(\xi)$ by
standard arguments (for example, splitting the domain of integration into annuli).

To estimate $\|\NN(\RR\omega^x )\|_{L^p(\sigma)}$, observe that $\RR\omega^x = \nabla (\EE*\omega^x)$.
Clearly, $u:=\EE*\omega^x$ is a harmonic function  in $\Omega$ and, moreover, from \rf{eqgreen77} it follows that $\EE*\omega^x$ extends continuously to $\pom$, since $G(\cdot,x)$ vanishes continuously on $\pom$,
as $\Omega$ is Wiener regular. 
Then, from the solvability of $(R_p)$,  we get
$$\|\NN(\RR\omega^x )\|_{L^p(\sigma)} \lesssim \|\nabla_{H,p}(\EE*\omega^x)\|_{L^p(\sigma)}.$$
Since $G(\cdot,x)$ is constantly $0$ on $\pom$, using again \rf{eqgreen77}, we have
$$\nabla_{H,p}(\EE*\omega^x) = \nabla_{H,p}(\EE (x-\cdot)).$$
Thus, 
$$\|\NN(\RR\omega^x )\|_{L^p(\sigma)} \lesssim \|\nabla_{H,p}(\EE (x-\cdot)\|_{L^p(\sigma)}.$$
We claim now that the function 
$$g_x(\xi):= \frac{C}{|x-\xi|^n},\quad \quad\xi\in\pom,$$
is a Haj\l asz upper gradient for $\EE (x-\cdot)$ for a suitable $C>0$. This is easy to check: for $\xi,\xi'\in\pom$ such that
$|\xi-\xi'|\leq \frac12|x-\xi|$, we have
\begin{align*}
|\EE(x-\xi) - \EE(x-\xi')| & = \bigg|\frac{C}{|x-\xi|^{n-1}} - \frac{C}{|x-\xi'|^{n-1}}\bigg|
\leq C \frac{|\xi-\xi'|}{|x-\xi|^n} \\ &\leq |\xi-\xi'|\,(g_x(\xi) + g_x(\xi')).
\end{align*}
In the case when $|\xi-\xi'|> \frac12\max(|x-\xi|, |x-\xi'|)$, we have
\begin{align*}
|\EE(x-\xi) - \EE(x-\xi')| & \leq 
\frac{C}{|x-\xi|^{n-1}} + \frac{C}{|x-\xi'|^{n-1}} \\
& \leq \frac{2C\,|\xi-\xi'|}{|x-\xi|^{n}} + \frac{2C\,|\xi-\xi'|}{|x-\xi'|^{n}}
\leq |\xi-\xi'|\,(g_x(\xi) + g_x(\xi')),
\end{align*}
which proves our claim and implies that $\|\nabla_{H,p}(\EE (x-\cdot))\|_{L^p(\sigma)}\leq \|g_x\|_{L^p(\sigma)}$.
Thus, 
$$\|\NN(\RR\omega^x )\|^p_{L^p(\sigma)} \lesssim \|g_x\|^p_{L^p(\sigma)}
 \lesssim 
\int \frac1{|x-\xi|^{np}}\,d\sigma(\xi) \lesssim \frac1{\dist(x,\pom)^{n(p-1)}}.$$
Together with \rf{eqfkf4} and \rf{eqnabv4}, this gives
$$\|\cM_{\sigma,0}\, \omega^x\|_{L^p(\sigma)} \lesssim  \frac1{\dist(x,\pom)^{n(p-1)/p}} \approx_\Lambda
\frac1{\sigma(B)^{1/p'}},$$
which implies \rf{eqclau883}.

It is easy to check that, in light of \rf{eqclau883}, $\omega^x$ is absolutely continuous with respect to
$\sigma$ and that \rf{eqrever} holds. Indeed, consider the functions
$$h_k = \sum_{Q\in\DD_{\sigma,k}} \frac{\omega^x(Q)}{\sigma(Q)}\,\chi_Q,$$ 
where $\DD_{\sigma,k}$ is the subfamily of the cubes from $\DD_\sigma$ with side length $2^{-k}$.
For $k$ big enough, it is immediate to check that $h_k\lesssim \cM_{\sigma,0}\,\omega^x$. Thus, the
functions $h_k$ are uniformly in $L^p(\sigma)$, and so we can extract a weakly convergent subsequence 
so that  $h_k \rightharpoonup h\in L^p(\sigma)$. It is also immediate that the sequence of measures $h_k\,\sigma$ converges weakly to
$\omega^x$. So we infer that $\omega^x = h\,\sigma$, with $h$ satisfying
$$\left(\avint_{\Lambda B} |h|^p \,d\sigma\right)^{1/p} \lesssim_\Lambda \sigma(B)^{-1},
$$ 
which proves the theorem.
\end{proof}

\vv

% ********************************************************************************************

\section{A counterexample}\label{sec:counterexample}

In this section we show that, for $n\geq3$, there exists an AD-regular NTA (i.e, chord-arc) domain $\Omega\subset\R^{n+1}$ with connected boundary
such that, for all $p\in [1,\infty)$ and all $M>0$, one can find a Lipschitz function $f:\pom\to\R$ such that the solution of the Dirichlet
problem with boundary data $f$ satisfies
\begin{equation}
\|\NN(\nabla u)\|_{L^p(\sigma)} \geq M  \|\nabla_t f\|_{L^p(\sigma)}.
\label{eqhjk23}
\end{equation}
That is, $(\wt R_p)$ is not solvable in $L^p$ for any $p\in [1,\infty)$. 
This is in strong contrast with what happens for  domains satisfying the two-sided local John condition (like two-sided chord-arc domains, for example) for which  $(\wt R_p)$ is  solvable in $L^p$ for any $p>1$ small enough, by Theorem \ref{teomain}. It is also worth comparing this example with the chord-arc domains considered by Jerison in \cite{Je}, where he showed that for every $p>1$ there exists a chord-arc domain such that $(D_{p'})$ is not solvable and thus, neither is $(R_{p})$ nor $(\wt R_{p})$.

\vv

First we deal with the case $p\in [1,n)$.
We denote $x=(x_1,\ldots,x_{n+1})=(x',x_{n+1})$. To define the aforementioned NTA domain $\Omega\subset\R^{n+1}$,  we consider the solid truncated closed cone
$$K= \big\{x\in\R^{n+1}: |x'|^2 \leq |x_{n+1}-1|^2,\, 0\leq x_{n+1}\leq1 \big\}.$$
Observe that the vertex of $K$ is $(0,\ldots,0,1)$, and that its basis is a circle of radius $1$ contained in the horizontal hyperplane $x_{n+1}=0$ and centered at the origin.
Next we consider the open cylinder
$$C= \big\{x\in\R^{n+1}: |x'|^2 < 4,\, |x_{n+1}|< 1\big\},$$
and we set $\Omega = C\setminus K$. Observe that $K\subset \overline C$ and that
$$\pom = \partial C \cup\partial K,\qquad \{(0,\ldots,0,1)\} = \partial C \cap\partial K,$$
so that $\pom$ is connected. It is also easy to check that $\Omega$ is an AD-regular NTA domain\footnote{It is easy to see that this domain does not satisfy the two-sided local John condition since for any small ball around $(0,0,\cdots ,1)$, the vertex of the cone, one cannot find a corkscrew point in $(\overline\Omega)^c$ so that every boundary point in that ball can be connected with a ``good" (in fact, with any) curve.}.

For all $s\in (0,1)$ we consider the subset $K_s\subset \partial K$ given by
$$K_s = \{x\in \partial K: 0<|1-x_{n+1}|<s\},$$
and then we define the function $f_s:\pom\to \R$ by
$$f_s(x) = \left\{
\begin{array}{ll} 
0 & \quad \mbox{ if $x\in\partial C$,}\\ 
\dfrac{1- x_{n+1}}s  & \quad \mbox{ if $x\in K_s$,}\\
1  & \quad \mbox{ if $x\in\partial K$ and $|1-x_{n+1}|\geq s$.}
\end{array}
\right.
$$
Observe that $f_s$ is Lipschitz, $\nabla_t f_s=0$ $\sigma$-a.e.\ in $\pom\setminus K_s$, and 
$|\nabla_t f_s(x)|\leq \dfrac1s$ for $\sigma$-a.e.\  $x\in K_s$.
Thus,
$$\int |\nabla_t f_s|^p\,d\sigma \leq \frac1{s^p} \,\sigma(K_s) \lesssim s^{n-p}.$$
Therefore, for $1\leq p<n$,
$$\|\nabla_t f_s\|_{L^p(\sigma)} \to 0\quad \mbox{ as $s\to 0$.}$$
On the other hand, it is easy to check that the solution $u_s$ of the Dirichlet problem with boundary data $f_s$
converges uniformly in compact subsets of $\Omega$
to the function $u_0(x)= \omega_\Omega^x(\partial K)$ as $s\to0$, where $\omega_\Omega^x$ is the harmonic measure for $\Omega$ with pole in $x$. Analogously, $\nabla u_s$ converges  uniformly in compact subsets of $\Omega$ to $\nabla u_0$, which is a non-zero harmonic function.
So it follows that 
$\|\NN(\nabla u_s)\|_{L^p(\sigma)}$ is bounded away from $0$ uniformly on $s$.
Thus,
$$\frac{\|\NN(\nabla u_s)\|_{L^p(\sigma)}}{\|\nabla_t f_s\|_{L^p(\sigma)}} \to \infty \quad \mbox{ as $s\to 0$,}$$
which proves the existence of the functions $f,u$ satisfying \rf{eqhjk23}, by taking $s$ small enough.
\vv

Next we show how, in the case $n\geq 3$, one can modify the preceding domain so that $(\wt R_p)$
is not solvable either for $p\geq n$. {Notice first that, by 
Theorem \ref{propoconverse}, $(\wt R_p)$ implies $(D_{p'})$.
So it suffices to modify $\Omega$ so that Dirichlet problem is not solvable in $L^{s'}$ for some $s'>p'$ to ensure that
$(\wt R_p)$ does not hold.} In the case $p\geq n\geq3$, we have {$p'\leq \frac{n}{n-1} \leq \frac{3}{2}$}, and by the {extrapolation} of  solvability of the Dirichlet problem, it is enough to show that this not solvable {in $L^{s'}$, for some $s'>3/2$.}
Now the idea is to replace the bottom face of $\partial \Omega$ (which equals the bottom face of the cylinder $C$) by a suitable Lipschitz graph $\Gamma$, so that the harmonic measure does not satisfy a reverse H\"older inequality with exponent $s$. To this end, consider a graph constructed as follows.
For any $\ve>0$, let $\Gamma_0\subset \R^2$ be a curve made up by joining 
successively the points 
$(-2,-2)$, $(-\ve,-2)$, $(0,-1)$, $(\ve,-2)$, $(2,-2)$ by segments. Notice that at the point $(0,-1)$,
$\Gamma_0$ has a vertex of an angle tending to $0$ as $\ve\to0$. Next we let 
$$\Gamma = \{x\in\R^{n+1}: (x_1,x_{n+1})\in\Gamma_0\}.$$
We let $\Omega'$ be the part of the domain $\Omega$ that lies above $\Gamma$. That is, if $\Gamma$ is defined by the function $\wt \gamma:[-2,2]^n\to \R$,
we set
$\Omega' = \{x\in\Omega: x_{n+1}>\wt \gamma(x')\}.$
Now we claim that
\begin{equation}\label{eqclaim37}
\frac{d\omega_{\Omega'}}{d\sigma}\not \in   { L^{5/2}} (\sigma|_{B((0,-1),1/2)})\quad \mbox{ for $\ve>0$ small enough.}
\end{equation}
 {Assuming this for the moment, we deduce that $\omega_{\Omega'}$ does not satisfy a reverse H\"older inequality with exponent $5/2$. However, such reverse inequality is a necessary condition for $(D_{5/3})$ (see for example Proposition 2 from \cite{Hofmann}). Hence $(D_{5/3})$ is not solvable and, as explained above (choosing $s=5/2$, $s'=5/3$), this implies that $(\wt R_p)$ is not solvable for $p\geq3$.}
Further, essentially the same calculations that we did to show \rf{eqhjk23} for $\Omega$ are valid for $\Omega'$, and so the problem is not solvable either for $p\in[1,n)$.

\vv
It remains to prove \rf{eqclaim37}. To this end, we will relate 
 the harmonic measure in $\Omega'$ to the one of the planar domain
$$U_0 = \{x\in B((0,-1),1/2): x_2>\wt \gamma_0(x_1)\},$$
where $\wt \gamma_0$ is the function that defines the graph $\Gamma_0$ in $\R^2$, and using a conformal mapping to study the harmonic measure for $U_0$, we will derive \rf{eqclaim37}. See also Remark 2.1.17 in \cite{Ke} for a related argument.                      

Let us see the detailed. arguments. By translating, dilating, and rotating $U_0$, we can transform $U_0$ into
the planar domain 
$$U_1 = \{z\in B(0,1): 0<{\rm Arg}(z)<\alpha\},\quad \mbox{ with $\alpha\to 2\pi$ as $\ve\to0$.}$$
Next, by the conformal map $g_1(z)=z^{\pi/\alpha}$ we transform $U_1$ into the half disk
$\{z\in B(0,1):{\rm Im}(z)>0\}$, and then by standard arguments we find a conformal map $g_2$ of that half disk into the unit disk (for example we can transform this into the first quadrant of the complex plane by a Mobius transformation, then transform the first quadrant into the upper half space by $z\mapsto z^2$, and then apply another Mobius transformation of the upper half space into the unit disk). Anyway, all that matters about $g_2$ is that it is
smooth (in fact, analytic) in a neighborhood of $g_1(0)=0$.
From this fact, it follows that, for $0<r<1/2$ and $v:=(0,-1)$ (the middle vertex of $\wt \gamma_0$),
\begin{align*}
\omega_{U_0}^{z_0}(\partial U_0\cap B(v,r)) &= 
\omega_{U_1}^{z_1}(\partial U_1\cap B(0,2r)) = \frac1{2\pi}\,\HH^1(g_2\circ g_1(\partial U_1\cap B(0,2r))) \\
& = \frac1{2\pi}\,\HH^1(g_2([-(2r)^{\pi/\alpha},(2r)^{\pi/\alpha}])) = c r^{\pi/\alpha}
+  O(r^{2\pi/\alpha}),
\end{align*}
where $\omega_{U_0}$ and $\omega_{U_1}$ are the harmonic measures for $U_0$ and $U_1$, respectively, with poles
$z_1=g^{-1}(0)$ and $z_0$ being the corresponding point in $U_0$. 
Then, for $\{w_1,w_2\}= \partial B(v,r) \cap\partial U_0$, we have
\begin{align*}
\frac{d\omega_{U_0}^{z_0}}{d\HH^1|_{\partial U_0}}(w_1) + \frac{d\omega_{U_0}^{z_0}}{d\HH^1|_{\partial U_0}}(w_2) & = 
\frac{d(\omega_{U_0}^{z_0}(\partial U_0\cap B(v,r)))}{dr}\\ & = c \,r^{\frac\pi\alpha -1}  +  O(r^{(\frac{2\pi}\alpha)-1})
\approx r^{\frac\pi\alpha -1}.
\end{align*}
By choosing appropriately the conformal map $g_2$, we can assume that
$z_0 = (0,-3/4)$, and then by symmetry we have $\frac{d\omega_{U_0}^{z_0}}{d\HH^1|_{\partial U_0}}(w_1) = \frac{d\omega_{U_0}^{z_0}}{d\HH^1|_{\partial U_0}}(w_2)$. Thus, by the connection between the Green function $G_{U_0}(z,\xi)$ and the harmonic measure $\omega_{U_0}^{z_0}$, we have
\begin{equation}\label{eqgreen**23}
\partial_{\nu_{U_0}} G_{U_0}(z,z_0) = \frac{d\omega_{U_0}^{z_0}}{d\HH^1|_{\partial U_0}}(z) 
\approx |z-v|^{\frac\pi\alpha-1} 
\end{equation}
for $z\in B(v,1/2)\cap \partial U_0$.

To deal with the harmonic measure in $\Omega'$ we consider the function
$$h(x) = G_{U_0}((x_1,x_{n+1}),z_0),$$
where $x=(x_1,\ldots,,x_{n+1})$, $z_0$ is fixed, and we understand that $(x_1,x_{n+1}) = x_1 + i\,x_{n+1}$.
Denote $v_{\Omega'} = (0,\ldots,0, -1)$.
Notice that $h$ is harmonic in $B(v_{\Omega'},1/8)\cap \Omega'$, continuous in
$\overline{B(v_{\Omega'},1/8)\cap \Omega'}$, and it vanishes identically in 
$\overline B(v_{\Omega'},1/8)\cap \partial\Omega'$.
Hence, by the boundary Harnack principle and \rf{eqgreen**23}, for any $x \in B(v_{\Omega'},1/10)\cap \partial\Omega'$ and $y_0\in\Omega'\setminus B(v_{\Omega'},1/2)$, we have
$$\frac{\omega_{\Omega'}^{y_0}}{d\sigma}(x) = \partial_\nu G_{\Omega'}(x,y_0) \approx
\partial_\nu h(x) = \partial_{\nu_{U_0}} G_{U_0}((x_1,x_{n+1}),z_0)\approx
|x_{n+1} +1|^{\frac\pi\alpha-1}.
$$
Thus, if we let $Q_{v_{\Omega'}}\subset\R^{n+1}$ be an (Euclidean) cube centered at $v_{\Omega'}$ contained in
$B(v_{\Omega'},1/10)$ with $\ell(Q_{v_{\Omega'}})\approx 1$, we get
\begin{align*}
\int_{B(v_{\Omega'},1/10)\cap \partial\Omega'} \bigg|\frac{\omega_{\Omega'}^{y_0}}{d\sigma}\bigg|^{5/2}\,d\sigma & \gtrsim 
\int_{Q_{v_{\Omega'}}\cap \partial\Omega'} \big|x_{n+1} + 1\big|^{\frac52(\frac\pi\alpha-1)}\,d\sigma\\ 
& \approx \ell(Q_{v_{\Omega'}})^{n-1} \int_{|y +1|\leq \ell(Q_{v_{\Omega'}})} \big|y + 1\big|^{\frac52(\frac\pi\alpha-1)}\,dy=\infty
\end{align*}
for $\alpha\in (5\pi/3,2\pi)$. This concludes the proof of \rf{eqclaim37}.

\vvv

%\newpage
\appendix

\section{Theorem \ref{propoconverse}  for second order elliptic operators with $L^\infty$ coefficients}\label{appendix}

In this section we prove  a version of Theorem \ref{propoconverse} for second order elliptic operators with $L^\infty$ coefficients. Remark that the previous proof of this theorem in Section \ref{secconverse} does not extend to these operators.
The proof below is inspired by the suggestion we got from an anonymous referee, to whom we are very grateful. 

We say that $A=(a_{ij})_{1 \leq i,j \leq n+1}$ is an {\it elliptic} $(n+1)\times (n+1)$ matrix with measurable coefficients $a_{ij}: \om \to \R$ if    there exists $\lambda \in (0,1]$ such that  $\|A\|_{L^\infty(\om)} \leq \lambda^{-1}$ and
\begin{align*} 
 A(x)\xi \cdot \xi \geq \lambda \, |\xi|^2, \,\, \textup{for a.e.}  \, x \in \om\,\, \textup{and every} \,\,  \xi \in \R^{n+1}.
\end{align*}
We define the second order elliptic operator associated with $A$ by $L=-\div A \nabla$ and its adjoint by $L^*=-\dv A^T \nabla$, where $A^T$ is  the transpose  matrix of $A$.  We will prove the following theorem.

\vv

%From now on we make the convention that if $\om$ is an unbounded open subset of $\rrn$,  the point at infinity belongs to the boundary of $\om$.  All topological notions are therefore understood with respect to the  one-point compactification of $\rrn$,  which we denote by $\overline \rrn = \rrn \cup \{\infty\}$. This convention allows a unified treatment of the Dirichlet problem for all open sets $\om$. We denote the extended boundary by $\d_\infty \om:=\pom \cup \{\infty\}$.  So if $f \in C( \d_\infty \om)$, then it is bounded even if $\pom$ is unbounded.

\vv

We say that the {\it  variational Dirichlet problem}  for $L$ is solvable in $\om$  if,  for every $f \in C_c(\pom)$ and  $\Phi \in  \dot W^{1,2}(\om)\cap C(\oom)$ satisfying $\Phi|_\pom=f$,  there exists $w \in \dot W^{1,2}(\om)$  such that
\begin{equation}\label{eq: Dirichlet problem}
(D^L_{\textup{v}})=
\begin{dcases}
Lw=-\div A\nabla w=0 &\textup{weakly in} \,\, \om \\
w-\Phi \in  Y_0^{1,2}(\om).\quad &\textup{on} \,\, \pom.
\end{dcases}
\end{equation}
This problem is uniquely solvable.  Indeed, by Lax-Milgram,  there exists $u \in Y^{1,2}_0(\om)$ such that $\int_\om A \nabla u \nabla \Psi= -\int A \nabla \Phi \nabla \Psi$  for every $\Psi \in Y^{1,2}_0(\om)$ (uniqueness follows from the maximum principle). Then,   it is easy to see that $w=u+\Phi$ solves  \eqref{eq: Dirichlet problem}.  Moreover, if $\pom$ is $n$-AD-regular, it follows that $u \in C(\oom)$ (see \cite[Theorem 6.27]{HKM} and \cite[Theorem 3]{Mar78}). In the particular case where $f \in \Lip_c(\pom)$, we have that $u \in C^\alpha_{\loc}(\oom)$ for some $\alpha \in (0,1)$.  Let us highlight that, clearly,  $w \in Y^{1.2}(\om)$, which will be useful in the arguments below. Moreover,  in view of the maximum principle and the Riesz representation theorem,  for any fixed point $x \in\om$, there exists a unique probability measure $\hm_L^x$ such that 
\begin{equation}\label{eq:elliptic measure}
w(x) = \int_{\d \om} f(\xi) \, d\omega_L^x(\xi).
\end{equation}

\vv

Following \cite{KP}, if $B_x:= B(x, c \delta_\om(x))$ for some $c\in (0, 1/2)$,  we introduce the {\it modified non-tangential maximal operator} of  function $u\in L^2_{\loc}(\Omega )$ by
\begin{equation}\label{eqmod-Nalpha}
\wt \NN_\alpha(u)(x):=\sup_{y \in \gamma_\alpha (x)} \left(\avint_{B_y} |u(z)|^2\,dz \right)^{1/2} , \,\,x\in \pom.
\end{equation}
When $L =-\Delta$,  and $u$ is harmonic in $\om$ then so is $\nabla u$ and thus it holds that   $\wt \NN(\nabla u) \approx \NN(\nabla u)$.

Adapting the definitions in Section \ref{sec:intro},  we say that {\it the Dirichlet problem is solvable in $L^p$} for $L$ (write $(D^L_{p})$ is solvable)  if there exists some constant $C_{D_p}>0$  such that, for any $f\in C_c(\pom)$,  the solution $u:\Omega\to\R$ of $(D^L_{\textup{v}})$ given by \eqref{eq: Dirichlet problem} in $\Omega$ with boundary data $f$ satisfies \eqref{eq:ntDirichlet}.

We also say that {\it the regularity problem is solvable in $L^p$} for $L$  (write $(R^L_{p})$ is solvable) if  there exists some constant $C_{R_p}>0$  such that, for any compactly supported  Lipschitz function 
$f:\pom\to\R$, the solution $u:\Omega\to\R$ of $(D^L_{\textup{v}})$ given by \eqref{eq: Dirichlet problem} in $\Omega$ with boundary data $f$ satisfies
\begin{equation}\label{eq:main-est-regL}
\|\wt \NN(\nabla u)\|_{L^p(\sigma)}   \leq 
\begin{dcases}
C_{R_p} \|f\|_{\dot W^{1,p}(\sigma)}, &\textup{if}\,\, \om \,\,\textup{is bounded or}\,\, \pom\,\,\textup{is unbounded}\\
C_{R_p} \|f\|_{ W^{1,p}(\sigma)}, &\textup{if}\,\, \om\,\,\textup{is unbounded and}\,\,\pom \,\,\textup{is bounded}.
\end{dcases}
\end{equation}

\vv

\begin{lemma}
Let $\om \subset \rrn$, $n \geq 1$,  be an open set satisfying the corkscrew condition with $n$-AD-regular boundary $\pom$,  and let $B:=B(\xi,r)$ be a ball centered at  $\xi \in\pom$ of radius $r$.  Then,  for any $u \in W^{1,2}(B) \cap C^\alpha(\overline{B})$ such that $u=0$ on $\overline{B} \cap \pom$,  it holds that
\begin{equation}\label{eq:goodPoincare}
\avint_B |u| \,dx \lec r\, \avint_B |\nabla u| \,dx.
\end{equation}
\end{lemma}

\begin{proof}
If we set $u=0$ in $\rrn \setminus \om$, then,  by (the proof of) \cite[Lemma 11]{HaM}, we  have that $u \in W^{1,2}(B) \cap C^\alpha(\overline{B})$ and so we can apply  \cite[Lemma 9.1]{Maz}  to infer that 
\begin{align}\label{eq:mazya}
\int_B |u(x)| \,dx \lec \frac{r^{n+1}}{\Cap_1(\overline{B} \cap \om^c,  2B)} \, \int_B |\nabla u| \,dx,
\end{align}
where, for any compact set $K \subset D$,  where $D \subset \rrn$ is open, we define
$$
\Cap_1(K,D):=\inf_{\substack{u \in C^\infty_c(D)\\ u \geq 1 \,\,\textup{on}\,\, K}} \int_D |\nabla u(x)|\,dx.
$$
Using \cite[Lemma 3.5]{KKST}, we have that $$\Cap_1(\overline{B} \cap \om^c, 2B) \geq \Cap_1(\overline{B} \cap \pom, 2B)\approx \HH^n_\infty(\overline{B} \cap \pom)\approx r^n,$$
 where the last comparability follows from the fact that $\pom$ is $n$-AD-regular.  Therefore,   \eqref{eq:mazya} becomes
$$
\int_B |u(x)| \,dx  \lec r\, \int_B |\nabla u| \,dx.
$$
concluding the proof of the lemma.
\end{proof}

\vv

We will prove the following theorem.
\begin{theorem}\label{th:reg-dir-elliptic}
Let $\om \subset \R^{n+1}$, $n \geq 2$, be an open set satisfying the corkscrew condition with $n$-AD regular boundary.  If $(R^L_p)$ is solvable in $\om$ then $(D^{L^*}_{p'})$  is solvable in $\om$.
\end{theorem}

\begin{proof}
We will show that the assumptions in Theorem \ref{teo9.2} (c) hold. To this end, we will prove that for $B$, $\Lambda$, and $x \in \pom$ as in Theorem \ref{teo9.2} (c),
\begin{equation}\label{eq:average-harm.measure}
\left(\avint_{\Lambda B} \big(\cM_{\sigma,0}\, \omega^x\big)^p \,d\sigma\right)^{1/p} \lesssim_\Lambda \sigma(B)^{-1},
\end{equation}
where $\cM_{\sigma,0}\,\tau$ is the truncated maximal operator of a signed Radon measure $\tau$ defined by 
$$\cM_{\sigma,0} \,\tau(\xi) = \sup_{0<r\leq \dist(x,\pom)/4K} \frac{|\tau|(B(\xi,r))}{\sigma(B(\xi,r))}.$$
 We set  $$r_x:= \dist(x,\pom)/4$$ 
and we consider a maximal collections of points $\{\xi_i\}_{i=1}^N \subset\pom\cap \Lambda B$ which are $r_x$-separated.
Then, for every $\xi \in \Lambda B \cap \pom$  and $r \in (0,r_x]$, we can find $i \in \{1, \dots, N\}$ for which
$$
B(\xi,r) \subset B(\xi_{i}, 2r_x).
$$
We fix $\xi \in \Lambda B \cap \pom$, $r \in (0,r_x]$, and $i \in \{1, \dots, N\}$.  We take $K>11$ big enough (depending only on $C_0$), so that $[B(\xi_{i},  (K-1) r_x) \setminus  B(\xi_{i}, 10 r_x)]\cap \pom \neq \varnothing$. 
Then we consider 
a compactly supported Lipschitz function $\vphi_i:\R^{n+1}\to\R$ satisfying:
\begin{itemize}
\item $\vphi_i = 1$ in $A(\xi_{i}, 9 r_x, K r_x)$.
\item $\vphi_i = 0$ in $\rrn\setminus A(\xi_{i}, 8r_x, (K+1) r_x)$.
\item $0\leq\vphi_i\leq 1$ and Lip$(\vphi_i) \leq (r_x)^{-1}$.  
\end{itemize}
From these properties and the fact that $g_i:=\frac{1}{r_x} 1_{B(\xi_{i},  (K+1) r_x)}$ is a Haj\l asz upper gradient  for $\vphi_i$, it follows easily that
\begin{equation}\label{eq:homogeneous phi}
 \|\vphi_i\|_{\dot W^{1,p}(\sigma)} \lesssim_{K} r_x^{{n}/{p}-1}.
\end{equation}
If $\om$ is unbounded and $\pom$ is bounded,  then we have that  $r_x \lec \diam(\pom)$ and thus, 
\begin{equation}\label{eq:inhomogeneous phi}
\|\vphi_i\|_{ W^{1,p}(\sigma)} :=\diam(\pom)^{-1}\| \vphi_i\|_{L^p(\pom)} + \|\vphi_i\|_{\dot W^{1,p}(\sigma)} \lesssim_{K} r_x^{{n}/{p}-1}.
\end{equation}

Let $u_i$ be the solution of the variational  Dirichlet problem \eqref{eq: Dirichlet problem}  for $L$ in $\om$ with data $\vphi_i$, which by \eqref{eq:elliptic measure}, is  given by 
$$
u_i(y)=\int_\pom \vphi_i\, d\hm_i^y.
$$

By the choice of $K$, there exists 
$$
\xi_0 \in [B(\xi_{i},  (K-1)  r_x) \setminus  B(\xi_{i}, 10 r_x)]\cap \pom,
$$
 and we define $B_0:=B(\xi_0,  r_x/4)$.  If we set $v_i:=1-u_i$, it is easy to see that $Lv_i=0$ in $4B_0$,  $v_i=0$ on $4B_0 \cap \pom$, and $0\leq v_i \leq 1$. Therefore,  by boundary H\"older continuity of $v_i$ (see \cite[Lemma 2.10]{AGMT}),  it holds that 
 $$
1-u_i(y)= v_i(y) \leq C (\delta_\om(y)/ r_x)^\alpha \sup_{2B_0 \cap \om} v_i \leq 1/2,
 $$
 for all $y \in B_0$ such that $\delta_\Omega(y) \leq (2C)^{-\alpha}  r_x$. Therefore,  we have that $u_i(y) \geq \frac{1}{2}$ in $y \in B_0$ such that $\delta_\Omega(y) \leq (2C)^{-\alpha}  r_x=:r_0$.  We define  $r_i:=|\xi_i -\xi_0| \approx  r_x$ and set $B_i:=B(\xi_i, r_i)$. Then, we can cover $\d B_i \cap \pom$ by a uniformly bounded number of balls $\wt B_k$ centered at $\d B_i \cap \pom$ with radius $r_0$ and by the same argument as before we can show that $u_i(y) \geq \frac{1}{2}$ for any $y \in \wt B_k$ such that $\delta_\Omega(y) \leq r_0$. This implies that 
 $$
 u_i(y) \geq 1/2,  \quad \textup{for any}\,\, y \in   \{y \in \d B_i \cap \om : \delta_\Omega(y) \leq r_0\}.
 $$
Since every $y \in   \{y \in \d B_i \cap \om : \delta_\Omega(y) >  r_0\}$ can be connected by a Harnack chain of balls centered at $\d B_i \cap \om$ with radii $\approx r_0$ to a point  $z \in \{y \in \d B_i \cap \om : \delta_\Omega(y) \leq r_0\}$,  then,  by Harnack's inequality,  we obtain that there exists a uniform constant $c \in (0,1/2)$ such that $u_i(y) \geq c$ for every $y \in \d B_i \cap \om$.  Hence,  since $G(x,y) \lec r_x^{1-n}$  for every $y \in \d B_i \cap \om$,  by the maximum principle, 
\begin{equation}\label{eq:max-princ}
G(y,  x) \lec \delta_\om(x)^{1-n} u_i(y) \quad \textup{for every }\,\, y \in B_i.
\end{equation}

For every $B(\xi,r) \subset B(\xi_i, 2 r_x)$,  since $u_i \in Y^{1,2}(\om) \cap C^\alpha_{\loc}(\oom)$ and vanishes in  $B(\xi,  8r)\cap\pom$,  we apply \eqref{eq:goodPoincare} and get that
\begin{align}\label{eq:ui<M(N(nablaui))}
 \avint_{B(\xi,4 r) \cap \om} u_i(y) dy & \lec r\,  \avint_{B(\xi,  4r)\cap \om} |\nabla u_i(y)| \,dy \lec r \avint_{B(\xi,  C r)} \wt{\mathcal{N}}_2(\nabla u_i)\,d\sigma  \\
& \lec r \mathcal{M}(\wt{\mathcal{N}}_2(\nabla u_i))(\xi). \notag
\end{align}
 Therefore,  by \cite[Lemma 2.6]{AGMT},  Cauchy-Schwarz,  Caccioppoli's inequality,  Moser's estimates at the boundary, \eqref{eq:max-princ}, and \eqref{eq:ui<M(N(nablaui))}, we obtain  
\begin{align*}
\frac{\hm^x(B(\xi,r))}{\sigma(B(\xi,r))} &\lec \avint_{B(\xi,  2 r)\cap \om} |\nabla G(y,x)| \,dy \lec r^{-1} \avint_{B(\xi,4r)\cap \om}  G(y,x)\\
& \lec  \delta_\om(x)^{1-n} r^{-1} \avint_{B(\xi,4r)\cap \om}   u_i \lec \delta_\om(x)^{1-n}  \mathcal{M}(\wt{\mathcal{N}}_2(\nabla u_i))(\xi).
\end{align*}
Consequently,  for any fixed $\xi \in \Lambda B$,  if we take  supremum over all $r\leq r_x$, we get  
$$
\cM_{\sigma,0} \,\hm^x(\xi) \lec \delta_\om(x)^{1-n}  \mathcal{M}(\wt{\mathcal{N}}_2(\nabla u_i))(\xi),
$$
and, by the solvability of $(R^L_p)$ in $\om$ (if $\om$ is bounded or $\pom$ unbounded) along with \eqref{eq:homogeneous phi},
\begin{align*}
\| \cM_{\sigma,0}& \,\hm^x \|_{L^p(\sigma, \Lambda B)} \lec \delta_\om(x)^{1-n} \sum_{j=1}^N  \| \mathcal{M}(\wt{\mathcal{N}}_2(\nabla u_j)) \|_{L^p(\sigma, B(\xi_j, 2r_x)) }\\
&\lec \delta_\om(x)^{1-n}  \sum_{j=1}^N \|\wt{\mathcal{N}}_2(\nabla u_j)\|_{L^p(\sigma)} \lec \delta_\om(x)^{1-n}  \sum_{j=1}^N  \|\vphi_j\|_{ \dot W^{1,p}(\sigma)}\\ 
&\lec N \delta_\om(x)^{1-n}   r_x^{{n}/{p}-1} \lec \delta_\om(x)^{-{n}/{p'}} \lec \frac{1}{\sigma(B)^{1/p'}}.
\end{align*} 
If $\om$ is unbounded with bounded  boundary, then we use \eqref{eq:inhomogeneous phi}. This readily  implies \rf{eq:average-harm.measure}.  Since Lemma \ref{lemrev} and Theorem \ref{teo9.2} hold for general elliptic operators as well with the same proof,  we may argue as in the end of the proof of Theorem \ref{propoconverse} to conclude the proof of the theorem.
\end{proof}

\vv

% **************************************************************************************************

\end{document}